\newtheorem{theorem}{Theorem}[section]
\newtheorem{prop}[theorem]{Proposition}
\newtheorem{lemma}[theorem]{Lemma}
\newtheorem{cor}[theorem]{Corollary}
\newtheorem{example}[theorem]{Example}
\newtheorem{fact}[theorem]{Fact}
\newtheorem{defi}[theorem]{Definition}
\newtheorem{main}[theorem]{Main Result}
\newtheorem{rem}[theorem]{Remark}
\newenvironment{proof}{\par\noindent\textbf{Proof}\hspace{1em}}{\qed}
\newenvironment{proof*}{\par\noindent\textbf{Proof}\hspace{1em}}{}
\def\<{\langle}
\def\>{\rangle}
\newcommand{\cC}{\mathcal{Cy}}
\newcommand{\PG}{\mathsf{PG}}
\newcommand{\cod}{\mathrm{codim}}
\newcommand{\ssH}{\mathbb{H}}
\newcommand{\A}{\mathbb{A}}
\newcommand{\K}{\mathbb{K}}
\renewcommand{\L}{\mathbb{L}}
\newcommand{\F}{\mathbb{F}}
\newcommand{\cP}{\mathcal{P}}
\newcommand{\cV}{\mathcal{V}}
\newcommand{\cL}{\mathcal{L}}
\newcommand{\CD}{\mathsf{CD}}
\newcommand{\ssO}{\mathbb{O}}
\newcommand{\PGL}{\mathsf{PGL}}
\newcommand{\AG}{\mathsf{AG}}
\newcommand{\PSL}{\mathsf{PSL}}
\newcommand{\kar}{\mathrm{char}\,}
\newcommand{\B}{\mathbb{B}}
\newcommand{\D}{\mathbb{D}}
\def\qed{{\hfill\hphantom{.}\nobreak\hfill$\Box$}}
\begin{document}

\author{Anneleen De Schepper\thanks{Supported by the Fund for Scientific Research - Flanders (FWO - Vlaanderen)} \and Hendrik Van Maldeghem\thanks{Partly supported by the Fund for Scientific Research - Flanders (FWO - Vlaanderen)}}
\title{Veronese representation of projective Hjelmslev planes over some quadratic alternative algebras}
\date{\footnotesize  Department of Mathematics: algebra and geometry,\\
Ghent University,\\
Krijgslaan 281-S25,\\
B-9000 Ghent,
BELGIUM\\
 \texttt{Anneleen.DeSchepper@UGent.be}\\  \texttt{Hendrik.VanMaldeghem@UGent.be}\\ $^*$Corresponding author}
\maketitle

\begin{abstract}
We geometrically characterise the Veronese representations of ring projective planes over algebras which are analogues of the dual numbers, giving rise to projective Hjelmslev planes of level 2 coordinatised over quadratic alternative algebras. These planes are related to affine buildings of relative type $\widetilde{A}_2$ and respective absolute type $\widetilde{\mathsf{A}}_2$, $\widetilde{\mathsf{A}}_5$ and $\widetilde{\mathsf{E}}_6$.
\end{abstract}

{\footnotesize
\emph{Keywords:}  Veronese map, Projective Hjelmslev planes, dual numbers, Cayley-Dickson process, quadratic alternative algebra\\
\emph{AMS classification:} 51C05, 51E24
}
\setcounter{tocdepth}{2}

\tableofcontents

\section{Introduction}
In \cite{Sch-Mal:14}, Schillewaert and the second author study ring projective planes over quadratic 2-dimensional algebras over a field $\K$, in casu the split and non-split quadratic \'etale extensions, the inseparable extensions of degree 2 in characteristic 2 and the dual numbers. The authors give a neat and more or less uniform geometric characterisation of the Veronese representations of these planes, each as a point set $X$ in a projective space, equipped with a family of $3$-dimensional subspaces intersecting $X$ in a quadric of a certain type (depending on the algebra) and satisfying some axioms. The obtained geometries are the Segre varieties $\mathcal{S}_{2,2}$, the Hermitian Veronese varieties including those of inseparable type, and the Hjelmslev planes of level 2 over the dual numbers, respectively. The quadrics are hyperboloids, ellipsoids and quadratic cones, respectively. In \cite{Kra-Sch-Mal:15} and~\cite{Sch-Mal:17}, the former two have been extended to planes over \emph{higher-dimensional} non-degenerate quadratic alternative algebras over $\K$, which geometrically corresponds to using higher-dimensional quadrics, namely hyperbolic quadrics (nondegenerate quadrics of maximal Witt index) and elliptic quadrics (nondegenerate quadrics of Witt index 1).

In this paper, we extend and in fact complete their result in the direction of the dual numbers. The dual numbers over $\K$ are set-wise given by $\K[0]=\{k+tk' \mid k,k' \in \K\}$ where $t$ is an indeterminate with $t^2=0$. We extend this notion to ``non-split dual numbers'': quadratic algebras set-wise given by $\B[0]:=\{b+tb' \mid b,b' \in \mathbb{B}\}$, where $\B$ is quadratic associative division algebra over $\K$ (to make sure the resulting algebra is alternative) and $t$ is again an indeterminate with $t^2=0$ satisfying well defined commutation relations to keep the algebra quadratic.  It will turn out that these algebras also result from one application of the Cayley-Dickson process on a quadratic associate division algebra $\mathbb{B}$ over $\K$, using $0$ as a primitive element (an option standardly not considered), whence the notation $\B[0]$.

\subsection{The main result}
The key achievement of this paper is the axiomatic characterisation of the Veronese representations of the ring projective planes over the quadratic dual numbers described above. Roughly speaking, we prove: \begin{quote} \it Any set of points $X$ in projective space (over any field and of arbitrary dimension $N$, possibly infinite), equipped with a family of subspaces (also of arbitrary dimension $d+1$, possibly infinite), each intersecting $X$ in some degenerate quadric (minus the vertex) whose basis is an elliptic quadric, and such that every pair of points of $X$ is contained in such a quadric, and every pair of such quadrics share a point of $X$, essentially is the Veronese representation of a ring projective plane over quadratic dual numbers $\B[0]$, where $\B$ is a quadratic associative division algebra with $2\dim_\K(\B)=d$. \end{quote}

Note that the equality $2 \dim_\K(\B)=d$ yields severe restrictions for the values of $d$ for which such sets $X$ exist (if $\kar(\K) \neq 2$, it implies $d \in \{2,4,8\}$). We also remark that we will more generally work with \emph{ovoids} (see below for a precise definition) instead of elliptic quadrics; our proof will imply that these ovoids will automatically be quadrics.

 In order to give the extended version of the main theorem (cf.\ \textbf{Theorems~\ref{main3} and~\ref{main2}}), we first need to define the ring projective planes over $\B[0]$ and their Veronese representations formally, which is done in Section~\ref{CD}  after discussing the properties of the algebras $\B[0]$. 

\subsection{Motivation}
Our interest in these Veronese varieties originates from their link to certain affine buildings associated to the buildings of the second row of the Freudenthal-Tits magic square. More precisely, as point-line geometries, the obtained structures are related to the below pictured affine Tits indices (below this is also explained in more detail), see \cite{MPW} for the general theory of such Tits indices. Nonetheless, these Veronese varieties are worth studying in their own right as they exhibit geometrically interesting behaviour and substructures. An instance of the latter is given by the notion of a \emph{regular scroll}, which is a generalisation of a normal rational cubic scroll. 
We dedicate an appendix to their properties, as they will be essential in our characterisation.

\begin{table}[h]
\begin{center}
\begin{tabular}{ccc}
 \begin{tikzpicture}[scale=0.75,baseline=-0.5ex]
\node at (0,0.8) {};
\node at (0,-0.8) {};
\node [inner sep=0.8pt,outer sep=0.8pt] at (1,-0.432) (-1) {$\bullet$};
\node [inner sep=0.8pt,outer sep=0.8pt] at (0,-0.432) (0) {$\bullet$};
\node [inner sep=0.8pt,outer sep=0.8pt] at (0.5,0.432) (1) {$\bullet$};
\draw (0,-0.432)--(1,-0.432);
\draw (0.5,0.432)--(1,-0.432);
\draw (0,-0.432)--(0.5,0.432);
\draw [line width=0.5pt,line cap=round,rounded corners] (-1.north west)  rectangle (-1.south east);
\draw [line width=0.5pt,line cap=round,rounded corners] (1.north west)  rectangle (1.south east);
\draw [line width=0.5pt,line cap=round,rounded corners] (0.north west)  rectangle (0.south east);
\end{tikzpicture} 
&
 \begin{tikzpicture}[scale=0.75,baseline=-0.5ex]
\node at (0,0.8) {};
\node at (0,-0.8) {};
\node [inner sep=0.8pt,outer sep=0.8pt] at (0,-0.865) (1) {$\bullet$};
\node [inner sep=0.8pt,outer sep=0.8pt] at (1,-0.865) (2) {$\bullet$};
\node [inner sep=0.8pt,outer sep=0.8pt] at (-0.5,0) (6) {$\bullet$};
\node [inner sep=0.8pt,outer sep=0.8pt] at (1.5,0) (3) {$\bullet$};
\node [inner sep=0.8pt,outer sep=0.8pt] at (0,0.865) (5) {$\bullet$};
\node [inner sep=0.8pt,outer sep=0.8pt] at (1,0.865) (4) {$\bullet$};
\draw (0,-0.865)--(1,-0.865);
\draw (1,-0.865)--(1.5,0);
\draw (1.5,0)--(1,0.865);
\draw (1,0.865)--(0,0.865);
\draw (0,0.865)--(-0.5,0);
\draw (-0.5,0)--(0,-0.865);
\draw [line width=0.5pt,line cap=round,rounded corners] (6.north west) rectangle (3.south east);
\node [inner sep=0.5pt, line width=0.5pt,line cap=round, draw=black, rounded corners, rotate fit=60, fit=(1) (4)] {};
\node [inner sep=0.5pt, line width=0.5pt,line cap=round, draw=black, rounded corners, rotate fit=120, fit=(2) (5)] {};
\end{tikzpicture} 
&
\begin{tikzpicture}[scale=0.75,baseline=-0.5ex]
\node at (0,0.8) {};
\node at (0,-0.8) {};
\node [inner sep=0.8pt,outer sep=0.8pt] at (0,0) (-1) {$\bullet$};
\node [inner sep=0.8pt,outer sep=0.8pt] at (1,0) (0) {$\bullet$};
\node [inner sep=0.8pt,outer sep=0.8pt] at (2,0) (1) {$\bullet$};
\node [inner sep=0.8pt,outer sep=0.8pt] at (2.865,0.5) (2) {$\bullet$};
\node [inner sep=0.8pt,outer sep=0.8pt] at (3.73,1) (3) {$\bullet$};
\node [inner sep=0.8pt,outer sep=0.8pt] at (2.865,-0.5) (4) {$\bullet$};
\node [inner sep=0.8pt,outer sep=0.8pt] at (3.73,-1) (5) {$\bullet$};
\draw (0,0)--(2,0);
\draw (2,0)--(3.73,1);
\draw (2,0)--(3.73,-1);
\draw [line width=0.5pt,line cap=round,rounded corners] (0.north west)  rectangle (0.south east);
\draw [line width=0.5pt,line cap=round,rounded corners] (2.north west)  rectangle (2.south east);
\draw [line width=0.5pt,line cap=round,rounded corners] (4.north west)  rectangle (4.south east);
\end{tikzpicture} \\
$\widetilde{\mathsf{A}}_2$ & $\widetilde{\mathsf{A}}_5$ & $\widetilde{\mathsf{E}}_6$ 
\end{tabular}
\end{center}
\caption{The corresponding Tits indices (affine). \label{dualnonsplit}}
\end{table}


Our geometric characterisation is especially remarkable since the axioms we use are a straightforward extension of the elementary axioms used by Mazzocca and Melone \cite{Maz-Mel:84} to characterise the Veronese representation of all conics of a projective plane over a finite field of odd order (the simplest case in a finite setting), and those axioms also describe and characterise the exceptional Veronese map related to an octonion division algebra, owing its existence to the Tits index $\mathsf{E_{6,2}^{28}}$ of simple algebraic groups. The following two facts demonstrate the strength of our geometric approach:
\begin{itemize}
\item All our results hold over arbitrary fields and in arbitrary dimension (even infinite), except that we exclude the field of order 2 in our main result (this will be explained later on). 
\item Our approach is uniform in the sense that we also capture the \emph{non-degenerate }Veronese varieties, i.e., the Veronese representations of the Moufang projective planes over the quadratic alternative division algebras. 
\end{itemize}
Indeed, as a side result, our axioms provide a new characterisation of the above mentioned non-degenerate varieties,  except that over the field of order $2$ more examples pop up (still classifiable though), and interestingly, one example is strongly related to the large Witt design $S(24,5,8)$ and the sporadic Mathieu group $M_{24}$. These extra examples are so-called \emph{pseudo-embeddings} and our results complement in a surprising way the results of De Bruyn \cite{Bru:12, Bru:13}.

The Veronese variety of the ring projective plane over the dual numbers is related to a (Pappian) Hjelmslev plane of level 2, see \cite{Sch-Mal:14}. As was proved in \cite{Han-Mal:89}, these Hjelmslev planes are the spheres of radius 2 in certain affine buildings of type $\widetilde{\mathsf{A}}_2$ and can be related to the split form. Now, there are other quadratic alternative algebras coordinatising the spheres of radius 2 in certain affine buildings of type $\widetilde{\mathsf{A}}_2$ and one can also define a Veronese representation of them, which is what we do in this paper. The revealed patterns somehow show that the occurring Tits indices are special among their relatives. In a certain sense, they are the simplest given the absolute type. Indeed, the anisotropic kernels are minimal in the $\widetilde{\mathsf{E}}_6$ case, the building at infinity is octonionic and hence exceptional, whereas the residues are quaternion and hence classical.  Algebraically, our results imply that the rings coordinatising the spheres of fixed radius at least 2 are never quadratic, except if the radius is 2 in the cases of above. Note that the spheres of radius 2 play a helpful role in the theory of affine buildings, especially in the $\widetilde{\mathsf{A}}_2$ case, see e.g. \cite{Tit:90}, \cite{Wit:17}, and are therefore interesting structures to investigate.

\subsection{Structure of the paper}
In \textbf{Section~\ref{Overview}}, we formalise the axiomatic setting alluded to above and give a short version of our main result (cf.\ \textbf{Theorem~\ref{maininformal}}). 

In \textbf{Section~\ref{CD}} we introduce the algebras of our interest as ``degenerate quadratic alternative $\K$-algebras $\A$ whose maximal non-degenerate subalgebras are division algebras $\B$ and whose radical is generated by a single element $t$''. We show that these are precisely the non-split dual numbers as described above and that they can equivalently be obtained by one application of the generalised Cayley-Dickson process on a quadratic associative division algebra. For each of these $\K$-algebras $\A$, we then define a ring geometry $\mathsf{G}(2,\A)$ for which we can then consider its Veronese representation $\mathcal{V}_2(\K,\A)$. 

In \textbf{Section~\ref{main}} we can then state our main results formally. Doing so, we also provide a neat geometric description and construction of the Veronese varieties over non-split dual numbers: they fall apart, just like the corresponding algebra, in two isomorphic (in fact, dual) parts, one non-degenerate and one degenerate, in the sense that after projecting from the degenerate part, one obtains a non-degenerate Veronese variety consisting of points and non-degenerate quadrics of Witt index 1, and their exists a duality between these two parts. We also motivate our axioms.

The remaining sections are devoted to proving \textbf{Theorems~\ref{main3} and~\ref{main2}}, see Section~\ref{structuur}.

\par\bigskip
\begin{tcolorbox}
Throughout the entire paper, $\K$ denotes an arbitrary (commutative) field, unless explicitly mentioned otherwise. \end{tcolorbox}

\section{Ordinary and Hjelmslevean Veronese sets}\label{Overview}
 Let $d,v$ be elements of $\mathbb{N} \cup \{-1,\infty\}$ with $d \geq 1$.  
Vere
\subsection{Definitions}
An \emph{ovoid} $O$ in $\PG(d+1,\K)$ is a set of points spanning $\PG(d+1,\K)$ such that for each point $x\in O$ the union of the set of lines intersecting $O$ in $x$ is a hyperplane of $\PG(d+1,\K)$, called the \emph{tangent hyperplane at $x$}, and each other line through $x$ intersects $O$ in exactly one more point (hence $O$ does not contain triples of collinear points). 
For short we call this a $Q^0_d$-quadric, where the $0$ indicates the (projective) dimension of the maximal subspaces lying on $O$ and $d$ the (projective) dimension of its tangent hyperplanes. Examples are given by quadrics of Witt index $1$ (or, equivalently, projective index 0), explaining our notation. Moreover, in the setting we will consider, the ovoids will turn out to be quadrics, see Corollary~\ref{ovoids}. 

\begin{defi}[$(d,v)$-tubes]
\em
In a projective space $\PG(d+v+2,\K)$, we consider a $v$-space $V$ and a $Q^0_d$-quadric in a $(d+1)$-space complementary to $V$. The union of lines joining all points of $V$ with all points of $Q^0_d$ is called a \emph{$(d,v)$-cone} with \emph{base} $Q^0_d$ and \emph{vertex} $V$. The cone without its vertex is called a \emph{$(d,v)$-tube} (with base $Q^0_d$).
\end{defi}

A $(d,v)$-tube with base $Q^0_d$ will be referred to as a \emph{tube} whenever $(d,v)$ and $Q^0_d$ are clear from the context. 

Let $C$ be a $(d,v)$-tube. The unique $(d,v)$-cone containing $C$ is denoted by $\overline{C}$, so $\overline{C}=C \cup V$ for a certain $v$-space $V$. Even though $V$ is not contained in $C$, we also call $V$ the \emph{vertex} of $C$. A \emph{tangent line} to $C$  is a line which has either one or all its points in $\overline{C}$. Let $x$ be any point in  $C$. All lines through $c$ entirely contained in $\overline{C}$ are those contained in $\<x,V\>$. The latter subspace is called a \emph{generator} of $\overline{C}$ and of $C$. The union of the set of tangent lines to $C$ through $x$ is a hyperplane of $\<C\>$, denoted $T_x(C)$, which intersects $\overline{C}$ in the generator through~$x$. 

\subsection{A characterisation of Hjelmslevean and ordinary Veronese sets}

Consider a spanning point set $X$ of $\PG(N,\K)$, $N>d+v+2$,  together with  a collection $\Xi$ of $(d+v+2)$-dimensional projective subspaces of $\PG(N,\K)$, called the \emph{tubic spaces} of $X$, such that, for any $\xi \in \Xi$, the intersection $\xi \cap X$ is a $(d,v)$-tube $X(\xi)$ in $\xi$ with base $Q_d^0$. For $\xi\in\Xi$ and $C=X(\xi)$, we define $\Xi(C)=\xi$.
The union of all vertices of those tubes is denoted by $Y$. The unique $(d,v)$-cone containing $X(\xi)$ is denoted by $\overline{X(\xi)}$ as before.
Note that $X\cap Y = \emptyset$: if a point $x\in X$ belongs to the vertex $V$ of some tubic space $\xi$, then $X \cap \xi$ would strictly contain a $(d,v)$-tube, instead of being one.
We often denote $T_x(X(\xi))$ by $T_x(\xi)$ and we define the \emph{tangent space $T_x$ of $x$} as the subspace spanned by all tangent spaces through $x$ to all tubes through $x$, i.e., $T_x=\<T_x(\xi) \mid x\in \xi \in \Xi\>$.

\begin{defi}\em
The pair $(X,\Xi)$, or simply $X$, is called a \emph{Hjelmslevean Veronese set (of type $(d,v)$)} 
if the following properties hold.

\begin{itemize}
\item[(H1)] Any two distinct points $x_1$ and $x_2$ of $X$ lie in at least one element of $\Xi$,
\item[(H2$^*$)] Any two distinct elements $\xi_1$ and $\xi_2$ of $\Xi$ intersect in points of $X\cup Y$, i.e., $\xi_1\cap\xi_2 = \overline{X(\xi_1)} \cap \overline{X(\xi_2)}$. Moreover,
$\xi_1 \cap \xi_2 \cap X$ is non-empty.
\end{itemize}
\end{defi}

Then informally our main result reads:

\begin{theorem}\label{maininformal}
If $(X,\Xi)$ is a Hjelmslevean Veronese set, then  $X$ is projectively equivalent to a cone over the image of the Veronese map on certain triples of elements of a quadratic alternative algebra whose maximal non-degenerate part is a division algebra and whose radical $R$ is either trivial or a principal ideal.
\end{theorem}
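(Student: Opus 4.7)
The plan is to reduce the Hjelmslevean situation to the non-degenerate case already handled in \cite{Kra-Sch-Mal:15,Sch-Mal:17} via a quotient construction that collapses each generator of every cone to its apex on the base ovoid. The guiding picture is that $X$ should look like the Veronese image of $\PG(2,\A)$, where the radical direction of $\A = \B[0]$ accounts for the generators of the tubes and the semisimple part $\B$ accounts for the bases; correspondingly, the vertex locus $Y$ should assemble into a structured ``dual copy'' of the non-degenerate Veronese variety rather than an amorphous union of disjoint vertices.

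First I would perform the local analysis near a point $x\in X$. Two distinct tubes through $x$ meet, by (H2$^*$), in points of $X\cup Y$ that lie inside both cones, which forces strong compatibility between their generators at $x$. Using this, define a relation $\sim$ on $X$ by $x_1\sim x_2$ iff $x_1,x_2$ lie on a common generator of some tube. Axiom (H1) together with the non-empty intersection clause of (H2$^*$) should yield that $\sim$ is an equivalence relation, and that each equivalence class is exactly a generator minus its vertex. In parallel, I would study the ``dual'' incidence on the vertices of cones to prove that $Y$ is itself a well-structured subvariety, sitting in a proper subspace of $\PG(N,\K)$, and admitting a natural bijection with the quotient $\bar{X}=X/\sim$ (the duality alluded to in the introduction).

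Next, I would show that $\bar{X}$, realised as the projection of $X$ from a suitable linear hull of $Y$, is a spanning point set in some lower-dimensional projective space, that each tube projects bijectively to its base ovoid, and that the induced pair $(\bar{X},\bar{\Xi})$ satisfies the ordinary (non-degenerate) Mazzocca-Melone axioms used in \cite{Kra-Sch-Mal:15, Sch-Mal:17}. Those characterization theorems then identify $\bar{X}$ with the Veronese image of $\PG(2,\B)$ for a quadratic alternative division algebra $\B$ over $\K$, forcing the base ovoids to be elliptic quadrics and $d=2\dim_\K\B$ (whence $d\in\{2,4,8\}$ in characteristic not $2$). To lift this back, I would match the parametrization of $Y$ with that of $\bar{X}$ through the tangent-space data $T_x$, producing coordinates on $X$ of the form $b+tb'\in \B\oplus t\B$; verifying the Veronese relations then exhibits the required projective equivalence with the Veronese image of $\PG(2,\A)$ for $\A=\B[0]$. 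The case of trivial radical corresponds to $Y=\emptyset$ and falls out directly from \cite{Kra-Sch-Mal:15, Sch-Mal:17}.

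The principal obstacle is the middle step: showing that the quotient actually lands in a non-degenerate Veronese set of the correct dimension. This requires proving that $Y$ is the ``dual'' of $\bar{X}$ in a strong structural sense rather than an unstructured subset, and that the projection away from $\langle Y\rangle$ does not collapse any base ovoid or force spurious incidences among tube-bases. A secondary technical point is the promotion of the abstract ovoid hypothesis to genuine elliptic quadrics, as claimed in Corollary~\ref{ovoids}; I expect this to follow once the algebraic structure has been imported from the quotient, combined with the rigidity provided by the transitive group action induced by coordinate changes in $\B[0]$. These two points should occupy the bulk of the remaining work.
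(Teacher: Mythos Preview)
Your overall strategy---project from the vertex locus $Y$ to obtain a non-degenerate Veronese variety over a division algebra $\B$, then lift back---matches the paper's approach in broad outline. However, there is a concrete error and a substantial missing ingredient.

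The error is your claim that each $\sim$-equivalence class is ``exactly a generator minus its vertex''. This is false in the genuinely Hjelmslevean case. A point $x$ lies on generators of \emph{many} tubes with \emph{different} vertices: the paper shows (Lemma~\ref{Pi_x2}) that through $x$ there exist tubes $C,C'$ with distinct vertices $V,V'$, and the equivalence class of $x$ is the affine $(2v+2)$-space $\Pi_x=\<x,V,V'\>\setminus\<V,V'\>$, not a single generator of dimension $v+1$. Correspondingly, the ``lines'' of $Y$ are the $(2v+1)$-spaces $\Pi_x^Y=\<V,V'\>$, not individual vertices. Getting this dimension right is essential: it drives the identification of $Y$ with the \emph{dual} projective plane (vertices $\leftrightarrow$ lines of $\bar X$, subspaces $\Pi_x^Y \leftrightarrow$ points of $\bar X$), and it is the input to the dimension count $N=6d+2$.

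The missing ingredient is how you prove $v=d-1$ and pin down the local structure of the set $\mathcal{C}_V$ of tubes sharing a vertex $V$. The paper does this via \emph{regular $d$-scrolls} (Appendix~\ref{scrolls} and Proposition~\ref{structureCV}): projecting $\mathcal{C}_V$ from $V$ yields a family of quadrics all passing through the elements of a regular spread on $\widetilde{Y}$, and the cross-ratio-preserving map $\chi_V$ between the base quadric and this spread forces the vertex dimension to equal $d-1$ and the spread to be regular. Without this, you have no mechanism to constrain $v$, to show the vertices form a regular spread of $Y$, or to prove the connection map $\chi$ is linear. Your proposed ``lift via explicit coordinates $b+tb'$'' cannot get off the ground until these facts are in place; the paper instead concludes by projective uniqueness (Lemma~\ref{projective}$(vii)$) once the scroll structure and the duality are established. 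You should also note the preliminary vertex-reduction step (Section~\ref{reduction}): one first projects from the common intersection $V^*$ of all vertices to reduce to the dichotomy $v=-1$ versus pairwise disjoint vertices.
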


When $\kar(\K)\neq 2$, we essentially (i.e., when only considering the basis of the above cone) obtain seven geometries, four of which have $v=-1$ (these are ordinary Veronese sets) and three of which have $v=d-1 \geq 0$ (actual Hjelmslevean Veronese sets). In the next section we explain this Veronese map and these algebras in more detail, so that in the section thereafter we can state a precise version of the main result, providing in detail which Veronese varieties we obtain.

\section{Non-split dual numbers and associated Veronese representations}\label{CD}

We introduce \emph{(degenerate) quadratic alternative $\K$-algebras $\A$}, then restrict our attention to these algebras whose \emph{radical} is non-trivial but ``small'' (generated, as a ring, by a single element) and show that these can be produced by applying the \emph{Cayley-Dickson doubling process on $\K$}. Each of these notions will be explained too. By abuse of notation we will assume that if a $\K$-algebra is unital, it contains $\K$ (and all our algebras will be unital). Afterwards, we define a ring projective plane over them and consider its Veronese representation.

\subsection{Quadratic alternative algebras over $\K$}
Suppose $\A$ is a unital $\K$-algebra which  is \emph{alternative} (i.e., the associator $[a,b,c]:=(ab)c-a(bc)$ is a $\K$-trilinear alternating map) and  \emph{quadratic} (i.e., each $a\in \A$ satisfies the quadratic equation $x^2-\mathsf{T}(a)x+\mathsf{N}(a)=0$ where the \emph{trace} $\mathsf{T}: \A \rightarrow \K: a \mapsto \mathsf{T}(a)$ is a $\K$-linear map with $\mathsf{T}(1)=2$ and the \emph{norm} $\mathsf{N}: \A \rightarrow \K: a \mapsto \mathsf{N}(a)$ is a $\K$-quadratic map with $\mathsf{N}(1)=1$). The latter property induces an involution $x \mapsto \overline{x}$ on $\A$ fixing $\K$, defined by taking $a\in \A$ to $\mathsf{T}(a)-a$ (to be precise, this is an involutive anti-automorphism, so $\overline{xy}=\overline{y}\,\overline{x}$). Consequently, $\mathsf{N}(a)=a\overline{a}$ for each $a\in \A$. Note that an element $a\in \A$ is invertible if and only if $\mathsf{N}(a)\neq 0$, and if invertible, its inverse is given by $\mathsf{N}(a)^{-1}\overline{a}$. Hence the algebra $\A$ is a \emph{division algebra} if the norm form $\mathsf{N}$ is anisotropic (i.e., if for all $x\in \A$, $\mathsf{N}(x)=0 \Leftrightarrow x =0$).

\subsubsection{The radical of $\A$}
The bilinear form $f$ associated to the quadratic form $\mathsf{N}$ is given by $f(x,y)=\mathsf{N}(x+y)-\mathsf{N}(x)-\mathsf{N}(y)=x\overline{y}+y\overline{x}$. Its \emph{radical} is the set  $\mathsf{rad}(f)=\{x \in \A \mid f(x,y)=0, \, \forall y \in \A\}$. 
The algebra $\A$ is \emph{non-singular} if $f$ has a trivial radical; it is \emph{non-degenerate} if its norm form $\mathsf{N}$ is non-degenerate, which means that $\mathsf{N}$ is anisotropic over $\mathsf{rad}(f)$, i.e., the set $R:=\{r \in \mathsf{rad}(f) \mid \mathsf{N}(r)=0\}=\{0\}$. So $\A$ is non-degenerate if and only if $R$ is trivial. The set $R$ is hence called  the \emph{radical} of $\A$.

Both $\mathsf{rad}(f)$ and $R$ are two-sided ideals of $\A$.  If $\kar{\K} \neq 2$, then for all $r\in \mathsf{rad}(f)$ we have $0=f(r,r)=2\mathsf{N}(r)$, so $R=\mathsf{rad}(f)$. In general, one can show that either $R=\mathsf{rad}(f)$ or $\mathsf{rad}(f)=\A$. Moreover, in the latter case, it follows that $\kar{\K}=2$, $\A$ is a commutative associative $\K$-algebra with $\A^2 \subseteq \K$ and $x \mapsto \overline{x}$ is trivial. 

\begin{rem}
\em One can also define $R$ without referring to the forms $f$ and $\mathsf{N}$ on $\A$: the radical $R$ is precisely the set of elements $r$ such that $ar$ is nilpotent for each $a\in \A$.
\end{rem}

The following fact is well known (see for instance \cite{Kap:53}).
 \begin{fact}\label{qad} Let $\A$ be a non-degenerate quadratic alternative division algebra over a field $\K$ and put $d=\dim_\K(\A)$. Then $\A$ is one of the following.
\begin{compactenum}
\item[$(d=1)$] $\A=\K$;
\item[$(d=2)$] $\A$ is a quadratic Galois extension $\L$ of $\K$;
\item[$(d=4)$] $\A$ is a quaternion division algebra $\ssH$ with center $\K$;
\item[$(d=8)$] $\A$ is a Cayley-Dickson division algebra $\ssO$ with center $\K$;
\item[\emph{(insep)}] $\A/\K$ is a purely inseparable extension with $\A^2 \subseteq \K$ and, if finite, then $d:=[\A:\K]$ can be any power of 2. In this case $\mathrm{char}(\K)=2$.
\end{compactenum}
\end{fact}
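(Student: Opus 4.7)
The plan is to split according to whether the bilinear form $f(x,y)=x\overline{y}+y\overline{x}$ has full radical or is non-singular, using the dichotomy recorded in the excerpt just above the statement.

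First I would treat the degenerate case $\mathsf{rad}(f)=\A$: then $\kar\K=2$, $\A^2\subseteq\K$, and conjugation is trivial, so $\A$ is commutative and associative, and since it is a division algebra every $a\in\A$ satisfies $a^2\in\K$. This already says $\A/\K$ is a purely inseparable extension of exponent at most $1$, i.e.\ case \emph{(insep)}. I would then argue, by viewing $\A$ as a compositum of quadratic inseparable extensions $\K[a_i]$ with $a_i^2\in\K$, that if $\dim_\K\A$ is finite it must be a power of $2$.

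Next I would handle the non-singular case $\mathsf{rad}(f)=\{0\}$ (guaranteed by non-degeneracy of $\A$ plus the alternative recorded above: either $\mathsf{rad}(f)=R$ or $\mathsf{rad}(f)=\A$), via the Cayley--Dickson doubling construction applied iteratively. Starting from $\K$: if $\A=\K$ we are done; otherwise I pick $a\in\A\setminus\K$, so $\K[a]$ is a $2$-dimensional separable quadratic extension $\L$ (it cannot be inseparable, for then $a=\overline{a}$ would put $a$ in $\mathsf{rad}(f)=\{0\}$). If $\A=\L$, stop. Otherwise non-singularity of $f$ and the division algebra property let me pick $b\perp_f\L$ with $\mathsf{N}(b)\neq 0$, and I check that $\L\oplus\L b$ is the Cayley--Dickson double of $\L$, a quaternion algebra $\ssH$ of dimension $4$. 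One further doubling (if $\A\neq\ssH$) produces the $8$-dimensional Cayley--Dickson algebra $\ssO$, alternative but no longer associative.

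The hard part will be termination at dimension $8$. I would invoke the classical structural fact that the Cayley--Dickson double of an alternative algebra is itself alternative \emph{only if} the algebra being doubled is associative. Consequently a further doubling $\ssO[d]$ sitting inside $\A$ would fail alternativity of $\A$: explicitly one exhibits a non-vanishing associator $[x,y,d]$ with $x,y\in\ssO$ that is not alternating. So no element $d\in\A\setminus\ssO$ with $d\perp_f\ssO$ and $\mathsf{N}(d)\neq 0$ can exist, and non-singularity of $f$ together with the division property then force $\A=\ssO$. This closes the chain $\K\subset\L\subset\ssH\subset\ssO$ and, together with the \emph{(insep)} branch, produces the five items in the classification.
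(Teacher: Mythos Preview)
The paper does not actually prove Fact~\ref{qad}; it is stated as ``well known'' with a reference to Kaplansky~\cite{Kap:53}. So there is no in-paper proof to compare against. Your sketch follows the classical Hurwitz--Kaplansky argument (split off the totally singular case, then climb the Cayley--Dickson tower and use that the double of a non-associative algebra is never alternative), which is exactly the route one would take and which the paper alludes to when it says these algebras ``are precisely the ones obtained from subsequent applications of the standard Cayley--Dickson process on $\K$''.

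One genuine imprecision: your justification for separability of $\K[a]$ in the non-singular branch is not correct as written. You claim that if $\K[a]$ were inseparable then $a=\overline{a}$ would force $a\in\mathsf{rad}(f)$. But $a=\overline{a}$ only gives $f(a,y)=a\overline{y}+ya$, which need not vanish for all $y\in\A$; so $a$ is not automatically in $\mathsf{rad}(f)$. The correct fix is a choice-of-element argument: since $\mathsf{rad}(f)=\{0\}$ we have $1\notin\mathsf{rad}(f)$, hence $\mathsf{T}\not\equiv 0$; in characteristic $2$ one has $\K\subseteq\ker\mathsf{T}\subsetneq\A$, so there exists $a\in\A\setminus\K$ with $\mathsf{T}(a)\neq 0$, and then $x^2-\mathsf{T}(a)x+\mathsf{N}(a)$ is separable. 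In characteristic $\neq 2$ every $a\in\A\setminus\K$ already works. With this adjustment the first doubling step is sound, and the rest of your outline (finding $b\perp_f\L$ with $\mathsf{N}(b)\neq0$ via non-singularity and the division property, identifying $\L\oplus\L b$ with $\mathsf{CD}(\L,-\mathsf{N}(b))$, iterating, and terminating at dimension $8$ because the double of $\ssO$ is not alternative) is the standard argument and is fine.
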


Except for the case $(d=2)$ in characteristic 2, these algebras are precisely the ones obtained from subsequent applications of the standard Cayley-Dickson process on $\K$ (see below). 

The following fact has been shown for fields $\K$ with characteristic different from 2 in \cite{Kun-Sch:86}, but one can verify that it also holds for characteristic 2, using the technique of the proof of Fact~\ref{qad} (see also Section 2.6, Theorems 2.6.1 and 2.6.2, on pages 164--166 of
McCrimmon's book \cite{McC:04}).

\begin{fact}\label{direct+} Let $\A$ be a possibly degenerate quadratic alternative unital $\K$-algebra with radical $R$. Then $\A$ contains a unital subalgebra $\B$ maximal with the property of being non-degenerate; and for such $\B$ we have $\B^\perp=R$ and $\A=\B \oplus R$. 
\end{fact}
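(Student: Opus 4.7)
The plan is to obtain $\B$ by Zorn's lemma and then to force the decomposition via the maximality of $\B$ together with one application of the Cayley--Dickson doubling.

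First I would consider the family $\mathcal{F}$ of non-degenerate unital subalgebras of $\A$, ordered by inclusion. It contains $\K\cdot 1$ (whose norm $\lambda\mapsto\lambda^{2}$ is anisotropic) and is closed under chain unions: if $\{\B_{i}\}$ is a chain and $r$ lies in the radical of $\bigcup_{i}\B_{i}$, then $r\in\B_{i_{0}}$ for some $i_{0}$ and the defining conditions $f(r,-)=0$ and $\mathsf{N}(r)=0$ restrict from $\bigcup_{i}\B_{i}$ to $\B_{i_{0}}$, forcing $r=0$ by non-degeneracy of $\B_{i_{0}}$. Zorn's lemma then furnishes a maximal $\B\in\mathcal{F}$.

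The inclusion $R\subseteq\B^{\perp}$ is clear since $R\subseteq\mathsf{rad}(f)$. For the reverse inclusion I argue by contradiction: suppose $x\in\B^{\perp}\setminus R$. The strategy is to exhibit some $x'\in\B^{\perp}$ with $\mathsf{N}(x')\neq 0$ and then enlarge $\B$ by the Cayley--Dickson double $\B\oplus x'\B$. If $\mathsf{N}(x)\neq 0$ take $x'=x$; otherwise $x\notin\mathsf{rad}(f)$, so there is $y\in\A$ with $f(x,y)\neq 0$. Once the decomposition $\A=\B+\B^{\perp}$ is available (from the non-degeneracy of $f|_{\B}$, argued in the next paragraph), I may subtract from $y$ its $\B$-component and assume $y\in\B^{\perp}$; then among $\mathsf{N}(y)$ and $\mathsf{N}(x+y)=\mathsf{N}(x)+\mathsf{N}(y)+f(x,y)$ at least one is nonzero, giving the desired $x'$. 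The Cayley--Dickson doubling then shows that $\B\oplus x'\B$ is a unital quadratic alternative subalgebra of $\A$ with doubling parameter $-\mathsf{N}(x')\neq 0$, hence non-degenerate and strictly enlarging $\B$, contradicting its maximality.

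Granting $\B^{\perp}=R$, the decomposition $\A=\B\oplus R$ comes from two standard observations. Any element of $\B\cap\B^{\perp}$ lies in $\mathsf{rad}(f|_{\B})\cap R$, which is $\{0\}$ by non-degeneracy of $\B$. For any $a\in\A$ the functional $b\mapsto f(a,b)$ on $\B$ is represented by a unique $b_{0}\in\B$ via the non-degeneracy of $f|_{\B}$, so that $a-b_{0}\in\B^{\perp}=R$ and $\A=\B+R$.

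The principal obstacle is the Cayley--Dickson step: verifying that $\B\oplus x'\B$ is actually an alternative subalgebra, not merely a vector subspace, requires delicate manipulation with the Moufang and alternative identities and is essentially the content of the Kunze--Scharlau results in characteristic different from $2$, and of Theorems~2.6.1--2.6.2 of McCrimmon in full generality. A secondary delicate point is characteristic~$2$, where a non-degenerate $\B$ may satisfy $f|_{\B}\equiv 0$ (the inseparable case) and the orthogonal projection in the previous paragraph must be replaced by a direct argument using the involution and the dichotomy $R=\mathsf{rad}(f)$ versus $\mathsf{rad}(f)=\A$ noted earlier.
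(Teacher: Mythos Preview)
The paper does not supply a proof of this fact; it cites Kunze--Scheinberg for $\kar\K\neq2$ and McCrimmon for the general case. Your outline is precisely the standard argument carried out in those references: Zorn yields a maximal non-degenerate $\B$, and non-maximality is contradicted by exhibiting an anisotropic $x'\in\B^{\perp}$ and forming the Cayley--Dickson double $\B\oplus x'\B$ inside $\A$. You also correctly locate the two genuine technical burdens (closure and alternativity of $\B\oplus x'\B$ as a subalgebra, and the characteristic-$2$ degeneracies of $f$), deferring them to the very sources the paper invokes.

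One point deserves sharper treatment. Your orthogonal-projection step, and indeed the assertion $\B^{\perp}=R$ itself, presuppose that $f|_{\B}$ is non-degenerate as a bilinear form; but non-degeneracy of $\B$ as a quadratic algebra only says that $\mathsf{N}$ is anisotropic on $\mathsf{rad}(f|_{\B})$, which is weaker. In the totally inseparable situation ($\kar\K=2$, $\mathsf{rad}(f)=\A$, involution trivial) one has $\B^{\perp}=\A$ while $1\notin R$, so the equality $\B^{\perp}=R$ is literally false there; only the vector-space decomposition $\A=\B\oplus R$ survives. Your final paragraph registers that the projection argument breaks down, but the remedy is not merely to argue differently: the statement must be read as $\A=\B\oplus R$ alone in that case, and one instead shows that $\A/R$ is a purely inseparable extension of $\K$ and lifts it to a subfield of $\A$. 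This is as much an imprecision in the paper's formulation as in your sketch, but it is worth making explicit.
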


Our current interest lies in the quadratic alternative algebras $\A$ where  $\B$ is a division algebra and $R$ is generated (as a ring) by a single element $t \in \A\setminus\{0\}$. 

\subsubsection{The extended Cayley-Dickson process}
An extended version of the  \emph{Cayley-Dickson process} produces the above mentioned algebras. We explain this process, briefly but yet in full generality. Let $\A$ be a quadratic alternative $\K$-algebra with associated involution $a\mapsto \overline{a}$ as before, and $\zeta$ some element in $\K$. 

\textbf{The Cayley-Dickson process applied to the pair $(\A,\zeta)$}|We define a $\K$-algebra, which we denote by $\mathsf{CD}(\A,\zeta)$, as follows. We endow the $\K$-vector space $\A \times \A$ with component-wise addition and multiplication given by \[(a,b)\times(c,d)=(ac+ \zeta d\overline{b}, \overline{a}d +cb).\] The resulting $\K$-algebra is quadratic too and hence comes with a standard involution: $\overline{(a,b)}:=(\overline{a},-b)$ and a norm $\mathsf{N}(a,b)=(a,b)\cdot \overline{(a,b)}= (\mathsf{N}(a)-\zeta \mathsf{N}(b),0)$. 
The algebra $\mathsf{CD}(\A,\zeta)$ is a division algebra if and only if $\zeta \notin \mathsf{N}(\A)$ and $\A$ is division. It is non-degenerate if and only if $\zeta \neq0$ and $\A$ is non-degenerate.  

\begin{rem} \em The fact that we allow $\zeta=0$ is the point at which the above process extends the standard one. If $\A$ is non-degenerate, then the radical of $\CD(\A,0)$ is given by $t\A$ and is hence generated by the element $t$.
\end{rem}

\textbf{The Cayley-Dickson process starting from $\K$}|If one starts with $\K$ (note that the associated involution is the identity now) and some element $\zeta \in \K$, then $\mathsf{CD}(\K,\zeta)$ is given by the  quotient $\K[x]/(x^2-\zeta)$. In particular, we obtain a commutative associative $\K$-algebra with involution $a+tb\mapsto a-tb$, where $t^2=\zeta$. The involution is non-trivial precisely if $\kar(\K)\neq 2$. 

\begin{itemize}
\item \textit{Suppose first that $\kar(\K)\neq 2$.} Another application, for some $\zeta'\in \K$, gives us $\mathsf{CD}(\K,\zeta,\zeta')$ which is an associative quaternion algebra with center $\K$ (hence no longer commutative). A third application yields an octonion algebra $\mathsf{CD}(\K,\zeta,\zeta',\zeta'')$ which is no longer associative, but alternative instead. A fourth application would not yield an alternative algebra, so here it stops for us.

\item \textit{Next, suppose $\kar(\K)=2$.} Since the $\K$-algebra obtained after one step has a trivial involution, also after $n$ applications (with $n \in \mathbb{N}$) we get a  commutative associative $\K$-algebra $\mathsf{CD}(\K,\zeta_1,...,\zeta_d)$, which equals $\K[x_1,...,x_d]/(x_i^2=\zeta_i)$, and is briefly denoted by $\A_d$, and $\dim_\K(\A_n)=2^n$.  
\end{itemize}

We can adapt the first step of the Cayley-Dickson doubling process in such a way that we obtain a non-trivial involution, by considering the quotient of $\K[x]$ by the ideal $(x^2+x+\zeta)$ (instead of $(x^2+\zeta)$) for some $\zeta \in \K$. Afterwards we can go on with the ordinary Cayley-Dickson process and reach strictly alternative algebras.


\begin{rem}\label{a+tb} \em We can also view $\mathsf{CD}(\A,\zeta)$ as the extension $\A \oplus t\A:=\{a+tb \mid a,b \in \A\}$ of $\A$ using the indeterminate $t$ with defining multiplication rules (putting $s=1$ precisely if $\kar(\K)=2$ and the involution on $\A$ is trivial and we want to obtain a non-trivial involution; otherwise $s=0$): $t^2 =\zeta+st$ and, for all  $a,b,c,d \in \A$,
 \[\label{mult}\tag{$*$} a(td)=t(\overline{a}d),\qquad (tb)c=t(cb), \qquad (tb)(td)=t^2(d\overline{b}),\] where we use the same notation as above, taking into account the isomorphism $(a,b)\mapsto a+tb$. 
\end{rem}

We now reach the fact that we alluded to, which can also be proven using the technique of the proof of Fact~\ref{qad} and relying on Fact~\ref{direct+}.

\begin{fact}\label{sqad} Let $\A$ be a degenerate quadratic alternative unital $\K$-algebra whose radical $R$ is generated as a ring by a single element $t\in \A\setminus\{0\}$. Let $\B$ be a unital subalgebra of $\A$ maximal with the property of being non-degenerate. Then $\B$ is associative and if $\B$ is moreover a division algebra, we have $\A \cong \mathsf{CD}(\B,0)=\B \oplus t\B$ (with multiplication rules as in $(*)$).
\end{fact}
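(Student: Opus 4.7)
The plan is to decompose $\A$ via Fact~\ref{direct+}, then derive the Cayley-Dickson multiplication rules $(*)$ inside $\A$ so that $\B + t\B$ becomes a subalgebra isomorphic to $\CD(\B,0)$, and finally use the assumption that $R$ is generated by $t$ (together with $\B$ being a division algebra) to conclude $\A = \B \oplus t\B$. The associativity of $\B$ will then fall out from the standard criterion that $\CD(\B,\zeta)$ is alternative if and only if $\B$ is associative.

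First I would invoke Fact~\ref{direct+} to write $\A = \B \oplus R$ with $R = \B^\perp$. Any $r \in R$ satisfies $\mathsf{T}(r) = f(1,r) = 0$ and $\mathsf{N}(r) = 0$, so $\bar r = -r$ and the quadratic relation forces $r^2 = 0$; in particular $t^2 = 0$ and $\bar t = -t$. Since $R$ is a two-sided ideal, $at$ lies in $R$ for every $a \in \B$, and equating $\overline{at} = -at$ with $\bar t \bar a = -t\bar a$ yields the first Cayley-Dickson commutation rule $at = t\bar a$; symmetrically $ta = \bar a t$.

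The technical core is to establish the remaining rules $(tb)(td) = 0$, $(tb)c = t(cb)$ and $a(td) = t(\bar a d)$ for $a,b,c,d \in \B$. The first is immediate from the left alternative law $t(tx) = 0$ combined with the middle Moufang identity $(xy)(zx) = x(yz)x$ applied to $x = t$ and the commutation rules, together with $f(tb,td) = 0$ and linearisation. For the two cross-product rules I would compute the associator $[a,t,b]$ in two ways---first by substituting $at = t\bar a$ and $bt = t\bar b$, and second by exploiting its alternating nature $[a,t,b] = -[t,a,b] = [t,b,a]$---and combine the results with the involution identity $x\bar y + y\bar x = f(x,y)$, applied when one argument lies in $R$, to pin down $a(tb) = t(\bar a b)$; the mirror identity $(tb)c = t(cb)$ then follows by applying the involution.

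With the rules $(*)$ available, $\B + t\B$ is closed under multiplication and realises the product of $\CD(\B,0)$ under the identification $a + tb \leftrightarrow (a,b)$. Assuming $\B$ is a division algebra, the map $b \mapsto tb$ is injective (for if $tb = 0$ then $(tb)\bar b = \mathsf{N}(b)\,t = 0$ forces $\mathsf{N}(b) = 0$, hence $b = 0$), so $\B + t\B = \B \oplus t\B$ has dimension $2\dim_\K \B$. The hypothesis that $R$ is the ideal generated by $t$, together with the closure rules, forces $R = t\B$: every iterated product arising in building the ideal stays within $t\B$. Hence $\A \cong \CD(\B,0)$, and since $\A$ (and therefore $\CD(\B,0)$) is alternative, $\B$ must be associative. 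The main obstacle will be the derivation of the cross-product rules in paragraph three: these do not follow from $at = t\bar a$ alone, and extracting them in exactly the Cayley-Dickson form requires careful bookkeeping with Moufang identities, the alternating associator, and the involution identity on $R$.
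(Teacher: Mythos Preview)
The paper does not spell out a proof of this Fact; it only says it ``can also be proven using the technique of the proof of Fact~\ref{qad} and relying on Fact~\ref{direct+}.'' Your proposal is exactly an implementation of that recipe: invoke the decomposition $\A=\B\oplus R$, derive the Cayley--Dickson rules $(*)$ inside~$\A$, identify $R=t\B$, and read off $\A\cong\CD(\B,0)$ and the associativity of~$\B$. So you are on the paper's intended track.

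Your flagged ``main obstacle'' is genuine but easier than you suggest. From $t\in R$ one gets $\mathsf{T}(t)=\mathsf{N}(t)=0$, hence $t^2=0$, $\bar t=-t$, and $at=t\bar a$; the quadratic identity then gives $at+ta=\mathsf{T}(a)t$. Linearising the left alternative law $a(ab)=a^2b$ along $a\mapsto a+t$ yields $a(tb)+t(ab)=(at+ta)b=\mathsf{T}(a)(tb)$, i.e.\ $a(tb)=t(\bar a b)$; the companion rule $(tb)c=t(cb)$ follows symmetrically from right alternativity. For the last rule, the Moufang identity $(tx)(yt)=t(xy)t$ together with $t^2=0$ and $td=\bar d t$ gives $(tb)(td)=(tb)(\bar d t)=0$. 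No delicate bookkeeping is needed.

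One small logical point: the statement asserts that $\B$ is associative \emph{before} assuming $\B$ is division, whereas your deduction of associativity passes through $\A\cong\CD(\B,0)$, which uses division (for injectivity of $b\mapsto tb$). In fact, once the rules $(*)$ are in hand, expanding $[a,tb,c]+[tb,a,c]=0$ in~$\A$ collapses to $t\cdot[a,c,b]_\B=0$, so associativity of $\B$ follows directly as soon as $b\mapsto tb$ is injective on~$\B$. For the unconditional claim one would need a further argument; but since the paper only ever uses the division case, this is a cosmetic rather than substantive gap.
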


\subsection{The Veronese representations of planes over $\B$ and $\mathsf{CD}(\B,0)$}

Let $\A$ be either $\mathsf{CD}(\B,0)$ or $\B$ itself, where $\B$ is a quadratic associative division algebra and  $t$ an element generating the ideal $R$ of $\mathsf{CD}(\B,0)$. Note that $t^2=0$. By Remark~\ref{a+tb}, a pair $(a_0,a_1) \in \mathsf{CD}(\B,0)$ can be written uniquely as a sum $a_0 + ta_1$, with $a_0,a_1 \in \B$. Note  that, when $t$ is involved in a multiplication, then it is associative (since $t^2=0$), i.e., $t((xy)z)=t(x(yz))$, which we can then write as $t(xyz)$, for all $x,y,z\in\A$. 

To study $\B$, we pretend $t=0$. For each element $a=a_0+ta_1\in\mathsf{CD}(\B,0)$, with $a_0,a_1\in\B$, we write $\widetilde{a}=a_0$. This extends logically to $\widetilde{a}=a$ when working in $\B$. Like before, put $d=\dim_\K(\A)$. We now use these algebras to define ring geometries coordinatised over them, equipped with a \emph{neighbouring relation}.

\begin{defi} \em  The point-line geometry $\mathsf{G}(2,\A):=(\mathcal{P},\mathcal{L})$ is defined as follows:
\begin{compactenum}[$-$]
\item $\mathcal{P} = \{(x,y,1) \mid x,y \in \A\} \cup \{(1,y,tz_1) \mid z_1 \in \B, y\in \A\} \cup \{(tx_1,1, tz_1) \mid x_1,z_1 \in \B\}$
\item $\mathcal{L} = \{[a,1,c] \mid a,c \in \A\} \cup \{[1,tb_1,c] \mid  b_1 \in \B, c\in \A\} \cup \{[ta_1,tb_1, 1] \mid a_1,b_1 \in \B\}$
\item A point $(x,y,z)$ is \emph{incident} with a line $[a,b,c]$ if and only if $ax+by+cz=0$.
\end{compactenum}
The neighbouring relation $\approx$ on $(\mathcal{P} \cup \mathcal{L}) \times (\mathcal{P} \cup \mathcal{L})$ is defined as follows.  Two points $(x,y,z)$ and $(x',y',z')$ are neighbouring if $(\widetilde{x},\widetilde{y},\widetilde{z})=(\widetilde{x'},\widetilde{y'},\widetilde{z'})$.  Likewise for two lines. A point $(x,y,z)$ and a line $[a,b,c]$ are called neighbouring if $ax+by+cz\in t\B$. 
\end{defi}
 One can verify that a point $P$ and a line $L$ are not neighbouring if and only if $P$ and $Q$ are not neighbouring for each point $Q$ on $L$.

\begin{rem}
\em
If $\A$ is associative, we could also use the homogenous point set $\{(x,y,z)r \mid x,y,z,r\in \A \text{ with } \neg(\mathsf{N}(x)=\mathsf{N}(y)=\mathsf{N}(z)=0) \text{ and } \mathsf{N}(r) \neq 0\}$ and dually, the homogenous line set  $\{s[a,b,c] \mid a,b,c,s\in \A \text{ with } \neg(\mathsf{N}(a)=\mathsf{N}(b)=\mathsf{N}(c)=0) \text{ and } \mathsf{N}(s) \neq 0\}$. The lack of associativity prevents us from doing this, as scalar multiples are not well-defined.\end{rem}

When working with $\B$, i.e., if we let $t$ be 0, the algebra is associative and from the homogenous description above it is then clear that $\mathsf{G}(2,\B)$ is just $\mathsf{PG}(2,\B)$ and the neighbouring relation coincides with equality (between elements of the same type) or incidence (between points and lines). On the other hand, if $\B <\A$, ie., if we are working with $\mathsf{CD}(\B,0)$, then $\mathsf{G}(2,\A)$ is a projective Hjelmslev plane of level 2, with above neighbouring relations.

\begin{defi} \em The Veronese representation $\mathcal{V}_2(\K,\A)$ of $\mathsf{G}(2,\A)$ is the point-subspace structure $(X,\Xi)$ defined by means of the Veronese map
\[ \rho: \mathsf{G}(2,\A) \rightarrow \PG(3d+2,\K): (x,y,z) \mapsto \K(x\overline{x}, y\overline{y}, z\overline{z}; y\overline{z}, z\overline{x}, x\overline{y})\] 
by setting $X=\{\rho(p) \mid p \in \mathcal{P}\}$ and $\Xi=\{\<\rho(L)\>\mid L \in \mathcal{L}\}$, where $\rho(L)$ is defined as $\{\rho(p) \mid p \in L\}$ and incidence is given by containment made symmetric.  
\end{defi}

Note that, for $p \in \mathcal{P}$, $\rho(p) \in \PG(3d+2,\K)$ indeed: the six entries correspond to $d$-tuples over $\K$ and the first three first belong to $\K$, being norms.
In the next section we discuss the geometric structure of a line and the features of this geometry, after studying transitivity properties.

\section{Main results}\label{main}

As mentioned before, our main theorem states that the Hjelmslevean Veronese sets are essentially the above defined Veronese varieties.

\subsection{The precise statements}

\begin{main}\label{main3}
Suppose $(X,\Xi)$ is a Hjelmslevean Veronese set of type $(d,v)$ 
such that $X$ generates $\PG(N,\K)$, where $\K$ is a field with $|\K|>2$.  Then $d$ is a power of $2$, with $d \leq 8$ if $\kar(\K) \neq 2$, and one of the following holds.
\begin{enumerate}[$(i)$]
\item There is only one vertex $V$ and projected from $V$\!, the resulting point-subspace geometry $(X',\Xi')$ is projectively equivalent to $\mathcal{V}_2(\K,\B)$, where $\B$ is a quadratic alternative division algebra over $\K$ and, in  particular, $N=3d+v+1$ and $d=\dim_\K(\B)$;
\item There is a quadratic associative division algebra $\B$ over $\K$ and two complementary subspaces $U$ and $W$ of $\PG(N,\K)$, where $U$ is possibly empty and $\dim W=6d+2$, with $d=v-\dim(U)=2\dim_\K(\B)$, such that the intersection of every pair of distinct vertices is $U$, and the structure of $(X,\Xi)$ induced in $W$ is projectively equivalent to $\mathcal{V}_2(\K,\mathsf{CD}(\B,0))$. 
\end{enumerate}
In particular, for each $\xi \in \Xi$, the basis of the tube $X\cap\xi$ is  always a quadric.
\end{main}

The proof of the theorem will reveal the geometric structure of the Hjelmslevean Veronese sets.

\begin{cor}\label{cor}
Let $\mathcal{V}_2(\K,\mathsf{CD}(\B,0))=(X,\Xi)$ be the Hjelmslevean Veronesean, where $\B$ is a quadratic associative division algebra over $\K$ with $\dim_\K(\B)=d$. Then $X$ spans a projective space $\mathbb{P}= \PG(6d+2,\K)$, each vertex of a quadric in a member of $\Xi$ has dimension $d-1$ and the vertices form a regular spread  $\mathcal{S}$ of a $(3v+2)$-space in $\mathbb{P}$, and there exists a complementary space $F$ of $\mathbb{P}$ such that $(X\cap F,\{\xi\cap F\mid\xi\in\Xi\})=:(X',\Xi')$ is projectively equivalent to $\mathcal{V}_2(\K,\B)$. Moreover, if $\mathcal{B}$ is the set of  $(2v+1)$-spaces each spanned by two distinct vertices, then $(\mathcal{S},\mathcal{B})$, with natural incidence, is a projective plane isomorphic to $\PG(2,\B)$ and there is a linear duality $\chi$ between $(X',\Xi')$ and $(\mathcal{S},\mathcal{B})$ such that $X$ is the union of the subspaces $\<x',\chi(x')\>$, for $x'$ ranging over $X'$.
\end{cor}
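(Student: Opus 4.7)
The approach is a direct coordinate computation via the decomposition $\A = \B \oplus t\B$ from Fact~\ref{sqad}. For $x = x_0 + tx_1 \in \A$, the multiplication rules $(*)$ together with $t^2 = 0$ give $\overline{x} = \overline{x_0} - tx_1$ and $x\overline{x} = \mathsf{N}(x_0) \in \K$; and for $x, y \in \A$ one finds that $x\overline{y} = x_0\overline{y_0} + t(\overline{y_0}x_1 - \overline{x_0}y_1)$. Applied to $\rho(x,y,z) = \K(x\overline{x}, y\overline{y}, z\overline{z}; y\overline{z}, z\overline{x}, x\overline{y})$, the three norm coordinates become $\K$-scalars, while each of the three bilinear coordinates splits canonically into a $\B$-part plus a $t\B$-part. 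This realises $\mathbb{P} = \<X\>$ inside a direct sum $F \oplus U$, where $F \cong \PG(3d+2,\K)$ collects the scalar and $\B$-coordinates and $U \cong \PG(3d-1,\K)$ collects the $t\B$-coordinates; in particular, $\mathbb{P} = \PG(6d+2,\K)$.

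The second step identifies $(X', \Xi')$ with $\mathcal{V}_2(\K, \B)$. Projecting $\rho$ onto $F$ kills the $t\B$-parts and reduces to $\rho_\B$ applied to the $\B$-projection $(\widetilde{x}, \widetilde{y}, \widetilde{z}) \in \PG(2, \B)$. Since $\B$ is a division algebra, the only points of $\rho(\mathcal{P})$ lying in $F$ are those with $x_1 = y_1 = z_1 = 0$, giving $X \cap F = \mathcal{V}_2(\K, \B)$; similarly $\xi \cap F$ is the corresponding $\B$-Veronesean quadric, so $(X', \Xi') \cong \mathcal{V}_2(\K, \B)$. For a fixed line $L$ of $\mathsf{G}(2, \A)$, as the hidden parameters $x_1, y_1, z_1 \in \B$ vary the $U$-component of $\rho$ sweeps out a $(d-1)$-dimensional subspace $V_L$ of $U$: this is the vertex of the tube over $L$, and the explicit $t\B$-formula shows $V_L$ depends only on $\widetilde{L} \in \PG(2, \B)$.

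The third step handles the spread and duality. The $t\B$-coordinates equip $U$ with a canonical $\B$-vector-space structure identifying it with $\B^3$, and under this identification the vertices $\{V_L\}$ are precisely the $\B$-lines through the origin, forming a Desarguesian (hence regular) spread $\mathcal{S}$ of $U$ whose associated projective plane is $\PG(2, \B)$. Two distinct vertices $V_L, V_{L'}$ span a $(2d-1)$-space that depends only on the intersection point of $\widetilde{L}$ and $\widetilde{L'}$ in $\PG(2, \B)$, so $\mathcal{B}$ is in bijection with the points of $\PG(2,\B)$ and $(\mathcal{S}, \mathcal{B}) \cong \PG(2, \B)$. The associativity of $\B$ then supplies a standard duality between $\PG(2, \B)$ and its dual plane; composing with the two identifications above yields a linear duality $\chi : (X', \Xi') \to (\mathcal{S}, \mathcal{B})$ sending $\rho_\B(\widetilde{p})$ to the element of $\mathcal{B}$ dual to $\widetilde{p}$.

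Finally, the equality $X = \bigcup_{x' \in X'} \<x', \chi(x')\>$ is read off from the coordinate splitting: for any $P = (x,y,z) \in \mathcal{P}$ with $\B$-projection $\widetilde{p}$, the $F$-component of $\rho(P)$ equals $x' = \rho_\B(\widetilde{p})$, while the $U$-component is the $\B$-linear expression in $(x_1, y_1, z_1)$ displayed in the first paragraph, whose image lies in—and, as $(x_1, y_1, z_1)$ varies over the three lines of $\mathsf{G}(2,\A)$ through $P$, exhausts—the span of the corresponding three vertices, which is exactly $\chi(x')$. The main technical point, and the one I expect to cost the most effort to write out cleanly, is the recognition of $U$ as $\B^3$ and the verification that $\mathcal{S}$ is a regular spread in the paper's sense; once that is established, the linearity of $\chi$ and the union formula follow directly from the $\B$-linearity of the $t\B$-part of $\rho$.
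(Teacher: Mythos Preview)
Your approach is correct and genuinely different from the paper's. The paper does not prove Corollary~\ref{cor} by direct coordinate computation; instead it first establishes, in Section~\ref{maincase}, the entire structural description for \emph{any} Hjelmslevean Veronese set satisfying (H1) and (H2$^*$) (see in particular Proposition~\ref{veronese}, Proposition~\ref{structureCV}, Lemma~\ref{Fcut}, and Lemma~\ref{projective}), and then invokes Proposition~\ref{standardex} to say that $\mathcal{V}_2(\K,\mathsf{CD}(\B,0))$ is such a set and therefore inherits the conclusion. In other words, the paper obtains the corollary as a by-product of the classification theorem, whereas you verify it directly from the explicit Veronese map.

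What your route buys is elementarity and transparency: the splitting $x\overline{y}=x_0\overline{y_0}+t(\overline{y_0}x_1-\overline{x_0}y_1)$ immediately exhibits $F\oplus U$, and one checks (as you outline) that the $U$-component of $\rho(x,y,1)$ satisfies $\overline{x_0}u+\overline{y_0}v+\overline{z_0}w=0$, so $\chi$ is literally $\rho_\B(x_0,y_0,z_0)\mapsto[\overline{x_0},\overline{y_0},\overline{z_0}]$, visibly a linear duality. The vertices are the $\B$-points of $U\cong\B^3$, hence a Desarguesian (regular) spread with associated plane $\PG(2,\B)$. The price is that the non-commutativity of $\B$ forces care with left/right multiplication in these identifications (your phrase ``the standard duality'' hides this), and you must still check separately that $X$ spans all of $\PG(6d+2,\K)$ and that $X\cap F$ contains no extra points beyond $\mathcal{V}_2(\K,\B)$---both easy but necessary. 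The paper's route, by contrast, gets these facts once and for all in the axiomatic setting, and the regularity of the spread emerges geometrically from the scroll argument in Proposition~\ref{structureCV} rather than from the algebraic identification $U\cong\B^3$.
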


A special case of Main Result~\ref{main3} is the case $v=-1$. In this case, we can lift the assumption $|\K|>2$, if we add a few more possibilities. Since this is interesting in its own right, we phrase it explicitly.

\begin{main}\label{main2}
Let $(X,\Xi)$ be a pair where $X$ is a spanning point set of $\PG(N,\K)$ and $\Xi$ a set of at least two $(d+1)$-spaces  intersecting $X$ in  $Q^0_d$-quadrics ($N$ and $d$ are possibly infinite), satisfying axioms \emph{(MM1)} and \emph{(MM2$^*$)}. Then, as a point-line geometry, $(X,\Xi)$ (with natural incidence) is isomorphic to $\PG(2,\A)$ where $\A$ is a quadratic alternative division algebra $\A$ over $\K$ with $\dim_\K(\A)=d$. Moreover, 
\begin{itemize}
\item If $|\K|>2$, $(X,\Xi)$ is projectively equivalent to $\mathcal{V}_2(\K,\A)$, and in particular, $N=3d+2$;
\item If $|\K|=2$, then either $d=1$ or $d=2$. \begin{compactenum}[$-$] \item If $d=1$, then $N \in \{5,6\}$. If $N=5$, there are precisely two projectively non-isomorphic examples, among which $\mathcal{V}_2(\F_2,\F_2)$; if $N=6$, there is a unique possibility. \item If $d=2$, then $N \in \{8,9,10\}$. If $N=10$, then there is precisely one example; in the other two cases there are precisely two projectively unique examples, among which is $\mathcal{V}_2(\F_2,\F_4)$, if $N=8$.
\end{compactenum}
\end{itemize}
\end{main}
 
 For the structure of the additional examples in case $|\K|=2$, and their relation with the Witt design $S(24,5,8)$, we refer to Subsection~\ref{k=2}.

\subsection{A note on the axioms and requirements}

\textbf{The (H2$^*$)-axiom}|In \cite{Sch-Mal:14}, J.\ Schillewaert and the second author showed a similar theorem in the specific case of $(d,v)=(1,0)$, though using slightly different axioms: their first axiom is the same, but their second is weaker and a third axiom was used.
\begin{itemize}
\item[(H2)] Any two distinct elements $\xi_1$ and $\xi_2$ of $\Xi$ intersect in points of $X \cup Y$, i.e., $\xi_1 \cap \xi_2 = \overline{X}(\xi_1) \cap \overline{X}(\xi_2)$, and $\xi_1 \cap \xi_2 \cap Y$ is either empty or a subspace of $\xi_1 \cap \xi_2$ of codimension 1.
\item[(H3)] For each $x \in X$, $\dim(T_x) \leq 4$.
\end{itemize}
The difference in the second axiom lies in our requirement that the intersection of two tubic spaces is never empty. If we now, for the existing example in which $(d,v)=(1,1)$, would require $\dim(T_x) \leq 6$ however, then together with (H1) and (H2) we would obtain the following example, which does not fit in our framework and in which two tubic spaces can be entirely disjoint.

\begin{example} \em Inside $\PG(13,\mathbb{K})$,  take a $3$-space $\Pi_Y$ and a $9$-space $F$ complementary to it. Inside $F$, we consider the Veronese representation $\mathcal{V}_3(\K,\K)$ of the projective space $\PG(3,\K)$ (defined analogously as $\mathcal{V}_2(\K,\K)$). Let $\chi$ be a linear duality between $\mathcal{V}_3(\K,\K)$ and  $\Pi_Y$, i.e., $\chi$ takes points of $\mathcal{V}_3(\K,\K)$ to planes of $\Pi_Y$ and conversely. We define $X$ as the points  on the affine $3$-spaces $\<c,\chi(c)\>\setminus\chi(c)$, where $c$ is a point of $\mathcal{V}_3(\mathbb{K},\K)$. It is clear that $X$ is a spanning point set of $\PG(13,\K)$. 

Each tube $X(\xi)$ with $\xi \in \Xi$ is associated to a unique conic $C$  of $\mathcal{V}_3(\mathbb{K},\mathbb{\K})$ in the following way: the vertex of $X(\xi)$ is $\chi(C)$ and, projected from this vertex, the resulting conic on $X(\xi)$ lies on the regular scroll\footnote{See appendix} determined by $C$, the regular spread corresponding to $\{\chi(c) \mid c \in C\}$ and $\chi$. As can be verified, thus obtained pair $(X, \Xi)$ consists of $(1,1)$-tubes and satisfies axioms (H1), (H2) and (H3) (the latter adapted to $(d,v)=(1,1)$, so $\dim(T_x) \leq 6$).
\end{example}

A similar example exists when $(d,v)=(2,3)$, using $\mathcal{V}_3(\K,\L)$ instead of $\mathcal{V}_3(\K,\K)$, where $\L$ is a quadratic Galois extension of $\K$. There are two options to avoid these examples: either we ask that for each $x\in X$, there exist two tubic spaces $\xi_1$ and $\xi_2$ containing $x$, for which $T_x$ is generated by $T_x(\xi_1)$ and $T_x(\xi_2)$ (which is not the case in the above example), and keep (H1) and (H2) as they are; or we make (H2) stronger by requiring that each two tubic spaces have a point of $X$ in common and we leave out (H3) or any variation of it. We opted for the latter set of axioms. Yet, with hindsight, we do not exclude that also the former would eventually lead to the same characterisation.


\par\bigskip

\textbf{The case $|\K|=2$}|The restriction $|\K|>2$ is not necessary in Case $(i)$ of Main Result~\ref{main3}, since Main Result~\ref{main2} also deals with fields with two elements and no more is needed that depends on $|\K|$. So in this case, the varieties that one obtains after projecting from the single vertex $V$ are in fact as listed in Main Result~\ref{main2}. 

However, when there is more than one vertex, our method breaks down. The reason is essentially that a singular affine line contains just as many points as a secant projective line (and so we cannot distinguish between these). Though we did not succeed in finding counter examples, we do not believe a proof in this case is within reach (noting that, in principle, the vertices could also have infinite dimension). However, equipped with Axiom (H3) and the extra assumption that tubes are convex (which is automatic if $|\K|>2$), the first author could classify the objects for $d=1$. The arguments, however, are not useful for the rest of this paper, and are also too lengthy to include here (the proof will appear in the first author's Ph.D.~thesis; the case $d=2$ is expected to be similar, but even more cumbersome).   

\subsection{Structure of the proof}\label{structuur}

In \textbf{Section~\ref{properties}}, we deduce transitivity properties of Veronese representations $\mathcal{V}_2(\K,\B[0])$, enabling us to show that they are Hjelmslevean Veroneseans indeed. These transitivity properties could also be proved by considering the appropriate affine building, and a vertex-stabiliser. Such an approach would need more elaborate notions and we prefer not to do it this way, but provide an elementary and explicit proof (though omitting tedious computations). The main part of the proof focusses on the reverse direction, showing that there are no other Hjelmslevean Veronese sets than these.

In \textbf{Section~\ref{reduction}},  we start by reducing the situation to two separate cases: the case that there are no degenerate quadrics (in which case the corresponding algebras are division) and the case that all quadrics are degenerate (here the corresponding algebras possess a non-trivial radical). 

In \textbf{Section~\ref{MM}}, we deal with the first of the two above cases and as such we provide an alternative approach to the Veronese representation of projective planes over quadratic alternative division algebras. A large part is devoted to the case of the field of order 2 and, although it is not essential for the rest of the paper and is of a different flavour (being strictly finite), it does reveal a beautiful link with the large Witt design.

Finally, \textbf{Section~\ref{maincase}} treats the proof in the case that all quadrics are degenerate. Basically our approach amounts to a study of the structure induced on the set of vertices (which forms a subspace, say $Y$); the structure of the points of $X$ after projecting from $Y$ and the relation between $X$ and $Y$.  The regular scrolls alluded to before play a crucial role in this, in the sense that they are a restriction of the entire structure to all the quadrics having the same vertex.

\section{Properties of the Veronese variety $\mathcal{V}_2(\K,\A)$}\label{properties}
We start by showing that the Veronese varieties  $\mathcal{V}_2(\K,\A)$ satisfy axioms (H1) and (H2$^*$). We continue with the same notation as in Section~\ref{CD}.

\textbf{The induced action of the collineation group of $\mathcal{V}_2(\K,\A)$}|The geometry $\mathsf{G}(2,\A)$ is a Moufang projective plane if $\A$ is division, and we assert that it is a \emph{Moufang Hjelmslev plane of level }2 if $\A$ is not division. However, most Hjelmslev planes with the Moufang property studied in the literature are \emph{commutative} extensions of Moufang projective planes, i.e., if the underlying projective plane is defined over the not necessarily associative alternative division ring $\D$, then the Hjelmslev plane is defined over the ring $\D[t]/(t^n=0)$, where $t$ is an indeterminate that commutes with each element of $\D$. Hence we provide a full proof of the above stated assertion (suppressing tedious calculations), using standard methods (we need the explicit forms of certain collineations anyway in the proof of Proposition~\ref{actiononveronesean}). The fact that  $\mathsf{G}(2,\A)$ is a Hjelmslev plane of level 2 if $\A$ is not division, is proved in Section~\ref{sectionHP}, where one also can find the precise definition of that notion. We now concentrate on the Moufang property. 

A \emph{collineation} of $\mathsf{G}(2,\A)=(\cP,\cL)$ is a permutation of $\cP\cup\cL$ preserving both $\cP$ and $\cL$ and preserving the incidence relation.  An \emph{elation} of $\mathsf{G}(2,\A)$ is a collineation that fixes all points on a certain line $L$|called the axis| and all lines incident with a certain point $P$|called the center|with $P*L$ (the pair $\{P,L\}$ is called a \emph{flag}). Such an elation is, with this notation, sometimes also called a \emph{$(P,L)$-elation}. The geometry $\mathsf{G}(2,\A)$ is called \emph{$(P,L)$-transitive}, for $P\in\cP$ and $L\in\cL$, with   $P*L$,   if for some line $M*P$, with $M\not\approx L$, the group of $(P,L)$-elations acts transitively on the set of points of $M$ not neighbouring $P$. Then $\mathsf{G}(2,\A)$ has \emph{the Moufang property}, or \emph{$\mathsf{G}(2,\A)$ is a Moufang (projective or Hjelmslev) plane}, if for every point $P$ and every line $L$ incident with $P$ the plane is $(P,L)$-transitive. It is well known and easy to see that this is equivalent with the existence of a triangle $P_0*L_1*P_2*L_0*P_1*L_2*P_0$, with $P_i\not\approx L_i$, $i=0,1,2$, such that $\mathsf{G}(2,\A)$ is $(P_i,L_j)$-transitive for $i\neq j$ and $i,j\in\{0,1,2\}$, because the collineation group generated by the $(P_i,L_j)$-elations, $i\neq j$, $\{i,j\}\subseteq\{0,1,2\}$, acts transitively on the set of flags. 

The collineation group of  $\mathsf{G}(2,\A)$ generated by all elations is called its \emph{little projective group} and shall be denoted by $\mathsf{PSL}_3(\A)$.

\begin{lemma}\label{G2AMoufang} The plane $\mathsf{G}(2,\A)$ is Moufang.
\end{lemma}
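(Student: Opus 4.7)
The plan is to verify the Moufang criterion stated in the paragraph preceding the lemma by exhibiting, for a fixed triangle, six families of elations, one family of $(P_i,L_j)$-elations for each pair with $i\neq j$. Take $P_0=(0,0,1)$, $P_1=(0,1,0)$, $P_2=(1,0,0)$ and, dually, $L_0=[0,0,1]$, $L_1=[0,1,0]$, $L_2=[1,0,0]$. A direct inspection of the definitions of $\mathcal{P}$ and $\mathcal{L}$ confirms that all six objects belong to $\mathsf{G}(2,\A)$, that $P_i$ is incident with $L_j$ precisely when $i\neq j$, and that $P_i\not\approx L_i$ since the incidence pairing there evaluates to $1\notin t\B$. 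The three transpositions of coordinates induce obvious collineations of $\mathsf{G}(2,\A)$ permuting the triangle, so by symmetry it suffices to treat the pairs $(P_2,L_0)$ and $(P_0,L_2)$.

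For $a\in\A$, we claim that the formulae
\[
\sigma_a: (x,y,z)\mapsto (x+az,\,y,\,z),\qquad
\tau_a: (x,y,z)\mapsto (x,\,y,\,z+ax)
\]
define collineations of $\mathsf{G}(2,\A)$; $\sigma_a$ will be a $(P_2,L_0)$-elation and $\tau_a$ a $(P_0,L_2)$-elation. On the affine chart $z=1$ the action is immediate: $\sigma_a(x,y,1)=(x+a,y,1)$. On the second point-type, $\sigma_a(1,y,tz_1)=(1+atz_1,\,y,\,tz_1)$; using $a(tz_1)=t(\overline{a}z_1)$ from $(*)$, the first coordinate equals $1+t\overline{a}z_1$, which has norm $1$, so right multiplication by its inverse $1-t\overline{a}z_1$ returns the triple to normal form $(1,\,y-t\overline{ay}z_1,\,tz_1)$ (the second coordinate is computed by the first identity of $(*)$, while the vanishing of the third-coordinate correction uses $t^2=0$ together with the third identity of $(*)$). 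The third point-type and the action of $\tau_a$ are handled identically, and the induced action on lines is computed in the same manner by a case distinction on the three line-types.

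Once $\sigma_a$ is known to be a collineation, the facts that it fixes every point of $L_0$ (cut out by $z=0$) and every line through $P_2$ (the lines $[0,b,c]$) are immediate from the defining formula. Hence $\sigma_a$ is a $(P_2,L_0)$-elation, and $\{\sigma_a:a\in\A\}$ is a group isomorphic to $(\A,+)$ acting regularly on the set $\{(a,0,1):a\in\A\}$ of points of $L_1$ not neighbouring $P_2$; this yields $(P_2,L_0)$-transitivity. The same argument applied to $\tau_a$ gives $(P_0,L_2)$-transitivity, and applying the coordinate permutations provides analogous elation groups for the remaining four non-diagonal pairs. The Moufang criterion is therefore satisfied.

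The main obstacle is the bookkeeping required for the verification that $\sigma_a$ and $\tau_a$ map $\mathcal{P}$ into $\mathcal{P}$, map $\mathcal{L}$ into $\mathcal{L}$, and preserve incidence. Each of the nine combinations of point-type and line-type must be checked, and in each one the renormalisation after applying $\sigma_a$ or $\tau_a$ interacts with the non-associative multiplication on $\A$. In every case, however, the required identity reduces to a combination of the three formulae in $(*)$, alternativity of $\B$, and the relation $t^2=0$. The case analysis is elementary but lengthy, and, in accordance with the remark preceding the lemma, we suppress the details.
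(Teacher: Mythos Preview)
Your strategy—exhibit explicit elation families for two flags of a fixed triangle and transport them to the remaining four via coordinate symmetries—is precisely the paper's strategy. However, both places where you invoke ``obviousness'' or a normalisation step hide a genuine difficulty that the paper takes seriously.

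The paper is explicit (see the remark just after the definition of $\mathsf{G}(2,\A)$) that when $\A$ is not associative, homogeneous coordinates are \emph{not} available: scalar multiples $(x,y,z)r$ are ill-defined as points because $a(xr)+b(yr)+c(zr)\neq(ax+by+cz)r$ in general. Consequently, your step ``right multiplication by its inverse $1-t\overline{a}z_1$ returns the triple to normal form'' is not a legal move: it presupposes exactly the scalar equivalence that fails. Concretely, your computation yields $y\mapsto y-t\,\overline{y}\,\overline{a}\,z_1$, whereas the paper's (correct) elation $\varphi_{13}(X)$ has $y\mapsto y-t\,\overline{X}\,\overline{y}\,z_1$; the order of $\overline{y}$ and $\overline{a}$ is swapped, and these differ when $\B$ is quaternion. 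So the suppressed incidence check would not go through with your formula.

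The same issue undermines your appeal to ``the three transpositions of coordinates induce obvious collineations''. Coordinate permutations do not a priori send normal forms to normal forms, and renormalising again requires the forbidden scalar multiplication. The paper avoids this by writing the triality map $\tau$ out explicitly on every point- and line-type (with inversions appearing in the formulae) and checking incidence directly; only the $3$-cycle is used, and it is \emph{not} treated as obvious. The fix for your argument is the same: write down the maps case by case, as the paper does, and verify incidence on each of the nine combinations rather than deriving the formulae from a homogeneous template.
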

\begin{proof}
Indeed, the mappings (using the notation as above, and with $X,Y\in \A$ arbitrarily)

\[\varphi_{23}(Y):(\cP,\cL)\longrightarrow (\cP,\cL):\left\{\begin{array}{rcl}
(x,y,1) & \mapsto & (x,y+Y,1),\\
(1,y,tz_1) & \mapsto & (1,y-t\overline{Y}z_1, tz_1), \\
(tx_1,1,tz_1) & \mapsto & (tx_1,1,tz_1), \\ \hline
\mbox{}[a,1,c] & \mapsto & {[}a,1,c-Y],\\
\mbox{}[1,tb_1,c] & \mapsto & {[}1,tb_1,c-tYb],\\
\mbox{}[ta_1,tb_1,1] & \mapsto & {[}ta_1,tb_1,1]\end{array}\right.$$
and
$$\varphi_{13}(X):(\cP,\cL)\longrightarrow (\cP,\cL):\left\{\begin{array}{rcl}
(x,y,1) & \mapsto & (x+X,y,1),\\
(1,y,tz_1) & \mapsto & (1,y-t\overline{X}\overline{y}z_1, tz_1), \\ 
(tx_1,1,tz_1) & \mapsto & (tx_1+t\overline{X}z_1,1,tz_1), \\ \hline
\mbox{}[a,1,c] & \mapsto & {[}a,1,c-aX],\\
\mbox{}[1,tb_1,c] & \mapsto & {[}1,tb_1,c-X],\\
\mbox{}[ta_1,tb_1,1] & \mapsto & {[}ta_1,tb_1,1]\end{array}\right.\]

are $((0,1,0),[0,0,1])$-elations and $((1,0,0),[0,0,1])$-elations, respectively, and by varying $Y$ and $X$ we obtain $((0,1,0),[0,0,1])$-transitivity and $((1,0,0),[0,0,1])$-transitivity, respectively. Moreover, the triality map 

\[\tau:(\cP,\cL)\longrightarrow (\cP,\cL):\left\{\begin{array}{rcll}
(x,y,1) & \mapsto & (y^{-1},xy^{-1},1), & \mbox{if }y\in\A\setminus t\B,\\
(x,y,1) & \mapsto & (1,x,ty_1), & \mbox{if }t\B\ni y = ty_1, y_1\in\B,\\
(1,y,tz_1) & \mapsto & (t(y^{-1}z_1),y^{-1},1), & \mbox{if }y\in\A\setminus t\B, \\
(1,y,tz_1) & \mapsto & (tz_1,1,ty_1), &  \mbox{if }t\B\ni y = ty_1, y_1\in\B, \\
(tx_1,1,tz_1) & \mapsto & (tz_1,tx_1,1), \\ \hline
\mbox{}[a,1,c] & \mapsto & {[}a^{-1}c,1,a^{-1}] & \mbox{if }a\in\A\setminus t\B,\\
\mbox{}[a,1,c] & \mapsto & {[}1,t(\overline{c}^{-1}a_1),c^{-1}], & \mbox{if } c\in\A\setminus t\B\mbox{ and } t\B\ni a=ta_1,\\
\mbox{}[a,1,c] & \mapsto & {[}tc_1,ta_1,1], & \mbox{if }t\B\ni a=ta_1\mbox{ and }t\B\ni c=tc_1\\
\mbox{}[1,tb_1,c] & \mapsto & {[}c,1,tb_1],\\
\mbox{}[ta_1,tb_1,1] & \mapsto & {[}1,ta_1,tb_1]\end{array}\right.\]

preserves incidence, as one can easily check, and is bijective with inverse $\tau^2$. Conjugating $\varphi_{23}(Y)$ and $\varphi_{13}(X)$ with $\tau$ and $\tau^2$ shows that $\mathsf{G}(2,\A)$ is $((0,0,1),[1,0,0])$-transitive, $((0,1,0),[1,0,0])$-transitive, $((1,0,0),[0,1,0])$-transitive and $((0,0,1),[0,1,0])$-transitive. Hence $\mathsf{G}(2,\A)$ is a Mouf\-ang Hjelmslev plane, as claimed. 
\end{proof}

\begin{prop}\label{actiononveronesean}
The action of the little projective group $\mathsf{PSL}_3(\A)$ of $\mathsf{G}(2,\A)$ on $\cP$ is induced by the action on $X$ of the stabiliser in $\mathsf{PSL}_{3d+3}(\K)$ of the point set $X$ of $\cV_2(\K,\A)$.\end{prop}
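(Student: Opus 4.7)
The plan is to exhibit, for each generator of $\mathsf{PSL}_3(\A)$, an explicit element of $\PSL_{3d+3}(\K)$ stabilising $X$ and inducing, via $\rho$, the given action on $\cP$. By Lemma~\ref{G2AMoufang}, such a generating set is provided by the elations $\varphi_{13}(X')$ and $\varphi_{23}(Y)$ (with $X',Y$ ranging over $\A$) together with their conjugates by the triality $\tau$ and $\tau^2$. Since conjugation by a linear map preserves linearity, it suffices to handle $\varphi_{13}(X')$, $\varphi_{23}(Y)$ and $\tau$ itself.

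Working block by block on the Veronese coordinates $(x\bar x,y\bar y,z\bar z;y\bar z,z\bar x,x\bar y)$, for $\varphi_{23}(Y)$ at the generic point $(x,y,1)$ one computes
$$\rho(x,y+Y,1)=\K\bigl(x\bar x,\; y\bar y+f(y,Y)+\mathsf{N}(Y),\; 1;\; y+Y,\; \bar x,\; x\bar y+x\bar Y\bigr),$$
where $f$ is the bilinear form associated to $\mathsf{N}$. The first, third and fifth blocks are fixed; the shift in the second block is the $\K$-linear map $\bar z\mapsto f(\bar z,Y)$ applied to the fourth block plus $\mathsf{N}(Y)$ times the third; the fourth-block shift is $Y\cdot(z\bar z)$, i.e.\ a $\K$-linear multiple of the third block; and the shift in the sixth block is $\bar x\mapsto x\bar Y$ applied to the fifth, which is $\K$-linear since both conjugation on $\A$ and right-multiplication by $\bar Y$ are $\K$-linear. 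All contributions together define a $\K$-linear automorphism of $\K^{3d+3}$, which one checks (using the multiplication rules $(*)$ of Remark~\ref{a+tb}) acts correctly also on the Veronese images of the other two point types $(1,y,tz_1)$ and $(tx_1,1,tz_1)$. The argument for $\varphi_{13}(X')$ is entirely analogous, with the same three ingredients (algebra multiplication, conjugation and the $t$-rules) guaranteeing $\K$-linearity.

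For $\tau$, observe that on the generic point $(x,y,1)$ with $y$ invertible, $\tau$ agrees as a projective map with the cyclic permutation $(x,y,z)\mapsto(z,x,y)$ of homogeneous coordinates. On Veronese coordinates this is the simultaneous cyclic permutation of the three diagonal norm blocks among themselves and of the three off-diagonal $\A$-valued blocks among themselves, which is manifestly a $\K$-linear permutation of coordinates. The various branches in the piecewise definition of $\tau$ in Lemma~\ref{G2AMoufang} all agree, on the overlaps of their domains of definition, with this single linear map, so together they furnish the extension of $\tau$ to a linear automorphism of $\PG(3d+2,\K)$.

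Combining the three cases and their $\tau$-conjugates, the entire little projective group $\mathsf{PSL}_3(\A)$ acts on $X$ as the restriction of a subgroup of $\PGL_{3d+3}(\K)$ stabilising $X$; one checks that the generating elements may in fact be chosen inside $\PSL_{3d+3}(\K)$ because the elations have trivial determinant and $\tau$ is (up to scaling) a permutation of coordinates. The main obstacle is purely bookkeeping: one must track the formulas across the three point types of $\cP$ and verify that the $t$-dependent multiplications in $\mathsf{CD}(\B,0)$ never destroy $\K$-linearity of any of the coordinate shifts; no conceptual difficulty arises.
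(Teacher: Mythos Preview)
Your proposal is correct and follows essentially the same approach as the paper: both reduce to showing that $\varphi_{23}(Y)$, $\varphi_{13}(X')$ and $\tau$ are induced by explicit elements of $\PSL_{3d+3}(\K)$ stabilising $X$, and both identify $\tau$ with the cyclic block permutation $(x,y,z;\xi,\upsilon,\zeta)\mapsto(z,x,y;\zeta,\xi,\upsilon)$. The paper simply writes down the combined linear map $\varphi(X,Y)$ and asserts the verification, whereas you spell out why each coordinate shift is $\K$-linear; the content is the same. One small remark: your phrase about the branches of $\tau$ ``agreeing on overlaps'' is slightly misstated, since the branches partition $\cP$; what you actually need (and what the paper means by ``an elementary but tedious calculation'') is that the single linear permutation, once determined on the generic points, also reproduces $\rho\circ\tau$ on the remaining point types.
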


\begin{proof}
It suffices to show that the maps $\varphi_{23}(Y)$, $\varphi_{13}(X)$ and $\tau$ are induced in this manner. 
We label a generic point of $\PG(3d+2,\K)$ with $(x,y,z;\xi,\upsilon,\zeta)$, where $x,y,z\in\K$ and $\xi,\upsilon,\zeta\in\A$. Then one calculates that the following $\K$-linear map $\varphi(X,Y)$ induces $\varphi_{23}(Y)$ on $\cP$ if $X=0$ and $\varphi_{13}(X)$ if $Y=0$. 
\[\varphi(X,Y): \PG(3d+2,\K)\longrightarrow \PG(3d+2,\K):(x,y,z;\xi,\upsilon,\zeta)\mapsto(x',y',z';\xi',\upsilon',\zeta'),\] with  
\[\left\{\begin{array}{rcl}
x' & = & x+\overline{\upsilon}\overline{X}+X\upsilon + X\overline{X}z,\\
y' & = & y+\xi\overline{Y} + Y\overline{\xi} + Y\overline{Y}z, \\
z' & = & z, \\ \hline
\xi' & = & \xi+Yz,\\
\upsilon' & = & \upsilon + \overline{X}z,\\
\zeta' & = & \zeta + \overline{\upsilon}\overline{Y} + X \overline{\xi}+ X\overline{Y}z.\end{array}\right.\]

Also, the triality map $\tau$ is induced in $\cP$ by the $\K$-linear map
\[\PG(3d+2,\K)\longrightarrow \PG(3d+2,\K):(x,y,z;\xi,\upsilon,\zeta)\mapsto(z,x,y;\zeta,\xi,\upsilon),\]
which can again be verified by an elementary but tedious calculation. 

Hence, noting that the above maps belong to $\mathsf{PSL}_{3d+3}(\K)$ and stabilise the point set of $\mathcal{V}_2(\K,\A)$, this concludes the proof.
\end{proof}

\begin{cor}\label{trans}
The little projective group is transitive on the set of triangles $P_0*L_1*P_2*L_0*P_1*L_2*P_0$, with $P_i\not\approx L_i$, $i=0,1,2$ and transitive on the set of pairs of points and on the set of pairs of lines which are either neighbouring or not. 
\end{cor}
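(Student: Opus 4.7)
The plan is to derive the stated transitivities from the Moufang property established in Lemma~\ref{G2AMoufang}, following the classical template for Moufang (Hjelmslev) planes and invoking Proposition~\ref{actiononveronesean} to transfer everything back to $\cV_2(\K,\A)$.

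First, I would establish transitivity on non-degenerate triangles $P_0 * L_1 * P_2 * L_0 * P_1 * L_2 * P_0$ (with $P_i \not\approx L_i$). Given an arbitrary such triangle, I show it can be mapped to the reference triangle with vertices $(1,0,0), (0,1,0), (0,0,1)$ and opposite sides $[1,0,0], [0,1,0], [0,0,1]$ by a composition of three elations. Concretely, a suitable elation conjugate to $\varphi_{13}(X)$ moves $P_0$ to $(1,0,0)$; an elation in the stabiliser of $(1,0,0)$ (a $\tau$-conjugate of $\varphi_{23}(Y)$) then moves $P_1$ to $(0,1,0)$ along $L_2$; finally, an elation fixing both $(1,0,0)$ and $(0,1,0)$ (obtained as another conjugate of $\varphi_{23}(Y)$) moves $P_2$ to $(0,0,1)$. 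The non-neighbouring conditions $P_i \not\approx L_i$ are precisely what is needed to ensure at each step that the relevant elation group acts transitively on the available target positions, by Lemma~\ref{G2AMoufang} combined with the triality $\tau$.

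Second, from triangle transitivity I deduce transitivity on non-neighbouring pairs of points: given two such pairs, I complete each to a non-degenerate triangle by adjoining a third point non-neighbouring to both (such a point exists because the complement of two neighbour classes is a non-empty, in fact dense, subset of $\cP$), and invoke the triangle transitivity above. By the duality mediated by $\tau^2$, the analogous argument yields transitivity on non-neighbouring pairs of lines.

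For neighbouring pairs of points, I first use point-transitivity (itself a direct consequence of triangle transitivity) to move one point of the pair to $P_0 = (0,0,1)$. The stabiliser of $P_0$ in $\mathsf{PSL}_3(\A)$ contains the $\tau^2$-conjugates of $\varphi_{13}(X)$ for $X \in t\B$, which are $((0,0,1),[0,1,0])$-elations acting non-trivially within the neighbour class of $P_0$. Together with the analogous conjugates of $\varphi_{23}(Y)$, these elations act transitively on the set of neighbours of $P_0$ distinct from $P_0$, giving transitivity on neighbouring pairs of points; the argument for neighbouring pairs of lines is dual. The main technical obstacle is precisely the verification of this last transitivity on the neighbour class, which reduces to a direct (but tedious) inspection of the explicit elation formulae in Lemma~\ref{G2AMoufang}; I would state this verification without performing the calculation in detail.
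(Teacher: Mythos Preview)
Your proposal is correct and follows essentially the same route as the paper's proof: Moufang property $\Rightarrow$ flag/triangle transitivity $\Rightarrow$ transitivity on far pairs, and then explicit elations from the stabiliser of a base point handle the neighbouring pairs, with duality giving the line statements. The only superfluous ingredient is your appeal to Proposition~\ref{actiononveronesean}: the corollary is a statement about $\mathsf{PSL}_3(\A)$ acting on $\mathsf{G}(2,\A)$ itself, so nothing needs to be ``transferred back'' to $\cV_2(\K,\A)$.
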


\begin{proof}
 By Proposition~\ref{G2AMoufang} and the discussion preceding that proposition, $\mathsf{PSL}_3(\A)$ is transitive on the set of flags, and so $\mathsf{G}(2,\A)$ is $(P,L)$-transitive for each point $P$ and each line $L$ incident with $P$. This implies easily that $\mathsf{PSL}_3(\A)$ is transitive on the set of triangles $P_0*L_1*P_2*L_0*P_1*L_2*P_0$, with $P_i\not\approx L_i$, $i=0,1,2$. 
 
Since $\mathsf{G} (2,\A)$ is $(P,L)$-transitive for all flags $(P,L)$, $\mathsf{G}(2,\A)$ is clearly transitive on the pairs of points $(Q,R)$ which are far from each other (note that the neighbouring relation is preserved by all $(P,L)$-elations). By transitivity on points, it suffices to show that two points neighbouring $(1,0,0)$, but different from $(1,0,0)$, can be mapped to each other while fixing $(1,0,0)$. The elations $\varphi_{13}(X)$ and their conjugates under the triality map $\tau$ take care of this. The statement for the lines follows by duality. 
\end{proof}

\textbf{The geometric structure of a line}|By Corollary~\ref{trans}, each line behaves as does the line $[1,0,0]$, whose points $(X,Y,Z)$ satisfy $X=0$, so they are given by $(0,1,z)$ with $z \in \A$ and $(0,ty_1,1)$ with $y_1 \in \B$. Their images under $\rho$ are $(0,1,z\overline{z};\overline{z},0,0)$ and $(0,0,1;ty_1,0,0)$, respectively. These are exactly the points $(K_0,K_1,K_2;A_0,A_1,A_2)$ of $\rho(\mathcal{P})$ satisfying $K_1K_2=\mathsf{N}(A_0)$ and $K_0=A_1=A_2=0$. 
Recalling $A_i=(B_{i0},B_{i1})=(K_{i0},...,K_{id})$, we can write these as equations over $\K$, where $n'$ is the (anisotropic) norm form associated with $\B$.
\[K_1K_2 =n'(B_{00}) \tag{$1$}\]
\[K_0=K_{10}=\cdots = K_{1d}=K_{20}= \cdots =K_{2d}=0 \tag{$2$}\]
We conclude that the corresponding element of $\Xi$, spanned by the points of $\rho([1,0,0])$, is the $(d+1)$-dimensional subspace of $\PG(3d+2,\K)$ satisfying equation ($2$), and the points of $\rho([1,0,0])$ it contains are the ones that additionally satisfy the quadratic equation ($1$). Moreover, $\xi$ contains no other points of $\rho(\mathcal{P})$ than those of $\rho([1,0,0)]$: suppose $(x,y,z) \in \mathcal{P}$ is such that $x \neq 0$ and $x\overline{x}=z\overline{x}=x\overline{y}=0$. Then an easy calculation shows  that $x_0=y_0=z_0=0$ and hence $(x,y,z) \notin \mathcal{P}$, so $x=0$ and hence $(x,y,z)$ belongs to the line $X=0$ indeed.

If $\A=\B$, $X(\xi):=X \cap \xi$ is a quadric of Witt index 1 (i.e., whose maximal isotropic subspaces have projective dimension 0); and if $\A=\mathsf{CD}(\B,0)$, $X(\xi)$ is a cone with base a quadric of Witt index 1 in $\PG(d'+1,\K)$, where $d'=\frac{d}{2}-1$, and a $d'$-dimensional vertex which is omitted.
\par\bigskip

We now show the two specific properties (H1) and (H$2^*$) for $\mathcal{V}_2(\K,\A)$. 

\begin{prop}\label{standardex} The Veronese representation $\mathcal{V}_2(\K,\A)=(X,\Xi)$ of $\mathsf{G}(2,\A)$, where $\A=\mathsf{CD}(\B,0)$ for a quadratic associative division algebra $\B$ over a field $\K$ satisfies the following two properties:
\begin{itemize}
\item[\emph{(H1)}] Any two distinct points $x_1$ and $x_2$ of $X$ lie in at least one element of $\Xi$.
\item[\emph{(H2}$^*)$] Any two distinct elements $\xi_1$ and $\xi_2$ of $\Xi$ intersect in points which belong to the singular quadric uniquely determined by $\xi_1 \cap X$ (i.e., including its vertex).
Moreover, $\xi_1 \cap \xi_2 \cap X$ is non-empty.
\end{itemize}
\end{prop}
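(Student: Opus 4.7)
Both (H1) and (H2$^*$) are statements about the incidence structure of $\mathsf{G}(2,\A)$ transported to $\PG(3d+2,\K)$ via $\rho$. My plan is to combine the transitivity provided by Corollary~\ref{trans} with the linear realisation of $\PSL_3(\A)$ inside $\mathsf{PSL}_{3d+3}(\K)$ supplied by Proposition~\ref{actiononveronesean}: this reduces each axiom to a finite list of coordinate representatives, which I then verify using the explicit description of $\rho$ and of a tubic space obtained just before the statement of the proposition.

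For (H1), the little projective group acts with two orbits on unordered pairs of distinct points of $\mathcal{P}$, namely neighbouring pairs and far pairs. As a representative of the far orbit I would take $\{(1,0,0),(0,1,0)\}$, both incident with $[0,0,1]$; as a representative of the neighbouring orbit I would take $\{(1,0,0),(1,ty,0)\}$ with $y\in\B$, again both incident with $[0,0,1]$. The image under $\rho$ of this common line spans a tubic space containing both point-images, which gives (H1).

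For (H2$^*$), I dualise, using transitivity on pairs of lines. In the far case, take $L_1=[1,0,0]$ and $L_2=[0,1,0]$. From the coordinate equations recorded before the statement, $\xi_1$ is cut out by $K_0=0$ and $A_1=A_2=0$, and by symmetry $\xi_2$ by $K_1=0$, $A_0=A_2=0$; their intersection is the single point $\rho(0,0,1)\in X$, which is the image of the unique common point of $L_1$ and $L_2$ in $\mathsf{G}(2,\A)$, so the statement is immediate.

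The real work, and the main obstacle, is the neighbouring case. I would fix representatives $L_1=[1,0,0]$ and $L_2=[1,tb_1,0]$ with $b_1\in\B\setminus\{0\}$, parametrise the points of $L_2$ in each of the three types of $\mathcal{P}$, apply $\rho$, and read off defining linear equations of $\xi_2$. Intersecting with the already-known equations of $\xi_1$ should produce exactly one generator of the cone $\overline{X(\xi_1)}$, namely the join of its vertex with one base point, this point corresponding to the common reduction of $L_1$ and $L_2$ modulo $t$. The delicate part is ruling out extraneous intersection: since $\xi_1$ and $\xi_2$ have equal dimensions but are distinct, one must exploit $t^2=0$ together with the multiplication rules~$(*)$ of Remark~\ref{a+tb} to see that the equations of $\xi_2$ transverse to $\xi_1$ cut down the intersection to exactly the expected generator, and no more. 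Once this is in place, the generator lies in $\overline{X(\xi_1)}\cap\overline{X(\xi_2)}$ by construction, and its non-vertex part is an affine subspace contained in $X$, yielding both halves of (H2$^*$).
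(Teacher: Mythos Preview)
Your plan is correct and matches the paper's proof essentially line for line: the paper also invokes Corollary~\ref{trans} and Proposition~\ref{actiononveronesean} to reduce to the same representatives $\{(1,0,0),(0,1,0)\}$ and $\{(1,0,0),(1,t,0)\}$ for (H1), and to $\{[1,0,0],[0,1,0]\}$ and $\{[1,0,0],[1,t,0]\}$ for (H2$^*$), then computes the equations of $\xi_2$ in the neighbouring case and intersects with those of $\xi_1$ to obtain precisely one generator of $\overline{X(\xi_1)}$ (the points $(0,0,k_2;tb_{01},0,0)$, with the vertex arising for $k_2=0$). The only difference is cosmetic: the paper fixes $b_1=1$ rather than a general $b_1\in\B\setminus\{0\}$ and actually writes out the resulting linear equations, so to complete your proof you just need to carry out that explicit computation.
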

\begin{proof}
We first verify property (H1). This follows from the fact that $\mathsf{G} (2,\A)$ is a Hjelmslev plane. An explicit proof goes as follows. By transitivity (cf.\ Corollary~\ref{trans}), we may assume that $x_1$ is $\rho((1,0,0))$ and $x_2$ is either $\rho((0,1,0))$ (if $\rho^{-1}(x_1) \not\approx \rho^{-1}(x_2)$) or $\rho((1,t,0))$ (if $\rho^{-1}(x_1) \approx \rho^{-1}(x_2)$). Either way, $\rho([0,0,1])$ is an element of $\Xi$ containing both $x_1$ and $x_2$. This shows property (H1).

For the second property, transitivity again implies that we may assume that $\xi_1$ corresponds to $\rho([1,0,0])$ and $\xi_2$ to either $\rho([0,1,0])$ (if they come from non-neighbouring lines) or to $\rho([1,t,0])$ (if they come from neighbouring lines). In the first case, $\xi_1$ is as described in the geometric structure of a line above, and $\xi_2$ is completely analogous. It follows that the intersection of $\xi_1$ and $\xi_2$ is the unique point $(0,0,1;0,0,0)$ which is exactly $\rho((0,0,1)) \in X$, which hence belongs to $X \cap \xi_1$ indeed. In the second case, $\xi_2$ is spanned by $\rho((-ty_0,y,1))=(0,\mathsf{N}(y_0),1;y,ty_0,-t\mathsf{N}(y_0))$ for $y=y_0+ty_1\in \A$ and $\rho((-t,1,tz_1))=(0,1,0;-tz_1,0,-t)$ for $z_1 \in \B$, so $\xi_2$ is given by $K_0=B_{10}=B_{20}=B_{00}-B_{11}=B_{21}-K_1=0$. The subspace $\xi_1 \cap \xi_2$ is then given by $K_0=K_1=B_{00}=A_1=A_2=0$, so we get the points $(0,0,k_2;tb_{01},0,0)=\rho((0,tb_{01},1))$, for $k_2 \in \K$ and $b_{01} \in \B$, if $k_2 \neq 0$. If $k_2 =0$, we get precisely the vertex of the quadric determined by $\xi_i \cap X$, $i=1,2$. This shows the claim.
\end{proof}
\par\bigskip  
\section{Basic properties of (Hjelmslevean) Veronese sets}\label{reduction}
We reduce the proof to two essential cases, namely $v=-1$, i.e., vertices are empty (Case $(i)$); or there is more than one vertex and distinct vertices are pairwise disjoint (Case $(ii)$). In Section~\ref{MM}, we deal with Case $(i)$, also allowing $\K=\F_2$, and in fact covering Main Result~\ref{main2} and Main Result~\ref{main3}$(i)$. In Section~\ref{maincase} we then treat Case $(ii)$ above, covering Main Result~\ref{main3}$(ii)$.




We now start with the reduction. 

 \begin{defi}[Singular subspaces]~\ref{defpix2} \em We define a \emph{singular line} $L$ as a line of $\PG(N,\K)$ that has all its points in $X\cup Y$. Two (distinct) points $z,z'$ of $X\cup Y$ are called \emph{collinear} if they are on a singular line.  A subspace $\Pi$ of $\PG(N,\K)$ will be called \emph{singular} if it belongs to $X\cup Y$ and each pair of its points is collinear. If a singular subspace $\Pi$ intersects $Y$ in a hyperplane of $\Pi$, then $\Pi \cap X$ is called a \emph{singular affine subspace}; in particular, if $\Pi$ is a singular line containing a unique point in $Y$, then $\Pi \cap X$ is called a singular affine line.
%
\end{defi}

\begin{lemma}\label{singularlines2} Let $L$ be a line of $\PG(N,\K)$ containing two points $x_1$ and $x_2$ in $X$. Then either $L$ is a singular line having a unique point in $Y$ (hence $L\cap X$ is a singular affine line) or $L \cap (X\cup Y)=\{x_1,x_2\}$. In particular, each singular line contains at least one point of $Y$.
\end{lemma}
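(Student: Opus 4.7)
The plan is to apply (H1) to place both $x_1$ and $x_2$ in a common tubic space $\xi\in\Xi$, so that $L\subseteq\xi$, and then to analyse $L$ inside the $(d,v)$-tube $C=X(\xi)$ with vertex $V$ by projecting $\xi$ from $V$ onto a complementary $(d+1)$-space carrying the ovoid base $Q^0_d$. Since $x_1\in X$ and $X\cap Y=\emptyset$, the line $L$ is not contained in $V\subseteq Y$, so a projective-dimension count gives $\dim(L\cap V)\leq 0$, and $L\cap V$ is either empty or a single point; these two subcases will correspond exactly to the two alternatives of the statement.

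Suppose first that $L\cap V=\{v_0\}$. Then $\langle L,V\rangle$ has dimension $v+1$ and therefore coincides with the generator $\langle y,V\rangle$ through any chosen $y\in L\setminus\{v_0\}$. Consequently every point of $L\setminus\{v_0\}$ projects to the same point of $Q^0_d$ and thus lies in $C\subseteq X$, while $v_0\in V\subseteq Y$. Because $X\cap Y=\emptyset$, the point $v_0$ is the only point of $L$ in $Y$, and so $L$ is a singular line with a unique point in $Y$, giving the first alternative.

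Suppose next that $L\cap V=\emptyset$. Then the projection from $V$ is injective on $L$ and maps it onto a line $L'$; the images $x_1',x_2'$ of $x_1,x_2$ are distinct, for otherwise $L$ would lie in a generator, contradicting $L\cap V=\emptyset$. Since $Q^0_d$ is an ovoid, $L'\cap Q^0_d=\{x_1',x_2'\}$, whence $L\cap\overline{C}=\{x_1,x_2\}$. To finish this subcase one must exclude any third point $p\in L\cap(X\cup Y)$. If such a $p$ lies in $Y$, say $p\in V_{\xi'}$ for some $\xi'\neq\xi$, then axiom (H2$^*$) yields $p\in\xi\cap\xi'=\overline{X(\xi)}\cap\overline{X(\xi')}\subseteq\overline{C}$, a contradiction. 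If instead $p\in X$, then (H1) applied to $x_1,p$ produces a different $\xi'\in\Xi$ with $L\subseteq\xi'$ and $\{x_1,x_2,p\}\subseteq X(\xi')$; the ovoid property inside $\xi'$ forces $L$ into a generator of $X(\xi')$, so by the dimension count above $L$ meets $V_{\xi'}\subseteq Y$, again contradicting what has just been established.

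The final assertion is then immediate: for a singular line $L$ containing at least two points of $X$ the second alternative is ruled out (as $L$ has $|L|\geq 3$ points in $X\cup Y$), forcing the first and hence $L\cap Y\neq\emptyset$; otherwise $L$ has at most one point in $X$ and its remaining points automatically lie in $Y$. The main point of the argument, and the only really subtle step, is the exclusion of extra points in the second subcase: it is precisely here that the strengthened axiom (H2$^*$), together with a second application of (H1) to reduce back to the first subcase, is essential; by contrast the dimension arithmetic inside $\xi$ is entirely elementary.
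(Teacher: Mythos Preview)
Your proof is correct, but the $p\in X$ subcase in Case~2 takes an unnecessary detour. You already have $L\subseteq\xi$ and $p\in X$, so $p\in\xi\cap X=C\subseteq\overline{C}$; this immediately contradicts $L\cap\overline{C}=\{x_1,x_2\}$. There is no need to invoke (H1) a second time, and your claim that (H1) produces a \emph{different} $\xi'$ is not justified by the axiom (which only guarantees existence of \emph{some} tubic space, possibly $\xi$ itself). Your argument still goes through if $\xi'=\xi$, but the detour is entirely avoidable.

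The paper's proof is shorter and unifies your two subcases: it observes at once that $\xi\cap(X\cup Y)=\overline{C}$ (the nontrivial inclusion $\xi\cap Y\subseteq\overline{C}$ is exactly your (H2$^*$) argument from the $p\in Y$ subcase, done once and for all). Then any third point $z\in L\cap(X\cup Y)$ automatically lies in $\overline{C}$, giving three collinear points in the cone and forcing $L$ into a generator. This avoids splitting into $p\in X$ versus $p\in Y$ and the second appeal to~(H1).
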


\begin{proof}
By (H1), there is a tube $C$ through $x_1$ and $x_2$. Suppose that $L$ contains a third point $z \in X \cup Y$. As $\Xi(C) \cap (X\cup Y) = \overline{C}$, the line $L$ belongs to a generator of $C$ and hence it is a singular line containing a unique point in the vertex of $\overline{C}$, all its other points clearly belonging to $X$.  It follows that no singular line has all its points in $X$. 
\end{proof}

\begin{lemma}\label{L-y-L'} Let $L$ and $L'$ be distinct singular lines containing unique points $y$ and $y'$ in $Y$, respectively, with $y=y'$. Then either $L$ and $L'$ belong to a unique common tube, or the plane $\<L,L'\>$ is a singular plane and $\<L,L'\> \cap X$ is a singular affine plane. 
\end{lemma}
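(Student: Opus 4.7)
Plan: pick $x\in L\cap X$ and $x'\in L'\cap X$, set $\pi:=\langle L,L'\rangle$, and by (H1) fix a tube $C$ through $x$ and $x'$ with vertex $V$. The proof splits according to whether $y\in V$.

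\emph{If $y\in V$:} Since $x\in C$ and $y\in V$, the line $L=\langle x,y\rangle$ lies in the generator $\langle x,V\rangle$ of $\overline C$, hence $L\subseteq\overline C$; symmetrically $L'\subseteq\overline C$, so $C$ is a common tube containing both. For uniqueness, any other common tube $C^*$ would give $\pi\subseteq\Xi(C)\cap\Xi(C^*)$, which by (H2$^*$) equals $\overline C\cap\overline{C^*}$; comparing the two cone structures on $\pi$ (projecting from $V$ and from $V(C^*)$, and using that $\pi$ contains the two singular lines $L,L'$ through $y$) forces the vertex–base data to coincide and hence $C=C^*$.

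\emph{If $y\notin V$:} Since $\overline C\cap Y=V$, $L$ is not contained in $\overline C$; combined with singularity of $L$ and the identity $\Xi(C)\cap(X\cup Y)=\overline C$, this forces $L\cap\Xi(C)=\{x\}$ and similarly $L'\cap\Xi(C)=\{x'\}$. The claim is that $\pi\subseteq X\cup Y$ and $\pi\cap Y$ is a line, so that $\pi\cap X$ is a singular affine plane. First, by Lemma~\ref{singularlines2} (applied to $L$ with two $X$-points $x,x_2\in L$), any tube $D$ through $x$ and $x_2$ given by (H1) satisfies $L\subseteq\overline D$ and $y\in V(D)$; analogously a tube $E$ contains $L'$ with $y\in V(E)$. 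If $D=E$ we would obtain a common tube through $x$ and $x'$ containing $L\cup L'$ with $y$ in its vertex, reducing to the previous case; so we may assume $D\neq E$, whence also $x'\notin D$ and $x\notin E$ (any such incidence would again yield a common tube).

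By Lemma~\ref{singularlines2} the line $\langle x,x'\rangle$ is either singular, with a unique $Y$-point $z$, or meets $X\cup Y$ only at $\{x,x'\}$. The secant alternative is excluded by applying (H2$^*$) to $(C,D)$: since $\Xi(C)\cap\Xi(D)\subseteq\overline C\cap\overline D$ and contains $x$, the generators of $D$ through points of $\overline C\cap\overline D$, combined with secancy (which puts $x$ and $x'$ in distinct generators of $C$) and the fact that $y\in V(D)\setminus V(C)$, produce an additional point of $X\cup Y$ on $\langle x,x'\rangle$, contradicting secancy. Hence $\langle x,x'\rangle$ is singular, with $z\in V$ by Lemma~\ref{singularlines2} applied to $C$, and $z\neq y$. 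Now every line $M$ in $\pi$ through $y$ other than $L,L'$ meets $\langle x,x'\rangle$ in a unique point $w$: if $w=z$ then $M=\langle y,z\rangle\subseteq Y$ (arising from $y\in V(E), z\in V(C)$ and an (H2$^*$)-intersection argument), and if $w\in X$ then the same recipe, using a tube through $x$ and $w$ with $y$ in its vertex (obtained from $L$ via Lemma~\ref{singularlines2}), shows $M$ is singular with unique $Y$-point on $\langle y,z\rangle$. Therefore $\pi\subseteq X\cup Y$ with $\pi\cap Y=\langle y,z\rangle$, giving the singular affine plane.

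The hardest step is the exclusion of the secant alternative for $\langle x,x'\rangle$ in the second case: it requires juggling three tubes $C,D,E$ whose vertex data about $y$ and $z$ are only partially compatible, and extracting from (H2$^*$) a precise extra incidence on the plane $\pi$. Uniqueness in the first case has a similar flavor and rests on a cone-comparison argument inside $\pi$.
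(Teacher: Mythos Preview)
Your overall case split (does some tube through the chosen $x,x'$ have $y$ in its vertex?) is different from the paper's and is not unreasonable, but the argument in Case~2 has a genuine gap.

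The problematic step is your exclusion of the secant alternative for $\langle x,x'\rangle$. You invoke (H2$^*$) for the pair $(C,D)$ and claim that ``the generators of $D$ through points of $\overline C\cap\overline D$ \dots\ produce an additional point of $X\cup Y$ on $\langle x,x'\rangle$''. But this cannot work as stated: since $\pi\cap\Xi(D)=L$ (otherwise $x'\in\Xi(D)$ and you would already have a common tube), the line $\langle x,x'\rangle$ meets $\Xi(D)$ only at $x$. Hence nothing extracted from $\Xi(C)\cap\Xi(D)$ can land on $\langle x,x'\rangle$ away from $x$. The same obstruction applies to $(C,E)$. So the secant alternative is simply not ruled out by this argument. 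Your subsequent steps inherit the problem: the claim $\langle y,z\rangle\subseteq Y$ relies on $Y$ being a subspace, which is only proved later (Corollary~\ref{Ysub} uses Lemma~\ref{L-x-L'2}, which in turn uses the present lemma); and your sketch for ``$M$ singular when $w\in X$'' needs a tube with $y$ in its vertex containing $w$, which you do not produce.

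There is also a small misreading in Case~1: from two common tubes $C,C^*$ you correctly deduce $\pi\subseteq\overline C\cap\overline{C^*}\subseteq X\cup Y$, but then try to force $C=C^*$. That is false in general (two tubes sharing a full generator can contain $L\cup L'$); the right conclusion is simply that you have landed in the second alternative of the dichotomy.

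For comparison, the paper's proof avoids all of this by working pointwise in $\pi$: given $p\in\pi\setminus(L\cup L')$, pick two distinct lines $M,M'$ of $\pi$ through $p$ not passing through $y$. Each meets both $L\setminus\{y\}$ and $L'\setminus\{y\}$, hence contains two points of $X$, so by (H1) lies in a tubic space. If these two tubic spaces coincide, that tube contains $L\cup L'$ (a common tube, handled separately); if not, (H2$^*$) forces $p\in X\cup Y$. This gives $\pi\subseteq X\cup Y$ directly, and then $\pi\cap Y$ is a line because it is a geometric hyperplane of $\pi$ (using Lemma~\ref{singularlines2}). The key idea you are missing is this trick of drawing lines through the \emph{target} point $p$ that avoid $y$, rather than trying to control the fixed chord $\langle x,x'\rangle$.
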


\begin{proof} If $L \cup L'$ belongs to at least two tubes, then (H2) implies that $L \cup L'$ is contained in a generator of those tubes, and then $\<L,L'\>$ is a singular plane with a unique line in $Y$ indeed. So suppose $L \cup L'$ does not belong to any tube. 

Take any point $p$ in the plane $\pi$ spanned by $L$ and $L'$, but not on $L\cup L'$. We consider two lines $M,M'$ in $\pi$ through $p$ not incident with $y$. We consider tubes $C_M$ and $C_{M'}$ through $M$ and $M'$, respectively. If $C_M = C_{M'}$, then $C_M$ contains $L\cup L'$, contradicting our assumption. So $C_M$ and $C_{M'}$ are distinct and hence, $p \in C_M \cap C_{M'} \subseteq X \cup Y$ by (H2$^{*}$). This already shows that $\pi$ is a singular plane, i.e., $\pi \subseteq X \cup Y$.

Now $\pi$ contains a unique line in $Y$ since, by Lemma~\ref{singularlines2} and the fact that $\pi\cap X\neq\emptyset$, $\pi\cap Y$ is a geometric hyperplane of $\pi$. 
 \end{proof}

\begin{cor}\label{uniquetube2} Let $x_1$ and $x_2$ be non-collinear points of $X$. Then there is a unique $v$-space $V$ in $Y$ collinear to both of them. In particular, there is a unique tube through $x_1$ and $x_2$ (denoted $[x_1,x_2]$), and its vertex is $V$\!. 
\end{cor}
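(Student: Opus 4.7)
My plan is to split the proof into two parts: uniqueness of the tube through $x_1,x_2$, and identification of its vertex $V$ as the unique $v$-space of $Y$ collinear to both points.

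For the first part, existence of a tube through $x_1,x_2$ is immediate from (H1). For uniqueness, I would suppose that two distinct tubic spaces $\xi,\xi'\in\Xi$ both contain $x_1$ and $x_2$. Then $\xi\cap\xi'$, being a projective subspace containing both points, must contain the whole line $\langle x_1,x_2\rangle$. But axiom (H2$^*$) forces $\xi\cap\xi'=\overline{X(\xi)}\cap\overline{X(\xi')}\subseteq X\cup Y$, which would turn $\langle x_1,x_2\rangle$ into a singular line and contradict the non-collinearity of $x_1$ and $x_2$. So there is a unique tube $[x_1,x_2]$, with a well-defined vertex $V$.

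For the second part, I first observe that $V$ is indeed a $v$-space of $Y$ collinear to both $x_1$ and $x_2$: each point $w\in V$ is collinear to $x_i$ via the singular line in the generator $\langle x_i,V\rangle$ of the cone $\overline{[x_1,x_2]}$. The heart of the argument is the converse: any $y\in Y$ collinear to both $x_1$ and $x_2$ must lie in $V$. Given such a $y$, I would consider the two singular lines $L_i:=\langle x_i,y\rangle$ for $i=1,2$. They are distinct (else $x_2\in L_1$ would make $\langle x_1,x_2\rangle$ singular, contrary to assumption) and, by Lemma~\ref{singularlines2}, each has $y$ as its unique point in $Y$. Lemma~\ref{L-y-L'} then leaves only two possibilities. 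The first, that $\langle L_1,L_2\rangle$ is a singular plane, again forces $\langle x_1,x_2\rangle\subseteq X\cup Y$ and is ruled out. The second, that $L_1$ and $L_2$ lie in a unique common tube $\widetilde C$, must therefore hold; since $\widetilde C$ contains $x_1,x_2$, the uniqueness just proved yields $\widetilde C=[x_1,x_2]$. Then $y$ is the unique $Y$-point of the singular line $L_1$ inside this tube, so $L_1$ sits inside the generator $\langle x_1,V\rangle$ of $\overline{[x_1,x_2]}$ and consequently $y\in V$.

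The only pivot where the argument could fail is the elimination of the singular-plane alternative in Lemma~\ref{L-y-L'}, but this is immediate from the non-collinearity of $x_1,x_2$. Once both directions are in place, the $Y$-points collinear to both $x_1$ and $x_2$ form exactly $V$, so any $v$-space in $Y$ collinear to both points is contained in, and therefore equals, $V$. This simultaneously establishes uniqueness of $V$ and uniqueness of the tube, both assertions of the corollary.
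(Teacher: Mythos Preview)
Your proof is correct and follows essentially the same approach as the paper. The only difference is structural: you first establish uniqueness of the tube directly (via (H2$^*$) forcing $\langle x_1,x_2\rangle$ to be singular) and then deduce that any $y\in Y$ collinear to both lies in $V$, whereas the paper starts from a hypothetical $y\in Y\setminus V$, uses Lemma~\ref{L-y-L'} to produce a second tube, and reaches the same contradiction; the logical content is identical.
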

\begin{proof} By (H1), there is at least one tube $C$ through $x_1$ and $x_2$ and hence the vertex of $C$ is a $v$-space $V$ collinear to both $x_1$, $x_2$. If there would be a point $y \in Y\setminus V$ collinear with both $x_1$ and $x_2$, then Lemma~\ref{L-y-L'} implies that the lines $x_1y$ and $yx_2$ are in a tube $C'$. Since $y \notin V$, the tubes $C$ and $C'$ are distinct, but then their intersection contains two non-collinear points, contradicting~(H2)$^*$.
\end{proof}

Already at this point, the need for $|\K|>2$ arises. If $|\K|=2$, we would only be able to show that collinearity is an equivalence relation. 

\begin{lemma}\label{L-x-L'2} Two singular affine subspaces $\Pi$ and $\Pi'$ intersecting in at least one point $x\in X$ generate a singular subspace and $\<\Pi,\Pi'\> \cap X$ is a singular affine subspace.

\end{lemma}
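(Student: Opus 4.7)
Write $\Pi = \<x, H\>$ and $\Pi' = \<x, H'\>$ where $H = \Pi\cap Y$ and $H' = \Pi'\cap Y$ are the distinguished hyperplanes inside the two singular affine subspaces, so that $\<\Pi,\Pi'\>=\<x,H,H'\>$. The target is to prove that $\<\Pi,\Pi'\>\subseteq X\cup Y$, that any two of its points are joined by a singular line, and that $\<\Pi,\Pi'\>\cap Y=\<H,H'\>$ is a hyperplane of $\<\Pi,\Pi'\>$. The plan splits into two tasks: (i) show that for every $y\in H$ and $y'\in H'$ the plane $\<x,y,y'\>$ is a singular affine plane lying in $X\cup Y$ and meeting $Y$ exactly in the line $yy'$, and (ii) assemble the conclusion by iterating (i) over all such pairs.

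\textbf{Reducing to two singular lines.} Fix $y\in H$, $y'\in H'$ and set $L=\<x,y\>$, $L'=\<x,y'\>$; these are singular lines through the $X$-point $x$. If $L=L'$ nothing is to prove, so assume $L\neq L'$, forcing $y\neq y'$ (two singular lines sharing two common points would coincide). By (H1) applied to a pair of $X$-points of $L$ we obtain a tube $C$ whose tubic space $\xi_C$ contains $L$, so that $L$ lies in the generator $\<x,V_C\>$ and in particular $y\in V_C$. Build $C'$ similarly for $L'$, with $y'\in V_{C'}$. If $y'\in V_C$, then also $L'\subseteq\<x,V_C\>$, and $\<L,L'\>\subseteq \<x,V_C\>$, a singular affine subspace, so we are done.

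\textbf{Handling the remaining case via (H2$^*$).} Otherwise $C\neq C'$. For any $p\in\<L,L'\>\setminus(L\cup L')$, select two distinct lines $M_1,M_2$ in the plane $\<L,L'\>$ through $p$, each avoiding the three points $x,y,y'$; since $|\K|>2$ there are strictly more than the three forbidden lines $px,py,py'$, so such a pair exists. Each $M_i$ meets $L$ in an $X$-point and $L'$ in another $X$-point, and by (H1) lies in some tubic space $\xi_{C_{M_i}}$. If $C_{M_1}=C_{M_2}$, this single tube contains two $X$-points of $L$ and two of $L'$, and the singular-line analysis of Lemma~\ref{singularlines2} forces $L,L'\subseteq\overline{C_{M_1}}$; both passing through $x$, they lie in a common generator $\<x,V_{C_{M_1}}\>$, which again settles the claim. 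If instead $C_{M_1}\neq C_{M_2}$, axiom (H2$^*$) gives $p\in\xi_{C_{M_1}}\cap\xi_{C_{M_2}}\subseteq\overline{C_{M_1}}\cap\overline{C_{M_2}}\subseteq X\cup Y$. Running the same argument on arbitrary pairs of points of $\<L,L'\>$ shows that every two are joined by a singular line, so $\<L,L'\>$ is a singular plane. A point of $\<L,L'\>\cap Y$ not on $yy'$ would yield (together with an $X$-point) a singular line transversal to $yy'$, contradicting Lemma~\ref{singularlines2}; thus $\<L,L'\>\cap Y=yy'$, a hyperplane of the plane.

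\textbf{Assembling the full subspace.} Step (i) shows in particular that $yy'\subseteq Y$ for every $y\in H$, $y'\in H'$. Iterating this across the two subspaces (using that $H$ and $H'$ themselves are already singular subspaces of $Y$) puts $\<H,H'\>$ inside $Y$. Then $\<\Pi,\Pi'\>=\<x,H,H'\>$ is covered by the singular affine planes of step (i) from $x$ to points of $\<H,H'\>$, each entirely in $X\cup Y$ with its $Y$-part lying in $\<H,H'\>$; this gives $\<\Pi,\Pi'\>\subseteq X\cup Y$, pairwise collinearity, and $\<\Pi,\Pi'\>\cap Y=\<H,H'\>$ as a hyperplane. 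Hence $\<\Pi,\Pi'\>\cap X$ is a singular affine subspace.

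\textbf{Main obstacle.} The genuine difficulty is the second alternative in the reduction to lines: forbidding a plane $\<L,L'\>$ that is only partially inside $X\cup Y$. Here the tight line-counting (three forbidden directions $px,py,py'$ through $p$) is exactly what the bound $|\K|>2$ is designed to accommodate, and the decisive input is the strengthened axiom (H2$^*$), which lets two distinct tubic spaces through $p$ force $p$ itself into $X\cup Y$.
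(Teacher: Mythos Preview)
Your overall architecture is sound, and the assembling in the final step is fine. However, the line-counting in your ``remaining case'' is off by one, and this makes the argument break down for $|\K|=3$.

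Concretely: you pick an arbitrary $p\in\langle L,L'\rangle\setminus(L\cup L')$ and want two distinct lines $M_1,M_2$ through $p$ avoiding $x,y,y'$. When $p\notin yy'$, the three lines $px$, $py$, $py'$ are pairwise distinct, so with $|\K|=3$ there are exactly four lines through $p$ in the plane, three of which are forbidden. That leaves only \emph{one} admissible line, not a pair; your appeal to ``strictly more than the three forbidden lines'' only gives $|\K|+1-3\geq 1$, not $\geq 2$. So you cannot manufacture two distinct tubic spaces through $p$ and (H2$^*$) never fires.

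The paper sidesteps this by a two-stage argument. It first restricts to $p\in yy'\setminus\{y,y'\}$: there the forbidden lines collapse to just $px$ and $yy'$, so $|\K|>2$ genuinely yields two admissible secants $M,M'$, and (H2$^*$) gives $p\in X\cup Y$. This puts $yy'\subseteq X\cup Y$; since $yy'$ already carries two $Y$-points, Lemma~\ref{singularlines2} forces some $p\in yy'\setminus\{y,y'\}$ to lie in $Y$. Now $M$ and $M'$ are singular lines sharing the $Y$-point $p$, and the already-proved Lemma~\ref{L-y-L'} delivers the entire plane $\langle L,L'\rangle$ as a singular affine plane in one stroke. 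In other words, the paper never needs the counting at a generic point of the plane.

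A secondary point: your justification that $\langle L,L'\rangle\cap Y=yy'$ via a ``transversal contradicting Lemma~\ref{singularlines2}'' is not quite right as stated. The clean argument (also implicit in the paper) is that once the plane is singular, every line in it carries a $Y$-point by Lemma~\ref{singularlines2}, so $\langle L,L'\rangle\cap Y$ is a geometric hyperplane; since it misses $x\in X$, it is exactly a line, necessarily $yy'$.
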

\begin{proof}
Let $x\in X$ and suppose that $L$ and $L'$ are distinct singular lines through $x$, having unique points $y$ and $y'$ in $Y$, respectively (note that $y \neq y'$). 
Take a point $p$ on $yy'\setminus\{y,y'\}$. Since $|\mathbb{K}| >2$, there are lines  $M$ and $M'$ through $p$, each of which meets both $L\setminus\{y\}$ and $L'\setminus\{y'\}$ in distinct points of $X$. If $M$ and $M'$ are contained in the same tubic subspace, then, since $X\ni x\notin M\cup M'$, the plane spanned by the lines $M,M'$ is singular, with a unique line in $Y$. Hence we may assume that $M$ and $M'$ are contained in distinct tubic subspaces, and so $p\in X\cup Y$ by (H2$^*$). 
As $p$ was arbitrary on  $yy'\setminus\{y,y'\}$, it follows that the line $yy'$ is contained in $X \cup Y$. So by  Lemma~\ref{singularlines2}  and $|\K|>2$, we may assume $p \in Y$. Applying Lemma~\ref{L-y-L'} on the lines $M$ and $M'$, we again obtain that $\<L,L'\>$ is a singular plane with a unique line (namely $yy'$) in $Y$.

Now let $\Pi$ and $\Pi'$ be general singular affine subspaces with $x\in\Pi\cap\Pi'$. Repeated use of the previous argument for all affine singular lines in $\<\Pi,\Pi'\>$ sharing $x$ shows the lemma.
\end{proof}
\par\bigskip

\begin{cor}\label{Ysub}
The set $Y$ is a subspace.
\end{cor}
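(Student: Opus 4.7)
The plan is to show that for any two distinct points $y,y'\in Y$, the line $yy'$ is contained in $Y$; the empty case being trivial. By definition of $Y$, we may pick tubes $C,C'\in\Xi$ whose vertices $V,V'$ contain $y$ and $y'$ respectively. If by chance we may take $C=C'$, then both points lie in the single subspace $V$ and we are done. So the interesting case is $C\neq C'$, and here the driving idea is to use axiom (H2$^*$) to produce a common point $x\in X$ in the two tubes, reducing the problem to the already established Lemma~\ref{L-x-L'2}.

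More precisely, by (H2$^*$) applied to $\xi_1=\Xi(C)$ and $\xi_2=\Xi(C')$, there is a point $x\in\xi_1\cap\xi_2\cap X$, and since $X\cap\xi_i=C_i$, this $x$ lies in both $C$ and $C'$. I then claim that $L:=xy$ and $L':=xy'$ are singular affine lines with unique points $y$, respectively $y'$, in $Y$. Indeed, because $x\in C$ and $y\in V$, the line $L$ lies in the generator $\<x,V\>$ of $\overline{C}$, hence entirely in $\overline{C}\subseteq X\cup Y$; since $x\notin V$, the line $L$ is not contained in $V$, so $L\cap V$ is the single point $y$ and all remaining points of $L$ belong to $C\subseteq X$. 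The same argument with $C'$ works for $L'$.

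Next I observe that $x,y,y'$ cannot be collinear: otherwise $y'$ would be a second point of $Y$ on the singular affine line $L$, contradicting what was just shown. Hence $\pi:=\<x,y,y'\>=\<L,L'\>$ is a genuine plane, and Lemma~\ref{L-x-L'2} applied to the singular affine subspaces $L$ and $L'$ (both passing through $x\in X$) yields that $\pi$ is singular with $\pi\cap X$ a singular affine subspace. By the very definition of the latter, $\pi\cap Y$ is a hyperplane of $\pi$, i.e.\ a line; since $y,y'\in\pi\cap Y$, this line is exactly $yy'$, and so $yy'\subseteq Y$, as required.

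I do not anticipate any real obstacle: the argument is essentially a one-step application of (H2$^*$) to turn the problem into the situation addressed by Lemma~\ref{L-x-L'2}. The only point demanding some care is to verify that the auxiliary lines $xy$ and $xy'$ really are singular affine lines with unique points in $Y$, so that the non-collinearity of $x,y,y'$ (and hence the two-dimensionality of $\pi$) is guaranteed before invoking Lemma~\ref{L-x-L'2}.
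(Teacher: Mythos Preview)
Your proof is correct and follows essentially the same approach as the paper: pick tubes whose vertices contain the two points of $Y$, use (H2$^*$) to obtain a common point $x\in X$ when the tubes differ, and then apply Lemma~\ref{L-x-L'2} to the singular lines $xy$ and $xy'$ to conclude that $yy'\subseteq Y$. You supply a bit more detail than the paper (verifying that $xy$ and $xy'$ are singular affine lines and that $x,y,y'$ are non-collinear), but the argument is the same.
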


\begin{proof}
Let $y_1,y_2\in Y$, $y_1\neq y_2$. Let $y_i$ be contained in a tube $C_i$, $=1,2$.  If $C_1$ equals $C_2$, then the line $y_1y_2$ joining $y_1$ and $y_2$ is contained in its vertex, and in particular in $Y$. If $C_1 \neq C_2$, then their intersection contains a point $x\in X$ by (H2$^*$). Applying Lemma~\ref{L-x-L'2} on the lines $L=xy_1$ and $L'=xy_2$, we obtain that $y_1y_2 \subseteq Y$. 
\end{proof}


\begin{defi}[Maximal singular subspaces]\label{defpix2} \em Let $x$ be any point in $X$. By the previous lemma, we can define $\Pi_x$ as the unique maximal singular affine subspace containing $x$, and we denote its projective completion (i.e., $\<\Pi_x\>$) by $\overline{\Pi}_x$.  Finally, we define $\Pi_x^Y=\overline{\Pi}_x\setminus\Pi_x=\overline{\Pi}_x\cap Y$.\end{defi}

We have the following corollary.

\begin{cor}\label{pix2}   
For $x,x' \in X$, $\Pi_x \cap \Pi_{x'}$ is non-empty if and only if $\Pi_x = \Pi_{x'}$ if and only if  $x$ and $x'$ are collinear. If $x$ and $x'$ are not collinear, then $\overline{\Pi}_x \cap \overline{\Pi}_{x'}$ is the vertex of $[x,x']$.
\end{cor}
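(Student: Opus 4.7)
The plan is to prove the three assertions in order, using the two-way interplay between singular affine subspaces (ruled by Lemma~\ref{L-x-L'2}) and singular lines with a unique point in $Y$ (ruled by Lemma~\ref{L-y-L'}), with Corollary~\ref{uniquetube2} controlling the non-collinear case.

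\textbf{Step 1: $\Pi_x\cap\Pi_{x'}\neq\emptyset\Longleftrightarrow\Pi_x=\Pi_{x'}$.} The direction ($\Leftarrow$) is trivial since $x\in\Pi_x$. For ($\Rightarrow$), any common point $z$ of $\Pi_x$ and $\Pi_{x'}$ lies in $X$ (by the very definition of a singular affine subspace). Then $\Pi_x$ and $\Pi_{x'}$ are two singular affine subspaces sharing the point $z\in X$, so by Lemma~\ref{L-x-L'2} the projective join $\<\Pi_x,\Pi_{x'}\>\cap X$ is again a singular affine subspace containing both. Maximality of $\Pi_x$ and $\Pi_{x'}$ forces $\Pi_x=\<\Pi_x,\Pi_{x'}\>\cap X=\Pi_{x'}$.

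\textbf{Step 2: $\Pi_x=\Pi_{x'}\Longleftrightarrow x,x'$ collinear.} If $\Pi_x=\Pi_{x'}$, then $x,x'$ are two points of the same singular (affine) subspace, hence are collinear by definition. Conversely, if $x,x'$ are collinear, the line $xx'$ is singular; by Lemma~\ref{singularlines2} it contains a unique point $y\in Y$, so $xx'\cap X$ is a singular affine line. By maximality, $xx'\cap X\subseteq\Pi_x$, giving $x'\in\Pi_x\cap\Pi_{x'}$; by Step 1, $\Pi_x=\Pi_{x'}$.

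\textbf{Step 3: the vertex of $[x,x']$ is contained in $\overline{\Pi}_x\cap\overline{\Pi}_{x'}$.} Assume $x,x'$ are non-collinear. By Corollary~\ref{uniquetube2}, the unique tube $C=[x,x']$ has a $v$-space vertex $V\subseteq Y$ with generator $\<x,V\>\subseteq\overline{C}\subseteq X\cup Y$. Thus $\<x,V\>$ is a singular subspace meeting $Y$ in the hyperplane $V$, so $\<x,V\>\cap X$ is a singular affine subspace containing $x$. By maximality it is contained in $\Pi_x$, hence $V\subseteq\overline{\Pi}_x$. The same argument applied at $x'$ gives $V\subseteq\overline{\Pi}_{x'}$.

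\textbf{Step 4: the reverse inclusion $\overline{\Pi}_x\cap\overline{\Pi}_{x'}\subseteq V$.} Take any $z\in\overline{\Pi}_x\cap\overline{\Pi}_{x'}$. If $z\in X$, Steps 1--2 would force $x,x'$ collinear, a contradiction; hence $z\in Y$. Now the lines $L=\<x,z\>\subseteq\overline{\Pi}_x$ and $L'=\<x',z\>\subseteq\overline{\Pi}_{x'}$ are singular lines, each meeting $Y$ in precisely $\{z\}$. Apply Lemma~\ref{L-y-L'}: either $\<L,L'\>$ is a singular plane with a unique line in $Y$, or $L,L'$ lie in a common tube. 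In the first case $x$ and $x'$ would be collinear (in that singular plane), contradicting our assumption. So $L$ and $L'$ lie in a common tube $C'$; since $C'$ contains both $x$ and $x'$, the uniqueness in Corollary~\ref{uniquetube2} gives $C'=[x,x']$. But $z\in L\cap Y$, and the points of $\overline{C'}$ that lie in $Y$ are exactly the vertex $V$, so $z\in V$. This finishes the proof.

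The main obstacle is Step 4: ruling out the singular-plane alternative in Lemma~\ref{L-y-L'} (which is precisely what identifies non-collinearity with the existence of a \emph{unique} tube), and then identifying $z$ with a point of the vertex rather than some other point of $\overline{C'}\cap Y$. Once Lemma~\ref{L-y-L'}, Corollary~\ref{uniquetube2} and Lemma~\ref{L-x-L'2} are in hand, the rest is essentially formal.
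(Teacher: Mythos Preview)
Your proof is correct and follows essentially the same route as the paper. Steps~1--2 match the paper verbatim; for the non-collinear case the paper simply invokes Corollary~\ref{uniquetube2} (whose proof already shows that no point of $Y\setminus V$ is collinear with both $x$ and $x'$), whereas your Steps~3--4 unpack that same argument explicitly via Lemma~\ref{L-y-L'} and then identify $\overline{C'}\cap Y$ with the vertex.
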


\begin{proof}
Suppose $\Pi_x \cap \Pi_{x'}$ contains a point of $X$. It follows from Lemma~\ref{L-x-L'2} that $\Pi_x$ and $\Pi_{x'}$ generate a singular subspace $\Pi$. As both were the maximal ones containing $x$ and $x'$, $\Pi_x=\Pi=\Pi_{x'}$. Clearly, $\Pi_x=\Pi_{x'}$ implies $x' \in \Pi_x$, so $x$ and $x'$ are collinear. For collinear points $x$ and $x'$, we have $x,x' \subseteq \Pi_x \cap \Pi_{x'}$, hence the ``if and only if''-statements follow.

If $x$ and $x'$ are non-collinear points, then Corollary~\ref{uniquetube2} implies that the vertex of $[x,x']$ coincides with $\overline{\Pi}_x \cap \overline{\Pi}_{x'}$.
\end{proof}
\par\bigskip



\begin{lemma}\label{vertex} 
Let $C_1$ and $C_2$ be tubes with respective vertices $V_1$ and $V_2$. Set $V^*=V_1\cap V_2$. 
Then the  vertex $V$ of each tube $C$ contains $V^*$. Hence each point of $X$ is collinear with $V^*$. Moreover, the intersection of any pair of distinct vertices is precisely $V^*$. 

\end{lemma}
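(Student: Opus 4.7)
The plan is to reduce everything to the following easy observation, which I call the \emph{good case}: if for a tube $C$ with vertex $V$ we can find points $x_1\in C\cap C_1\cap X$ and $x_2\in C\cap C_2\cap X$ with $x_1\neq x_2$ and not collinear, then by Corollary~\ref{uniquetube2} we have $C=[x_1,x_2]$ and by Corollary~\ref{pix2} its vertex satisfies $V=\overline{\Pi}_{x_1}\cap\overline{\Pi}_{x_2}$. Since $\langle x_i,V_i\rangle$ is the generator of $\overline{C_i}$ through $x_i$, which is singular, we have $V_i\subseteq\overline{\Pi}_{x_i}$, and hence $V^*=V_1\cap V_2\subseteq\overline{\Pi}_{x_1}\cap\overline{\Pi}_{x_2}=V$.

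To reach a suitable pair $(x_1,x_2)$ for an arbitrary tube $C$, I first establish the auxiliary claim that $V^*\subseteq\overline{\Pi}_z$ for every $z\in X$. For $z\in C_1\cap X$ this is immediate ($V^*\subseteq V_1\subseteq\overline{\Pi}_z$), and likewise for $z\in C_2\cap X$. For a general $z$, I choose a point $q_1\in C_1\cap X$ not collinear with $z$; such a point exists because the subspaces $\overline{\Pi}_p$ associated with points $p$ on distinct generators of $\overline{C_1}$ are pairwise disjoint (Corollary~\ref{pix2}), so $z$ can be collinear with at most one generator class of $C_1\cap X$. Setting $D:=[z,q_1]$ (which exists and is unique by Corollary~\ref{uniquetube2}), axiom (H2$^*$) provides $p\in D\cap C_2\cap X$; if $p\neq q_1$ and $p,q_1$ are not collinear, the good case applied to $(D,C_1,C_2)$ yields $V^*\subseteq V_D\subseteq\overline{\Pi}_z$.

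The main obstacle is the degenerate situation where this choice of $q_1$ fails, i.e.\ where every such $q_1$ has its associated $p$ equal to $q_1$ or collinear with it, for every admissible $q_1$. Here I exploit that $|\K|>2$ together with the abundance of points on the base quadric $Q_d^0$ of $C_1$: by varying $q_1$ over the generator classes non-collinear with $z$ and using that collinearity is transitive within a single $\Pi$-class, a persistent failure would force $C_1\cap C_2\cap X$ to contain a large singular affine subspace, which together with (H2$^*$) (forcing $\xi_1\cap\xi_2=\overline{C_1}\cap\overline{C_2}$) and Lemma~\ref{L-x-L'2} contradicts the fact that $C_1$ and $C_2$ are distinct tubes with distinct bases. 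Hence a good choice of $q_1$ always exists, proving the auxiliary claim.

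With the claim in hand, the lemma follows quickly. For any tube $C$ with vertex $V$, the base $Q_d^0$ contains non-collinear points, which correspond to points $x_1,x_2\in C\cap X$ lying on distinct generators and hence non-collinear; by Corollary~\ref{pix2}, $V=\overline{\Pi}_{x_1}\cap\overline{\Pi}_{x_2}$, and the auxiliary claim gives $V^*\subseteq\overline{\Pi}_{x_1}\cap\overline{\Pi}_{x_2}=V$. For the collinearity statement, any $x\in X$ lies on some tube $C$ with vertex $V\supseteq V^*$, so the singular generator $\langle x,V\rangle$ contains $V^*$. Finally, given any two distinct vertices $V',V''$, applying what we have proved with the pair $(V',V'')$ in the role of $(V_1,V_2)$ shows that $V'\cap V''\subseteq V$ for \emph{every} vertex $V$, in particular $V'\cap V''\subseteq V_1\cap V_2=V^*$; combined with $V^*\subseteq V'\cap V''$, this gives $V'\cap V''=V^*$.
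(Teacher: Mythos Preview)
Your overall architecture—reduce everything to the ``good case'' and then prove the auxiliary claim $V^*\subseteq\overline{\Pi}_z$ for all $z\in X$—is sound, and the good case itself as well as the final deductions (vertex containment, collinearity with $V^*$, and the ``moreover'' part) are correct. The gap is in the paragraph handling the obstacle.

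Your assertion that a persistent failure ``would force $C_1\cap C_2\cap X$ to contain a large singular affine subspace'' does not follow. Suppose for every admissible $q_1$ the point $p\in D\cap C_2$ satisfies $p\perp q_1$ (possibly $p=q_1$). Then $p\in C_2$ and $\Pi_p=\Pi_{q_1}$, so the whole generator $\langle p,V_2\rangle\setminus V_2$ lies in $\Pi_{q_1}$, whence $V_2\subseteq\overline{\Pi}_{q_1}$. This says nothing about $C_1\cap C_2$ being large; rather, applying it to two non-collinear admissible $q_1,q_1'$ (which exist since the base ovoid has at least three points) and using Corollary~\ref{pix2} gives $V_2\subseteq\overline{\Pi}_{q_1}\cap\overline{\Pi}_{q_1'}=V_1$, hence $V_1=V_2$. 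But in that case $V^*=V_1$ and you have not shown $V_1\subseteq\overline{\Pi}_z$; so the auxiliary claim is still open and your contradiction never materialises.

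The paper's proof avoids this trap by first proving a sharp criterion: with $x\in C_1\cap C_2\cap X$, one has $z_1\perp z_2$ (for $z_i\in C\cap C_i$) \emph{if and only if} both $z_i$ lie in the generator $\langle x,V_i\rangle$. This is established directly from Corollary~\ref{pix2}. With that criterion in hand, the bad case is very specific (both $z_i\in x^\perp$), and one escapes it by choosing auxiliary points $z_i'\in C_i\setminus x^\perp$, forming $C'=[z_1',z_2']$ (whose vertex contains $V^*$ by the good case), and then rerunning the argument with the pair $(C_i,C')$: since $z_i\not\perp z_i'$, the criterion guarantees that the new intersection points are non-collinear, and the good case finishes. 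Your route can be repaired along the same lines: first dispose of the case $z\perp x$ (trivial), and for $z\not\perp x$ choose $q_1$ not collinear with \emph{either} $z$ or $x$; the criterion then forces $q_1\not\perp p$, so the obstacle never occurs.
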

\begin{proof}
By (H2$^*$), the tubes $C_1$ and $C_2$ share a point $x\in X$, so $\overline{C_1}\cap \overline{C_2} = \<x,V^*\>$. By the same axiom, $C \cap C_i$ contains a point $z_i \in X$, $i=1,2$. 

We claim that $z_1 \perp z_2$ if and only if $z_1\in\<x,V_1\>$ and $z_2\in\<x,V_2\>$. Indeed, suppose $z_1$ does not belong to $\<x,V_1\>$ (which is equivalent to  $z_1$ not being collinear to $x$) and suppose $z_1 \perp z_2$. The first fact means $C_1=[x,z_1]$, so by Corollary~\ref{pix2} we have $\Pi_x^Y \cap \Pi_{z_1}^Y=V_1$. By the same corollary, the second fact means $V_2 \subseteq \Pi_{z_1}^Y$ and hence $V_2 \subseteq \Pi_x^Y \cap \Pi_{z_1}^Y= V_1$, a contradiction.
The other implication is clear since $\<x,V_1,V_2\>$ belongs to the singular subspace $\Pi_x$.  This shows the claim. 


Now, if $z_1$ is not collinear with $z_2$, then Corollary~\ref{pix2} readily implies $V^* \subseteq V$. 
So suppose $z_1,z_2 \in x^\perp$ (and hence $z_1$ and $z_2$ are equal or collinear).
For $i=1,2$, let $z'_i$ be a point of $C_i$ not collinear to $x$ (and hence neither to $z_i$). Then by the above, the vertex $V'$ of the tube $C'$ through $z_1'$ and $z_2'$ contains 
$V^*$. For $i=1,2$, we now consider $C_i$ and $C'$ instead of $C_1$ and $C_2$. Then again, since $z_i$ and $z'_i$ are not collinear, the previous cases reveal that $V \cap V_i$ contains $V^*$.

As $C$ was arbitrary, we conclude that each tube's vertex $V$ contains $V^*$. 
It immediately follows that each point $x$ is collinear with $V^*$. Assume that there would be two tubes $C'_1$ and $C'_2$ whose respective distinct vertices $V'_1$ and $V'_2$ would intersect in more than $V^*$. Repeating the above argument, we would obtain that $C_1$ and $C_2$ both contain $V'_1 \cap V'_2$, a contradiction.
\end{proof}

\par\bigskip

For an arbitrary subspace $F$ of $\PG(N,\K)$ complementary to $V^*$, we  now consider the map $\rho: X \rightarrow F: x \mapsto \<x,V^*\> \cap F$.  The pair $(\rho(X),\rho(\Xi))$ is well defined then and consists of $(d,v')$-tubes with base $Q^0_d$, where $v'=\cod_V(V^*)$, for any vertex $V$. 

\begin{prop}\label{vertexred}
Let $C_1$ and $C_2$ be tubes with respective vertices $V$ and $V'$ that intersect in a subspace $V^*$ of $V$.
Then $(\rho(X),\rho(\Xi))=(X\cap F,\{\xi\cap F \mid \xi\in\Xi\})$ is a Hjelmslevean Veronese set with $(d,v')$-tubes for $v'=\cod_V(V^*)$. If $v' \geq 0$ then two vertices either coincide or are disjoint and both cases occur. \end{prop}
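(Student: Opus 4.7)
The proposition decomposes into three parts: showing that $(\rho(X),\rho(\Xi))$ satisfies axioms (H1) and (H2$^*$), checking that its members cut out $(d,v')$-tubes, and establishing the vertex dichotomy together with the claim that both cases actually occur.

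My first step is to record how the projection interacts with the available structure. By Lemma~\ref{vertex}, every vertex $V$ contains $V^*$, hence every tubic space $\xi\in\Xi$ (and every cone $\overline{X(\xi)}$) contains $V^*$ as well. For any subspace $W\supseteq V^*$, projection from $V^*$ onto $F$ reduces to $W\mapsto W\cap F$, of projective dimension $\dim W-\dim V^*-1$. Since $X\cap V^*\subseteq X\cap Y=\emptyset$, $\rho$ is well defined on all of $X$, and each tube $C$ with vertex $V$ projects to a tube $\rho(C)=C\cap F$ whose base is still a $Q^0_d$-quadric (unchanged up to projective equivalence, as it already lives in a subspace of $\xi$ complementary to $V$) and whose vertex is $V\cap F$ of projective dimension $v'=\cod_V(V^*)$.

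The two axioms then follow readily. For (H1), two distinct points of $\rho(X)$ lift to distinct points of $X$, and a tube through them provided by (H1) for $(X,\Xi)$ projects to a tube containing the two images. For (H2$^*$), two distinct elements of $\rho(\Xi)$ lift to distinct $\xi_1,\xi_2\in\Xi$, each containing $V^*$, so intersection commutes with $\cap F$ and $\rho(\xi_1)\cap\rho(\xi_2)=(\xi_1\cap\xi_2)\cap F$. Applying (H2$^*$) to $(X,\Xi)$ and intersecting with $F$ then yields $\rho(\xi_1)\cap\rho(\xi_2)\subseteq\overline{\rho X(\xi_1)}\cap\overline{\rho X(\xi_2)}$; the $X$-point inside $\xi_1\cap\xi_2$ avoids $V^*\subseteq Y$ and so projects to a point of $\rho(X)$ in this intersection.

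For the vertex dichotomy when $v'\ge 0$, two distinct projected vertices arise from distinct original vertices, which by Lemma~\ref{vertex} meet precisely in $V^*$; intersecting with the complement $F$ makes them disjoint. The disjoint case is already witnessed by the projections of the vertices of $C_1$ and $C_2$. For the coincident case, I plan to produce two distinct tubes of $(X,\Xi)$ already sharing the vertex $V$ of $C_1$. Fixing $x\in C_1\setminus V$ and using that $v'\ge 0$ forces $V\setminus V^*\neq\emptyset$, my strategy is to combine Lemma~\ref{L-x-L'2} (applied to the singular lines joining $x$ with points of $V\setminus V^*$) with axiom (H2$^*$) to produce $x'\in X$ that is collinear to every point of $V$, non-collinear with $x$, and outside $C_1$. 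Corollary~\ref{uniquetube2} then gives a unique tube $[x,x']$, whose vertex $\overline{\Pi}_x\cap\overline{\Pi}_{x'}$ contains $V$; Lemma~\ref{vertex} (``equal or meeting only in $V^*$'') forces this vertex to be $V$ itself, and $x'\notin C_1$ ensures $[x,x']\neq C_1$. I expect the construction of such $x'$ to be the main obstacle, as it requires interleaving the singular affine structure around $V$ with the non-emptiness clause of (H2$^*$); once that is in place, the remainder is bookkeeping.
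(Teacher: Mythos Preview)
Your argument for the main parts—verifying (H1) and (H2$^*$), identifying the projected tubes as $(d,v')$-tubes, and the ``coincide or disjoint'' dichotomy via Lemma~\ref{vertex}—is correct and essentially the paper's proof. The paper phrases the equality $(\rho(X),\rho(\Xi))=(X\cap F,\{\xi\cap F\mid\xi\in\Xi\})$ by directly invoking from Lemma~\ref{vertex} that every point of $X$ is collinear with $V^*$; you obtain the same thing implicitly via $\rho(C)=C\cap F$ for each tube, which is fine.

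Where you go further than the paper is the ``coincident case occurs'' clause. The paper simply defers to Lemma~\ref{vertex}, which strictly speaking only yields the dichotomy and the disjoint witness. You are right that an additional argument is needed, but your plan—finding $x'$ collinear to all of $V_1$, \emph{non}-collinear to $x$, and outside $C_1$—is the hard way around. A cleaner execution: with $x\in C_1\cap C_2$, apply Lemma~\ref{L-x-L'2} to the generators $\langle x,V_1\rangle\setminus V_1$ and $\langle x,V_2\rangle\setminus V_2$ to see that $\langle x,V_1,V_2\rangle\setminus\langle V_1,V_2\rangle\subseteq\Pi_x$. Since $v'\ge 0$ forces $V_2\not\subseteq V_1$, this properly contains $\langle x,V_1\rangle\setminus V_1$, so pick $x''\in\Pi_x\setminus\langle x,V_1\rangle$; then $x''\notin C_1$ (as $C_1\cap\Pi_x$ is the generator through $x$) yet $V_1\subseteq\Pi_{x''}^Y=\Pi_x^Y$. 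Now take any $z_1\in C_1$ non-collinear to $x$; by Corollary~\ref{uniquetube2} the tube $[z_1,x'']$ has vertex $\overline\Pi_{z_1}\cap\overline\Pi_{x''}\supseteq V_1$, hence equal to $V_1$ by Lemma~\ref{vertex}, and it differs from $C_1$ because $x''\notin C_1$. This produces two distinct tubes with vertex $V_1$, completing the coincident case without the obstacle you anticipated.
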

\begin{proof}
By Lemma~\ref{vertex}, each point of $x \in X$ is collinear with $V^*$  and all elements of $\Xi$ contain $V^*$.  Hence $(\rho(X),\rho(\Xi))=(X\cap F,\{\xi\cap F \mid \xi\in\Xi\})$. Clearly, $\rho(\xi)\cap \rho(X)$ is a $(d,v')$-tube for $v' = \cod_V(V^*)$ for each $\xi\in\Xi$. We show that (H1) and (H2$^*$) are satisfied.

$\quad \bullet$  \textit{Axiom \emph{(H1)}.} Let $x$ and $x'$ be points of $\rho(X)$. Then Axiom (H1) in $(X,\Xi)$ implies that there is a tube $C$ containing $x$ and $x'$. By Lemma~\ref{vertex}, the vertex of $C$ contains $V^*$ and hence $\rho(C)$ is a tube through $x$ and $x'$. 

$ \quad \bullet$ \textit{Axiom \emph{(H2$^*$)}.} Let $\xi$ and $\xi'$ be distinct members of $\rho(\Xi)$. Then $\<\xi,V^*\> \cap \<\xi',V^*\>$ belongs to $X\cup Y$ and contains at least one point $x \in X$ by Axiom (H2$^*$) in $(X,\Xi)$. It is clear that $\xi \cap \xi'$ belongs to $X \cup Y$ and that it contains $\rho(x) \in \rho(X)$.

The rest from the statement follows immediately from Lemma~\ref{vertex}.
\end{proof}

\par\bigskip

\textbf{Vertex-reduction}| Consider the following property.
\begin{itemize}
\item[($\mathsf{V}$)] Two vertices either coincide or have empty intersection, and both cases occur.
\end{itemize}
If ($\mathsf{V}$) is not valid, then either all tubes have the same vertex $V$ or there are tubes $C$ and $C'$ such that their respective vertices $V$ and $V'$ are neither disjoint nor equal. In both cases, we have shown above that the projection from $V$ or from $V \cap V'$, respectively, yields a point-quadric variety with $(d,v')$-tubes with base $Q^0_d$ with $v'=-1$ or with $0 \leq v' \leq v$, respectively, which satisfies (H1), (H2$^*$) and, if $v'\geq 0$,  also ($\mathsf{V}$) applies. 
We deal with those two possibilities separately. 

\section{The case $v=-1$: Ordinary Veronese sets}\label{MM}
In this section we treat the case of Main Result~\ref{main3}$(i)$ by showing Main Result~\ref{main2}, and then use Proposition~\ref{vertexred}.  We will use  a characterisation of the ordinary Veronese representations $\mathcal{V}_2(\K,\B)$ of a projective plane $\PG(2,\B)$ over a quadratic alternative division algebra $\B$ over $\K$ by means of Mazzocca-Melone axioms by O. Krauss, J. Schillewaert and the second author \cite{Kra-Sch-Mal:15}.  

\subsection{The general set-up}

So let $X$ be a spanning point set of $\PG(N,\K)$, $N>d+1$ (possibly infinite), and let $\Xi$ be a collection of $(d+1)$-dimensional projective subspaces of $\PG(N,\K)$  (called the \emph{elliptic spaces}) such that, for any $\xi \in \Xi$, the intersection $\xi \cap X$ is a $Q^0_d$-quadric $X(\xi)$ whose points span $\xi$. The tangent spaces $T_x(X(\xi))$ are also denoted by $T_x(\xi)$ as before and the subspace spanned by all such tangent spaces through $x$ is again called the \emph{tangent space $T_x$ of $x$}, i.e., $T_x=\<T_x(\xi) \mid x\in \xi \in \Xi\>$. For such a quadric $Q$, we denote by $\Xi(Q)$ the unique member of $\Xi$ containing $Q$. We assume that $(X',\Xi)$ satisfies the following two properties:

\begin{enumerate}
\item[{(MM1)}]  each pair of distinct points $x'_1,x'_2 \in X'$ is contained in some element of $\Xi'$, and
\item[{(MM2$^*$)}] the intersection of each pair of distinct elements of $\Xi'$ is precisely a point of $X'$. 
\end{enumerate}

Seeing the different nature of the results depending on $|\K|$, it should not surprise that we divide the proof into two cases.

\subsection{The case $|\K|>2$} \label{k=2}

Our aim is to show that $(X,\Xi)$ satisfies the following properties. 
\begin{itemize}
\item[(MM1)] Any two distinct points $x_1$ and $x_2$ of $X$ lie in a element of $\Xi$;
\item[(MM2)] for any two distinct members $\xi_1$ and $\xi_2$ of $\Xi$, the intersection $\xi_1 \cap \xi_2$ belongs to $X$;
\item[(MM3)] for any $x\in X$ and any three distinct members $\xi_1$, $\xi_2$ and $\xi_3$ of $\Xi$ with $x\in \xi_1\cap \xi_2 \cap \xi_3$ we have $T_x(\xi_3) \subseteq \< T_x(\xi_1), T_x(\xi_2)\>$.
 \end{itemize}
 
 Indeed, if $(X,\Xi)$ satisfies (MM1) up to (MM3), then $(X,\Xi)$ is projectively equivalent to $\mathcal{V}_2(\K,\B)$, with $\B$ a quadratic alternative division algebra $\B$ over $\K$  with $\dim_\K(\B)=d$, as follows from a characterisation of the ordinary Veronese representations $\mathcal{V}_2(\K,\B)$ of a projective plane $\PG(2,\B)$ over such an algebra $\B$, a by means of Mazzocca-Melone axioms by O.\ Krauss, J.\ Schillewaert and H.\ Van Maldeghem in \cite{Kra-Sch-Mal:15}.  Conversely, then it is easily verified that, for each such algebra $\B$, the corresponding Veronese representation $\mathcal{V}_2(\K,\B)$ satisfies our axiom (MM2$^*$). This then shows Main Result~\ref{main2} in case $|\K|>2$ (if $|\K|=2$, (MM3) not necessarily holds). 
 
\par\bigskip
Fix an elliptic space $\xi \in \Xi$ and put $Q=X(\xi)$; let $F$ be a subspace of $\PG(N,\K)$ complementary to $
\xi$. We denote by $\rho: \PG(N,\K) \rightarrow F$ the projection operator that projects from $\xi$ onto $F$.

\begin{lemma}\label{injective}
The projection $\rho$ is injective on $X \setminus Q$. \end{lemma}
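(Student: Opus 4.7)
The plan is a short proof by contradiction, exploiting both axioms \emph{(MM1)} and \emph{(MM2$^*$)} together with the defining no-three-collinear property of an ovoid (the quadric $Q^0_d$).

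Suppose for contradiction that there exist distinct $x_1,x_2\in X\setminus Q$ with $\rho(x_1)=\rho(x_2)$. Since $\rho$ is the projection from $\xi$ onto a complementary subspace $F$, this equality forces the line $\ell=\langle x_1,x_2\rangle$ to meet $\xi$ in some point $p$. First I would apply \emph{(MM1)} to the pair $x_1,x_2$: it yields an element $\xi'\in\Xi$ containing both points, and then necessarily $\ell\subseteq\xi'$. Note that $\xi'\neq\xi$, since $x_1\notin Q=X(\xi)$. Hence $p\in\xi\cap\xi'$, and by \emph{(MM2$^*$)} the intersection $\xi\cap\xi'$ is a single point of $X$; in particular $p\in X$, and therefore $p\in X\cap\xi'=X(\xi')$.

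But now $x_1,x_2,p$ are three distinct collinear points of $X(\xi')$, contradicting the defining property of an ovoid (which, by definition of $Q^0_d$, contains no triple of collinear points). This contradiction shows that $\rho$ is injective on $X\setminus Q$.

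The only point that needs a brief check is that the line $\ell$ really does meet $\xi$ (equivalently, the three points $x_1,x_2,p$ are distinct); this follows because $x_1,x_2\notin\xi$ while their projections from $\xi$ coincide, forcing $\ell\cap\xi$ to be a single point different from $x_1$ and $x_2$. I do not expect any genuine obstacle in this argument: the role of \emph{(MM2$^*$)} (rather than the weaker \emph{(MM2)} that merely says the intersection lies in $X\cup Y$) is precisely what allows us to promote $p$ to a point of $X$, and from there the ovoid axiom closes the argument immediately.
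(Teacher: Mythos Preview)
Your proof is correct and follows essentially the same approach as the paper's own argument: assume two points of $X\setminus Q$ have the same projection, deduce that the line through them meets $\xi$ in a point, use (MM1) and (MM2$^*$) to conclude that this intersection point lies in $X$, and obtain three collinear points on an ovoid. The only minor quibble is your aside about a ``weaker (MM2)'' involving $X\cup Y$: in this $v=-1$ setting there is no $Y$, and the paper's (MM2) simply asks that $\xi_1\cap\xi_2\subseteq X$ without requiring it to be a single point; but this remark is tangential and does not affect your argument.
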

\begin{proof}
Take two (distinct) points $p,q \in X\setminus Q$ and suppose for a contradiction that $\rho(p)=\rho(q)$. Then $\xi=\<Q\>$ is a hyperplane of $\<Q,p,q\>$, implying that the line $pq$ intersects $\xi$ in a point $z$. On the other hand, by (MM1$^*$), $p$ and $q$ are contained in a quadric $Q'$ which intersects $\xi$, by (MM2$^*$), in a point of $Q$. Since $z \in \xi \cap \Xi(Q')$, it follows from (MM2$^*$) that $z \in X$. Clearly, the points $p,q,z$ are all distinct, and hence  the line $pq$ in $\Xi(Q')$ contains three points of $X$, a contradiction. The assertion follows.
\end{proof}

\begin{lemma}\label{image}
The image $\rho(X\setminus Q)$ is an affine subspace $A$ whose projective completion equals~$F$. 
\end{lemma}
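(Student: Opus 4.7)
The proof will verify three things: (i) $A := \rho(X\setminus Q)$ spans $F$; (ii) every projective line containing at least two points of $A$ meets $A$ in exactly $|\K|$ of its $|\K|+1$ points (an ``affine closure'' property); and (iii) under the hypothesis $|\K|>2$, (i) and (ii) force $A$ to be the complement of a hyperplane in $F$, which is the conclusion of the lemma.

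For (i), since $X$ spans $\PG(N,\K)$, it suffices to show $X\setminus Q$ spans. If $U:=\<X\setminus Q\>$ were a proper subspace, pick $q\in Q\setminus U$; applying (MM1) to $q$ and any $x\in X\setminus Q$, we get a quadric $Q'\neq Q$ through both. Then $Q'\setminus\{q\}\subseteq X\setminus Q\subseteq U$, and since an ovoid is not contained in its tangent hyperplane at one of its points, $Q'\setminus\{q\}$ spans $\Xi(Q')$, forcing $\Xi(Q')\subseteq U$ and $q\in U$, a contradiction.

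For (ii), consider any $Q'\in\Xi$ with $Q'\neq Q$. By (MM2$^*$), $\Xi(Q')\cap\xi=\{q\}$ for some $q\in Q$, and projection from $q$ inside $\Xi(Q')$ sends the ovoid $Q'\setminus\{q\}$ bijectively onto the complement, in the $d$-subspace $\rho(\Xi(Q'))\subseteq F$, of the $(d-1)$-subspace $\rho(T_q(Q'))$; call this affine $d$-subspace $A_{Q'}\subseteq A$. Take distinct $a,b\in A$ with $a=\rho(p)$, $b=\rho(p')$; (MM1) and (MM2$^*$) give a unique quadric $Q''\neq Q$ through $p,p'$, so $a,b\in A_{Q''}$ and the line $ab\subseteq\rho(\Xi(Q''))$ meets the hyperplane at infinity of $A_{Q''}$ in a unique point $z$, giving $ab\setminus\{z\}\subseteq A_{Q''}\subseteq A$. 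To see $z\notin A$, suppose $z=\rho(p'')$ for some $p''\in X\setminus Q$ and let $Q^*$ be the unique quadric through $p$ and $p''$. If $Q^*=Q''$ then $z\in A_{Q''}$, a contradiction; otherwise (MM2$^*$) gives $\Xi(Q^*)\cap\Xi(Q'')=\{p\}$, hence $A_{Q^*}\cap A_{Q''}=\{a\}$, but also $a,z\in A_{Q^*}$ forces $|ab\cap A_{Q^*}|=|\K|$, so inclusion-exclusion on the $(|\K|+1)$-point line $ab$ yields $2|\K|-1\leq|\K|+1$, i.e., $|\K|\leq 2$, contradicting the hypothesis $|\K|>2$.

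For (iii), $A$ spans $F$ and every line with two $A$-points has exactly $|\K|$ of its points in $A$. A standard recognition argument shows $A=F\setminus H$ for a unique hyperplane $H\subset F$: restrict to a plane spanned by three non-collinear $A$-points, rule out tangent lines through $A$-points in that plane by a counting contradiction valid for $|\K|>2$, and patch planes together to conclude that $F\setminus A$ is closed under taking lines and is therefore a subspace of the cardinality of a hyperplane. The \emph{main obstacle} is precisely this last step, where the ``affine closure'' property must be elevated to a genuine affine-subspace structure, and for which the hypothesis $|\K|>2$ is essential.
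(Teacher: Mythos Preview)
Your proposal is correct and follows essentially the same route as the paper: both first show that every secant to $A=\rho(X\setminus Q)$ is an affine line (you via an inclusion--exclusion count on two affine $d$-spaces $A_{Q^*},A_{Q''}$, the paper via Lemma~\ref{injective} applied to two affine lines $L_{pq},L_{pr}$), and then reduce to a planar recognition argument that hinges on $|\K|>2$. Both you and the paper treat the final passage from the planar result to the global affine-subspace conclusion as routine; your counting sketch for the planar step (ruling out tangent lines at $A$-points) is a valid alternative to the paper's direct argument that the line $y_qy_r$ carries no point of $A$.
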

\begin{proof}
We first prove that the image is an affine space. Let $Q'$ be any quadric distinct from $Q$. By (MM2$^*$),  $Q \cap Q'=\xi \cap \Xi(Q')$ contains exactly one point, say $z$. So $\rho(Q')$ is given by projecting $Q'$ from $z$ and as such it is an affine $d$-space, whose projective $(d-1)$-space at infinity corresponds to $\rho(T_z(Q'))$. 

Consequently, (MM1$^*$) implies that each two points $\rho(p), \rho(q)$, with $p,q   \in X\setminus Q$, are contained in an affine line $L_{pq}$ of $\rho(X([p,q]))$. Let $y$ be the point on $\rho(p)\rho(q)$ not contained in $\rho(X([p,q]))$, i.e., $L_{pq} \cup \{y\} = \rho(p)\rho(q)$. If $y = \rho(r)$ for some $r \in X\setminus Q$, then $\rho(p)\rho(q)=\rho(p)\rho(r)$, and as $|\K| > 2$, this yields at least two points in $L_{pq} \cap L_{pr}$, and by injectivity, their inverses belong to  $X[p,q] \cap X[p,r]$, which only contains one point by (MM1$^*$), a contradiction. This shows $y \notin \rho(X\setminus Q)$. The set $Y:=\{ \rho(p)\rho(q) \setminus L_{pq} \mid p,q \in X\setminus Q, p \neq q\}$ thus belongs to $F \setminus \rho(X\setminus Q)$. 

Now take three points $p,q,r$ in $\rho(X\setminus Q)$ which are not on a line. We claim that $\<p,q,r\> \cap \rho(X\setminus Q)$ is an affine plane (whose projective completion equals $\<p,q,r\>$). Let $y_q$ be the unique point of $Y$ on $pq$ and $y_r$ the unique point of $Y$ on $pr$. By the previous paragraph we already know that a line containing two points of $\rho(X\setminus Q)$ has all but one points in $\rho(X\setminus Q)$, the remaining point being contained in $Y$.  In particular,  the line $y_qy_r$ contains at most one point in $\rho(X\setminus Q)$. Suppose $y_qy_r$ contains a unique point $x \in \rho(X\setminus Q)$. Then $px$ contains a unique point $y \in Y$ through which there is a line $L$ intersecting $pq\setminus\{p,y_q\}$ and $pr\setminus\{p,y_r\}$, since $|\K|>2$. Clearly, $L \neq px$, so $L$ intersects $y_qy_r$ in a point distinct from $x$ and hence $L$ contains at least two points of $\rho(X\setminus Q)$ and two points of $Y$, a contradiction.  Hence all points of $y_qy_r$ belong to $Y$. Now each point $v \in \<p,q,r\>\setminus y_qy_r$ is on a line containing at least two points of $(pq \cup qr \cup rp) \cap X$, implying $v\in\rho(X\setminus Q)$. The claim is proved.

It follows that $\rho(X\setminus Q)$ is an affine subspace of $F$ and, as $X$ spans $\PG(N,\K)$, $\rho(X\setminus Q) \cup Y=F$.
\end{proof}

We keep referring to the projective space at infinity of $\rho(X\setminus Q)$ in $F$ as $Y$. 
\begin{lemma}\label{partition}
Let $x$ be a point of $X\setminus Q$.  For distinct points $p,q$ of $Q$, $\rho(T_p([p,x])) \cap \rho(T_q([q,x]))$ is empty and $\bigcup_{p \in Q} \rho(T_p([p,x]))=Y$.
\end{lemma}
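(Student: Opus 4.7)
The plan is to exploit the fact, already observed in the proof of Lemma~\ref{image}, that for each $p\in Q$ the image $\rho(X([p,x])\setminus\{p\})$ is an affine $d$-space in $F$ through $\rho(x)$ whose unique hyperplane at infinity is the $(d-1)$-dimensional subspace $\rho(T_p([p,x]))$. This rests on the observation that $T_p([p,x])\cap\xi=\{p\}$, which follows from (MM2$^*$) applied to $[p,x]$ and $\xi$ together with the fact that $[p,x]$ is a $Q^0_d$-quadric whose tangent hyperplane at $p$ meets it only in $p$. In particular each $\rho(T_p([p,x]))$ is contained in $Y$, the hyperplane at infinity of the full affine space $\rho(X\setminus Q)$ (Lemma~\ref{image}).

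To prove the disjointness assertion, I would assume towards a contradiction that some $z$ lies in both $\rho(T_p([p,x]))$ and $\rho(T_q([q,x]))$ with $p\neq q$, and consider the projective line $\ell=\rho(x)z$ in $F$. Since $Y$ is a hyperplane of $F$ and $\rho(x)\notin Y$, the point $z$ is the only point of $\ell$ in $Y$, so every $w\in\ell\setminus\{z\}$ lies in $\rho(X\setminus Q)$. Moreover, $\ell$ is contained in the projective completion of each of $\rho(X([p,x])\setminus\{p\})$ and $\rho(X([q,x])\setminus\{q\})$, so $w=\rho(x'_p)=\rho(x'_q)$ for some $x'_p\in X([p,x])$ and $x'_q\in X([q,x])$. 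Invoking the injectivity of $\rho$ on $X\setminus Q$ (Lemma~\ref{injective}) and (MM2$^*$) applied to the distinct quadrics $[p,x]$ and $[q,x]$, one obtains $x'_p=x'_q\in[p,x]\cap[q,x]=\{x\}$. Hence every $w\in\ell\setminus\{z\}$ equals $\rho(x)$, which is impossible as $|\ell|\geq 3$.

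For the covering statement, I would fix $z\in Y$ and again look at $\ell=\rho(x)z$. Choosing any $w\in\ell\setminus\{\rho(x),z\}$ (which exists since $|\K|\geq 2$), the same argument gives $w=\rho(x')$ for some $x'\in X\setminus Q$ with $x'\neq x$. By (MM1) there is a quadric $C\in\Xi$ through $x$ and $x'$; by (MM2$^*$) applied to $C$ and $\xi$, the intersection $C\cap Q$ is a single point $p$, so $C=[p,x]$. The affine $d$-space $\rho(X([p,x])\setminus\{p\})$ now contains both $\rho(x)$ and $w$, so its projective completion contains $\ell$, and the point at infinity $z$ of $\ell$ therefore lies in $\rho(T_p([p,x]))$. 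Together with the inclusion $\rho(T_p([p,x]))\subseteq Y$ noted above, this yields the desired equality.

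I expect the main obstacle to be purely notational: keeping straight which projective completion plays the role of the ambient $d$-space, and ensuring that the ``one point at infinity'' interpretation applies uniformly because $Y$ is a hyperplane of $F$. Once the dictionary ``point at infinity of $\rho(X([p,x])\setminus\{p\})$'' $=$ ``point of $\rho(T_p([p,x]))$'' is set up, both assertions collapse to a single observation about the line $\rho(x)z$ together with the injectivity of $\rho$ and the uniqueness statement in (MM2$^*$).
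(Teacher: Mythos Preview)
Your proposal is correct and follows essentially the same approach as the paper's proof. Both arguments hinge on the observation that the $d$-spaces $F_p:=\rho([p,x])$ pairwise meet only in $\rho(x)$ (which you establish by looking at the line $\rho(x)z$ and using injectivity plus (MM2$^*$), while the paper phrases it as ``otherwise $\rho(Q_p)\cap\rho(Q_q)$ would contain an affine line''), and both prove the covering by pulling back a generic point of the line $\rho(x)z$ to $X\setminus Q$ and identifying the quadric through $x$ and that preimage.
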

\begin{proof}
Put $Q_p=X([p,x])$ and $F_p:=\rho([p,x])$, i.e., $F_p=\rho(Q_p) \cup \rho(T_p([p,x]))$; likewise $Q_q=X([q,x])$ and $F_q :=\rho([q,x])$. 
As $\rho$ is injective by Lemma~\ref{injective}, (MM2$^*$) implies that $\rho(Q_p) \cap \rho(Q_q)$ is exactly $\rho(x)$. 
Moreover, this also implies that $F_p \cap F_q =\rho(x)$, as otherwise $F_p \cap F_q$ contains a line through $\rho(x)$ and then $\rho(Q_p) \cap\rho(Q_q)$ would be an affine line through $\rho(x)$, a contradiction. It follows that $\rho(T_p(Q_p)) \cap \rho(T_q(Q_q))$ is empty. 

Now take $y \in Y$ arbitrary. Let $r$ be a point of $F \setminus Y$ on the line $\rho(x)y$ and put $r' = \rho^{-1}(r)$ (which is well defined by Lemmas~\ref{injective} and~\ref{image}). Then $[x,r'] \cap Q$ is a point $r''$ and we obtain that $y \in \rho(T_{r''}[r'',x])$. Note that any point $r'$ on $\rho(x)y$ would yield the same point $r''$ by the previous paragraph.
\end{proof}

\begin{lemma}\label{equalT}
Let $p$ be a point in $Q$. Then $\rho(T_p(Q_1))=\rho(T_p(Q_2))$ for all quadrics $Q_1$ and $Q_2$ distinct from $Q$ and with $p \in Q_1 \cap Q_2$. 

\end{lemma}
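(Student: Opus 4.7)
The plan is to reformulate the claim geometrically and then deduce it from a parallelism argument. Writing $A_i := \rho(Q_i \setminus \{p\})$, Lemma~\ref{injective} ensures that $A_i$ is a genuine affine $d$-dimensional subset sitting inside the projective $d$-space $\rho(\Xi(Q_i))$ of $F$ (using that $\Xi(Q_i) \cap \xi = \{p\}$, which follows from (MM2$^*$)); the subspace $\rho(T_p(Q_i))$ is precisely the hyperplane at infinity of $A_i$, and it lies entirely in $Y$ by Lemma~\ref{partition}. Hence the desired conclusion is equivalent to the assertion that $A_1$ and $A_2$ are parallel affine $d$-subspaces of $A$. I will prove this by fixing an arbitrary $w \in \rho(T_p(Q_1))$ and establishing $w \in \rho(T_p(Q_2))$; the reverse inclusion is symmetric, so by dimension the two $(d-1)$-dimensional subspaces then coincide.

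First I set up the configuration. Choose $r_1 \in Q_1 \setminus \{p\}$ and $r_2 \in Q_2 \setminus \{p\}$, and let $Q_3 := [r_1, r_2]$ be the unique quadric through these two points, given by (MM1) together with uniqueness from (MM2$^*$). I check that $Q_3 \cap Q = \{q\}$ with $q \neq p$: indeed, if $q = p$ then uniqueness would force $Q_3 = [p, r_1] = Q_1$ and simultaneously $Q_3 = [p, r_2] = Q_2$, contradicting $Q_1 \neq Q_2$. Now apply Lemma~\ref{partition} to $r_1$: since $w$ lies in the $p$-cell $\rho(T_p(Q_1))$ of $\Psi_{r_1}$, every point $s$ on the affine line $\rho(r_1) w \cap A$ satisfies $\rho^{-1}(s) \in Q_1$, so the whole affine line $\rho(r_1) w \setminus \{w\}$ is contained in $A_1$. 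I aim to show the analogous statement for $r_2$ in place of $r_1$.

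To transfer information from $r_1$ to $r_2$, I work inside the $2$-plane $\pi := \<\rho(r_1), \rho(r_2), w\>$ of $F$: here $\rho(r_1)w$ and $\rho(r_2)w$ are two affine-parallel lines (common direction $w$ at infinity), and since $|\K| > 2$ I can pick $s \in \rho(r_1)w \cap A$ and $s' \in \rho(r_2)w \cap A$ such that $ss'$ is parallel in $\pi$ to the line $\rho(r_1)\rho(r_2)$, producing an affine parallelogram. The common direction $w'$ of $\rho(r_1)\rho(r_2)$ and $ss'$ lies in $\rho(T_q(Q_3))$ by Lemma~\ref{partition} (applied to either $r_1$ or $r_2$, these partitions both having $q$-cell equal to $\rho(T_q(Q_3))$ since $[q, r_1] = [q, r_2] = Q_3$). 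Setting $\tilde s := \rho^{-1}(s) \in Q_1$ and $\tilde{s'} := \rho^{-1}(s')$, applying Lemma~\ref{partition} to $\tilde s$ with the line in direction $w'$ identifies $[\tilde s, \tilde{s'}] \cap Q$; combining this with the partition $\Psi_{r_2}$ evaluated at the direction $w$ and with the uniqueness of quadrics through two points supplied by (MM2$^*$) then forces $\tilde{s'} \in Q_2$. Since $s'$ was arbitrary on $\rho(r_2)w \setminus \{w\}$, this line is contained in $A_2$, whence $w \in \rho(T_p(Q_2))$. The main obstacle is precisely the last bookkeeping step: one must combine the parallelogram structure in $\pi$ with the partial coincidence of partition cells (the $q$-cells of $\Psi_{r_1}$ and $\Psi_{r_2}$ agree by construction of $Q_3$) to pin down the quadric $[\tilde s, \tilde{s'}]$ and thereby force $\tilde{s'} \in Q_2$; the hypothesis $|\K|>2$ is essential because it provides the extra points on each line required to form a nondegenerate affine parallelogram.
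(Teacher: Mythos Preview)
Your proposal has a genuine gap at the step you yourself flag as ``the main obstacle''. To identify $[\tilde s,\tilde{s'}]\cap Q$ via Lemma~\ref{partition} applied to $\tilde s$, you need to know in which cell of $\Psi_{\tilde s}$ the direction $w'$ lies. But all you have established is that $w'$ lies in the $q$-cell $\rho(T_q(Q_3))$ of the partition $\Psi_{r_1}$ (because $[q,r_1]=Q_3$). Unless $\tilde s=r_1$, the quadric $[q,\tilde s]$ is in general different from $Q_3$, and the assertion $w'\in\rho(T_q([q,\tilde s]))$ is \emph{exactly} an instance of the lemma you are proving (with $q$ in place of $p$). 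The follow-up claim that this ``forces $\tilde{s'}\in Q_2$'' is similarly unmotivated: even granting $[\tilde s,\tilde{s'}]\cap Q=q$, this only yields $\tilde{s'}\in[q,\tilde s]$, and there is no visible mechanism linking that quadric to $Q_2=[p,r_2]$.

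The paper's argument bypasses the parallelogram entirely and is a two-line contradiction. Assume $w\in\rho(T_p(Q_1))\setminus\rho(T_p(Q_2))$ and pick any $x_2\in Q_2\setminus\{p\}$. By Lemma~\ref{partition} applied to $x_2$, there is $p'\in Q\setminus\{p\}$ with $w\in\rho(T_{p'}([p',x_2]))$. Set $Q_3:=[p',x_2]$; by (MM2$^*$) it meets $Q_1$ in a point $x_1\neq p$. Now $Q_1=[p,x_1]$ and $Q_3=[p',x_1]$ are two quadrics through $x_1$ meeting $Q$ at distinct points, yet $w$ lies in both $\rho(T_p(Q_1))$ and $\rho(T_{p'}(Q_3))$---contradicting Lemma~\ref{partition} applied to $x_1$. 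This is precisely the incidence your parallelogram was meant to engineer, obtained directly without the auxiliary points $s,s'$, the direction $w'$, or the assumption $|\K|>2$.
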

\begin{proof}
Suppose for a contradiction that $\rho(T_p(Q_1)) \neq \rho(T_p(Q_2))$ for two quadrics $Q_1$ and $Q_2$ distinct from $Q$ with $p \in Q_1 \cap Q_2$. Then there is a point $y \in \rho(T_p(Q_1)) \setminus \rho(T_p(Q_2))$. Let $x_2$ be a point in $Q_2 \setminus \{p\}$. By Lemma~\ref{partition}, $y$ belongs to $\rho(T_{p'}([p',x_2]))$ for some $p' \in Q$ with $p' \neq p$. By (MM2$^*$), $Q_3=X([p',x_2])$ intersects $Q_1$ in a point $x_1$, and $x_1 \neq p$ since $x_2 \notin Q$. Then $Q_1$ and $Q_3$ are two different quadrics through $x_1$, and $y \in \rho(T_p(Q_1)) \cap \rho(T_{p'}([x_2,p']))$, whereas this intersection should be empty  according to Lemma~\ref{partition}. This contradiction shows the lemma. 
\end{proof}

\begin{lemma}\label{mm3}
Let $p$ be any point in $Q$. For any member $\xi' \in \Xi\setminus\{\xi\}$ with $p \in \xi \cap \xi'$,  $T_p = \<T_p(\xi),T_p(\xi')\>$. \end{lemma}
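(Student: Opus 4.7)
The plan is to prove the stronger statement that every tangent space $T_p(\xi'')$ with $\xi''\in\Xi$ and $p\in\xi''$ lies in $\<T_p(\xi),T_p(\xi')\>$; combined with the trivial reverse inclusion, this gives $T_p=\<T_p(\xi),T_p(\xi')\>$. The cases $\xi''\in\{\xi,\xi'\}$ are immediate, so assume $\xi''\notin\{\xi,\xi'\}$.

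First I would invoke Lemma~\ref{equalT} from two different viewpoints. Applied to the quadrics $X(\xi')$ and $X(\xi'')$ (both distinct from $Q=X(\xi)$ and both containing $p$), it gives $\rho(T_p(\xi'))=\rho(T_p(\xi''))$, whence $T_p(\xi'')\subseteq\<\xi,T_p(\xi')\>$. The entire chain of arguments leading to Lemma~\ref{equalT} makes no use of the specific choice of the fixed quadric, so applying the same argument with $\xi'$ in the role of $\xi$ gives $T_p(\xi'')\subseteq\<\xi',T_p(\xi)\>$. Hence
\[T_p(\xi'')\subseteq\<\xi,T_p(\xi')\>\cap\<\xi',T_p(\xi)\>.\]

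Second, I would show that this intersection equals $\<T_p(\xi),T_p(\xi')\>$ by a dimension count inside $\Lambda:=\<\xi,\xi'\>$. Since (MM2$^*$) yields $\xi\cap\xi'=\{p\}$, we have $\dim\Lambda=2d+2$. Because $T_p(\xi)$ and $T_p(\xi')$ are hyperplanes of $\xi$ and $\xi'$ respectively (being tangent hyperplanes to ovoids), and $T_p(\xi)\cap T_p(\xi')\subseteq\xi\cap\xi'=\{p\}$, the subspace $\<T_p(\xi),T_p(\xi')\>$ has dimension $2d$. The subspaces $\<\xi,T_p(\xi')\>$ and $\<\xi',T_p(\xi)\>$ both contain it and both have dimension $2d+1$, so each is a hyperplane of $\Lambda$. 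Picking any $q\in\xi\setminus T_p(\xi)$, a short independence check (using $\xi\cap\xi'=\{p\}$, so that $q\notin\<\xi',T_p(\xi)\>$) shows that the two hyperplanes are distinct; hence they meet exactly in $\<T_p(\xi),T_p(\xi')\>$, as required.

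The main obstacle is really not conceptual but bookkeeping: one must verify that $T_p(\xi)$ is a hyperplane of $\xi$ (true because $X(\xi)$ is an ovoid), that $T_p(\xi)\cap T_p(\xi')=\{p\}$ (forced by $\xi\cap\xi'=\{p\}$), and that the two candidate hyperplanes of $\Lambda$ are genuinely distinct. The conceptual heart of the proof is simply Lemma~\ref{equalT} applied once from each of the two natural projection viewpoints; everything else is linear algebra.
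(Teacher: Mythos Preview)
Your proof is correct and uses essentially the same ingredients as the paper: Lemma~\ref{equalT} applied once with $\xi$ fixed and once with $\xi'$ fixed, followed by a subspace argument inside $\langle\xi,\xi'\rangle$. The paper packages the endgame slightly differently---it sandwiches $T_p$ between $\langle T_p(\xi),T_p(\xi')\rangle$ and $\langle\xi,T_p(\xi')\rangle$, notes the first is a hyperplane of the second, and eliminates the bad case after switching roles---rather than explicitly intersecting the two hyperplanes, but the logic is the same. One small caveat: your explicit dimension counts ($2d$, $2d+1$, $2d+2$) presuppose $d<\infty$, whereas the ambient setup allows $d=\infty$; your argument still goes through if rephrased in codimension (both $\langle\xi,T_p(\xi')\rangle$ and $\langle\xi',T_p(\xi)\rangle$ are genuine hyperplanes of $\Lambda$, and $\langle T_p(\xi),T_p(\xi')\rangle$ has codimension~$2$ in $\Lambda$), which is closer to how the paper states it.
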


\begin{proof}
By Lemma~\ref{equalT}, $\rho(T_p(Q'))=\rho(T_p(Q''))$ for all quadrics $Q',Q''$ distinct from $Q$. Now fix any quadric $Q' \neq Q$ through $p$. By definition of $T_p$ we have $\rho(T_p)=\rho(T_p(Q'))$. We obtain $\<T_p(Q),T_p(Q')\> \subseteq T_p \subseteq \<Q, T_p(Q'),\>$. Since $\<T_p(Q),T_p(Q')\>$ is a hyperplane of $\<Q, T_p(Q')\>$, we have that either $T_p = \<T_p(Q),T_p(Q')\>$, in which case the lemma is proven, or $T_p = \<Q,T_p(Q')\>$. So suppose we are in the latter case, in which $Q \subseteq T_p$. Then no quadric $Q'' \neq Q$ through $p$ can be contained in $T_p$, for otherwise $\Xi(Q) \cap \Xi(Q'')$ contains at least a line, a contradiction. Switching the roles of $Q$ and $Q'$, we  obtain that $\<T_p(Q),T_p(Q')\> \subseteq T_p \subseteq \<Q', T_p(Q)\>$, and the latter situation cannot occur since $Q\not\subseteq \<Q',T_p(Q)\>$. The lemma is proven.
\end{proof}

Since $Q$ and $p \in Q$ were arbitrary, it follows from Lemma~\ref{mm3} that $(X,\Xi)$ satisfies Axiom (MM3), finishing the proof of Theorem~\ref{main2} in case $|\K| >2$. 

\subsection{The case $|\mathbb{K}|=2$}

When there are only 3 points on a line, the above techniques fail and for a very good reason: We get more examples. As the field is finite, $Q^0_d$-quadrics only exist when $d=1,2$. We deal with those cases separately. 

Since we are working here in projective spaces of order 2, we can add points together: The sum of two points is the third point on the line determined by those two points. This additive structure, with additional neutral element $\emptyset$, where $a+a=\emptyset$, for each point $a$, is an elementary abelian 2-group (the additive group of the underlying vector space).  
 
\subsubsection{$d=1$}

Axioms (MM1) and (MM2$^*$) imply that $(X,\Xi)$, if existing, is as a point-line geometry isomorphic to a projective plane of order $2$ (i.e., $\PG(2,2)$) and hence contains seven points in total. 

\begin{prop}\label{K=2,d=1}
For any pair $(X,\Xi)$ satisfying \emph{(MM1)} and \emph{(MM2$^*$)}, with $X$ a spanning point set of $\PG(N,2)$, $N>2$, and $\Xi$ a set of planes, we have $N \in \{5,6\}$. If $N=5$ there are, up to projectivity, two possibilities---among which $\mathcal{V}_2(\F_2,\F_2)$; if $N=6$ then $X$ is any basis of $\PG(6,\K)$. 
\end{prop}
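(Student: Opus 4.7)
The plan is to encode $(X,\Xi)$ as seven nonzero vectors in $V=\F_2^{N+1}$ and translate the axioms into conditions on the space of linear dependencies. By (MM1) and (MM2$^*$), $(X,\Xi)$ is a projective plane of order $2$, so $|X|=|\Xi|=7$ and each conic is a $3$-element Fano line. Writing $X=\{v_1,\dots,v_7\}\subseteq V$ and letting $D\subseteq \F_2^7$ be the kernel of the natural surjection $e_i\mapsto v_i$, one has $\dim D=7-(N+1)$, so it suffices to show that $\dim D\in\{0,1\}$. The axioms translate as: (a) for every Fano line $L$, the characteristic vector $\chi_L\notin D$, so that the three $v_i$ with $i\in L$ span a plane; and (b) for every pair of distinct Fano lines $L_1,L_2$ (which always meet in a unique point) the five vectors $v_i$ with $i\in L_1\cup L_2$ are linearly independent, i.e.\ no nonzero $d\in D$ is supported on $L_1\cup L_2$.

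The combinatorial key is that the map $\{L_1,L_2\}\mapsto \{1,\dots,7\}\setminus(L_1\cup L_2)$ is a bijection from unordered pairs of distinct Fano lines to unordered pairs of points (both sets have size $21$), so every $5$-subset of $\{1,\dots,7\}$ equals $L_1\cup L_2$ for some pair $\{L_1,L_2\}$. Combined with (a), this forces every nonzero $d\in D$ to have weight in $\{6,7\}$: weight $1$ or $2$ would give $v_i=0$ or $v_i=v_j$; weight $3$ is excluded either by (a) (if $\mathrm{supp}(d)$ is a Fano line) or by (b) applied to two sides of the triangle $\mathrm{supp}(d)$; and weights $4$ and $5$ are excluded by (b), since a $4$-subset sits inside a $5$-subset $=L_1\cup L_2$ and a $5$-subset \emph{is} such a union. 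I then rule out $\dim D\ge 2$: if $d_1,d_2\in D$ are linearly independent, then $d_1+d_2$ must also have weight $\ge 6$; but a weight-$7$ plus a weight-$6$ vector has weight $1$, and two weight-$6$ vectors missing distinct coordinates sum to a weight-$2$ vector—both contradicting the weight restriction. Hence $\dim D\in\{0,1\}$, so $N\in\{5,6\}$.

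It remains to classify and verify. For $N=6$, $D=0$ so $X$ is any basis; conversely, any basis of $\PG(6,\F_2)$ satisfies the axioms, since the seven Fano triples then span seven distinct $3$-dimensional subspaces which pairwise intersect in the single common basis vector. For $N=5$, the two subcases $\mathrm{wt}(d)=7$ and $\mathrm{wt}(d)=6$ each yield a unique configuration up to projectivity (in the weight-$6$ case the identity of the excluded index is irrelevant because $\Aut(\text{Fano})=\PGL(3,\F_2)$ is transitive on points, and one easily checks that both configurations do satisfy (a) and (b)). A direct computation of the Veronese map $(x,y,z)\mapsto(x,y,z,yz,zx,xy)$ on the seven points of $\PG(2,\F_2)$ shows that the sum of the image points is zero in $\PG(5,\F_2)$, identifying the weight-$7$ configuration with $\cV_2(\F_2,\F_2)$. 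The main difficulty in the whole argument is the support analysis of paragraph two; everything else is essentially counting inside the Fano plane.
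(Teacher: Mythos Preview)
Your proof is correct and takes a genuinely different route from the paper's.

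The paper argues geometrically: it bounds the number of points of $X$ in a plane ($\le 3$) and in a $3$-space ($\le 4$), then rules out $N=4$ by a hands-on counting of points in general position, and finally splits $N=5$ into the two cases ``seven points are a frame'' versus ``six points are a frame of a hyperplane, the seventh outside''. Your approach is algebraic/coding-theoretic: you pass to the kernel $D\subseteq\F_2^7$ of $e_i\mapsto v_i$, translate (MM1) and (MM2$^*$) into the support conditions (a) and~(b), and observe that since every $5$-subset of the Fano plane is a union of two lines, every nonzero $d\in D$ must have weight $\ge 6$. A two-line parity argument then forces $\dim D\le 1$, giving $N\in\{5,6\}$ in one stroke, with the two $N=5$ configurations falling out as the weight-$7$ and weight-$6$ cases.

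What each buys: the paper's argument is self-contained and elementary but somewhat ad~hoc (the $N=4$ case is handled separately). Your argument is cleaner and more uniform---the bound $N\ge 5$, the classification for $N=5$, and the identification of $\mathcal{V}_2(\F_2,\F_2)$ as the weight-$7$ case all come from the same weight analysis. It also makes the projective uniqueness in each subcase transparent (frames are transitive; $\Aut(\mathrm{Fano})$ is point-transitive). One small point worth making explicit in your write-up: when you verify that the examples satisfy the axioms, you should note that no fourth point $v_r$ can lie in an elliptic plane $\langle v_p:p\in L\rangle$, since that would produce a dependency of weight $\le 4$; this ensures $\xi\cap X$ is exactly a $3$-point conic. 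Also, ``seven distinct $3$-dimensional subspaces'' should read ``seven distinct planes'' (you mean projective planes, i.e., $3$-dimensional vector subspaces).
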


\begin{proof}
Since there are only seven points, we readily obtain $N \leq 6$. Now by (MM2$^*$), we see that $N\geq 4$ and moreover this axiom implies that each plane of $\PG(N,2)$ contains at most three points of $X$ and each $3$-space of $\PG(N,2)$ at most four points of $X$ (indeed, any set of five points of a projective plane of order 2 forms exactly the set of points on two lines and hence spans a $4$-space of $\PG(N,2)$). 

First suppose $N=4$. We choose five points of $X$, which, by the above, form a basis of $\PG(4,2)$. Now the two remaining points of $X$ are not contained in any 3-space spanned by four points of the basis. But there is only one such point in $\PG(4,2)$. 
This contradiction rules out $N=4$. 

Next, suppose $N=5$. Since no line contains three points of $X$, no plane contains four points of $X$ and no $3$-space contains five points of $X$, there are only two options. Firstly, it could be that no $4$-space contains six points of $X$, in which case we obtain that the seven points of $X$ form a frame. Then $(X,\Xi)$ is projectively equivalent to $\mathcal{V}_2(\F_2,\F_2)$. Secondly, if there is a $4$-space $S$ containing six points of $X$ (seven is impossible by the previous paragraph), then these six points form a frame of $S$ and the seventh point of $X$ is a point outside $S$ forming a basis with any $5$ points of $S \cap X$. One easily checks that such a set satisfies the axioms (MM1) and (MM$^*$), no matter how we choose the elliptic spaces.   

Finally, suppose $N=6$. Then $X$ generates $\PG(N,\K)$ and hence is any basis of it. Also in this case, any choice of the elliptic spaces will do. 
\end{proof}

\subsubsection{$d=2$}
As each ovoid in $\PG(3,2)$ contains five points, it follows as before that the pair $(X,\Xi)$, as a point-line geometry, is a projective plane of order 4, hence containing $21$ points and as such isomorphic to $\PG(2,4)$. Clearly, each set of four points on an ovoid $O$ in $\PG(3,2)$ determines a basis of $\PG(3,2)$. Note that there is a unique frame of $\PG(3,2)$ containing this basis, which then coincides with $O$. More precisely, if we let $e_0$, $e_1$, $e_2$, $e_3$ be any four of its points, then the fifth point is $e_0 + e_1+e_2+e_3$. This will be the key observation to show the following proposition.

\begin{prop}\label{K=2,d=2} For any pair $(X,\Xi)$, where $X$ is a spanning point set of $\PG(N,2)$ with $N >3$ and $\Xi$ a family of $3$-spaces, satisfying \emph{(MM1)} and \emph{(MM2$^*$)}, we have $8 \leq N \leq 10$. If $N=10$ then $(X,\Xi)$ is projectively unique (and denoted by $\mathcal{M}^{10}(\F_2)$); if $N=9$ or $N=8$, then $(X,\Xi)$ results from projecting  $\mathcal{M}^{10}(\K)$ from a suitable point or line, respectively, and there is a unique such line that  gives  $\mathcal{V}_2(\F_2,\F_4)$. In both cases, $(X,\Xi)$ is projectively unique.
\end{prop}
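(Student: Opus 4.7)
The plan is to exploit two features of the situation: (i) the pair $(X,\Xi)$ is already known combinatorially (it is a projective plane of order $4$, hence $\PG(2,4)$), so it has exactly $21$ points and $21$ ovoids; and (ii) every ovoid in $\PG(3,2)$ is a $5$-point frame, so its five points sum to zero in the ambient $\F_2$-vector space. The first step would be to record this linear relation $\sum_{p\in\xi\cap X} p = 0$ (one relation per $\xi\in\Xi$), since it turns the embedding problem into a problem of computing a quotient of $\F_2^{21}$.

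Next I would establish $N\leq 10$ by constructing a universal model $\mathcal{M}^{10}(\F_2)$. Let $\widetilde V$ be the free $\F_2$-vector space on $X$, and let $R\subseteq\widetilde V$ be the subspace spanned by the $21$ ovoidal relations coming from lines of $\PG(2,4)$. The given embedding $X\hookrightarrow\PG(N,2)$ factors through $\widetilde V/R$, so $N+1 \leq \dim(\widetilde V/R)$. Using the classical fact that the $21\times 21$ point-line incidence matrix of $\PG(2,4)$ has $\F_2$-rank $10$ (this is where the connection with the extended binary Golay code, the large Witt design $S(24,5,8)$, and the Mathieu group $M_{24}$ enters and is what was alluded to in Section 1), we get $\dim(\widetilde V/R)=11$, hence $N\leq 10$. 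The case $N=10$ is the universal model itself and is therefore projectively unique; call it $\mathcal{M}^{10}(\F_2)$.

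For the lower-dimensional cases, any $(X,\Xi)$ with $N<10$ is the image of $\mathcal{M}^{10}(\F_2)$ under the projection from a subspace $S\subset\PG(10,2)$ of dimension $9-N$. For the projection to be valid one needs $S$ disjoint from $X$, from every secant $p+q$ with $p,q\in X$ distinct, and from every elliptic $3$-space $\xi\in\Xi$ (so that ovoids remain ovoids in $3$-spaces). Calling a subspace with these properties \emph{good}, the lower bound $N\geq 8$ amounts to showing that no good plane exists in $\mathcal{M}^{10}(\F_2)$; I would establish this by a short counting/geometric argument inside $\PG(10,2)$, using that three independent good directions would generate a plane meeting some $\xi$ non-trivially (since the $21$ elliptic spaces span $\PG(10,2)$ quite densely).

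Finally, projective uniqueness in each dimension is an orbit problem for the stabiliser $G$ of $X$ in $\mathrm{P\Gamma L}_{11}(\F_2)$ acting on good points and good lines. Since $G$ induces the full collineation group $\mathrm{P\Gamma L}_3(\F_4)$ of $\PG(2,4)$ on $(X,\Xi)$, one checks that $G$ has exactly one orbit of good points (yielding the unique $N=9$ example) and exactly one orbit of good lines contained in a tangent hyperplane-like configuration isolating $\mathcal{V}_2(\F_2,\F_4)$. The main obstacle is the last step: identifying precisely which $G$-orbit of good lines reproduces $\mathcal{V}_2(\F_2,\F_4)$ and showing no other good line exists in a different orbit. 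I expect this to be handled by computing $\rho(\PG(2,4))$ explicitly as a set of $21$ vectors in $\F_2^{11}$, pinning down the vertex to project from via its characterisation as the unique $G$-invariant line (or point) complementary to the span of a suitable subconfiguration, and then verifying directly that the resulting $8$-dimensional (respectively $9$-dimensional) variety is isomorphic to $\mathcal{V}_2(\F_2,\F_4)$ by matching it with the explicit Veronese map defined in Section~\ref{CD}.
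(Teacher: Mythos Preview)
Your overall architecture---universal pseudo-embedding of $\PG(2,4)$ in $\PG(10,2)$, then projection from admissible subspaces---matches the paper's strategy (and the paper itself acknowledges the link to De Bruyn's pseudo-embedding theory). However, two concrete errors prevent the argument from going through.

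First, your definition of a ``good'' subspace $S$ is too weak. Disjointness from $X$, from secants, and from each $\xi\in\Xi$ preserves (MM1) and the ovoid structure, but it does \emph{not} preserve (MM2$^*$). If $S$ meets $\langle\xi_1,\xi_2\rangle$ for some pair $\xi_1\neq\xi_2$ (which is a $6$-space, since $\xi_1\cap\xi_2$ is a single point), then after projection the two $3$-spaces meet in at least a line, and that line contains points not in $X$---so (MM2$^*$) fails. The correct condition, which the paper calls \emph{admissible}, is $S\cap\langle\xi_1,\xi_2\rangle=\emptyset$ for all pairs of blocks. This is what drives the paper's counting of non-admissible points via centres of $X$-triangles and $X$-quadrangles (Lemma~\ref{adm}), and it is strictly stronger than what you wrote.

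Second, and more seriously, your orbit counts are wrong. You claim $G\cong\mathsf{P\Gamma L}(3,4)$ has exactly one orbit on good points and one on good lines, yielding a unique example in each of $N=9$ and $N=8$. In fact the paper shows there is a distinguished $G$-invariant line $M=\bigcap_{x\in X}T_x$, and the $66$ admissible points fall into \emph{two} orbits (the $3$ points of $M$ and the $63$ points of $\langle M,x\rangle\setminus(M\cup\{x\})$ for $x\in X$); likewise the admissible lines fall into two orbits ($\{M\}$ and $63$ others). The resulting projections are genuinely non-isomorphic, distinguished by whether all tangent spaces $T_x$ drop dimension or only one does. So there are two examples for $N=9$ and two for $N=8$ (one of the latter being $\mathcal{V}_2(\F_2,\F_4)$, obtained precisely from $M$), consistent with Main Result~\ref{main2}. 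Your sketch for $N\geq 8$ (``three independent good directions would meet some $\xi$'') is also too vague; the paper obtains it from the explicit description of the admissible locus as $\bigcup_{x\in X}(\langle M,x\rangle\setminus\{x\})$, which one checks contains no plane.
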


We prove this proposition in a small series of lemmas. In the first lemma (Lemma~\ref{repr}) we consider all representations of $\PG(2,4)$  as point-block geometries in $\PG(N,2)$, such that blocks of $\PG(2,4)$ correspond to ovoids  in $3$-dimensional subspaces  of $\PG(N,2)$. Noting that an ovoid in $\PG(3,2)$ is a \emph{frame} (in general this is a set of $n+2$ points of an $n$-dimensional projective space such that each $n+1$ among them generate the space), the lemma is in fact about {pseudo embeddings} of $\PG(2,4)$. \emph{Pseudo embedding} of point-line geometries have been introduced and studied by De Bruyn \cite{Bru:12,Bru:13}. In Proposition~4.1 of~\cite{Bru:13}, he obtained that the universal pseudo-embeddings of $\PG(2,4)$ lives in $\PG(10,2)$ and an explicit (coordinate) construction has been given by him in Theorem 1.1 of~\cite{Bru:12}. Nevertheless we include our construction, which is in terms of a basis of $\PG(10,2)$  because we will rely on it in the lemmas thereafter to prove results in our more specific setting (in which (MM2) also holds). 

\begin{lemma}\label{repr} Let $(X,\Phi)$ be a pair with $X$ a spanning point set of $\PG(N,2)$, $N >3$ and $\Phi$ a family of ovoids in $3$-spaces, such that, with the natural incidence, $(X,\Phi)$ is a projective plane of order $4$. Then\begin{compactenum}[$(i)$] \item if $N=10$, then $(X,\Phi)$ is projectively unique and denoted by $\mathcal{M}^{10}(\F_2)$; and \item each such structure is the projection of $\mathcal{M}^{10}(\F_2)$.  \end{compactenum} In particular $N\leq 10$. Moreover, the stabiliser of $\mathcal{M}^{10}(\F_2)$ in $\PSL(11,2)$ is group isomorphic to $\mathsf{P\Gamma L}(3,4)$.  
%
\end{lemma}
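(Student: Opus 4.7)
The plan is to reformulate the lemma in the framework of pseudo-embeddings over $\F_2$, in the spirit of De Bruyn's work \cite{Bru:12,Bru:13}. Over $\F_2$, five pairwise distinct points of a $3$-space form an ovoid if and only if any four among them are linearly independent, equivalently, if and only if the five sum to zero. Hence any $(X,\Phi)$ as in the statement corresponds to an injective map $\phi:\mathcal{P}\to\PG(N,2)$ with spanning image, where $\mathcal{P}$ is the point set of $\PG(2,4)$, satisfying $\sum_{p\in\ell}\phi(p)=0$ for every line $\ell$ of $\PG(2,4)$.

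I would then introduce the universal object $V=\F_2^{\mathcal{P}}/R$, with $R$ spanned by the indicator vectors $\mathbf{1}_\ell$ of the $21$ lines of $\PG(2,4)$. By construction, every such $\phi$ factors uniquely as the tautological map $p\mapsto e_p+R$ composed with a surjective linear map $V\to\F_2^{N+1}$. Thus the lemma reduces to three facts: (i) $\dim V=11$, giving $N\le 10$; (ii) the tautological map is injective, yielding the universal pseudo-embedding which is defined to be $\mathcal{M}^{10}(\F_2)$; and (iii) the stabiliser of $\mathcal{M}^{10}(\F_2)$ in $\PSL(11,2)$ equals $\mathsf{P\Gamma L}(3,4)$.

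For (i) and (ii), De Bruyn (Proposition~4.1 of~\cite{Bru:13} and Theorem~1.1 of~\cite{Bru:12}) already gives both. Alternatively, one can construct $\mathcal{M}^{10}(\F_2)$ by hand: fix a quadrangle $\{p_1,p_2,p_3,p_4\}$ in $\PG(2,4)$, assign its points to four $\F_2$-independent vectors of $\PG(10,2)$, extend using seven further carefully chosen points to produce a basis of $\F_2^{11}$, and then verify by bookkeeping over the $21$ line-relations that the remaining $10$ points of $\mathcal{P}$ are forced, mutually distinct, and jointly span $\PG(10,2)$. Injectivity amounts to showing that the $\F_2$-code spanned by the line-indicators contains no vector of weight at most $2$, which follows from the fact that any two lines of $\PG(2,4)$ meet in exactly one point, so that signed sums of line-indicators have strongly constrained weight profiles. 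For (iii), any element of $\PSL(11,2)$ stabilising $X$ permutes the $21$ points of $X$ and maps ovoid-carrying $3$-spaces to ovoid-carrying $3$-spaces, hence permutes $\Phi$ and induces an automorphism of $\PG(2,4)$, yielding an injection into $\mathsf{P\Gamma L}(3,4)$ (injective since $X$ spans $\PG(10,2)$). Conversely, every element of $\mathsf{P\Gamma L}(3,4)$ permutes the generators $e_p$ of $V$ and preserves $R$, hence lifts to an $\F_2$-linear automorphism of $V$ by the universal property.

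Uniqueness at $N=10$ is then immediate: any pseudo-embedding into $\PG(10,2)$ with spanning image is linearly isomorphic, hence projectively equivalent, to $\mathcal{M}^{10}(\F_2)$. For $N<10$, every pseudo-embedding arises as a quotient of $V$ by some $\F_2$-subspace $U$ disjoint from the set $\{e_p+R\mid p\in\mathcal{P}\}$, equivalently as a projection of $\mathcal{M}^{10}(\F_2)$ from the corresponding subspace $\<U\>$ of $\PG(10,2)$ avoiding $X$. The main obstacle is the explicit verification that $\dim V=11$ together with injectivity of the tautological map; beyond invoking De Bruyn, a self-contained proof requires a careful choice of $11$ ``basis points'' in $\PG(2,4)$ and bookkeeping of the line-relations and of the resulting incidence pattern among the $21$ image points. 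Once this is in place, projective uniqueness, the quotient description for $N<10$, and the stabiliser calculation follow essentially formally.
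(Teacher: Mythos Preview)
Your proposal is correct and follows the standard pseudo-embedding framework of De Bruyn, which the paper in fact explicitly acknowledges as an alternative (citing \cite{Bru:12,Bru:13}) before deliberately choosing a different, more explicit route.

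The paper's approach is concrete rather than categorical: it selects two points $\circ,*\in X$, three blocks through each avoiding the other, labels the nine resulting intersection points $1,\ldots,9$, and then argues that the eleven points $\{\circ,*,1,\ldots,9\}$ determine every remaining point of $X$ as a sum of four already-constructed points (using that every block, being a frame of a $3$-space, has its fifth point forced as the sum of the other four). This simultaneously yields $N\le 10$, projective uniqueness at $N=10$, and the projection statement for $N<10$. The stabiliser is obtained by a direct count of the number of admissible choices for the ordered data $(\circ,*,\xi_1^*,\ldots)$, which equals $|\mathsf{P\Gamma L}(3,4)|$.

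Your universal-object argument is cleaner conceptually and your stabiliser computation via the functoriality of $V=\F_2^{\mathcal{P}}/R$ is arguably more elegant than the paper's enumeration. However, as you yourself note, the crux---$\dim V=11$ and injectivity of $p\mapsto e_p+R$---either appeals to De Bruyn or requires exactly the kind of explicit basis-and-bookkeeping that the paper carries out. The paper opts for the explicit version precisely because the resulting coordinate description (the labelled points, the $*$-, $\circ$-, and $\Sigma$-triples, etc.) is reused heavily in the subsequent Lemmas~\ref{M}--\ref{adm} to locate the special line $M$ and classify the admissible subspaces. Your abstract proof, while valid for the lemma in isolation, would not supply that machinery. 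One minor imprecision: for the quotient by $U$ to yield an \emph{injective} $\phi$ you need $U$ disjoint from all pairwise sums $e_p+e_q+R$, not merely from the $e_p+R$; but since the lemma only asserts that every example \emph{is} a projection (not which projections work), this does not affect the argument.
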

\begin{proof}
For convenience, we shall call a member of $\Phi$ a \emph{block}. So a block is a line of the projective plane $(X,\Phi)\cong\PG(2,4)$, and at the same an ovoid in some $3$-space of $\PG(N,2)$. The unique block through two distinct points $a,b$ will be denoted by $[a,b]$, since the notation $ab$ will mean something else (namely, $a+b$). 

Let $\circ$ and $*$ be any two (distinct) elements of $X$. Take arbitrarily three blocks $\xi_1^*$, $\xi_2^*$ and $\xi_3^*$ through $*$ and not through $\circ$, and three arbitrary blocks $\xi^\circ_1$, $\xi^\circ_2$ and $\xi^\circ_3$ through $\circ$ but not through $*$, in such a way that the points $\xi_i^*\cap\xi_i^\circ$, $i=1,2,3$, are on a block (this can be achieved by possibly just interchanging $\xi_2^\circ$ and $\xi_3^\circ$). Then we claim that the nine intersection points of these blocks, together with $\circ$ and $*$, fully determine the pair $(X,\Phi)$, as a substructure of $\PG(N,2)$, and $X$ is contained in the span of these eleven points. In particular, $N \leq 10$. 

Let us label the nine intersection points of $\xi^*_i$ and $\xi^\circ_j$, $i,j\in\{1,2,3\}$, by the digits $1$ up to $9$ according to the picture below. Set $I=\{1,2,\ldots,9\}$.

\begin{figure}[ht]
\begin{center}\footnotesize
\begin{tikzpicture}[scale=0.5]
\foreach \i in {-10,-5,0,5,10}{
	 \node [inner sep=0.8pt,outer sep=0.8pt]  at (\i,0) {$\bullet$};
	 \node [inner sep=0.8pt,outer sep=0.8pt]  at (5+\i/2,5-\i/2) {$\bullet$};	
	 \node [inner sep=0.8pt,outer sep=0.8pt]  at (-5-\i/2,5-\i/2) {$\bullet$};	
        \draw[gray,very thin] (\i,0) -- (0,10);
        \draw[gray, very thin]  (5+\i/2,5-\i/2) -- (-10,0);

}

	 \node   at (-5,0) {$\bullet$};
	  \node [left]  at (-5,0) {$\circ123$};
	  
	  \node   at (0,0) {$\bullet$};
	  \node [left]  at (0,0) {$\circ456$};
	  
	  \node   at (5,0) {$\bullet$};
	  \node [left]  at (5,0) {$\circ789$};
	  
	  \node   at (2.5,7.5) {$\bullet$};
	  \node [above right]  at (2.5,7.5) {$147*$};

	  \node   at (5,5) {$\bullet$};
	  \node [above right]  at (5,5) {$258*$};
	  
	  \node   at (7.5,2.5) {$\bullet$};
	  \node [above right]  at (7.5,2.5) {$369*$};

	 	 \node [above left]  at (-2.5,7.5) {$\overline{348}$};	
		 
		  \node [above left]  at (-5,5) {$\overline{159}$};	
		  
		 \node [above left]  at (-7.5,2.5) {$\overline{267}$};
		 
		 \node [above right]  at (10,0) {$\Sigma$};	

	 \node [blue]  at (-10,0) {$\bullet$};
	  \node [left, blue]  at (-10,0) {$*$};
	 
	 \node [blue]  at (0,10) {$\bullet$};
	 \node [above, blue] at (0,10) {$\circ$};

	 \node [blue]  at (-2.85,4.3) {$\bullet$};
	  \node [ left,blue]  at (-2.85,4.3)  {$1$};
	  
	   \node [blue]  at (-4,2) {$\bullet$};
	   \node [ left, blue]  at (-4,2) {$2$};
	   
	   \node [blue] at (-4.6,0.75) {$\bullet$};
	   \node [left, blue]  at (-4.6,0.75) {$3$};
	   
	   \node [blue]  at (0,6) {$\bullet$};
	   \node [above, blue]  at (0,6) {$4$};
		    
	   \node [blue]  at (0,3.35) {$\bullet$};
	   \node [above, blue]  at (0,3.35) {$5$};
	   
	  \node [blue]  at (0,1.45) {$\bullet$};
	   \node [above, blue]  at (0.1,1.45) {$6$};
	   
	   \node [blue]  at (1.55,6.9) {$\bullet$};
	   \node [above, blue]  at (1.55,6.9) {$7$};
	   
	   \node [blue]  at (2.85,4.3) {$\bullet$};
	   \node [above, blue]  at (2.85,4.3) {$8$};
	   
	   \node [blue]  at (4,2) {$\bullet$};
 	   \node [above, blue]  at (4,2) {$9$};
	   
	   \end{tikzpicture}
\end{center}
\caption{The projective plane $(X,\Xi)$ \label{fig1}}
\end{figure}
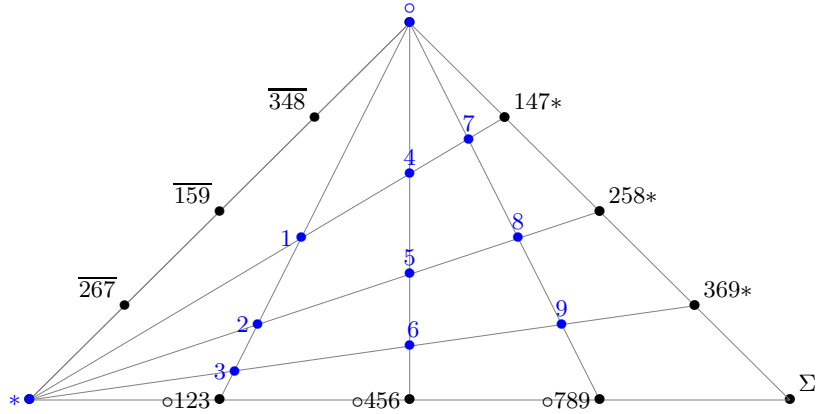

Since each of those six blocks now contains exactly four points of the set $I \cup \{*,\circ\}$, its fifth point is uniquely determined by their sum (and we denote $\circ+1+2+3$ as $\circ123$ and $1+4+7+*$ as $147*$---be aware that we use the elements of $I$ as mere symbols; in general we shall denote the sum of elements of $I\cup\{*\circ\}$ by juxtaposition; we overline a string of elements of $I\cup\{*,\circ\}$ if we mean the sum of the complement of the elements in the string). We obtain six additional points: $147*$, $258*$ and $369*$ (called \emph{$*$-triples}) on the blocks $\xi_1^*$, $\xi^*_2$ and $\xi^*_3$, respectively; and $\circ123$, $\circ456$ and $\circ789$ (called \emph{$\circ$-triples}) on $\xi^\circ_1$, $\xi^\circ_2$ and $\xi^\circ_3$. The $*$-triples are on a block $\xi^\circ_4$ through $\circ$ and the $\circ$-triples are on a block $\xi^*_4$ through $*$. From each of the blocks $\xi^\circ_4$ and $\xi^*_4$, four points are determined, and hence the remaining point is determined as well. For both blocks, this remaining point is $\circ123456789*=:\Sigma$. 

To define the three remaining points of $X$ (those in $[\circ,*]\setminus\{\circ,*\}$), we consider the blocks $[\Sigma,7]$, $[\Sigma,8]$ and $[\Sigma,9]$. By our assumption that $1,5$ and $9$ are on a block, the block $[\Sigma,7]$ contains the points $2,6,7$, the block $[\Sigma, 8]$ contains the points $3,4,8$ and, lastly, the block $[\Sigma,9]$ contains the points $1,5,9$. The fifth points on these blocks are $\overline{267}$, $\overline{348}$ and $\overline{159}$, respectively (these three are called the $\Sigma$-triples; as introduced above, $\overline{abc}$ denotes the sum of the complement of $\{a,b,c\}$ in the set $I \cup \{\circ, *\}$). The points $\overline{267}$, $\overline{159}$ and $\overline{348}$ lie on a block together with $\circ$ and $*$ and they do sum up to zero indeed. 

We need the nine remaining blocks of the projective plane $(X,\Psi)$ to conclude that this is well defined. This could be done by using coordinates, though we prefer to give the remaining blocks by reasoning as follows. 

For each point in $I$, we need two more blocks through it. Taking $1 \in I$ as an example, we note that the blocks through $1$ and $*$, $\circ$ and $\Sigma$, respectively, are given as follows: $[1,*]=\{1,4,7,*,147*\}$, $[1,\circ]=\{\circ,1,2,3,\circ123\}$, $[1,\Sigma]=\{1,5,9,\Sigma, \overline{159}\}$, so for each $\mathsf{x} \in \{*,\circ,\Sigma\}$, the $\mathsf{x}$-triple containing $1$ reveals which points are on the block $[1,\mathsf{x}]$. Since $6,8$ do not occur in any such triple,  the remaining blocks are $[1,6]$ and $[1,8]$, and they need to be distinct (there is no $*$-triple neither containing $1$ nor $6$ nor $8$). Hence the block $[1,6]$ has to contain $\mathsf{x}$-triples not containing $1$ and $6$, but there are exactly three such.  Consequently, there is only one possibility: $[1,6]=\{1,6,258*,\circ789,\overline{348}\}$. Likewise $[1,8]=\{1,8,369*,\circ456,\overline{267}\}$. These indeed have sum zero. In general, let $\{a,b\} \subseteq I$ be any pair that is, just like $\{1,6\}$ and $\{1,8\}$, not contained in any triple. Then for each $\mathsf{x} \in \{*,\circ,\Sigma\}$, there is a unique triple not containing $a$, nor $b$, which we denote by $T^\mathsf{x}(ab)$. For those pairs $\{a,b\}$ (for the record, these are all pairs occurring in $\{1,6,8\}$, in $\{2,4,9\}$ and in $\{3,5,7\}$) we define
\[ [a,b]:= \{a,b,T^*(ab),T^\circ(ab),T^\Sigma(ab)\}.\] A straightforward verification shows that each such block sums up to zero.

We now have 21 blocks, 5 through each point and one through each pair of distinct points, confirming that the above defined set of points and blocks indeed is the projective plane of order 4. Hence the set $I \cup \{*,\circ\}$ defines $(X,\Xi)$ entirely. In particular, $N \leq 10$.

Now let $N=10$. Then we can take for $I\cup\{*,\circ\}$ any basis of $\PG(10,2)$ and we obtain a unique example $\mathcal{M}^{10}(\F_2)$. Now, there are $21\cdot 20\cdot (4\cdot 3\cdot 2)\cdot (4\cdot 3\cdot 2)/2=|\mathsf{P\Gamma L}(3,4)|$ choices for the set $I\cup\{*,\circ\}$ in $X$. All these produce $X$ by the above algorithm in a unique way. Since a base change boils down to an element of $\PGL(11,2)$, this implies that the stabiliser of $X$ in $\PGL(11,2)$ has size at least $|\mathsf{P\Gamma L}(3,4)|$, and since the point-wise stabiliser must be trivial (as $X$ contains the frame $I\cup\{*,\circ,\Sigma\}$), we conclude that this stabiliser is isomorphic to $\mathsf{P\Gamma L}(3,4)$. 

Now define $\Xi$ as the family of $3$-spaces spanned by the members of $\Phi$, and still denote by $\mathcal{M}^{10}(\F_2)$ the pair $(X\Xi)$.  It is easy to verify that $\mathcal{M}^{10}(\F_2)$ satisfies (MM1) and (MM$2^*$): one only needs to verify (MM1) for one particular block, e.g., $\xi_1^*$  and (MM$2^*$) for two particular blocks, e.g., $\xi_1^*$ and $\xi_2^*$. 

Now let $N<10$. Then the 11 points $I\cup\{*,\circ\}$ are not linearly independent, and they are the projection of a base of $\PG(10,2)$ into $\PG(N,2)$, say from the subspace $U$. Since the rest of $X$ is determined uniquely by these eleven points by consecutively summing up sets of four already obtained points, the whole of $X$ is the projection from $U$ of $\mathcal{M}^{10}(\F_2)$. 

This completes the proof of the lemma.
\end{proof}

\par\bigskip
\textbf{The projective plane $(\mathcal{P}, \mathcal{L})\cong\PG(2,4)$}| For future reference, we give a brief description of the projective plane $(\mathcal{P}, \mathcal{L})$ that emerged in the above proof. Put $I=\{1,2,3,4,5,6,7,8,9\}$ and let $T^\circ=\{123,456,789\}$, $T^*=\{147,258,369\}$, $T^\Sigma=\{159,267, 348\}$ and $T=\{168,249,357\}$. For each pair $a,b$ occurring in a triple of $T$, and for each $\mathsf{x}\in\{\circ,*,\Sigma\}$, we let $T^\mathsf{x}(ab)$ be the unique element of $T^\mathsf{x}$ neither containing $a$, nor $b$. Then we have:

\[\mathcal{P} = I \cup \{\circ,*,\Sigma\} \cup \{\circ abc \mid \forall abc \in T^\circ\}  \cup \{abc* \mid \forall abc \in T^*\}  \cup \{\overline{abc} \mid \forall abc \in T^\Sigma\}\]
 \begin{multline*}
\mathcal{L} = \{ \{\circ abc,a,b,c,\circ\} \mid \forall \circ abc \in T^\circ\}  \cup \{ \{abc*,a,b,c,*\} \mid \forall abc* \in T^*\}  \\ \cup \{ \{\overline{abc},a,b,c,\Sigma\} \mid abc \in T^\Sigma\} 
 \cup \{ \{a,b,T^\circ(ab),T^*(ab),T^\Sigma(ab)\} \mid \forall abc \in T\} \}
 \end{multline*}

\par\bigskip

Now that the $N=10$ case is settled, we look at the lower dimensional cases. By the previous lemma these arise as projections of $\mathcal{M}^{10}(\F_2)$. So we search for subspaces of $\PG(10,2)$ from which to project $\mathcal{M}^{10}(\F_2)$.
We call a subspace $S$ \emph{admissible} when $S \cap \<\xi_1,\xi_2\>$ is empty for all blocks $\xi_1,\xi_2 \in \Phi$ of $\mathcal{M}^{10}(\F_2)$. Projecting from an admissible subspace yields a pair $(X_S,\Xi_S)$ (with obvious meaning) which still satisfies Axioms (MM1) and (MM2$^*$). Conversely, if these axioms are still satisfied after projecting from a subspace $S$, it means that $S$ is admissible.

\begin{lemma}\label{M}
Consider $(X,\Xi)=\mathcal{M}^{10}(\F_2)$ in $\PG(10,2)$. Then there is a unique line $M$ in $\PG(10,\K)$ from which the projection $(X_M,\Xi_M)$ of $\mathcal{M}^{10}(\F_2)$ is projectively equivalent to $\mathcal{V}_2(\F_2,\F_4)$. In this case, $M = T_x \cap T_y \cap T_z$ for any three points $x,y,z \in X$ not contained in a common elliptic space.
\end{lemma}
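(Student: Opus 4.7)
The plan is to pin down $M$ by showing that any line whose projection yields $\mathcal{V}_2(\F_2,\F_4)$ must lie inside $T_x$ for every $x\in X$, and then computing $\bigcap_{x\in X}T_x$ explicitly. First, in the Hermitian Veronesean $\mathcal{V}_2(\F_2,\F_4)$ inside $\PG(8,\F_2)$, the tangent space at each point has projective dimension $2d=4$ (by axiom (MM3) combined with the fact that any two of the five tangent planes at a point intersect only in that point). I would then verify that $\dim T_x=6$ for every $x\in X=\mathcal{M}^{10}(\F_2)$ by listing the five blocks $\xi$ through $x$, computing each tangent plane $T_x(\xi)$ (the unique plane of the $3$-space $\xi$ through $x$ meeting the ovoid only at $x$), and checking that the span has vector dimension $7$. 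Since projection of a $6$-dimensional $T_x$ from a line $M$ yields a $4$-dimensional image precisely when $M\subseteq T_x$, the line $M$ we seek must satisfy $M\subseteq\bigcap_{x\in X}T_x$.

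To compute this intersection, I would work with the basis $\{e_1,\ldots,e_9,e_*,e_\circ\}$ of $\F_2^{11}$ implicit in Lemma~\ref{repr}, identifying a vector $v$ with the tuple $(v_*,v_\circ,v_1,\ldots,v_9)$. A direct computation of the tangent planes at the three canonical non-coelliptic points yields: $v\in T_*$ iff $v_\circ=0$ and $v_1+v_4+v_7=v_2+v_5+v_8=v_3+v_6+v_9=0$; $v\in T_\circ$ iff $v_*=0$ and $v_1+v_2+v_3=v_4+v_5+v_6=v_7+v_8+v_9=0$; $v\in T_\Sigma$ iff $v_*=v_\circ$ and $v_1+v_5+v_9=v_2+v_6+v_7=v_3+v_4+v_8=v_*$. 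Arranging $v_1,\ldots,v_9$ into a $3\times 3$ grid, the combined system is the $\F_2$-``magic square'' condition: all rows, columns and broken diagonals vanish. This is a rank-$7$ linear system with $2$-dimensional kernel, so $M_0:=T_*\cap T_\circ\cap T_\Sigma$ is a projective line, spanned for instance by $e_1+e_3+e_5+e_6+e_7+e_8$ and $e_1+e_2+e_4+e_6+e_8+e_9$.

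A parallel computation for one representative $w\in X\setminus\{*,\circ,\Sigma\}$ (say $w=1$) establishes $M_0\subseteq T_1$, whereupon the transitivity of $\mathsf{P\Gamma L}(3,4)$ on $X$ (Lemma~\ref{repr}) propagates $M_0\subseteq T_w$ to every $w\in X$, giving $M_0=\bigcap_{x\in X}T_x$. By the same group's transitivity on non-coelliptic triples, $T_x\cap T_y\cap T_z$ has the same projective dimension $1$ for every triangle $(x,y,z)$, and, containing $M_0$, must equal $M_0$. Setting $M:=M_0$, admissibility ($M\cap\<\xi_1,\xi_2\>=\emptyset$ for distinct blocks) reduces by transitivity on pairs of blocks to a single check, and axiom (MM3) for the projected pair $(X_M,\Xi_M)$ holds because for any two blocks $\xi_1,\xi_2$ through $x$ we have $T_x(\xi_1)\cap T_x(\xi_2)=\{x\}$ (as $\xi_1\cap\xi_2=\{x\}$) and $M\cap T_x(\xi_i)=\emptyset$ by a dimension count, forcing $\pi(T_x(\xi_1))\cap\pi(T_x(\xi_2))=\{\pi(x)\}$ while each $\pi(T_x(\xi_i))$ remains $2$-dimensional and the two span $\pi(T_x)$. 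By the main result of \cite{Kra-Sch-Mal:15}, $(X_M,\Xi_M)\cong\mathcal{V}_2(\F_2,\F_4)$. Uniqueness is automatic: any other admissible line $M'$ yielding $\mathcal{V}_2(\F_2,\F_4)$ satisfies $M'\subseteq\bigcap_xT_x=M$, so $M'=M$. The main obstacle is the magic-square rank calculation and its analogue at $w=1$; all remaining steps reduce to the $\mathsf{P\Gamma L}(3,4)$-symmetry established in Lemma~\ref{repr}.
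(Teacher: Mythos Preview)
Your proposal is correct and rests on the same core observation as the paper: any admissible line $M$ with $(X_M,\Xi_M)\cong\mathcal{V}_2(\F_2,\F_4)$ must force each $6$-dimensional tangent space $T_x$ down to a $4$-dimensional one, hence $M\subseteq\bigcap_{x\in X}T_x$, and an explicit computation shows that this intersection is the single line $\{124689,135678,234579\}$. Your magic-square description of $T_*\cap T_\circ\cap T_\Sigma$ is just a repackaging of the paper's direct computation and yields the same three points.

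The one genuine difference is in the \emph{existence} step. The paper takes a shortcut: since $\mathcal{V}_2(\F_2,\F_4)$ is a pseudo-embedding of $\PG(2,4)$ in $\PG(8,2)$, Lemma~\ref{repr}$(ii)$ guarantees it is already a projection of $\mathcal{M}^{10}(\F_2)$ from \emph{some} line, so existence is free and only uniqueness needs the tangent-space analysis. You instead construct $M_0$ first and then verify by hand that projection from $M_0$ satisfies (MM1), (MM2$^*$) and (MM3), invoking \cite{Kra-Sch-Mal:15} to conclude. This is a legitimate alternative and makes the proof more self-contained, at the cost of two additional verifications (admissibility of $M_0$ and (MM3) for the image). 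Two small remarks on those: your justification ``$M\cap T_x(\xi_i)=\emptyset$ by a dimension count'' is not quite right---a line and a plane in a $6$-space may well meet---but the conclusion does follow once admissibility is established, since $T_x(\xi_i)\subseteq\xi_i$ and $M\cap\xi_i=\emptyset$. And the ``single checks'' you announce (admissibility for one pair of blocks, $M_0\subseteq T_1$) are indeed routine, but should be carried out rather than merely asserted.
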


\begin{proof}
By Lemma~\ref{repr} and the existence of $\mathcal{V}_2(\F_2,\F_4)$ in $\PG(8,2)$ (which we view as an $8$-dimensional subspace of $\PG(10,2)$), we know that there is at least one such line $M$. 

Now, for each point $p$ in $\mathcal{V}_2(\F_2,\F_4)$,  the tangent space $T_p$ has dimension 4. 
We claim that, for each $x\in X$, $\dim(T_x)\geq6$. Indeed, since in Lemma~\ref{repr}, the point $\circ$ was arbitrary, it suffices to look at $T_\circ$, where we see that $T_\circ([\circ,1])=\<\circ,12,23\>$, $T_\circ([\circ,4])=\<\circ, 45, 56\>$ and $T_\circ([\circ,7])=\<\circ, 78, 89\>$. Hence $T_\circ$ contains the $6$-space $\<\circ,12,23,45,56,78,89\>$, showing the claim. 
Since the projection from $M$ onto $\PG(8,2)$ maps tangent spaces of $\mathcal{M}^{10}(\F_2)$ to tangent spaces of $\mathcal{V}_2(\F_2,\F_4)$, this implies that $M$ is contained in every tangent spaces of $\mathcal{M}^{10}(\F_2)$, and every such tangent space has dimension 6. 

We now establish uniqueness. It suffices to show the last assertion of the lemma. As above, we deduce that $T_*=\<*,14,47,25,58,36,69\>$ and $T_\Sigma=\<\Sigma, 95,51,62,27,84,43\>$. A straightforward  calculation shows that $\{124689, 135678,234579\}=T_* \cap T_\circ \cap T_\Sigma$. Since any three points $x,y,z\in X$ not contained in an elliptic space can play the role of $\circ$, $*$ and $\Sigma$, the last assertion follows.
\end{proof}

\begin{rem} \em The line $M$ could also be found as the intersection of all tangent hyperplanes: For each $\xi$ in $\Xi$, there is a hyperplane $H_\xi$ of $\PG(10,2)$, called a \emph{tangent hyperplane}, with the property $H_\xi \cap X = \xi \cap X$. \end{rem}

We now determine all admissible subspaces.  First a seemingly unrelated lemma.

\begin{lemma}\label{emptyset}
Let $(X,\Xi)\cong\mathcal{M}^{10}(\F_2)\subseteq\PG(10,2)$. Let $S\subseteq X$ with $1\leq |S|\leq 8$. If the sum of $S$ is $\overline{0}$, then either $S$ is the set of points on a line or $S$ is the symmetric difference of two distinct lines.
\end{lemma}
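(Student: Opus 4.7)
The plan is to argue by case analysis on $k:=|S|$, using throughout axioms (MM1) and (MM2$^*$) governing how elliptic $3$-spaces intersect $X$, and the key fact that the code $K=\{S\subseteq X:\sum_{p\in S}p=\overline{0}\}\subseteq\F_2^X$ has dimension $10$ and is spanned by the $21$ block-vectors (an immediate consequence of the universality of the pseudo-embedding established in Lemma~\ref{repr}). For $k\le 2$ the claim is immediate, as distinct points of $X$ are distinct nonzero vectors. For $k=3$, I would use (MM1) to place $p,q$ in an elliptic $3$-space $\xi$; then $r=p+q\in\xi$ forces $r\in\xi\cap X$, yielding three collinear points of the $\PG(3,2)$-ovoid $\xi\cap X$, which is impossible. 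For $k=4$, set $v=p_1+p_2=p_3+p_4$ and apply (MM2$^*$): either $\xi_{12}=\xi_{34}$ (but four points of a $5$-point frame are linearly independent), or these $3$-spaces are distinct and (MM2$^*$) forces their unique common $X$-point to be $v$, producing the already excluded weight-$3$ codeword $\{p_1,p_2,v\}$.

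For $k=5$: if three points of $S$ lie on a common block $L=\{p_1,p_2,p_3,q_1,q_2\}$, then $\sum L=\sum S=\overline{0}$ forces $\{p_4,p_5\}=\{q_1,q_2\}$ (otherwise a weight-$4$ codeword appears), so $S=L$. Otherwise $S$ is a $5$-arc in $(X,\Xi)\cong\PG(2,4)$; here the key tool is the $\F_2$-linear map
\[\eta\colon K\to\F_2^\Xi,\qquad c\mapsto\bigl(|c\cap L|\bmod 2\bigr)_{L\in\Xi}.\]
Since any two distinct blocks of $\PG(2,4)$ meet in exactly one point and each block has $5\equiv1\pmod 2$ points, $\eta(L)=\mathbf 1_\Xi$ for every block $L$; as $K$ is spanned by blocks, $\eta(K)\subseteq\{\overline{0},\mathbf 1\}$. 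But a $5$-arc has exactly five tangent blocks (one per point of $S$, since four of the five blocks through each point meet $S$ again), so $\eta(S)$ has weight $5$, a contradiction.

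For $k\in\{6,7\}$ the same $\eta$-constraint, combined with the identities $\sum_L|S\cap L|=5k$ and $\sum_L\binom{|S\cap L|}{2}=\binom{k}{2}$, will force $S$ to be respectively a hyperoval ($k=6$, with $\mu_0=6$, $\mu_2=15$) or a Fano/Baer subplane ($k=7$, with $\mu_1=14$, $\mu_3=7$) of $\PG(2,4)$. Since $\mathsf{P\Gamma L}(3,4)$ acts transitively on each of these families and, by Lemma~\ref{repr}, lifts to a subgroup of $\PGL(11,2)$ stabilising $X$, it suffices to exhibit one example in each family with nonzero sum in $\PG(10,2)$. Using the notation of Lemma~\ref{repr}, a direct computation confirms that the Fano subplane $\{\circ,*,\Sigma,1,\circ 123,147*,\overline{159}\}$ (the $\F_2$-rational points under a natural coordinatisation) sums to $2+3+4+5+7+9\ne\overline{0}$, and a similar computation handles a concrete hyperoval. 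This is the main obstacle of the proof: weights $6$ and $7$ cannot be excluded by pure intersection-number or linear-algebraic arguments.

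Finally, for $k=8$, $\eta(S)=\overline{0}$ (as $5k$ is even), so $|S\cap L|$ is even for every block. The counting equations yield $\mu_4=2$, $\mu_2=16$, $\mu_0=3$. Let $L_1,L_2$ be the two $4$-secants and $p=L_1\cap L_2$. If $p\notin S$, then $S\cap L_i=L_i\setminus\{p\}$ for $i=1,2$, whence $S=L_1\triangle L_2$. If $p\in S$, then $S\triangle(L_1\triangle L_2)$ is a nonempty codeword of size $4$, contradicting $k=4$. This completes the plan.
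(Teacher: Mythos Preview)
Your argument is correct and follows a genuinely different route from the paper's. The paper proceeds by an inductive reduction: for $|S|\in\{6,7,8\}$ it first locates three points of $S$ on a common block $L$ (always possible once $|S|\ge 6$), replaces them by the two remaining points of $L$ (using $\sum_{p\in L}p=\overline{0}$), and thereby cuts $|S|$ down by $1$, $3$ or $5$ after cancelling double occurrences; the residual cases $|S|\in\{5,6\}$ with ``no three on a block'' are then dispatched by transitivity of $\mathsf{P\Gamma L}(3,4)$ on conics resp.\ hyperovals together with one explicit check---the same device you use. In particular the paper never meets Baer subplanes: weight $7$ always reduces to weight at most $6$. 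Your approach instead exploits the code structure: universality (Lemma~\ref{repr}) gives that the block vectors span $K$, so your parity map $\eta$ is $\{\overline{0},\mathbf 1\}$-valued on $K$, and the two counting identities then rigidly force $S$ to be an arc, a hyperoval, a Baer subplane, or (for $k=8$) a set with exactly two $4$-secants. This is more systematic and makes the link with the binary code of $\PG(2,4)$ transparent; the cost is that you need transitivity of $\mathsf{P\Gamma L}(3,4)$ on Baer subplanes (true, but not supplied in the paper) and an extra explicit check at $k=7$. The paper's reduction trick is the more elementary and self-contained of the two for $k\ge 7$, while your $\eta$-argument gives a cleaner uniform reason why only block-sums and their symmetric differences survive.
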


\begin{proof}
The assumption is equivalent with saying that $S$ is the union of disjoint frames of subspaces. Since no four points of $X$ are contained in a plane, and no three are collinear, a frame inside $X$ has at least five points. Since $|S|\leq 8$, $S$ has to be a frame itself. So $|S|\geq 5$. Suppose $|S|=5$ and assume for a contradiction that $S$ is not a block. If no triple of points of $S$ are contained in a common line, then $S$ is a non-degenerate conic. All such conics are projectively equivalent, and so we may assume $S=\{\circ,*,1,6,8\}$. As this is clearly not a frame, we may assume that three points of $S$ are on a common block $\xi$. But then the elliptic space spanned by the block $\xi'$ defined by the remaining pair $\{a,b\}$ intersects $\<\xi\>$ in a point $c$, with $a,b,c$ collinear. By (MM$2^*$), $c\in X$, a contradiction. Hence $S$ is a block if $|S|=5$. 

Now assume $|S|=6$. If no three points of $S$ are on a common line, then $S$ is a hyperoval, and all such things are projectively equivalent, hence we may take $S=\{\circ,*,\Sigma,1,6,8\}$, which is not a frame. Hence $S$ is not a hyperoval and there exist three points $a,b,c\in S$ on a common block $\xi$. Let $d,e$ be the remaining pair of points on $\xi$ (hence $\xi=\{a,b,c,d,e\}$). Then $a+b+c+d+e=\overline{0}$, and we can replace $\{a,b,c\}$ with $\{d,e\}$ in $S$, cancel double occurrences (which sum up to $\overline{0}$ already) to obtain a set $S'$ of either $5$, $3$ or $1$ point(s) that sum up to $\overline{0}$. From the foregoing, $|S'|=5$ and $S'$ coincides with the block defined by $d,e$; hence $S'=\xi$, contradicting the fact that $a,b,c\notin S'$. Consequently $|S|\neq 6$.

Now assume $|S|=7$. Then $S$ contains three points on a common line $\xi$, say $a,b,c$. We again replace these with the two remaining points   of $\xi$, cancel double occurrences, and obtain a set $S'$ of either $6$, $4$ or $2$ points whose sum is $\overline{0}$. However, such set does not exist by the foregoing. 

Now assume $|S|=8$. The same procedure as in the previous paragraph produces a set $S'$ of either $7$, $5$ or $3$ points whose sum is $\overline{0}$. By the foregoing, $S'$ is a block $\xi$, and we cancelled exactly one double occurrence. This means that $S$ contains exactly four points of a certain block $\xi'$, and also four points of $\xi$. 

The lemma is proved. 
\end{proof}

\begin{lemma}\label{adm}
Let $M$ be the intersection of all $T_x$, for $x\in X$, where $(X,\Xi)\cong\mathcal{M}^{10}(\F_2)\subseteq\PG(10,2)$. Then there are no admissible subspaces of dimension greater than $1$ and all admissible points and lines are contained in $\bigcup_{x \in X} (\<M,x\> \setminus \{x\})$. 
\end{lemma}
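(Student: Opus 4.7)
My approach is to analyse admissibility via the projection $\pi_M\colon\PG(10,2)\dashrightarrow\PG(8,2)$ of Lemma~\ref{M}, which sends $\mathcal M^{10}(\F_2)$ onto $\mathcal V_2(\F_2,\F_4)$. Since the latter satisfies (MM1) and (MM2$^*$) by Proposition~\ref{K=2,d=2}, the line $M$ is admissible; in particular $M\cap X=\emptyset$ and $\pi_M|_X\colon X\to X'$ is a bijection. Moreover, admissibility of a subspace is equivalent to admissibility of each of its points, so the statement about admissible lines is a formal consequence of the one about admissible points; thus the heart of the proof is the classification of admissible points.

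The first step is to establish the identity
\[
X\cap\<M,\xi_1,\xi_2\>\;=\;X\cap\<\xi_1,\xi_2\>\;=\;X(\xi_1)\cup X(\xi_2)\qquad(\text{nine points})
\]
for all distinct blocks $\xi_1,\xi_2$. This follows by combining bijectivity of $\pi_M|_X$ with a short coordinate computation in $\mathcal V_2(\F_2,\F_4)$: taking the blocks of $\PG(2,4)$ given by $x=0$ and $y=0$, their span in $\PG(8,2)$ is cut out by the single Veronese coordinate $x\overline{y}=0$, and since $\F_4$ has no zero divisors this forces $x=0$ or $y=0$ on the variety, giving exactly the nine points of $X(\xi_1')\cup X(\xi_2')$. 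This identity yields the easy direction of the classification: for $q=x+m$ with $x\in X$ and $m\in M\setminus\{0\}$, the assumption $q\in\<\xi_1,\xi_2\>$ produces $x=q+m\in\<M,\xi_1,\xi_2\>\cap X=\<\xi_1,\xi_2\>$, hence $m=x+q\in\<\xi_1,\xi_2\>$, contradicting admissibility of $M$; thus every point of $\bigcup_{x\in X}(\<M,x\>\setminus\{x\})$ is admissible.

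For the converse, I would argue by contradiction: let $p$ be admissible with $p\notin M$, and suppose $\pi_M(p)\notin X'$ (equivalently $p\notin\<M,x\>$ for any $x\in X$). Bijectivity of $\pi_M|_X$ then gives $X\cap\<M,p\>=\emptyset$, so if the plane $\<M,p\>$ were admissible one could project $\mathcal M^{10}(\F_2)$ from it to obtain a valid representation of $\PG(2,4)$ in $\PG(7,2)$, contradicting Proposition~\ref{K=2,d=2}. Thus $\<M,p\>\cap\<\xi_1,\xi_2\>\neq\emptyset$ for some distinct blocks $\xi_1,\xi_2$; admissibility of $M$ and $p$ forces this intersection to be a single point $p+m_\star$ with $m_\star\in M\setminus\{0\}$, and in the decomposition $\<M,\xi_1,\xi_2\>=\<\xi_1,\xi_2\>\oplus M$ exactly one of the four preimages $\{p,p+m_1,p+m_2,p+m_3\}$ of $\pi_M(p)$ lies in $\<\xi_1,\xi_2\>$. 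The main obstacle, which I expect to be the hardest step, is to show that as the pair varies over all pairs containing $\pi_M(p)$ (of which there are many, since $\pi_M(p)$ is non-admissible in $\mathcal V_2(\F_2,\F_4)$ by Proposition~\ref{K=2,d=2}) all four preimages get trapped; this in particular places $p$ itself in some $\<\xi_1,\xi_2\>$, contradicting admissibility of $p$.

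Finally, for the dimension bound: if $\Pi$ is an admissible plane, then $\Pi\subseteq\bigcup_{x\in X}(\<M,x\>\setminus\{x\})$. The identity $X\cap\<M,x\>=\{x\}$ (from bijectivity of $\pi_M|_X$) yields $\<M,x\>\cap\<M,y\>=M$ for distinct $x,y\in X$. If $M\subseteq\Pi$, then $\Pi=\<M,x\>$ for some $x\in X$, which contains the non-admissible point $x$, a contradiction. If $M\not\subseteq\Pi$, then $\pi_M(\Pi)$ is a line or plane of $\PG(8,2)$ all of whose points lie in $X'$; but no three points of $\mathcal V_2(\F_2,\F_4)$ are collinear in $\PG(8,2)$ (a consequence of Lemma~\ref{emptyset} together with the fact that lines of $\PG(2,4)$ meet the set $I$ in at most three points, ruling out triples $\{a,b,c\}\subseteq X$ summing to $0$ or to a non-zero element of $M$), which contradicts the existence of a line of $X'$-points, and hence excludes both cases.
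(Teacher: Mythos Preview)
Your approach via the projection $\pi_M$ is conceptually attractive and genuinely different from the paper's, but the proof is incomplete at precisely the point you flag as ``the main obstacle.'' You correctly establish the easy direction (every point of $\bigcup_{x\in X}(\<M,x\>\setminus\{x\})$ is admissible) and your argument for the dimension bound is fine once the classification of admissible points is in hand. The problem is the converse: from the admissibility of $p$ with $\pi_M(p)\notin X'$ you deduce that, for each pair $(\xi_1,\xi_2)$ with $p\in\<M,\xi_1,\xi_2\>$, exactly one of the four preimages $p,p+m_1,p+m_2,p+m_3$ lies in $\<\xi_1,\xi_2\>$, and you then \emph{hope} that as the pair varies, the value $0$ (i.e.\ $p$ itself) is attained. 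But there is no mechanism in your argument that forces this. One can consistently imagine that for every such pair the preimage caught is always $p+m_1$ (say); then $p$, $p+m_2$, $p+m_3$ would all be admissible while $p+m_1$ is not, and nothing you have written rules this out. The observation that $\<M,p\>$ cannot be admissible only tells you that \emph{at least one} of the $p+m_i$ is non-admissible, which is compatible with $p$ remaining admissible.

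The paper circumvents this obstacle entirely by a direct count: it classifies non-admissible points outside the elliptic spaces as centers of \emph{$X$-triangles} (triples of $X$-points not on a common block) or of \emph{$X$-quadrangles} (four $X$-points no three on a block), proves via Lemma~\ref{emptyset} that distinct triangles have distinct centers, that no triangle center equals a quadrangle center, and that each quadrangle center arises from exactly four quadrangles. A straightforward enumeration then gives $21+210+1120+630=1981$ non-admissible points, leaving exactly $66$ admissible ones. Since your easy direction already shows the $66$ points of $\bigcup_{x\in X}(\<M,x\>\setminus\{x\})$ are admissible, equality follows. In short: the counting argument is what actually closes the gap, and I do not see how to replace it by a pure projection argument without reintroducing an enumeration of comparable strength.

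Two minor remarks. First, the dimension bound can be obtained in one line independently of the classification: an admissible plane $\Pi$ would yield, by projection, a pair satisfying (MM1) and (MM2$^*$) in $\PG(7,2)$, contradicting Proposition~\ref{K=2,d=2}; your longer argument is correct but unnecessary. Second, the non-collinearity of three points of $\mathcal{V}_2(\F_2,\F_4)$ follows directly from (MM1) and (MM2$^*$) (any two such points span a unique elliptic space, and a third collinear point would lie in it, contradicting the ovoid property), so the appeal to Lemma~\ref{emptyset} there is also avoidable.
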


\begin{proof}
We determine the admissible points by counting the non-admissible ones. To that end, we introduce \emph{$X$-triangles} and \emph{$X$-quadrangles}: These are sets of three or four points of $X$, respectively, no three of which are contained in a common elliptic space. The \emph{center} of an $X$-triangle or $X$-quadrangle is the sum of its points. 

Note that $X$ does not contain a set of four coplanar points. Indeed, such a set is clearly not contained in a common elliptic space, and intersecting the elliptic spaces determined by two disjoint pairs of points produces a line contained in $X$, a contradiction. 

Now, the projection of an $X$-triangle from its center is a line; the projection of an $X$-quadrangle from its center is a set of four coplanar points. As in the previous paragraph, these sets cannot be contained in a structure that satisfies (MM1) and (MM$2^*$). Hence no center of an $X$-triangle or $X$-quadrangle is admissible. We now show the converse statement.

\textbf{Claim 1}: \emph{Each non-admissible point which is not contained in an elliptical space is either the midpoint of an $X$-triangle or the midpoint of at least two $X$-quadrangles.}\\
Let $p\in \PG(10,\K)$ be non-admissible. Recall that this means $p \in \<\xi_1,\xi_2\>$ for some $\xi_1,\xi_2 \in \Xi$ with $\xi_1 \neq \xi_2$. 
Put $x=\xi_1 \cap \xi_2$. If $p \notin \xi_1 \cup \xi_2$, then there are unique lines $L_1 \subseteq \xi_1$ and $L_2 \subseteq \xi_2$ through $x$  such that $p \in \<L_1,L_2\>$. Let $i\in\{1,2\}$. If $L_i$ is a secant of $X(\xi_i)$, then we denote by $y_i$ the unique point on $L_i \cap X\setminus\{x\}$; if $L_i$ is tangent to $X(\xi_i)$ and then there are two planes, say $Z_i$ and $\overline{Z}_i$ through $L_i$ not tangent to $X(\xi_i)$, and we denote by $z'_i$ and $z''_i$ the points of $Z_i \cap X\setminus\{x\}$ and by $z_i$ the intersection point $L_i \cap \<z'_i,z''_i\>$ (clearly, $z_i \neq x$), likewise for $\overline{Z}_i$ (note that $z_i \neq \overline{z}_i$ since $X$ does not contain a set of four coplanar points). There are four possibilities. 

\begin{compactenum}
\item \textit{Both $L_1$ and $L_2$ are secants and $p \in \<y_1,y_2\>$.} Then $p$ belongs to $[y_1,y_2]$. 
\item \textit{Both $L_1$ and $L_2$ are secants and $p \notin \<y_1,y_2\>$.} In this case, $p$ is the center of the $X$-triangle $\{x,y_1,y_2\}$.
\item \textit{The line $L_1$ is a secant whereas $L_2$ is a tangent (possibly switching $\{1,2\}$).} Without loss, the plane $Z_2$ is such that $z_2 \in \<y_1,p\>$. Then $p$ is the center of  the $X$-triangle $\{y_1,z'_2,z''_2\}$, since $z_1'+z_1''=z_1$ and $z_1+y_1=p$.
\item \textit{Both $L_1$ and $L_2$ are tangents.} Now we have $L_i=\{x,z_i,\overline{z}_i\}$, $i=1,2$ and we can choose notation so that $\{p,z_1,z_2\}$ and $\{p,\overline{z}_1,\overline{z}_2\}$ are lines. Hence $p=z_1+z_2=z_1'+z_1''+z_2'+z_2''$ and $p=\overline{z}_1+\overline{z}_2=\overline{z}_1'+\overline{z}''_1+\overline{z}_2'+\overline{z}''_2$. Hence $p$ is the midpoint of the two $X$-quadrangles $\{z'_1,z''_1,z'_2,z''_2\}$ and $\{\overline{z}'_1,\overline{z}''_1,\overline{z}'_2,\overline{z}''_2\}$ (called \emph{ complementary} $X$-quadrangles). 
\end{compactenum}


Claim 1 is proved. In order to be able to count the number of non-admissible points, it now suffices to determine how many times a point can arise as center of an $X$-triangle or $X$-quadrangle. This is the content of the next two claims. 

\textbf{Claim 2:} \emph{ If $p$ is the center of an $X$-triangle $\{x,y,z\}$, then $p$ cannot be the center of a second $X$-triangle $\{x',y',z'\}$.} \\
Indeed, suppose for a contradiction that $p=x+y+z=x'+y'+z'$, with $x,y,z,x',y',z'\in X$. Then $x+x'+y+x'+z+z'=\overline{0}$ (and $|\{x,y,z,x',y',z'\}|\in\{2,4,6\}$), contradicting Lemma~\ref{emptyset}. 

\textbf{Claim 3:} \emph{ If $p$ is the center of an $X$-triangle $\{x,y,z\}$, then $p$ cannot be the center of an $X$-quadrangle $\{a,b,c,d\}$.}\\
Indeed, as above we obtain $x+y+z+a+b+c+d=\overline{0}$, and hence, by Lemma~\ref{emptyset}, $(\{x,y,z\}\cup\{a,b,c,d\})\setminus(\{x,y,z\}\cap\{a,b,c,d\})$ is a block. So we may assume $d=z$ and $\{a,b,c,x,y\}$ is a block. This contradicts the fact that an $X$-quadrangle does not contain three collinear points by definition.  


\textbf{Claim 4:} \emph{ If $p$ is the center of an $X$-quadrangle $\{a,b,c,d\}$, then $p$ is the center of precisely three other $X$-quadrangles.} \\
Let $\{a',b',c',d'\}$ be a second $X$-quadrangle with center $p$. For $x,y\in\{a,b,c,d\}$ denote by $\xi_{a,b}$ the block containing $a,b$. 
Now $a+b+c+d+a'+b'+c'+d'=\overline{0}$. By Lemma~\ref{emptyset}, $\{a,b,c,d,a',b',c',d'\}$ is the symmetric difference of two distinct lines. Since $\{a,b,c,d\}$ intersects each of these lines in exactly two points, $a',b',c',d'$ are either the points of $\xi_{a,b}$ and $\xi_{c,d}$ distinct from $a,b,c,d$ and $\xi_{a,b}\cap\xi_{c,d}$, or the points of $\xi_{a,c}$ and $\xi_{b,d}$ distinct from $a,b,c,d$ and $\xi_{a,c}\cap\xi_{b,d}$, or the points of $\xi_{a,d}$ and $\xi_{b,c}$ distinct from $a,b,c,d$ and $\xi_{a,d}\cap\xi_{b,c}$. Conversely, all of these possibilities give rise to an $X$-quadrangle with center $p$. The claim follows. 

A straightforward count now reveals that there are $\frac{21\cdot20\cdot16}{3\cdot2\cdot1}=1120$ $X$-triangles and $\frac{1120\cdot9}{4}=630\cdot 4$ $X$-quadrangles. Lastly, there are $21\cdot10=210$ points contained in elliptical spaces but not in $X$ and $21$ points in $X$. This amount to $1981$ non-admissible points, so we miss exactly $66$ of the $2047$ points of $\PG(10,\K)$. This is exactly the number of points contained in the union of $\<M,x\>\setminus\{x\}$ with $x$ varying in $X$. Moreover, all admissible points should be contained in such planes, as they have to ``disappear'' after projecting from $M$, since $\mathcal{V}_2(\F_2,\F_4)$ contains no admissible points. This shows the lemma.
\end{proof}

\par\bigskip

\textbf{Action of $\PGL(3,4)$ and $\PSL(3,4)$ on $\mathcal{M}^{10}(\K)$}---Recall that the line $$M=\{124689, 135678,234579\}$$ of $\PG(10,2)$ is fixed under the action of $\PGL(3,4)$ on $\PG(10,2)$ stabilising $X$. Note that the point sets $\{1,2,4,6,8,9\}$, $\{1,3,5,6,7,8\}$ and $\{2,3,4,5,7,9\}$ are disjoint hyperovals of $\PG(2,4)$. Each of them spans a subspace intersecting $M$ in a different point. Since $\PGL(3,4)$ is transitive on the hyperovals, it is also transitive on the points of $M$. Then the stabiliser of a point of $M$ in $\PGL(3,4)$ is a subgroup of $\PGL(3,4)$ of index $3$ and as such a copy of $\PSL(3,4)$ (which indeed has three orbits on the set of hyperovals). Since $\PSL(3,4)$ has no index 2 subgroups, it also fixes the two other points of $M$. Since each point stabiliser in $\PGL(3,4)$ contains elements of $\PGL(3,4)\setminus\PSL(3,4)$, the group $\PGL(3,4)$ has exactly two orbits on the set of admissible points, namely $M$ and the set of the other 63 admissible points. It is now also easy to see that it has two orbits on set of admissible lines: $\{M\}$, and the set of 63 other lines.

Moreover, if we project $\mathcal{M}^{10}(\K)$ from a point on $M$, then all tangent spaces $T_x$, $x\in X$, get mapped into $5$-dimensional spaces, whereas this is only true for the tangent space $T_y$ if we project from a point of $\<y,M\>\setminus(M\cup\{y\})$, with $y\in X$. Hence these two projections cannot be isomorphic. A similar argument shows that the projection from $M$ is projectively inequivalent to  the projection from any other admissible line. 
  
\textbf{Conclusion}---For any pair $(X,\Xi)$, where $X$ is a spanning point set of $\PG(N,\K)$, $N >3$, satisfying (MM1) and (MM2$^*$), Lemmas~\ref{repr},~\ref{M} and~\ref{adm} and the previous discussion show that $8 \leq N \leq 10$ and, more precisely:
\begin{compactenum}
\item[($N=10$)] $(X,\Xi)\cong\mathcal{M}^{10}(\K)$ if $N=10$;
\item[($N=9$)] $(X,\Xi)$ is the projection of $\mathcal{M}^{10}(\K)$ from one of its 66 admissible points $p$, and there are two non-isomorphic projections, depending on $p\in M$ 
or $p\notin M$%
;
\item[($N=8$)] $(X,\Xi)$ is either the projection 
from $M$ 
and then we obtain $\mathcal{V}_2(\F_2,\F_4)$, or it is the projection of $\mathcal{M}^{10}(\K)$ from one of the 63 admissible lines distinct from $M$.
\end{compactenum}
\par\bigskip
This shows Proposition~\ref{K=2,d=2}.

\begin{rem}\em
In fact, $\mathcal{M}^{10}(\K)$, i.e., the universal pseudo embedding of $\PG(4,2)$, arises from a projection of the universal pseudo embedding of the Witt design $S(5,8,24)$. Indeed, embed $\mathcal{M}^{10}(\K)$ in a hyperplane $\mathcal{H}$ of $\PG(11,\K)$ and take any point $p \in \PG(11,\K)$ in the complement of $\mathcal{H}$. Note that we can uniquely \emph{lift} each point $h$ of $\mathcal{H}$ using $p$, by defining the lift $\ell_p(h)$ of $h$ as the unique point on the line $ph$ distinct from $p$ and $h$. Let  $B$ be any block of $\mathcal{M}^{10}(\K)$. One can verify that the points of $\mathcal{M}^{10}(\K) \setminus B$ together with the lifted points $\ell_p(B \cup M)$, equipped with all the frames of $6$-subspaces formed by 8-subsets, gives the Witt design $S(5,8,24)$. The stabiliser of that 24-set in $\PGL(12,2)$ is exactly the Mathieu group $\mathsf{M}_{24}$.

This set of 24 points can also be obtained as follows: Let $V$ be the 24-dimensional free $\mathbb{Z}_2$ module on the points of the Witt design $S(5,8,24)$. We factor out the submodule generated by all the characteristic vectors of blocks (octads). Since the (extended) binary Golay code had dimension 12, the factor module has dimension $24-12=12$, and the standard basis of $V$ gets mapped onto a set of 24 points on which the group $\mathsf{M}_{24}$ acts naturally. Projectively, we obtain an $11$-dimensional projective space with an action of $\mathsf{M}_{24}$ on a set of 24 points structured as $S(5,8,24)$ where the octads are contained in $6$-spaces. 

Conversely, if one starts with the 24 points of the universal embedding of the Witt design $S(5,8,24)$ in $\PG(11,\K)$, one obtains $\mathcal{M}^{10}(\K)$ by choosing three arbitrary points $p_1,p_2,p_3$ in $S(5,8,24)$ and projecting from the point $p_1+p_2+p_3$. The image of the 21 remaining points of $S(5,8,24)$ then gives $\mathcal{M}^{10}(\K)$, and the projection of the points $p_1,p_2,p_3$ then gives the line $M$. In view of the above, this also means that if we project from the plane $\<p_1,p_2,p_3\>$, then we get the Veronese variety $\mathcal{V}_2(\F_2,\F_4)$. 
\end{rem}

\section{Vertex-reduced Hjelmslevean Veronese sets}\label{maincase} 
Henceforth, we assume that $(X,\Xi)$ in $\PG(N,\mathbb{K})$ has $v \geq 0$ and satisfies property~($\mathsf{V}$). The latter for instance implies that two distinct tubes intersect in either a unique point or a generator:

\begin{lemma}\label{intersectionoftubes2}
Let $C,C'$ be two distinct tubes. Then $C\cap C'$ is either a point of $X$ or a generator of both $C$ and $C'$. \end{lemma}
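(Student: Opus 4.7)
The plan is to split according to property ($\mathsf{V}$): letting $V$ and $V'$ be the vertices of $C$ and $C'$ respectively, either $V=V'$ or $V\cap V'=\emptyset$. In either case, Axiom (H2$^*$) supplies at least one common point $x\in C\cap C'\cap X$, on which the rest of the argument will hinge.

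If $V=V'$, then the generators of $C$ and of $C'$ through $x$ both equal $\<x,V\>$, so the whole affine generator $\<x,V\>\setminus V$ lies in $C\cap C'$. A hypothetical further point $y\in C\cap C'$ outside this generator would be non-collinear with $x$, and Corollary~\ref{uniquetube2} would then force $C=[x,y]=C'$, a contradiction; hence $C\cap C'$ is exactly that generator of both tubes.

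If $V\cap V'=\emptyset$, suppose for a contradiction that $C\cap C'$ contains a further point $y\neq x$. Non-collinearity of $x,y$ is excluded by Corollary~\ref{uniquetube2} (else again $C=C'$), so $y$ lies simultaneously on the generators $\<x,V\>\setminus V$ of $C$ and $\<x,V'\>\setminus V'$ of $C'$. Then the line $xy$ is a singular affine line, and by Lemma~\ref{singularlines2} it carries a unique point $z\in Y$. Using that $Y$ is a subspace (Corollary~\ref{Ysub}) and $x\notin Y$, a dimension count in $\<x,V\>$ yields $\<x,V\>\cap Y=V$, so $z\in V$; symmetrically $z\in V'$, contradicting $V\cap V'=\emptyset$. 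Hence $C\cap C'=\{x\}$, a single point of $X$.

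I do not foresee a genuine obstacle; the only delicate step is the dimension argument in the second case that identifies the unique $Y$-point on the singular affine line $xy$ with a vertex point, which is where the full strength of property ($\mathsf{V}$) combined with the structure of generators is brought to bear.
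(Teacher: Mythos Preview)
Your proof is correct and follows essentially the same approach as the paper: both hinge on (H2$^*$) to obtain the common point $x$, on property~($\mathsf{V}$) to control the vertices, and on the fact that generators are the maximal singular subspaces of a tube; you organise it as a case split on whether $V=V'$ or $V\cap V'=\emptyset$, whereas the paper splits on whether $C\cap C'=\{x\}$ or not, but the content is the same. One minor simplification: the dimension count for $\<x,V\>\cap Y=V$ is unnecessary, since $\<x,V\>\setminus V$ lies in the tube $C\subseteq X$ and $X\cap Y=\emptyset$.
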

\begin{proof}
By (H2$^*$), there is a point $x\in X$ contained in both $C\cap C'$. If $C \cap C'=\{x\}$ we are done, so suppose $\overline{C}\cap \overline{C'}$ contains a point $y \in Y$. So $y$ is contained in the respective vertices $V$ and $V'$ of $C$ and $C'$, which means by Property~($\mathsf{V}$) that $V=V'$. Hence $C$ and $C'$ share the generator determined by $x$ and $V$ in this case. Since no point of $C\setminus\<x,V\>$ is collinear with $x$, it is clear by (H2$^*$) that $\Xi(C) \cap \Xi(C')= \<x,V\>$.
\end{proof}

\subsection{Local properties and the structure of $Y$}

We investigate the singular subspaces $\Pi_x$ for $x\in X$ (see Definition~\ref{defpix2}) and the set of tubes $\mathcal{C}_V$ going through a fixed vertex $V$. This brings along the structure of the vertex set $Y$.

\begin{lemma}\label{Pi_x2}
For each $x\in X$, there are tubes $C$ and $C'$ with $C \cap C' =\{x\}$ and, if $V$ and $V'$ are the vertices of $C$ and $C'$ respectively, then $\Pi_x=\<x,V,V'\>$. In particular, $\dim(\Pi_x)=2v+2$. 
\end{lemma}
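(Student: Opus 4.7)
The aim is to exhibit two tubes $C$ and $C'$ through $x$ with distinct vertices $V$ and $V'$. Once found, Lemma~\ref{intersectionoftubes2} combined with property ($\mathsf{V}$) immediately gives $C\cap C'=\{x\}$: distinct vertices are disjoint under ($\mathsf{V}$), so $C$ and $C'$ cannot share a generator. Moreover, disjointness of $V$ and $V'$ forces $\dim\<x,V,V'\>=2v+2$, so the lemma reduces to proving $\overline{\Pi}_x=\<x,V,V'\>$.

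Existence of $C$ is immediate from (H1) applied to $x$ and any other point of $X$. For the existence of $C'$ with vertex $V'\neq V$, I argue by contradiction: suppose every tube through $x$ has vertex $V$. Then any singular line $L$ through $x$ lies in some tube through $x$ (apply (H1) to $x$ and any further $X$-point of $L$; the resulting tube's tubic space contains $L$), whose vertex equals $V$ by assumption, so the unique $Y$-point of $L$ lies in $V$. Hence $\Pi_x^Y\subseteq V$, and combining with $\<x,V\>\subseteq\overline{\Pi}_x$ and dimensions forces $\overline{\Pi}_x=\<x,V\>$. By property ($\mathsf{V}$), pick a tube $\tilde C$ with vertex $V'\neq V$ (hence $V\cap V'=\emptyset$). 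By (H2$^*$) and Lemma~\ref{intersectionoftubes2}, $C\cap\tilde C=\{z\}$ with $z\in X$ and $z\neq x$ (else $\tilde C$ itself would be a tube through $x$ with vertex $\neq V$). Also $z$ cannot be collinear to $x$: otherwise $\overline{\Pi}_z=\overline{\Pi}_x=\<x,V\>$, and $z\in\tilde C$ gives $V'\subseteq\overline{\Pi}_z\cap Y=V$, contradicting $V\cap V'=\emptyset$. Picking $x'\in\<z,V'\>\cap X$ with $x'\neq z$ (possible since $|\K|>2$), the same argument shows $x'$ is not collinear to $x$, so $[x,x']$ is a tube through $x$, forced by our assumption to have vertex $V$. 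The main obstacle is then to derive a contradiction from this configuration: my plan is to exploit the richness of the generator $\<z,V'\>$ and apply (H2$^*$) across multiple tube pairs (including further tubes with vertices $\neq V$ supplied by property ($\mathsf{V}$)) to produce a tube through $x$ whose vertex differs from $V$, contradicting the assumption.

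Once $C$ and $C'$ are secured, the inclusion $\<x,V,V'\>\subseteq\overline{\Pi}_x$ is immediate: the generators $\<x,V\>$ of $C$ and $\<x,V'\>$ of $C'$ are singular subspaces through $x$, so by Lemma~\ref{L-x-L'2} their join $\<x,V,V'\>$ is singular and lies in $\overline{\Pi}_x$. For the reverse inclusion, suppose some singular line $L$ through $x$ has its $Y$-point $y\notin\<V,V'\>$. Then $L$ lies in a tube $E$ through $x$ whose vertex $V_E$ contains $y$, so $V_E$ is distinct from both $V$ and $V'$. The resulting three tubes $C,C',E$ through $x$ have pairwise distinct vertices and so pairwise meet only at $x$ by Lemma~\ref{intersectionoftubes2}. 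I will then apply (H2$^*$) to tubes of various vertices and analyse the intersection data, together with dimensional considerations on the singular subspaces generated at auxiliary points, to force $V_E\subseteq\<V,V'\>$, contradicting $y\notin\<V,V'\>$. The hard part will be turning the axiomatic intersection data into the rigidity statement confining all vertices of tubes through $x$ to a single $(2v+1)$-space $\<V,V'\>$, which is the geometric heart of the lemma.
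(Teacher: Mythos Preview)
Your proposal has genuine gaps in both halves: you correctly isolate what must be shown, but in each case you stop at ``my plan is to\ldots'' without carrying out the argument.

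For the existence of $C'$, your contradiction hypothesis (every tube through $x$ has vertex $V$) admits a one-line resolution that you miss. If every tube through $x$ has vertex $V$, then for \emph{any} $x'\in X\setminus\{x\}$ the tube $[x,x']$ has vertex $V$, so $x'$ is collinear with $V$. Thus every point of $X$ is collinear with $V$, and by Corollary~\ref{pix2} every tube has vertex $V$, contradicting~($\mathsf{V}$). Your detour through $\tilde C$, $z$, and $x'$ is unnecessary and, as you acknowledge, you do not finish it.

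For the inclusion $\overline{\Pi}_x\subseteq\<x,V,V'\>$, the paper does not merely ``analyse intersection data''; it makes a specific construction. Assume a third tube $C''$ through $x$ has vertex $V''\nsubseteq\<V,V'\>$. Choose $z\in C$ \emph{not} collinear to $x$ and $z'\in C'\setminus\{x\}$ \emph{collinear} to $x$ (so $z'\in\<x,V'\>\subseteq\Pi_x$), with $z'$ chosen so that $\<z',V\>$ is disjoint from $\<x,\,V''\cap\<V,V'\>\>$; this is possible because $V''\cap\<V,V'\>$ is a proper subspace of $V'$ (and hence $\<x,V''\cap\<V,V'\>\>$ does not exhaust $\<x,V'\>$). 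Since $z\notin\Pi_x$ and $z'\in\Pi_x$, they are non-collinear, and Corollary~\ref{pix2} gives that the vertex of $[z,z']$ is $\overline{\Pi}_z\cap\overline{\Pi}_{z'}=V$. Now $[z,z']$ meets $C''$ in a point $z''\in X$ by (H2$^*$). If $z''\perp z'$ then $z''$ lies in the generator $\<z',V\>$ of $[z,z']$; but $z''\in C''$ and $z''\in\Pi_{z'}=\Pi_x$ force $z''\in\<x,V''\>$, hence $z''\in\<z',V\>\cap\<x,V''\cap\<V,V'\>\>=\emptyset$ by the choice of $z'$. So $z''\not\perp z'$, whence $z''\not\perp x$ (Corollary~\ref{pix2}), so $C''=[x,z'']$. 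But $[x,z'']$ has vertex $\overline{\Pi}_x\cap\overline{\Pi}_{z''}$, and since $z''\in[z,z']$ with vertex $V$, this intersection contains $V$, forcing $V''=V$, the desired contradiction.

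The missing idea in your second half is precisely this asymmetric choice of $z$ and $z'$ (one off $\Pi_x$, one on it) together with the freedom in $z'$ to avoid a small subspace; without it, your ``dimensional considerations'' have nothing to bite on.
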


\begin{proof}
Let $C$ be a tube through $x$ and let $V$ be its vertex. Suppose for a contradiction that no tube through $x$ intersects $C$ in $\{x\}$ only. Then Lemma~\ref{intersectionoftubes2} implies that all tubes through $x$ contain $V$. But then, for each point $x' \in X$ distinct from $x$,  the tube through $x$ and $x'$ also  has $V$ as its vertex, so $x'$ is also collinear with $V$. Consequently,  by Corollary~\ref{pix2}, all tubes have $V$ as their vertex. This contradiction to ($\mathsf{V}$) implies that there is a tube $C'$ with $C \cap C'=\{x\}$. Denote its vertex by~$V'$.

We now show that $\Pi_x=\<x,V,V'\>$. If not, there is a tube $C''$ through $x$ with vertex $V'' \nsubseteq \<V,V'\>$. By ($\mathsf{V}$), $C \cap C' = C \cap C'' = \{x\}$. Take a point $z \in C$ not collinear to $x$ (so not contained in $\Pi_x$) and a point $z'  \in C'\setminus \{x\}$ collinear to $x$ (so contained in $\<x,V'\> \subseteq \Pi_x$), chosen in such a way that $\<z',V\>$ is disjoint from $\<x,V'' \cap \<V,V'\>\>$ (note that $V$ and $\<x,V'\>$ span $\<x,V,V'\>$ whereas $V'' \cap \<V,V'\>$ and $\<x,V'\>$ do not, by assumption on $V''$). By Corollary~\ref{pix2}, $z$ and $z'$ are not collinear and $\overline{\Pi}_z \cap \overline{\Pi}_{z'} = V$ is the vertex of the tube $[z,z']$.  Axiom (H2$^*$) implies that $[z,z']$ intersects $C''$ in a point $z''\in X$. If $z'' \perp z'$, then $z''$ belongs to the generator $\<z',V\>$ of $[z,z']$. But then $z'' \in \<z',V\> \cap C''$, or more precisely, $z''$ belongs to $\<z',V\> \cap \<x,V'' \cap \<V,V'\>\>$, which is empty by the choice of $z'$. Hence $z''$ is not collinear to $z'$ and hence by Corollary~\ref{pix2}, $z''$ is not collinear to $x$, so $C'' = [x,z'']$ and $V''=\overline{\Pi}_{x} \cap \overline{\Pi}_{z''}=V$, a contradiction. The lemma is proven.
\end{proof}

\textbf{Notation}|
Given a tube $C$, we denote  $\Pi_C^Y=\{\Pi_x^Y \mid x \in C\}$ and $\Pi_C=\{\Pi_x \mid x \in C\}$. For a given vertex $V$, the set of all tubes with vertex $V$ is $\mathcal{C}_V$; its structure is described in the following lemma. 

\begin{lemma}\label{Y}
Let $C$ be a tube with vertex $V$. Then  each point of $X$ collinear to $V$ is contained in a unique member of $\Pi_C$  and hence $\<\mathcal{C}_V\>=\<\Pi_C\>=\<C,\Pi_C^Y\>$. This has the following consequences:
\begin{compactenum}[$(i)$]
\item For each tube $C'$ with vertex $V$, containment gives a bijection between the set of generators of $C'$ and the set $\Pi_C$. Consequently, $\Pi_C^Y = \Pi_{C'}^Y$ and collinearity gives a bijection between the generators of $C$ and $C'$. 
\item For each point $z$ not collinear to $V$;  $V$ and $\Pi^Y_z$ are complementary subspaces in $\<\Pi_C^Y\>$; in particular, $\dim(\<\Pi_C^Y\>)=3v+2$;
\item For all non-collinear points $x,x'\in C$, the subspace $\<\Pi_C^Y\>$ is spanned by $\Pi_x^Y$ and $\Pi_{x'}^Y$;
\item Each point $y \in Y$ belongs to $\Pi_c^Y$ for some $c\in C$.
\item $\<\Pi_C^Y\> \cap \<C\>=V$ and the dimension of $\<\mathcal{C}_V\>$ is $3v+d+4$.
\end{compactenum}
\end{lemma}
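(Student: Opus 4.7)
The plan is to prove the main claim first, derive the span equalities $\<\mathcal{C}_V\>=\<\Pi_C\>=\<C,\Pi_C^Y\>$ as direct corollaries, and then obtain consequences (i)–(v) in an order where each builds on the previous ones; throughout I intend to exploit Lemma~\ref{intersectionoftubes2}, Corollary~\ref{pix2}, Lemma~\ref{Pi_x2} and property~($\mathsf{V}$).

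For the main claim, uniqueness is immediate from Corollary~\ref{pix2}: two distinct members of $\Pi_C$ come from non-collinear points of $C$ and so have disjoint affine parts. For existence, take $p\in X$ collinear with $V$. If $p\in C$, take $c=p$; otherwise I use Lemma~\ref{Pi_x2} to pick a second vertex $V'\neq V$ contained in $\Pi_p^Y$, together with a tube $D$ through $p$ having vertex $V'$. Since $V\neq V'$, Lemma~\ref{intersectionoftubes2} forces $C\cap D=\{c\}$ for a single $c\in X$. The crucial step, which I expect to be the main obstacle, is showing $p$ is collinear with this $c$: if not, Corollary~\ref{uniquetube2} produces the unique tube $[p,c]$, whose vertex $\Pi_p^Y\cap\Pi_c^Y$ contains $V$ and hence equals $V$ by~($\mathsf{V}$); but $D$ is a second tube through $p$ and $c$ with vertex $V'\neq V$, contradicting uniqueness.

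The span equalities now follow almost formally. $\<C,\Pi_C^Y\>=\<\Pi_C\>$ is clear from $\Pi_c\subseteq\<c,\Pi_c^Y\>$. For $\<\mathcal{C}_V\>\subseteq\<\Pi_C\>$, every point of every tube in $\mathcal{C}_V$ is collinear with $V$ and hence lies in some $\Pi_c$ by the main claim; the reverse inclusion uses that each affine part $\Pi_c\cap X$ is contained in $\bigcup\mathcal{C}_V$ while $\Pi_c^Y$ lies in its projective closure. For consequence (i), applying the main claim with $C'\in\mathcal{C}_V$ in place of $C$ yields $\Pi_{C'}=\Pi_C$, and the generator-to-$\Pi_c$ correspondence then gives the bijections between generators of $C$ and $C'$ together with $\Pi_{C'}^Y=\Pi_C^Y$. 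For (iv), given $y\in Y$ I select a tube $C_y$ whose vertex $V_{C_y}$ contains $y$; by (H2$^*$) the intersection $C\cap C_y$ contains a point $c\in X$, and Lemma~\ref{Pi_x2} applied at $c$ with vertices $V$ and $V_{C_y}$ shows $y\in V_{C_y}\subseteq\Pi_c^Y\in\Pi_C^Y$, so $Y\subseteq\<\Pi_C^Y\>$.

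For (iii), Corollary~\ref{pix2} gives $\Pi_x^Y\cap\Pi_{x'}^Y=V$ whenever $x,x'\in C$ are non-collinear, so $\dim\<\Pi_x^Y,\Pi_{x'}^Y\>=3v+2$; (iv) rules out further $Y$-directions, and therefore $\<\Pi_C^Y\>$ already coincides with $\<\Pi_x^Y,\Pi_{x'}^Y\>$, establishing the dimension statement in (ii). The disjointness $V\cap\Pi_z^Y=\emptyset$ in (ii) (for $z$ not collinear to $V$) follows by contradiction: a common $Y$-point would force the vertex of $[z,q]$, for $q\in C$ non-collinear to $z$, to meet $V$ non-trivially, hence to equal $V$ by~($\mathsf{V}$), which would make $z$ collinear with all of $V$. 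Finally for (v), $\<\Pi_C^Y\>\cap\<C\>=V$ holds because the only $Y$-part of the cone $\overline{C}$ is its vertex $V$, and the dimension formula then yields $\dim\<\mathcal{C}_V\>=\dim\<C,\Pi_C^Y\>=(d+v+2)+(3v+2)-v=3v+d+4$. After the existence step in the main claim, everything else is careful dimension bookkeeping combined with repeated use of~($\mathsf{V}$).
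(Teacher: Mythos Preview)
Your argument for the main claim is correct and in fact a bit more direct than the paper's (the paper argues by contradiction that otherwise \emph{every} point of $X$ would be collinear with $V$, violating ($\mathsf{V}$); you instead produce a concrete tube $D$ whose vertex contradicts the uniqueness of $[p,c]$). The span equalities, (i), (iv), the disjointness half of (ii), and (v) are all handled correctly.

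There is, however, a genuine gap in your derivation of (iii) and the dimension/spanning part of (ii). You compute $\dim\langle\Pi_x^Y,\Pi_{x'}^Y\rangle=3v+2$ and then assert that ``(iv) rules out further $Y$-directions, and therefore $\langle\Pi_C^Y\rangle$ already coincides with $\langle\Pi_x^Y,\Pi_{x'}^Y\rangle$''. But (iv) only gives the set-theoretic covering $Y=\bigcup_{c\in C}\Pi_c^Y$, hence $\langle\Pi_C^Y\rangle=Y$; it does \emph{not} show that every $\Pi_c^Y$ lies in $\langle\Pi_x^Y,\Pi_{x'}^Y\rangle$, nor does it bound $\dim Y$ from above. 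Your argument so far establishes only $\dim\langle\Pi_C^Y\rangle\geq 3v+2$, and the dimension formula in (v) then inherits this gap.

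The missing step is precisely the ingredient you set up but did not exploit in (ii): for a point $z$ not collinear with $V$, and any $c\in C$, the points $c$ and $z$ are non-collinear, so $V_c:=\Pi_c^Y\cap\Pi_z^Y$ is the vertex of $[c,z]$, a $v$-space. Since $V_c\subseteq\Pi_z^Y$ is disjoint from $V$ (your disjointness argument), and both $V$ and $V_c$ lie in the $(2v+1)$-space $\Pi_c^Y$, dimension forces $\Pi_c^Y=\langle V,V_c\rangle\subseteq\langle V,\Pi_z^Y\rangle$. Hence $\langle\Pi_C^Y\rangle\subseteq\langle V,\Pi_z^Y\rangle$, which gives the upper bound $3v+2$ and simultaneously the complementarity of $V$ and $\Pi_z^Y$ in $\langle\Pi_C^Y\rangle$. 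Statement (iii) then follows by specialising $V_c$ to $V_x=V_{c_1}$ and $V_{x'}=V_{c_2}$. This is exactly how the paper proceeds; once you insert this paragraph, your proof is complete.
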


\begin{proof}
Let $p \in X$ be collinear to $V$ and suppose $p$ is not contained in any member of $\Pi_C$. Take any point $q \in X\setminus (C \cup \Pi_p)$. By (H2$^*$), the tube $[p,q]$ then intersects $C$ in a point $c$, and since $p \notin \Pi_c$ we have $[p,q]= [p,c]$, which implies that $V$ is the vertex of $[p,q]$. In particular, $q$ is collinear with $V$. We obtain that all points of $X$ are collinear to $V$, and hence all tubes have $V$ as their vertex, contradicting ($\mathsf{V}$). We conclude that $p \in \Pi_c$ for some $c\in C$, uniqueness follows from the fact that the sets $\Pi_c$ are mutually disjoint.

Generated by all points of $X$ collinear to $V$,  $\<\mathcal{C}_V\>$ equals $\<\Pi_C\>$. Since for each $c\in C$, $\<\Pi_c\> = \overline{\Pi}_c=\<c,\Pi_c^Y\>$, we obtain $\<\mathcal{C}_V\> = \<C,\Pi_C^Y\>$.  

$\quad (i)$ It immediately follows from the above that each point $c' \in C'$ is contained in a unique member of $\Pi_C$. Observing that collinear points of $C'$ need to be contained in the same such subspace and that points in the same such subspace are necessarily collinear, each generator of $C'$ is contained in a unique member of  $\Pi_C$.

Interchanging the roles of $C$ and $C'$, we see that each generator of $C$ is contained in a member of $\Pi_{C'}$ and so $\Pi_C=\Pi_{C'}$. 
Hence also $\Pi_C^Y=\Pi_{C'}^Y$. 

We have shown that each tube through $V$ has exactly one generator in each member of $\Pi_C$, and all members of $\Pi_C$ contain a generator of that tube. It follows that the map from $C$ to $C'$ taking a generator of $C$ to the unique generator of $C'$ contained in the same member of $\Pi_C$ (i.e., the generator of $C'$ collinear with it) is a bijection.

$\quad (ii)$ Let $z$ be a point not-collinear with $V$ (note that this exists by Property ($\mathsf{V}$)). Such a point is not collinear with any point $c\in C$ and hence, for each $c\in C$, the intersection $\Pi_c^Y \cap \Pi_{z}^Y$ is a $v$-space $V_c$. Again by ($\mathsf{V}$), $V_c \cap V = \emptyset$; so also $\Pi_z^Y \cap V = \emptyset$. Moreover, Lemma~\ref{Pi_x2} implies $\Pi_c^Y=\<V_c,V\>$ for each $c\in C$, and also $\Pi_z^Y=\<V_{c_1},V_{c_2}\>=\<V_c \mid c\in C\>$. So $\<\Pi_C^Y\>=\<\Pi_c^Y \mid c\in C\>=\<V, V_c \mid c\in C\>=\<V,\Pi_{z}^Y\>$. In particular, $\dim(\<\Pi_C^Y\>)=3v+2$.

$\quad (iii)$ This follows immediately from the previous item since $\<V,\Pi_{z}^Y\>=\<V, V_{c_1},V_{c_2}\>=\<\Pi_{c_1}^Y,\Pi_{c_2}^Y\>$.

$\quad (iv)$  For an arbitrary point $y \in Y$, we have $y \in \Pi_z^Y$ for some $z \in X$. If $z$ is collinear with $V$ then $\Pi_z^Y = \Pi_c^Y$ for some $c \in C$. If $z$ is not collinear with $V$, then in the previous paragraph, we showed that $\<\Pi_C^Y\>=\<V,\Pi_{z}^Y\>$.  Hence $y\in \<\Pi_C^Y\>$ and this already implies $Y=\<\Pi_C^Y\>$. Now take a tube $C'$ through $zy$. 
By (H2$^*$), $C \cap C'$ contains a point $c \in X$ and hence $y \in 
\Pi^Y_c$.


$\quad (v)$  As $\<\Pi_C^Y\>$ only contains points of $Y$, the intersection $\<C\>\cap\<\Pi_C^Y\>$ coincides with $V$. Hence $\<\mathcal{C}_V\>$, generated by a quadric $Q$ on $C$ complimentary to $V$ and by $\Pi_C^Y$,  has dimension $3v+d+4$.
\end{proof}

\textbf{Notation}|Point $(i)$ of the previous lemma implies that $\Pi_C^Y$ could, and shall, more accurately be denoted by $\Pi_V^Y$, with $V$ the vertex of $C$, as it does not depend on the element of $\mathcal{C}_V$.

 We consider the following point-line geometry.

\begin{defi}[The point-line geometry $\mathcal{G}_V$]  \em We define  $\mathcal{G}_V$ as a geometry having as point set the set $\mathcal{P}_V$ of singular affine $(v+1)$-spaces having $V$ as their $v$-space at infinity and as line set the set $\mathcal{C}_V$ of tubes with vertex $V$, with containment made symmetric as incidence relation.
\end{defi}

\begin{cor}\label{dualaffineplane}
The point-line geometry $\mathcal{G}_V=(\mathcal{P}_V,\mathcal{C}_V,I)$ is a dual affine plane.
\end{cor}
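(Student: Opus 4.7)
The plan is to verify the two defining axioms of a dual affine plane: (DA1) any two distinct lines of $\mathcal{G}_V$ meet in exactly one point, and (DA2) given any point $G$ and any line $C$ of $\mathcal{G}_V$ with $G\not\subseteq C$, there is a unique point $G'\subseteq C$ that is joined to $G$ by no line.

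For (DA1), I would take distinct tubes $C_1,C_2\in\mathcal{C}_V$ and apply Lemma~\ref{intersectionoftubes2}: the intersection is either a single point of $X$ or a common generator. Since both tubes share the vertex $V$, we have $V\subseteq\overline{C_1}\cap\overline{C_2}$, so the ``single point'' case is ruled out (a common $x\in X$ would force the whole generator $\<x,V\>$ into both cones), and what remains is a common generator, i.e.\ a single point of $\mathcal{G}_V$.

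For (DA2), I would fix $G=\<x,V\>\in\mathcal{P}_V$ and $C\in\mathcal{C}_V$ with $x\notin C$, and use Lemma~\ref{Y}(i) to produce the candidate: $\Pi_x$ contains exactly one generator of $C$, call it $G'$. To see that $G$ and $G'$ are parallel, I would suppose some $C^*\in\mathcal{C}_V$ contains both, pick $x'\in G'\cap X$, and observe that $x$ and $x'$ then lie in the same maximal singular subspace $\Pi_x$, so they are collinear in $X$; but inside a tube whose base is an ovoid $Q^0_d$, two collinear points must lie on a common generator (the joining line, being a secant of the base, cannot lie in the cone), forcing $G=G'$ and hence $x\in C$, a contradiction. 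For uniqueness, and to witness that every other point of $C$ is joined to $G$, I take any $G''=\<x'',V\>\subseteq C$ with $G''\ne G'$; then $\Pi_{x''}\ne\Pi_x$, so $x,x''$ are non-collinear by Corollary~\ref{pix2}, and Corollary~\ref{uniquetube2} produces the tube $[x,x'']$ whose vertex $\overline{\Pi}_x\cap\overline{\Pi}_{x''}$ contains $V$ (since both $\overline{\Pi}$'s do); property~($\mathsf{V}$) then forces this vertex to equal $V$, so $[x,x'']\in\mathcal{C}_V$ is a line joining $G$ and $G''$.

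The main subtlety is the step ``two collinear points of a single tube lie on a common generator'': this rests on the fact that the base of a tube is a $Q^0_d$-quadric (an ovoid, containing no line), so that a line joining two points of $C^*$ on distinct generators is a secant of the base, not contained in the cone $\overline{C^*}$, and hence not a singular line at all by axiom (H2$^*$). Non-degeneracy of the plane structure is immediate from Lemma~\ref{Pi_x2}, which guarantees the existence of several distinct tubes, and hence several distinct generators, through $V$.
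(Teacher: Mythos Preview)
Your proof is correct and follows essentially the same approach as the paper: identify the unique ``parallel'' generator via Lemma~\ref{Y} and verify (DA1) using Lemma~\ref{intersectionoftubes2}. You spell out both directions of (DA2) more carefully than the paper does; the only minor wobble is the non-degeneracy remark at the end, where Lemma~\ref{Pi_x2} is not quite the right citation (it produces tubes through $x$, not necessarily through $V$), but this is easily patched using your own uniqueness argument applied to two non-collinear points on a tube in $\mathcal{C}_V$.
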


\begin{proof}
Any two tubes through $V$ intersect in an element of $\mathcal{P}_V$ as they need to share a point of $X$ by (H2$^*$). Let $C \in \mathcal{C}_V$ be arbitrary. Each singular affine $(v+1)$-space $W \in \mathcal{P}_V$, not contained in $C$, is collinear with a unique generator of $C$ by Lemma~\ref{Y}. That generator corresponds to the unique element of $\mathcal{P}_V$ in $C$ that is not contained in a member of $\mathcal{C}_V$ together with $W$. Clearly, no element of $\mathcal{P}_V$ is contained in all tubes through ~$V$. We verified all axioms of a dual affine plane.
\end{proof}

\subsection{Connecting $X$ and $Y$}

Recall that $Y$ is a subspace (cf.\ Corollary~\ref{Ysub}). The connection between the $X$-points and the $Y$-points is a crucial step towards understanding the structure of~$(X,\Xi)$. To this end, we consider the projection $\rho$ of $X$ from $Y$ onto a subspace $F$ of $\PG(N,\K)$ complementary to $Y$, i.e.,
\[\rho: X \rightarrow F : x \mapsto \<Y,x\> \cap F.\]

We show that this projection gives us a well-defined point-quadric set (the quadrics $\rho([x,z])$ and $\rho([x',z'])$ with $\rho(x)=\rho(x')$ and $\rho(z)=\rho(z')$ have to coincide).  For that we need one more general lemma.

\begin{lemma}\label{differentvertex}
Let  $C$ and $C'$ be tubes sharing only one point $x\in X$. Then two points $z \in C$ and $z'\in C'$ are collinear if and only if $\<z,z'\>$ belongs to $\Pi_x$.
\end{lemma}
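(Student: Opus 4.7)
The approach is to exploit the known structure of the maximal singular subspace $\Pi_x$ in conjunction with the uniqueness of tubes through pairs of non-collinear points. As a preliminary observation, the hypothesis $C\cap C'=\{x\}$ combined with Lemma~\ref{intersectionoftubes2} forces the respective vertices $V$ of $C$ and $V'$ of $C'$ to be distinct (if they coincided, the two tubes would share the whole generator $\<x,V\>$), and then Property~$(\mathsf{V})$ yields $V\cap V'=\emptyset$.

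The backward implication is immediate from the definitions: if the line $\<z,z'\>$ is contained in $\overline{\Pi}_x$, then both $z$ and $z'$ belong to the singular subspace $\Pi_x$, and every two points of a singular subspace are collinear.

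For the forward implication, I would assume $z$ and $z'$ collinear and apply Corollary~\ref{pix2} to obtain $\Pi_z=\Pi_{z'}$. The core of the argument is a two-case split. If one of $z,z'$, say $z$, is collinear with $x$, then $z\in\Pi_x$ forces $\Pi_z=\Pi_x$ (again by Corollary~\ref{pix2}), whence $\Pi_{z'}=\Pi_x$ and $z'\in\Pi_x$, so $\<z,z'\>\subseteq\overline{\Pi}_x$. Otherwise, neither $z$ nor $z'$ is collinear with $x$, and Corollary~\ref{uniquetube2} forces $C=[x,z]$ and $C'=[x,z']$; their vertices are then $V=\overline{\Pi}_x\cap\overline{\Pi}_z$ and $V'=\overline{\Pi}_x\cap\overline{\Pi}_{z'}$, but the identity $\Pi_z=\Pi_{z'}$ implies $\overline{\Pi}_z=\overline{\Pi}_{z'}$ and hence $V=V'$, contradicting the preliminary observation.

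The case analysis is dictated entirely by the structure of the $\Pi_x$-subspaces, and the only minor pitfall is to check that the degenerate possibilities $z=x$ or $z'=x$ are absorbed (they are: if $z=x$ then trivially $z\in\Pi_x$, so the first case applies). I expect no substantial obstacle here: all the necessary structural facts about $\Pi_x$, tube vertices, and the uniqueness of the tube $[x_1,x_2]$ through non-collinear points have already been established in the preceding section.
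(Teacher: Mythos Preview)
Your proof is correct and follows essentially the same approach as the paper's: both arguments reduce to the observation that if $z$ and $z'$ are collinear then $\Pi_z=\Pi_{z'}$ (Corollary~\ref{pix2}), which forces the vertices of $[x,z]$ and $[x,z']$ to coincide, contradicting $V\cap V'=\emptyset$. The paper phrases this via $\Pi_z^Y=\Pi_{z'}^Y$ and the inclusion $V'\subseteq\Pi_z^Y\cap\Pi_x^Y=V$, while you argue directly with $\overline\Pi_x\cap\overline\Pi_z=\overline\Pi_x\cap\overline\Pi_{z'}$; your explicit treatment of the case where one of $z,z'$ lies in $\Pi_x$ is a detail the paper leaves implicit under ``the converse statement is clear''.
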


\begin{proof}
Denote by $V$ and $V'$ the respective vertices of $C$ and $C'$. By Lemma~\ref{Pi_x2}, $\Pi_x=\<x,V,V'\>$. Consider two distinct points $z\in C \setminus\<x,V\>$ and $z' \in C'\setminus\<x,V'\>$. If $z$ and $z'$ would be collinear, then $\Pi_z^Y=\Pi_{z'}^Y$ by Corollary~\ref{pix2}, in particular this means that $V'$ belongs to $\Pi_{z}^Y$. This contradicts the fact that, also by Corollary~\ref{pix2}, $\Pi_z^Y \cap \Pi_x^Y = V$. We conclude that $z$ and $z'$ are not collinear. The converse statement is clear.
\end{proof}

\begin{lemma}\label{rho}
The projection $\rho$ is such that $\rho(x)=\rho(x')$, for $x,x'\in X$, if and only if $x$ and $x'$ are equal or collinear. In particular, for each $x\in X$, we have $\rho^{-1}(\rho(x))=\Pi_x$ and for any tube $C$ with vertex $V$, $\rho(C)$ is a $Q^0_d$-quadric. For any two tubes $C,C'$ we have that $\rho(C)=\rho(C')$ if and only if $C$ and $C'$ have the same vertex; and if the vertices are distinct, then $\rho(C) \cap \rho(C')=\rho(C\cap C')$. In particular, with slight abuse of notation, $\rho^{-1}(\rho(C))=\mathcal{C}_V$. \end{lemma}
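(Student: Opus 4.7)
The plan is to deduce everything from one basic equivalence: for $x,x'\in X$, $\rho(x)=\rho(x')$ if and only if $x$ and $x'$ are collinear. Indeed $\rho(x)=\rho(x')$ says the line $xx'$ meets $Y$, and since $xx'$ carries the two $X$-points $x,x'$, Lemma~\ref{singularlines2} forces $xx'$ to be a singular (affine) line, which is the definition of $x$ and $x'$ being collinear. Combined with Corollary~\ref{pix2}, which gives $\Pi_x=\Pi_{x'}$ exactly when $x,x'$ are collinear, this yields $\rho^{-1}(\rho(x))=\Pi_x$.

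For the claim that $\rho(C)$ is a $Q^0_d$-quadric, I would write $\overline{C}=\langle V,Q\rangle$ with $Q$ a $Q^0_d$-quadric in a $(d+1)$-space $\langle Q\rangle$ complementary to $V$ inside $\Xi(C)$; since $V\subseteq Y$, $\rho(C)=\rho(Q)$. The key observation is that $\Xi(C)\cap Y=V$: any $y$ in this intersection lies in some vertex $V''$, and (H2$^*$) applied to $C$ and any tube $C''$ with vertex $V''$ places $y\in\overline{C}\cap\overline{C''}\subseteq\overline{C}$; as $y\notin X$, we get $y\in V$. Since $\langle Q\rangle$ is complementary to $V$ in $\Xi(C)$, this gives $\langle Q\rangle\cap Y=\emptyset$, so $\rho$ restricts to a collineation on $\langle Q\rangle$ and maps $Q$ to a projectively equivalent $Q^0_d$-quadric.

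For the equivalence $\rho(C)=\rho(C')\iff C,C'$ share a vertex, the ``if'' direction is immediate from Lemma~\ref{Y}$(i)$: the collinearity bijection between the generators of $C$ and $C'$ forces equality of images. For ``only if'', I assume the vertices $V,V'$ are distinct; by ($\mathsf{V}$) they are disjoint, and Lemma~\ref{intersectionoftubes2} gives $C\cap C'=\{x\}$. I then pick $p\in C\setminus\langle x,V\rangle$, so $p$ is not collinear to $x$; if $\rho(C)=\rho(C')$, a partner $p'\in C'$ with $\rho(p)=\rho(p')$ exists, and since $p\notin C'$ we have $p\neq p'$, so $p,p'$ are collinear. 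Then Lemma~\ref{differentvertex} places $pp'\subseteq\Pi_x=\langle x,V,V'\rangle$, contradicting the choice of $p$. The intersection identity $\rho(C)\cap\rho(C')=\rho(C\cap C')=\{\rho(x)\}$ when $V\neq V'$ follows by the same analysis: any common image arises from $p\in C$, $p'\in C'$ with either $p=p'=x$ or $pp'\subseteq\Pi_x$ by Lemma~\ref{differentvertex}; combining $\langle C\rangle\cap Y=V$ (from the previous paragraph) with $V\cap V'=\emptyset$ gives $\langle C\rangle\cap\Pi_x=\langle x,V\rangle$, so $p$ lies on the generator of $C$ through $x$, is collinear to $x$, and hence $\rho(p)=\rho(x)$.

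Finally, the identity $\rho^{-1}(\rho(C))=\mathcal{C}_V$ (read as the union of the tubes of $\mathcal{C}_V$) follows by combining $\rho^{-1}(\rho(p))=\Pi_p$ for each $p\in C$ with Lemma~\ref{Y}, which identifies $\bigcup_{p\in C}\Pi_p$ as exactly the set of $X$-points collinear to $V$, that is, the points lying on some tube of $\mathcal{C}_V$. No serious obstacle is expected: the only delicate point is the identity $\Xi(C)\cap Y=V$, which underpins the injectivity of $\rho$ on $\langle Q\rangle$; everything else is a careful bookkeeping with Lemmas~\ref{differentvertex}, \ref{intersectionoftubes2} and~\ref{Y}.
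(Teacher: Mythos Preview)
Your proof is correct and follows essentially the same path as the paper's: the basic equivalence via Lemma~\ref{singularlines2}, the appeal to Lemma~\ref{Y}$(i)$ for tubes with equal vertex, and Lemma~\ref{differentvertex} for tubes with distinct vertices are exactly the ingredients the paper uses. One small remark: for $\Xi(C)\cap Y=V$ the paper silently invokes what was shown inside Lemma~\ref{Y} (namely $Y=\langle\Pi_C^Y\rangle$ together with item~$(v)$), whereas you give a direct argument via (H2$^*$); your argument works, but you should note that if the vertex $V''$ containing $y$ happens to equal $V$ you are already done, and only in the case $V''\neq V$ (hence $C''\neq C$) do you actually apply (H2$^*$).
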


\begin{proof}
Let $x$ and $x'$ be two points of $X$. Then $\rho(x)=\rho(x')$ (or equivalently, $\<Y,x\>=\<Y,x'\>$) if and only if $xx'$ contains a point of $Y$, which on its turn is equivalent with $x$ and $x'$ being collinear.  It is then clear that $\rho^{-1}(\rho(x))$ equals the set of points collinear with $x$, so $\Pi_x$. Now let $C$ be a tube with vertex $V$.
Since $\<C\> \cap Y = V$, we obtain that $\rho(C)$ is a quadric of type $Q^0_d$. It follows from Lemma~\ref{Y}$(i)$ that all tubes in $\mathcal{C}_V$ have the same image, as collinear generators are mapped onto the same point. If $C$ and $C'$ have distinct vertices $V$ and $V'$ (hence $V \cap V' = \emptyset$ by $(\mathsf{V})$) then $C \cap C'$ is a unique point $x$ by (H2$^*$) and hence it follows from Lemma~\ref{differentvertex} that $\rho(C) \cap \rho(C')=\rho(x)$.
\end{proof}
\bigskip

We now show that $(\rho(X),\rho(\Xi))$, as a pair of points and $Q^0_d$-quadrics in $F$, satisfies the Axioms~(MM1) and ~(MM2$^*$) introduced in Section~\ref{MM}. 

\begin{prop}\label{veronese}
The pair $(\rho(X),\rho(\Xi))$ satisfies Axioms \emph{(MM1)} and \emph{(MM2$^*$)}. As a point-line geometry, $(\rho(X),\rho(\Xi))$ is hence isomorphic to $\PG(2,\mathbb{B})$, where $\mathbb{B}$ is a quadratic alternative division algebra with $\dim_\K(\mathbb{B})=d$. Consequently, $d$ is a power of $2$, with $d\leq 8$ if $\mathrm{char}(\K) \neq 2$, and $N=6d+2$.
\end{prop}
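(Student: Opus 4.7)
The plan is to verify axioms (MM1) and (MM2$^*$) for $(\rho(X),\rho(\Xi))$, then invoke Main Result~\ref{main2} to identify it with an ordinary Veronese representation $\mathcal{V}_2(\K,\mathbb{B})$, and finally extract $N=6d+2$ by combining the resulting dimensions of $F$ and $Y$ with a spread argument forcing $v=d-1$. For (MM1), two distinct points $\rho(x_1),\rho(x_2)\in\rho(X)$ correspond by Lemma~\ref{rho} to non-collinear $x_1,x_2\in X$, and Corollary~\ref{uniquetube2} supplies a tube $[x_1,x_2]$ whose image under $\rho$ is the required $Q^0_d$-quadric. For (MM2$^*$), let $\rho(\xi_1)\neq\rho(\xi_2)$ come from tubes $C_1,C_2$: Lemma~\ref{rho} forces $V_1\neq V_2$, hence $V_1\cap V_2=\emptyset$ by ($\mathsf{V}$); Lemma~\ref{intersectionoftubes2} combined with (H2$^*$) gives $\xi_1\cap\xi_2=\{x\}$ for a unique $x\in X$; and Lemma~\ref{Pi_x2} yields $\overline{\Pi}_x=\<x,V_1,V_2\>$ of dimension $2v+2$, whence $\Pi_x^Y=\<V_1,V_2\>$ (both of dimension $2v+1$).

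To upgrade $\xi_1\cap\xi_2=\{x\}$ to $\rho(\xi_1)\cap\rho(\xi_2)=\{\rho(x)\}$, I would use the projection identity $\pi_F(\hat\xi_1)\cap\pi_F(\hat\xi_2)=\pi_F(\hat\xi_1\cap(\hat\xi_2+\hat Y))$ and reduce by dimension count to the claim $(\hat\xi_1+\hat\xi_2)\cap\hat Y=\hat V_1+\hat V_2$. Choosing, for $i=1,2$, a base quadric $Q_i\subseteq\xi_i$ through $x$ and complementary to $V_i$ (so that $\xi_i=V_i\oplus\<Q_i\>$), any nontrivial $\hat y=\hat a+\hat b\in(\hat\xi_1+\hat\xi_2)\cap\hat Y$ reduces (after absorbing the $V_i$-components into $\hat V_1+\hat V_2$) to the core case $a\in C_1$, $b\in C_2$. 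Then $\overline{ab}$ contains two $X$-points and the $Y$-point $y$ and so is singular by Lemma~\ref{singularlines2}, forcing $a$ and $b$ collinear; Lemma~\ref{rho} then pins down $\rho(a)=\rho(b)\in\rho(C_1)\cap\rho(C_2)=\{\rho(x)\}$, so $a,b$ lie on the generators of $C_1,C_2$ through $x$ respectively, with $\hat a=\alpha\hat x+\hat v_1$ and $\hat b=\beta\hat x+\hat v_2$ for some $v_i\in V_i$. Since $x\notin Y$ and $\hat y\in\hat Y$, the coefficient $\alpha+\beta$ of $\hat x$ must vanish, yielding $\hat y=\hat v_1+\hat v_2\in\hat V_1+\hat V_2$.

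With (MM1) and (MM2$^*$) established, Main Result~\ref{main2} (applicable since $|\K|>2$) gives a projective equivalence $(\rho(X),\rho(\Xi))\cong\mathcal{V}_2(\K,\mathbb{B})$ for some quadratic alternative division algebra $\mathbb{B}$ with $\dim_\K\mathbb{B}=d$; Fact~\ref{qad} supplies the stated restrictions on $d$ and fixes $\dim F=3d+2$. Combined with $\dim Y=3v+2$ from Lemma~\ref{Y} and the complementarity of $F$ and $Y$, this gives $N=3d+3v+5$. To reach $N=6d+2$ one needs $v=d-1$, which follows from a spread argument on $\Pi_x^Y$: the vertices $V\subseteq\overline{\Pi}_x$ are pairwise disjoint by ($\mathsf{V}$), cover $\Pi_x^Y$ (every singular line through $x$ ending at $y\in Y$ lies in a generator of some tube whose vertex contains $y$), and are in bijection with the $|\mathbb{B}|+1$ lines of $\PG(2,\mathbb{B})$ through $\rho(x)$; they therefore form a spread of $\Pi_x^Y=\PG(2v+1,\K)$ by $v$-subspaces of cardinality $|\K|^{v+1}+1=|\K|^d+1$, forcing $v+1=d$. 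The main obstacle will be verifying $(\hat\xi_1+\hat\xi_2)\cap\hat Y=\hat V_1+\hat V_2$ in full generality---the non-$X$-non-$Y$ points of $\xi_i$ demand a careful reduction via the base-quadric decomposition to the singular-line case---together with, in the infinite-$\K$ setting, replacing the cardinality comparison in the spread argument by a purely dimensional one.
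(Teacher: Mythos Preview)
Your overall strategy matches the paper's, but there is a genuine gap in your verification of (MM2$^*$). When you write $\hat y=\hat a+\hat b$ with $a\in\xi_1$, $b\in\xi_2$ and then ``absorb the $V_i$-components'', what you actually obtain are points $a\in\langle Q_1\rangle$ and $b\in\langle Q_2\rangle$---arbitrary points of the $(d+1)$-dimensional \emph{spans} of the base quadrics, not points of the tubes $C_1$ and $C_2$ themselves. There is no reason for $a$ to lie on $Q_1\subseteq C_1$, so you cannot invoke Lemma~\ref{singularlines2} (which requires two points of $X$ on the line through $y$), and your collinearity argument collapses. The paper's proof of $\langle C_1,C_2\rangle\cap Y=\langle V_1,V_2\rangle$ is considerably more delicate: assuming a point $y$ in that set but outside $\langle V_1,V_2\rangle$, it uses Lemma~\ref{Y} to locate the generators of $C_1,C_2$ collinear with $y$, constructs a carefully chosen plane $\alpha\subseteq\langle C_2\rangle$ through $x$, and---crucially using $|\K|>2$---manufactures a line carrying three points of $X$, contradicting Lemma~\ref{singularlines2}.

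Two smaller points. First, you omit the separate check that $\rho(\xi)\cap\rho(X)=\rho(X(\xi))$, needed to ensure the elliptic spaces of $(\rho(X),\rho(\Xi))$ meet $\rho(X)$ exactly in $Q^0_d$-quadrics; the paper proves this as an additional claim once (MM2$^*$) is available. Second, your spread argument for $v=d-1$ is correct in spirit but the cardinality comparison fails for infinite $\K$, as you note; the paper in fact defers $v=d-1$ to Proposition~\ref{structureCV}, where it emerges from the regular-scroll analysis, so the equality $N=6d+2$ in the present statement tacitly anticipates that result.
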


\begin{proof}
By Lemma~\ref{rho}, $\rho(\Xi)$ is a well-defined family of $(d+1)$-dimensional subspaces in $F$ (called the \emph{elliptic spaces}), such that for each $\xi \in \Xi$, $\rho(\xi) \cap \rho(X)$ contains $\rho(X(\xi))$ (equality will be shown once (MM2$^*$) is established). We prove that the pair $(\rho(X),\rho(\Xi))$ satisfies Axioms (MM1) and (MM2$^*$) and as such is a Veronese variety (cf.~Theorem~\ref{main2}).

$\quad \bullet$ \textit{Axiom} (MM1). Let $z$ and $z'$ be distinct points of $\rho(X)$. Then there are points $x,x' \in X$ with $z=\rho(x)$ and $z'=\rho(x')$. By the above, $x$ and $x'$ are not collinear, so by (H1), they are contained in a unique tubic space $\xi$. Hence $z$ and $z'$ are contained in the elliptic space $\rho(\xi)$ and Axiom (MM1) follows.

$\quad \bullet$ \textit{Axiom} (MM2$^*$). Let $\xi, \xi' \in \Xi$ be distinct tubic spaces and put $\rho(\xi)=\zeta$, $\rho(\xi')=\zeta'$ (note that $\zeta=\zeta'$ is a priori not impossible), and put $C=X(\xi)$ and $C'=X(\xi')$. If the respective vertices $V$ and $V'$ of $C$ and $C'$ coincide, then Lemma~\ref{Y}$(i)$ implies that $\rho(C)=\rho(C')$, and hence there is nothing to show. 
So suppose that $V$ and $V'$ are distinct, and hence disjoint by ($\mathsf{V}$). Axiom (H2$^*$) implies that $C \cap C'$ is a unique point $x\in X$ and by Lemma~\ref{rho} we obtain $\rho(C) \cap \rho(C')=\{\rho(x)\}$. For (MM2$^*$) to hold, we have to show that $\zeta \cap \zeta' = \{\rho(x)\}$ too. So it suffices to show that $\<C,C'\> \cap Y=\<V,V'\>$: in this case, the projection of $\<C,C'\>$ from $Y$ is then isomorphic to the projection of $\<C,C'\>$ from $\<V,V'\>$, and hence (MM2$^*$) follows from (H2$^*$). 

Suppose for a contradiction that $\<C,C'\> \cap Y$ contains a point $y \notin \<V,V'\>$.
By Lemma~\ref{Y}$(iii)$, $y$ is collinear to unique generators $\<z,V\> \subseteq C$ and $\<z',V'\> \subseteq C'$ for some points $z \in C$ and $z'\in C'$. Note that $x$ is not contained in those generators as $y \notin \<V,V'\>=\Pi_x^Y$.  Since $y \notin \<C'\>$, it is clear that $\<C',y\>$ intersects $\<C\>$ in a line $L$ through $x$. Moreover, $L$ is disjoint from the singular line $\<z',y\>$, for no point of $C$ is collinear to $z'$ by Lemma~\ref{differentvertex}. 
So,  $\<L,y,z'\>$ is a $3$-space in $\<C',y\>$, which thus has a plane $\alpha$ in common with $\<C'\>$ (note that $L$ and $\<z',y\>$ do not belong to $\<C'\>$, so neither to $\alpha$).

The plane $\alpha$ contains $x$ and $z'$ and hence $\alpha \cap C'$ is either a conic through $x$ and $z'$, or it is the union of two lines through $x$ and $z'$ respectively, having a point $v'$ of $V'$ in common.
In both cases, there is only one line in $\alpha$ through $x$ which does not contain a unique second point of $C'$ (in the first case, the tangent line through $x$ to $\alpha \cap C'$; in the second case, the line~$\<x,v'\>$).
Take any line $L'$ in $\alpha$ through $x$ having a unique second point $r'$ in common with $C'$. The plane $\<L,L'\>$ intersects the singular line $\<y,z'\>$ in a point $s$. There are at least three valid choices for $L'$ since $|\K|>2$, each yielding another point $s \in \<y,z'\>$. So we can choose $L'$ such that $s \notin \{y,z'\}$. In particular,   $s \neq r'$ since $s \in L'$ only occurs if $s=z'$ (as $\<y,z'\> \cap \alpha=\{z'\}$).

The lines  $\<s,r'\>$ and $L$, contained in the plane $\<L,L'\>$, share a point $r$. As $L$ and $\<y,z'\>$ were disjoint, $r \neq s$, and as $r' \neq x$, we also have $r \neq r'$.  Note that $r$ is contained in the intersection of $\xi$ and any tubic space containing $r'$ and $s$; hence $r\in C\subseteq X$.  So the line $\<s,r'\>$ contains three points in $X$ and is hence singular. But then $\<r,r'\>$ needs to be contained in $\Pi_x$ by Lemma~\ref{differentvertex}, implying that also $s\in \Pi_x$ and hence $y\in \Pi_x^Y$ as well, a contradiction.

$\quad \bullet$ \emph{Claim: The intersection $\rho(\xi) \cap \rho(X)$ equals $\rho(X(\xi))$ for each $\xi \in \Xi$.}\\
Put $C=X(\xi)$. Suppose for a contradiction that $\rho(\xi)$ contains a point $\rho(z)$ with $z\notin \rho^{-1}(\rho(C))$, i.e., $z \notin \mathcal{P}_V$.  Take any point $x\in X$ with $\rho(x) \in \rho(C)$. Then $[z,x]$ is a tube with vertex $V'$ distinct (and hence disjoint) from $V$. So by (H2$^*$), $[z,x] \cap \xi = \{x\}$. By Lemma~\ref{rho} and Axiom (MM2$^*$), $\rho([z,x]) \cap \rho(\xi) = \{\rho(x)\}$; yet this intersection contains the line $\<\rho(z),\rho(x)\>$ by construction. This contradiction shows the claim.

Knowing this, it follows from Theorem~\ref{main2} that, as a point-line geometry, $(\rho(X),\rho(\Xi)$) is isomorphic to a projective plane $\PG(2,\mathbb{B})$, where $\mathbb{B}$ is a quadratic alternative division algebra with $\dim_\K(\mathbb{B})=d$. Consequently, $d$ is a power of 2, smaller or equal than 8 if $\mathrm{char}(\K) \neq 2$. Since $\rho(X)$ spans $F$ (because $X$ spans $\PG(N,\K)=\<F,Y\>)$, this result also implies that $\dim(F)=3d+2$. Together with $\dim(Y)=3v+2=3d-1$, it follows that $N=6d+2$.
\end{proof}

The next corollary now immediately follows from Theorem~\ref{main2}.

\begin{cor}\label{ovoids}
All $Q_d^0$-quadrics are quadrics of Witt index $1$.
\end{cor}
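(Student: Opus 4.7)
The plan is to leverage what Proposition~\ref{veronese} (together with Main Result~\ref{main2} on the $v=-1$ case) already gives us. The key observation is that for any tube $C$ with vertex $V$, Lemma~\ref{Y}$(v)$ tells us that $\langle C \rangle \cap Y = V$. Therefore the restriction of the projection $\rho: \PG(N,\K) \dashrightarrow F$ (projection from $Y$) to the subspace $\langle C \rangle$ has kernel exactly $V$, and coincides up to a projective transformation of the target with the projection of $\langle C \rangle$ from $V$ onto a $(d+1)$-space complementary to $V$.

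By the very definition of a tube, this latter projection sends $C$ to its base, which is (by assumption) a $Q^0_d$-ovoid. Hence $\rho(C)$ and the base of $C$ are projectively equivalent as subsets of a $(d+1)$-space.

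On the other hand, Proposition~\ref{veronese} established that $(\rho(X),\rho(\Xi))$ is projectively equivalent to the ordinary Veronese representation $\mathcal{V}_2(\K,\mathbb{B})$ for a quadratic alternative division algebra $\mathbb{B}$ over $\K$ with $\dim_\K(\mathbb{B})=d$. In $\mathcal{V}_2(\K,\mathbb{B})$, each element of $\Xi$ meets $X$ in a genuine quadric of Witt index~$1$: this was computed explicitly in the discussion of ``the geometric structure of a line'' in Section~\ref{properties}, where the defining equations ($1$) and ($2$) exhibit the quadric form over $\K$ with anisotropic norm $\mathsf{N}'$. Hence $\rho(C)$ is a quadric of Witt index~$1$.

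Combining the two: the base of $C$, being projectively equivalent to $\rho(C)$, must itself be a quadric of Witt index~$1$. Since $C$ was an arbitrary tube, this proves the corollary. There is essentially no obstacle here: the whole argument is a one-line consequence of Lemma~\ref{Y}$(v)$ together with Proposition~\ref{veronese}, the only care needed being to note that two projections of $\langle C \rangle$ with the same kernel $V$ produce projectively equivalent images (so that ``quadric of Witt index~$1$'' transfers between them).
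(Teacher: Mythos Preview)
Your proof is correct and follows essentially the same approach as the paper, which merely writes that the corollary ``now immediately follows from Theorem~\ref{main2}'' (via Proposition~\ref{veronese}). You have just spelled out the implicit step: since $\langle C\rangle\cap Y=V$, the projection $\rho$ restricted to $\langle C\rangle$ is projection from $V$, so the base of $C$ is projectively equivalent to $\rho(C)$, which is a genuine Witt-index-$1$ quadric because $(\rho(X),\rho(\Xi))$ is projectively equivalent to $\mathcal{V}_2(\K,\B)$.
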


\begin{defi}[The connection map] \em We define $\chi$ as the map from $\rho(X)$ to $Y$, taking a point $z =\rho(x)$ to the subspace $\Pi_x^Y$ at infinity, i.e.,
\[ \chi: \rho(X) \rightarrow Y: \rho(x) \mapsto \Pi^Y_x.\]
\end{defi}
Note that Lemma~\ref{rho} assures that this map is well defined: points with the same image under $\rho$ are collinear and hence determine the same subspace $\Pi_x^Y$.
The following proposition contains an important local property of $\chi$. The proof uses the notion of regular $d$-scrolls, the definition and properties of which we have recorded in Appendix~\ref{scrolls} (see Definition~\ref{regscroll} and Lemma~\ref{regularscroll}), preceded by some auxiliary properties. 

\textbf{Notation} Let $V$ be some fixed vertex and $C$ a fixed tube belonging to $\cC_V$. We now choose the subspace $F$ (complementary to $Y$) such that is contains a $Q_d^0$-quadric $Q$ of $C$ (and so $\<Q,V\>=\<C\>$ and $\rho(C)=Q$).    We define $\rho_V$ as the projection from $V$ onto a complementary subspace $\widetilde{F}$ containing $F$. Then $\rho_V(C)=Q$.  Let $X_V$ be the set of points of $X$ collinear to $V$. By Lemma~\ref{rho}, $\rho(X_V)$ is a $Q_d^0$-quadric, and it obviously coincides with $Q$.  For any point $x\in X_V$, we denote $\Check{x}:=\rho(x)\in Q$ and $\widetilde{x}:=\rho_V(x)\in\widetilde{F}$. 
Also, we denote $\widetilde{Y}:=\rho_V(Y)=Y\cap\widetilde{F}$.

\begin{prop}\label{structureCV}
Let $V$ be a vertex.  Then, firstly, the set $\{\rho_V({\Pi}_{x}^Y) \mid x \in \mathcal{P}_V\}$ induces a regular spread $\mathcal{R}_V$ of $v$-spaces on $\widetilde{Y}$ and the (well-defined) map
\[\chi_V: \rho(X_V) \rightarrow \mathcal{R}_V: \Check{x} \mapsto R_{\Check{x}}:=\rho_V({\Pi}_{x}^Y)\]
takes a conic of $Q$ onto a regulus of $\mathcal{R}_V$  and its restriction to such a conic preserves the cross-ratio (i.e., $\chi_V$ is a projectivity between $Q$ and $\mathcal{R}_V$). 
Secondly, the regular spread $\mathcal{R}_V$, the quadric $Q$ and the map $\chi_V$ determine a regular $d$-scroll $\mathfrak{R}_d(\K)$ in $\widetilde{F}$ and for each tube $C' \in \mathcal{C}_V$, we have that $\rho_V(C')$ is an $\mathfrak{R}_d(\K)$-quadric and vice versa. 
Thirdly, $v=d-1$.
\end{prop}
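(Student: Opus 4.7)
My plan is to realise the $v$-spaces $\rho_V(\Pi_x^Y)$ as a spread of $\widetilde{Y}$, transport $Q$ onto it via $\chi_V$, and identify the whole configuration in $\widetilde{F}$ with a regular $d$-scroll; the identity $v=d-1$ will then follow by dimension. For the spread, Lemma~\ref{Pi_x2} gives $\dim\Pi_x^Y=2v+1$ with $V\subseteq \Pi_x^Y$, so $\rho_V(\Pi_x^Y)$ is indeed $v$-dimensional in $\widetilde{Y}$. Two images either coincide (when $x\sim x'$, by Corollary~\ref{pix2}) or are disjoint: for $x\not\sim x'$ in $X_V$, property~$(\mathsf{V})$ forces $[x,x']$ to have vertex $V$, and then Corollary~\ref{pix2} yields $\overline{\Pi}_x\cap\overline{\Pi}_{x'}=V$. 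That these $v$-spaces cover $\widetilde{Y}$ is Lemma~\ref{Y}(iv) applied to any $C\in\mathcal{C}_V$. So $\mathcal{R}_V$ is a spread and $\chi_V:Q\to\mathcal{R}_V$ is a well-defined bijection.

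\textbf{Step 2: each $\rho_V(C')$ is a $Q_d^0$-quadric, transversal to the spread.} For $C'\in\mathcal{C}_V$ each generator $\<x',V\>$ of $C'$ collapses under $\rho_V$ to a single point $\widetilde{x'}\in\widetilde{F}$, and Lemma~\ref{Y}(i) puts generators of $C'$ in collinearity-bijection with generators of $C$. Hence $\widetilde{C'}:=\rho_V(C')$ is naturally in bijection with $Q$ via $\widetilde{x'}\leftrightarrow\check{x}$, with $\widetilde{x'}$ lying in $\<\check{x},R_{\check{x}}\>\setminus R_{\check{x}}$ (since $x'\notin Y$). Because $V$ is disjoint from $\widetilde{F}$, any transversal section of $C'$ inside $\<C'\>$ projects isomorphically and endows $\widetilde{C'}$ with the structure of a $Q_d^0$-quadric. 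When $C'\neq C$, Lemma~\ref{intersectionoftubes2} gives $C\cap C'$ as a single common generator, so $\widetilde{C'}\cap Q$ is one point; and $\widetilde{C}=Q$ itself.

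\textbf{Step 3: regularity via conics and companion quadrics.} To linearise $\chi_V$, I fix $C'\in\mathcal{C}_V\setminus\{C\}$ and a plane $\pi\subset F$ cutting $Q$ in a conic $K$. The Step~2 bijection yields a companion conic $K'\subset\widetilde{C'}$; for each $\check{x}\in K$ the line $\check{x}\widetilde{x'}$ lies in $\<\check{x},R_{\check{x}}\>$ and meets $R_{\check{x}}$ in a single point $y_{\check{x}}$. The resulting classical ruled surface with conic directrices $K$ and $K'$, joined by these lines, forces the family $\{R_{\check{x}}:\check{x}\in K\}$ to sweep out a regulus of $\mathcal{R}_V$; cross-ratios on $K$ transfer to cross-ratios on this regulus through the rulings. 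Running this argument over all conics $K$ of $Q$ and over enough $C'$ (of which there are plenty thanks to the dual affine plane structure of Corollary~\ref{dualaffineplane}) makes $\mathcal{R}_V$ a regular spread and $\chi_V$ a projectivity sending conics to reguli.

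\textbf{Step 4, and main obstacle.} At this point the triple $(Q,\mathcal{R}_V,\chi_V)$ satisfies exactly the hypotheses defining a regular $d$-scroll $\mathfrak{R}_d(\K)$ (Definition~\ref{regscroll}, Lemma~\ref{regularscroll}), with the family $\{\rho_V(C'):C'\in\mathcal{C}_V\}$ playing the role of its $\mathfrak{R}_d(\K)$-quadrics; the converse inclusion follows from Step~2 combined with the completeness of $\mathcal{C}_V$ provided by Corollary~\ref{dualaffineplane}. Matching the two rulings of a regular $d$-scroll (the $Q_d^0$-base of dimension $d$ against the $1$-parameter family of $v$-spaces) then pins down $v+1=d$. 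The principal difficulty is concentrated in Step~3: upgrading the purely set-theoretic bijection $\chi_V$ of Step~1 to a projectivity sending conics to reguli, which is what rigidifies the whole configuration into a regular $d$-scroll. Once this rigidification is achieved, the identification with the appendix's scroll, the description of $\{\rho_V(C'):C'\in\mathcal{C}_V\}$, and the dimension identity $v=d-1$ all follow cleanly.
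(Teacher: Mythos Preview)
Your Steps~1 and~2 are fine and match the paper's Part~1 setup. The real problems are in Steps~3 and~4.

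\textbf{Step 3 does not prove regularity.} Fixing one companion tube $C'$ gives, for each $\check{x}\in K$, exactly \emph{one} transversal line $\check{x}\,\widetilde{x'}$ meeting $R_{\check{x}}$ in a single point $y_{\check{x}}$. That is not enough to force $\{R_{\check{x}}:\check{x}\in K\}$ to be a regulus: regularity requires that \emph{every} line meeting three of the $R_{\check{x}}$'s meets them all, equivalently that through an \emph{arbitrary} point $z_1\in R_{\check{x}_1}$ there is a transversal to all $R_{\check{x}}$, $\check{x}\in K$. Varying $C'$ over $\mathcal{C}_V$ does not obviously sweep out all of $R_{\check{x}_1}$ either, so the ``running over enough $C'$'' clause does not close the gap. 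The paper handles this by a genuinely different mechanism: given an arbitrary $z_1\in R_{\check{x}_1}$, it manufactures a tube through a chosen point of $\langle \widetilde{x}_1,z_1\rangle$ so that the resulting affine line (coming from Lemma~\ref{scrolld}) passes through $z_1$, and then shows that line meets every $R_{\check{x}}$ for $\check{x}\in K$.

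\textbf{Step 4 is circular on $v=d-1$.} Definition~\ref{regscroll} of a regular $d$-scroll already presupposes a regular $(d-1)$-spread in $\PG(2d-1,\K)$; you cannot invoke it until you know $v=d-1$. The paper establishes $v=d-1$ \emph{before} any appeal to regularity or to the scroll definition, via the key geometric step you are missing: apply Lemma~\ref{scrolld} to the two quadrics $Q=\rho_V(C)$ and $Q'=\rho_V(C')$ (sharing the point $\widetilde{x}_0$, with the collinearity bijection of Lemma~\ref{Y}(i) as the projectivity) to obtain an affine $d$-space $\alpha$ meeting all transversals; then argue, using $|\K|>2$ and the non-collinearity of points in distinct $\Pi_x$'s, that $\alpha\subseteq\widetilde{Y}$; finally show $\langle\alpha\rangle\setminus\alpha=R_{\check{x}_0}$, which forces $v=\dim R_{\check{x}_0}=d-1$. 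Only after this does the paper prove regularity (Part~3) and identify the scroll.

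In short, the missing idea is the affine $d$-space $\alpha\subseteq\widetilde{Y}$ furnished by Lemma~\ref{scrolld}; it is what both pins down $v=d-1$ and, reused with a carefully chosen auxiliary tube, supplies the transversals needed for regularity.
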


\begin{proof} By Lemma~\ref{rho}, the map $\chi_V$ is indeed well defined since $\rho^{-1}(\widetilde{x})=\Pi_x$. We proceed in three steps.

\textbf{Part 1:} \emph{ The set $\mathcal{R}_V$ is a spread.}\\
Recall that, by Lemma~\ref{Pi_x2} and Corollary~\ref{pix2}, the set $\Pi_V^Y=\{\Pi_x^Y \mid x \in X_V\}$ is a set of $(2v+1)$-spaces pairwise intersecting each other in $V$, and, by Lemma~\ref{Y}$(iii)$, each point of $Y\setminus V$ is contained in a member of $\Pi_V^Y$. So $\{R_{\Check{x}} \mid x \in X_V\}$  indeed defines a spread $\mathcal{R}_V$  of $v$-spaces on $\widetilde{Y}$. 


For the sequel,  let $C'$ be an arbitrary member of $\mathcal{C}_V$ distinct from $C$. We know that $C$ and $C'$ share a generator $\<x_0,V\>$. So $Q$ and $Q':=\rho_V(C')$ are quadrics sharing the point $\widetilde{x}_0$. Let $x\in V\setminus \<x_0,V\>$. By Lemma~\ref{Y}$(i)$, there is a unique generator, say $\<x',V\>$ of $C'$ collinear to $\<x,V\>$, i.e., $\Check{x}=\Check{x}'$. This implies that the mapping $f:\widetilde{x}\mapsto f(\widetilde{x}):=\widetilde{x}'$ (with $f(\widetilde{x}_0)=\widetilde{x}_0$ by definition) is a projectivity. Note that the points $\widetilde{x}$ and $\widetilde{x}'$ are collinear and the line joining them intersects $R_{\check{x}}$ in some point (since $\<x,x'\>$ intersects $\Pi_x^Y=\Pi_{x'}^Y$).  Note also that, since $\widetilde{Y}$ and $F$ are complementary subspaces of $\widetilde{Y}$, $Q$ is the projection of $Q'$ from $\widetilde{Y}$ onto $F$. 

\textbf{Part 2:} \emph{There is an affine $d$-space $\alpha \subseteq \widetilde{Y}$ intersecting all transversals $\<\widetilde{x},f(\widetilde{x})\>$. The subspace $R_{\Check{x}_0}$ is the $(d-1)$-space at infinity of $\alpha$. Consequently, $v=d-1$.}\\ 
Lemma~\ref{scrolld} yields an affine $d$-space $\alpha$ intersecting each transversal $\<\widetilde{x},f(\widetilde{x})\>$ with $\widetilde{x} \in Q\setminus\{\widetilde{x}_0\}$ in a point $\widetilde{x}^\alpha$. By the same lemma, the induced map $\varphi: Q\setminus\{\widetilde{x}_0\} \rightarrow \alpha: \widetilde{x}\mapsto \widetilde{x}^\alpha$ is such that for any conic $K$ on $Q$ through $\widetilde{x}_0$, $\varphi(K\setminus\{\widetilde{x}_0\})$ is an affine line $L$ and vice versa; moreover, the induced map $\overline{\varphi}_K$ taking $\widetilde{x}\in K\setminus\{\widetilde{x}_0\}$ to $\widetilde{x}^\alpha$ and $\widetilde{x}_0$ to $\<L\>\setminus L$ preserves the cross-ratio.

We now show that $\alpha$ belongs to $\widetilde{Y}$.  Note that each line $\<\widetilde{x},f(\widetilde{x})\>$, with $\widetilde{x}\in Q\setminus\{\widetilde{x}_0\}$, is contained in $\<\widetilde{x},R_{\Check{x}}\>$ (which belongs to $\rho_V(\Pi_x))$ and as such is a singular line having a unique point in~$\widetilde{Y}$. Consequently, $\alpha\subseteq X\cup Y$, and as $|\K|>2$, $\<\alpha\>$ is a  singular $d$-space. Suppose that $\widetilde{x}^\alpha$ and ${\widetilde{z}}^\alpha$ belong to $X$, for two distinct points $\widetilde{x},\widetilde{z} \in Q\setminus\{\widetilde{x}_0\}$. By Corollary~\ref{pix2}, $\widetilde{x}^\alpha$ and ${\widetilde{z}}^\alpha$  are not collinear (since $\Pi_{\widetilde{x}} \neq \Pi_{{\widetilde{z}}}$), contradicting the fact the line $\<\widetilde{x}^\alpha,{\widetilde{z}}^\alpha\>$ is singular, as it lies in $\<\alpha\>$. Again relying on $|\K|>2$, this reveals that each line in the affine space $\alpha$ contains at least two points in $\widetilde{Y}$ and as such, $\alpha \subseteq \widetilde{Y}$. 

As a consequence, $\widetilde{x}^\alpha \in R_{\Check{x}}$. Moreover, the above implies that collinearity is a bijection between $Q\setminus \{\widetilde{x}_0\}$ and $\alpha$ and as such, each member $R_{\Check{x}}$ of $\mathcal{R}_V \setminus \{R_{\Check{x}_0}\}$ intersects $\alpha$ in precisely  $\widetilde{x}^\alpha$.
As $\mathcal{R}_V$ is a spread, also the points of $\<\alpha\>\setminus \alpha$ need to be contained in a member of $\mathcal{R}_V$ too, and the only possibility left is $\<\alpha\>\setminus \alpha \subseteq R_{\check{x}_0}$. We claim that actually $\<\alpha\> \setminus \alpha = R_{\Check{x}_0}$. Indeed, suppose for a contradiction that $\<\alpha\>\setminus \alpha \subsetneq R_{\Check{x}_0}$. Since  $R_{\Check{x}_0} \cap \alpha$ is empty, $R_{\Check{x}_0}$ is a hyperplane of $\<R_{\Check{x}_0},\alpha\>$. Any point $y \in \<R_{\Check{x}_0}, \alpha\> \setminus (R_{\Check{x}_0} \cup \alpha)$ has to be contained in $R_{\Check{x}}$ for some $\widetilde{x} \neq \widetilde{x}_0$. But then the line $\<y,\widetilde{x}^\alpha\>\subseteq R_{\Check{x}}$ has to intersect $R_{\Check{x}_0}$ in a point, whereas $R_{\Check{x}} \cap R_{\Check{x}_0} = \emptyset$. This contradiction shows the claim. As a consequence, since $\alpha$ is an affine $d$-space, $v=d-1$.

\textbf{Part 3:} \emph{$\mathcal{R}_V$ is regular; $\chi_V$ is a projectivity between $Q$ and $\mathcal{R}_V$; $Q$, $\mathcal{R}_V$ and $\chi_V$ define a regular $d$-scroll $\mathfrak{R}_d(\K)$.}\\ 
Consider three distinct members $R_{\Check{x}_1}$, $R_{\Check{x}_2}$ and $R_{\Check{x}_3}$ of $\mathcal{R}_V$. Denote by $K$ the conic $Q \cap \<\Check{x}_1,\Check{x}_2,\Check{x}_3\>$. We claim that the regulus determined by $R_{\Check{x}_1}$, $R_{\Check{x}_2}$ and $R_{\Check{x}_3}$ is $\{R_{\Check{x}} \mid \Check{x} \in K\}$ and as such belongs to $\mathcal{R}_V$, showing that the latter is regular indeed and that a regulus of it corresponds with a conic of $Q$ and vice versa.  

Let $z_1$ be any point in $R_{\Check{x}_1}$. We view $Q$ as $\rho_V(C)$. Choose auxiliary points $\widetilde{x}_0\in K\setminus\{\widetilde{x}_1,\widetilde{x}_2,\widetilde{x}_3\}$ and $\widetilde{x}'_1\in\<\widetilde{x}_1,z_1\>\setminus\{\widetilde{x}_1,z_1\}$, and denote the quadric $\rho_V(X[\widetilde{x}_0,\widetilde{x}'_1])$ by $Q''$ (the tube $X[\widetilde{x}_0,\widetilde{x}'_1]$ indeed has vertex $V$). Like before, there is a projectivity $f$ between $Q$ and $Q''$ which induces a projectivity $\overline{\varphi}_K$ between $K$ and some line $L$ in $\widetilde{Y}$ that takes each $\widetilde{x} \in K\setminus\{\widetilde{x}_0\}$ to $\<\widetilde{x},f(\widetilde{x})\> \cap L$, or equivalently, $\overline{\varphi}_K(\widetilde{x})=R_{\Check{x}} \cap L$, and which maps $\widetilde{x}_0$ to $R_{\Check{x}_0}\cap L$ (this follows from the previous paragraph).   Moreover, as $\overline{\varphi}_K(\widetilde{x}_1)=z_1$ (recall that $z_1$ is on $\<\widetilde{x}_1,\widetilde{x}'_1\>$, and $\widetilde{x}'_1=f(\widetilde{x}_1)$), the line $L$ is the unique line through $z_1$ intersecting $R_{\Check{x}_2}$ and $R_{\Check{x}_3}$. Since $z_1$ in $R_{\Check{x}_1}$ was arbitrary, the claim follows: each transversal of $R_{\Check{x}_1}$, $R_{\Check{x}_2}$ and $R_{\Check{x}_3}$ is intersected by $R_{\Check{x}}$ for each $\widetilde{x} \in K$ and no other member of $\mathcal{R}_V$.

This implies that we indeed have a regular $d$-scroll $\mathfrak{R}_d(\K)$ defined by $Q=\rho(\mathcal{C}_V)$ and $\mathcal{R}_V$. Since this is independent of $C'$, each tube $C''$ of $ \mathcal{C}_V$ is such that the quadric $\rho_V(C'')$ intersects each transversal subspace $\<\widetilde{x},R_{\Check{x}}\>$ with $x \in \mathcal{P}_V$ in a unique point. Moreover, since any two points $\widetilde{x}$ and $\widetilde{z}$ on distinct transversal subspaces determine a unique such tube $X[x,z]$ by (H2$^*$) and each two points of $\mathfrak{R}_d(\K)$ determine a unique $\mathfrak{R}_d(\K)$-quadric (cf.\ Lemma~\ref{regularscroll}), the set $\{\rho_V(C'') \mid C'' \in \mathcal{C}_V\}$ coincides with the set of $\mathfrak{R}_d(\K)$-quadrics. \end{proof}

The combination of Propositions~\ref{structureCV} and~\ref{veronese} now gives us the relation between  the point-quadric variety in $F$ and the set $Y$ of vertices. Our next aim is to show that we can choose $F$ in such a way that $F\cap X=\rho(X)$. But first we deduce something useful from the above proof.

\begin{lemma}\label{TxC}
For each point $x\in X$, $T_x \cap Y = \Pi_x^Y$. If $C^*$ is any tube whose vertex $V^*$ is not collinear to $x$, then $T_x$ and $\<C^*\>$ are complementary subspaces of $\PG(N,\K)$.
\end{lemma}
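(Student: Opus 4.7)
The overall plan is to establish $T_x = T_x(\Xi(C)) + T_x(\Xi(C'))$ for the specific tubes $C, C'$ furnished by Lemma~\ref{Pi_x2}, from which both claims follow by direct computation. The inclusion $\Pi_x^Y \subseteq T_x$ is immediate: Lemma~\ref{Pi_x2} yields tubes $C, C'$ through $x$ with $\Pi_x = \<x,V,V'\>$ and $C \cap C' = \{x\}$, and since each generator $\<x,V\>$ lies in $\overline{C}$, one has $V \subseteq T_x(\Xi(C))$ and analogously $V' \subseteq T_x(\Xi(C'))$, so $\Pi_x^Y = \<V,V'\> \subseteq T_x$.

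Next I would compute the intersection of $A := T_x(\Xi(C)) + T_x(\Xi(C'))$ with $Y$. Property~($\mathsf{V}$) combined with Lemma~\ref{intersectionoftubes2} gives $\Xi(C) \cap \Xi(C') = \{x\}$, whence $T_x(\Xi(C)) \cap T_x(\Xi(C')) = \{x\}$ and $\dim A = 2(d+v+1) = 4d$ (using $v=d-1$). Applying $\rho$ and invoking axiom~(MM3) in the non-degenerate Veronese $(\rho(X),\rho(\Xi)) \cong \mathcal{V}_2(\K,\B)$ from Proposition~\ref{veronese}, the image $\rho(A)$ is the full Veronese tangent space $T_{\rho(x)}$, of projective dimension $2d$. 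Therefore $\dim(A + Y) = 2d + (3d-1) + 1 = 5d$, forcing $\dim(A \cap Y) = 2d - 1 = \dim \Pi_x^Y$, so $A \cap Y = \Pi_x^Y$.

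For the complementarity claim, the same projection argument shows that $T_x \cap \<C^*\>$ projects into $T_{\rho(x)} \cap \rho(\xi^*)$, which is empty because $T_{\rho(x)}$ and the elliptic space $\rho(\xi^*)$ (not containing $\rho(x)$, since $x \notin C^*$) are complementary in the non-degenerate Veronese; hence $T_x \cap \<C^*\> \subseteq Y \cap \xi^* = V^*$. Furthermore, the regular spread of Proposition~\ref{structureCV} applied with respect to $V$ forces $V^* \cap \Pi_x^Y = \emptyset$: $\rho_V(V^*)$ and $\rho_V(V')$ are distinct elements of $\mathcal{R}_V$, hence disjoint, which after a symmetric dimension count on $\<V,V',V^*\>$ converts into $V^* \cap \<V,V'\> = \emptyset$. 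Since $\dim \Pi_x^Y + \dim V^* + 1 = \dim Y$, we get $\Pi_x^Y + V^* = Y$, so $Y \subseteq T_x + \<C^*\>$; combined with $\rho(T_x + \<C^*\>) = F$ (by complementarity in the Veronese) we deduce $T_x + \<C^*\> = \PG(N,\K)$.

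The main obstacle is to rule out $T_x \cap V^* \neq \emptyset$, equivalently to prove $T_x = A$. For any third tube $C''$ through $x$ with vertex $V'' \subseteq \Pi_x^Y$, projection and (MM3) give $T_x(\Xi(C'')) \subseteq A + Y$ together with $T_x(\Xi(C'')) \cap Y = V'' \subseteq \Pi_x^Y \subseteq A$; upgrading this information to the inclusion $T_x(\Xi(C'')) \subseteq A$ requires exploiting the regular $d$-scroll structure of Proposition~\ref{structureCV} to describe how tangent hyperplanes of tubes sharing a generator $\<x,V\>$, or whose vertices sit in the regulus of $\<V,V'\>$, are linearly related. Once this is established, $\dim T_x = 4d$ and $T_x \cap Y = \Pi_x^Y$, and then $T_x \cap \<C^*\> \subseteq T_x \cap V^* \subseteq T_x \cap Y \cap V^* \subseteq \Pi_x^Y \cap V^* = \emptyset$, which together with $T_x + \<C^*\> = \PG(N,\K)$ gives the desired complementarity.
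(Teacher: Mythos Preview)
Your plan is sound and the dimension bookkeeping is correct: the inclusion $\Pi_x^Y\subseteq T_x$, the identification $A\cap Y=\Pi_x^Y$, the reduction $T_x\cap\<C^*\>\subseteq V^*$ via projection, the complementarity $\Pi_x^Y\oplus V^*=Y$ (this is exactly Lemma~\ref{Y}$(ii)$, by the way, so you do not need the spread detour), and the resulting equality $T_x+\<C^*\>=\PG(N,\K)$ are all fine. But you have not actually proved the lemma: you yourself flag the equality $T_x=A$ as ``the main obstacle'' and only say that it ``requires exploiting the regular $d$-scroll structure'', without carrying this out. As you correctly observe, the two facts $\rho(T_x(\Xi(C'')))\subseteq\rho(A)$ and $T_x(\Xi(C''))\cap Y\subseteq A$ do \emph{not} force $T_x(\Xi(C''))\subseteq A$, so something substantive is missing here; without it you cannot exclude $T_x\cap V^*\neq\emptyset$, and the complementarity claim collapses.

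The paper fills this gap precisely through the scroll computation you allude to. Fixing a vertex $V$ collinear to $x$ and projecting from $V$, every tube through the generator $\<x,V\>$ becomes an $\mathfrak{R}_d(\K)$-quadric through $\widetilde{x}$; the last assertion of Lemma~\ref{scrolld} then gives $T_{\widetilde{x}}(Q')\subseteq\<T_{\widetilde{x}}(Q),R_{\check{x}}\>$ for any two such quadrics, which lifts to $T_x(C'_x)\subseteq\<T_x(C_x),\Pi_x^Y\>$ whenever $C_x,C'_x$ are tubes through $x$ with vertex $V$. So the span of all these tangent hyperplanes meets $Y$ exactly in $\Pi_x^Y$, and varying $V$ over the vertices in $\Pi_x^Y$ yields $T_x\cap Y=\Pi_x^Y$. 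This is the concrete content your sketch postpones. Your approach (aiming for $T_x=A$) and the paper's approach (controlling $S_V\cap Y$ for each $V$) are closely related reformulations, but in either case the scroll lemma must actually be invoked to linearly relate the various tangent hyperplanes, not merely cited as the expected tool.
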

\begin{proof}
The tangent space $T_x$ is generated by all tangent spaces $T_x(C)$ where $C$ varies over the set of tubes through $x$. The vertices of such tubes are these contained in $\Pi_x$. Take such a vertex $V$. Then each tube $C_x$ through $x$ with vertex $V$ corresponds, projected from $V$,  to a quadric $Q_x$ on the scroll $\mathfrak{R}_d(\K)$. The subspace generated by all tangent spaces through $x$ at these quadrics is precisely $\<T_x(Q_x),R_x\>$ , for some fixed arbitrarily chosen quadric $Q_x$ (using the notation of the above proposition), as follows from the properties of scrolls (cf.\ last assertion of Lemma~\ref{scrolld}). We obtain that the subspace generated by the tangent spaces at $x$ of tubes through $\<x,V\>$ intersects $Y$ precisely in $\Pi_x^Y$.   Since $V$ was an arbitrary vertex collinear to $x$, we conclude that $T_x \cap Y = \Pi_x^Y$ indeed. 

Now consider the tube $C^*$ with vertex $V^*$. Since $V^*$ is not collinear to $x$, $V^*$ and $\Pi_x^Y$ are complementary subspaces of $Y$ by Lemma~\ref{Y}$(ii)$ and~$(iv)$. In the Veronese variety $(\rho(X),\rho(\Xi))$, the point $\rho(x)$ is not contained in $\rho(C^*)$ (since $x$ is not collinear to $V^*$), so  $\rho(T_x)=T_{\rho(x)}$ and $\rho(C^*)$ are also complementary subspaces by the properties of Veronese varieties (this can be verified algebraically but it has also been proven in Proposition 4.5 of \cite{Kra-Sch-Mal:15}). Since $T_x \cap Y$ and $\<C^*\> \cap Y$ are complementary subspaces of $Y$ and since the projections $\rho(T_x)$ and $\rho(\<C^*\>)$ from $Y$ onto $F$ are complementary in $F$, we obtain that $T_x$ and $\<C^*\>$ are complementary in $\<Y,F\>=\PG(N,\K)$.  
\end{proof}

\begin{lemma}\label{Fcut}
There exists a subspace $F^*$ of $\PG(N,\K)$ complementary to $Y$ such that the projection of $X$ from $Y$ onto $F^*$ is precisely the intersection of $F^*$ with $X$.    
\end{lemma}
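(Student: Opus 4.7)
The plan is to construct $F^*$ explicitly as the span of the bases of three tubes chosen so that their $\rho$-images form a triangle in the Veronese plane $\rho(X)\cong\mathcal{V}_2(\K,\B)$.

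Since $\rho(X)$ is projectively equivalent to $\mathcal{V}_2(\K,\B)\cong\PG(2,\B)$ by Proposition~\ref{veronese}, I select three tubic spaces $\xi_1,\xi_2,\xi_3\in\Xi$ with corresponding tubes $C_i:=X(\xi_i)$ having pairwise disjoint vertices $V_1,V_2,V_3$, such that the quadrics $\rho(C_1),\rho(C_2),\rho(C_3)$ form a triangle in $\rho(X)$: they pairwise meet in a single point $\rho(x_{ij})$, with $x_{ij}\in X$ the unique point of $C_i\cap C_j$ (by Lemma~\ref{intersectionoftubes2} applied to tubes with distinct vertices), and no three share a common point. For each $i\in\{1,2,3\}$, I pick a \emph{base} $B_i$ of $C_i$: a $(d+1)$-dimensional subspace of $\xi_i$ complementary to $V_i$ and containing both $x_{ij}$ and $x_{ik}$ for $\{j,k\}=\{1,2,3\}\setminus\{i\}$, which is feasible since $x_{ij},x_{ik}\notin V_i$. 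Set $F^*:=\<B_1,B_2,B_3\>$.

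Using $\xi_i\cap\xi_j=\{x_{ij}\}$ (which follows from Lemma~\ref{intersectionoftubes2} together with $V_i\cap V_j=\emptyset$), one deduces $B_i\cap B_j=\{x_{ij}\}$, whence $\dim\<B_1,B_2\>=2d+2$ by the Grassmann formula. The line $\<x_{13},x_{23}\>$ lies in $B_3\cap\<B_1,B_2\>$; any further common point would force $B_3$ to meet $\xi_1$ or $\xi_2$ in more than the single point $x_{13}$ or $x_{23}$, a contradiction. Hence $\dim F^*=3d+2$. For complementarity $F^*\cap Y=\emptyset$: each $B_i\cap Y=B_i\cap V_i=\emptyset$ by construction, and Lemma~\ref{differentvertex} prevents $Y$-points in any $\<B_i,B_j\>\setminus(V_i\cup V_j)$, which extends to the triple span.

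The crux is the \emph{section property}: for each $x\in X$, $F^*\cap\Pi_x$ consists of exactly one point of $X$. A raw dimension count yields $\dim F^*+\dim\overline{\Pi}_x-N=-d$, so this intersection is not automatic and reflects $F^*$ being in special position relative to $\overline{\Pi}_x$. If $\rho(x)\in\rho(C_i)$ for some $i$, the unique preimage of $\rho(x)$ on $C_i$ lies in $B_i\subseteq F^*$ and is the required point of $\Pi_x\cap F^*$. For a generic $\rho(x)$, pick a line of $\rho(X)\cong\PG(2,\B)$ through $\rho(x)$; it meets the triangle in three points whose $X$-lifts lie in the respective $B_i$'s, hence in $F^*$, and an argument based on the Veronese structure shows the corresponding tube's base embeds entirely in $F^*$, producing a unique lift of $\rho(x)$ in $\Pi_x\cap F^*$. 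The main obstacle is formalising this last step uniformly; a clean way is to verify that $F^*\cap X$ together with the tubes it inherits from $\Xi$ satisfies (MM1)--(MM3), so that Main Result~\ref{main2} identifies $F^*\cap X$ projectively with $\mathcal{V}_2(\K,\B)$, which is canonically sent to $\rho(X)$ by the restriction of $\rho$.
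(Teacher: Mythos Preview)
Your setup matches the paper's: three tubes whose $\rho$-images form a triangle, and $F^*$ taken as the span of three chosen bases. However, the paper does \emph{not} take the three bases arbitrarily, and this is exactly where your argument breaks down.

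The section property $|F^*\cap\Pi_x|=1$ fails for generic choices of $B_1,B_2,B_3$. You correctly observe that the raw dimension count gives $\dim F^*+\dim\overline{\Pi}_x-N=-d$, so the intersection is forced only by a special position of $F^*$; but then you pick the $B_i$ independently, which does not create that special position. Concretely, take a tube $C$ meeting each $C_i$ in a single point $p_i\in B_i$. For $d>1$ the three points $p_1,p_2,p_3$ do not span a $(d+1)$-space, so there is no reason the entire base of $C$ through them lies in $F^*$. Even for $d=1$, the conic through $p_1,p_2,p_3$ on $C$ need not project to $\rho(C)$ unless the $B_i$ are compatible. The paper fixes this by keeping $W_2$ arbitrary and then \emph{defining} $W_3:=\langle [x'_1,c_2]\cap C_3\mid c_2\in W_2\cap X\rangle$ and $W_1:=\langle [x'_2,c_3]\cap C_1\mid c_3\in W_3\cap X\rangle$, using the linear projection from $T_{x'_1}$ onto $\<C_3\>$ (Lemma~\ref{TxC}) to see these are genuine complements to the vertices. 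This built-in compatibility guarantees that $[c_1,c_2]\cap C_3\in W_3$ for all $c_i\in W_i\cap X$, and the section property is then verified for a general $x$ by passing to the $\K$-subplane (a copy of $\mathcal{V}_2(\K,\K)$) determined by four suitable points.

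Your proposed fallback---check (MM1)--(MM3) for $F^*\cap X$ directly---is circular: without already knowing the section property, you have no control over what $F^*\cap X$ looks like (it could miss entire classes $\Pi_x$), so you cannot verify (MM1). Two smaller issues: your argument that $B_3\cap\langle B_1,B_2\rangle=\langle x_{13},x_{23}\rangle$ is unjustified (a point of $\langle B_1,B_2\rangle$ need not lie in $\xi_1\cup\xi_2$), and the appeal to Lemma~\ref{differentvertex} for $F^*\cap Y=\emptyset$ does not say what you need; the paper instead shows $\langle W_1,W_2,W_3\rangle\cap Y=\emptyset$ via (MM2$^*$) for $(\rho(X),\rho(\Xi))$, an argument valid even when $d=\infty$.
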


\begin{proof}
As before, we denote the projection operator from $Y$ onto $F$ by $\rho$ (and $F$ is an arbitrary subspace of $\PG(N,\K)$ complementary to $Y$). Let $C_1,C_2,C_3$ be three tubes of $X$ such that $\rho(C_1)$, $\rho(C_2)$ and $\rho(C_3)$ correspond to the sides of a triangle in the projective plane $(\rho(X),\rho(\Xi))$. Let $x_i$ be the unique intersection point $C_j\cap C_k$, for all $\{i,j,k\}=\{1,2,3\}$ and denote the vertex of $C_i$ by $V_i$, $i=1,2,3$. In $\<C_i\>$, we choose an arbitrary subspace $W_i$ containing $\{x_j,x_k\}$ complementary to $V_i$,  with $\{i,j,k\}=\{1,2,3\}$.

\textit{Claim: $\<W_1,W_2,W_3\>$ and $Y$ are complementary subspaces of $\PG(N,\K)$.}\\
Firstly, $\<W_1,W_2,W_3,Y\>= \PG(N,\K)$, since $\<F,Y\>=\PG(N,\K)$ and $F$ is generated by the projections of $W_1,W_2,W_3$. If $d$ is finite, a dimension argument now readily shows that $\<W_1,W_2,W_3\> \cap Y$ is empty, but since $d = \infty$ is possible, we need a more general argument. First, we show that $\<W_2,W_3\>\cap Y=\emptyset$. Indeed, assume for a contradiction that $p\in \<W_2,W_3\>\cap Y$. Since $p\notin W_2\cup W_3$, this implies that $\<p,W_2\>\cap W_3$ contains a line $L$. Since $p\notin L$, the projection of $L$ under $\rho$ is contained in $\rho(W_2)\cap\rho(W_3)$,  a contradiction to (MM2$^*$) proved in Proposition~\ref{veronese}. 
Next, we show that $\<W_1,W_2,W_3\>\cap Y=\emptyset$.  Assume for a contradiction that $p\in\<W_1,W_2,W_3\>\cap Y$. 
 Since $p\notin W_1 \cup \<W_2,W_3\>$, the subspace $\<p,W_1\>$ intersects $\<W_2,W_3\>$ in at least a plane. But then, as $p \in Y$, the spaces $\rho(W_1)$ and $\<\rho(W_2),\rho(W_3)\>$ also share at least a plane, a contradiction. This shows the claim.
 
Put $F^*:=\<W_1,W_2,W_3\>$ and denote the projection of $X$ from $Y$ onto $F^*$ by $\rho^*$ (this projection makes sense by the above claim). If for each $x\in X$, the intersection $\Pi_x \cap F^*$ is a point of $X$, say $p^*(x)$, then $\rho^*(x)=p^*(x)$ and hence $F^* \cap X$ is isomorphic to $\mathcal{V}_2(\K,\B)$ (by Proposition~\ref{veronese} and with the same notation).  
 
\textit{Claim: We can choose $W_1$, $W_2$ and $W_3$ such that $\Pi_x \cap F^*$ is non-empty for each point $x\in X$; equivalently, $\rho^*(x)\in X$, for all $x\in X$.}\\
We keep the points $x_1,x_2,x_3$ and the subspace $W_2$ as above; and we will determine $W_1$ and $W_3$ in such a way that, for each pair of points $c_1\in (W_1 \cap X)\setminus \{x_3,x_2\}$ and $c_2 \in (W_2 \cap X)\setminus\{x_3,x_1\}$ holds that $[c_1,c_2] \cap C_3 \in W_3$. To that end, take a point $x'_1$ on $C_1 \setminus (\<x_3,V\> \cup \<x_2,V\>)$ and a point $x'_2$ on $W_2\cap X \setminus\{x_1,x_3\}$. We define $W_3$ as $\<[x'_1,c_2] \cap C_3 \mid c_2 \in W_2\cap X\>$ and $W_1$ as $\<[x'_2,c_3] \cap C_1 \mid c_3 \in W_3 \cap X\>$. We first show that $W_3$ is indeed a subspace of $\<C_3\>$ complementary to $V_3$; and in exactly the same way, then also $W_1$ is a subspace of $\<C_1\>$ complementary to $V_1$.

Consider the projection of $X \cup Y$ from $T_{x'_1}$ onto $\<C_3\>$ (by Lemma~\ref{TxC}, these are complementary subspaces). Note that, for each tube $C$ through $x'_1$, $C$ is mapped to the unique point $C\cap C_3$ since $C$ shares the hyperplane $T_{x'_1}(C)$ with $T_{x'_1}$. This means that each point $x'_2$ of $C_2$ is mapped to $[x'_1,x'_2] \cap C_3$. Moreover, the vertex $V_2$ of $C_2$ is mapped to $V_3$ since $T_{x'_1} \cap Y = \Pi_{x'_1}^Y$ and $V_3$ are complementary subspaces of $Y$.  As such, the map $C_2 \rightarrow C_3: x'_2 \mapsto [x'_1,x'_2]\cap C_3$ is the restriction of a projection that takes $W_2$ to $W_3$ (by definition of the latter) and $V_2$ to $V_3$. Since $W_2$ and $V_2$ are complementary in $\<C_2\>$, the same holds for their images $W_3$ and $V_3$ in $\<C_3\>$. Note also that the points $x_1$ and $x_2$ are fixed, so $W_3$ contains these; likewise, $W_1$ contains $x_2$ and $x_3$.  By definition of $W_1$, also  each tube $[c_1,x'_2]$ with $c_1 \in (W_1\cap X)$ intersects $C_3$ in a point of $W_3$. 

Now let $c_1\in (W_1 \cap X)\setminus \{x_3,x_2\}$ and $c_2 \in (W_2 \cap X)\setminus\{x_3,x_1\}$ be arbitrary. If $c_1=x'_1$ or $c_2=x'_2$ then, by definition, $[c_1,c_2] \cap C_3 \in W_3$, so suppose $c_1 \neq x'_1$ and $c_2 \neq x'_2$. Then the four points $x'_1,c_1,x'_2,c_2$ determine a unique $\K$-subplane $\pi$ of $(\rho^*(X),\rho^*(\Xi))$, which on $F^*$ corresponds to a copy $\cV$ of $\cV_2(\K,\K)$ (see Section 5.2 of \cite{Kra-Sch-Mal:15}). Let $c_3$, $c'_3$ and $c''_3$ denote the points of $C_3$ obtained by the intersection with $[x'_1,x'_2]$, $[x'_1,c_2]$ and $[c_1,x'_2]$, respectively. Then these belong to a conic $C_3$ on $W_3$ by the above, and moreover, this conic belongs to $\cV$. In $\cV$, the conic $C$ determined by $c_1$ and $c_2$ (which is part of the tube $[c_1,c_2]$) also intersects $C_3$ in a point. As such, we obtain that $[c_1,c_2] \cap C_3 = C \cap C_3$ belongs to $W_3$ indeed. 

Finally, we show that with these choices of $W_1$, $W_2$ and $W_3$, the claim holds. Take any point $x\in X$. If $\rho^*(x)\in W_i \cap X$ for some $i\in\{1,2,3\}$, then of course $\rho^*(x)\in X$. So assume $\rho^*(x)\notin W_1\cup W_2\cup W_3$. We consider the $\K$-subplane $\pi^*$ of $(\rho^*(X),\rho^*(\Xi))$ determined by the points $\rho^*(x_1),\rho^*(x_2),\rho^*(x_3),\rho^*(x)$ which in $F^*$ gives, as above, a copy $\cV^*$ of $\cV_2(\K,\K)$. Now, inside $\cV^*$, $\rho^*(x)$ lies on some conic $C_x^*$ intersecting $W_1\cap X$, $W_2\cap X$ and $W_3 \cap X$ in three distinct points, say $c_1,c_2,c_3$, respectively. Now, the points $c_1,c_2,c_3$ belong to $X$ and $[c_1,c_2] \cap C_3 = c_3$ by our choice of $W_1$ and $W_3$. As such,  $\rho^*(x)\in C_x^*=\<c_1,c_2,c_3\> \cap X$. The claim follows, ending the proof.  \end{proof}

From now on we assume that $\rho$ has target subspace $F$ such that $F\cap X=\rho(X)$. We now endow $Y$ with the following natural structure and deduce some more properties of it.

\begin{defi}[The point-line geometry $\mathbb{P}_Y$] \emph{Let $\mathcal{P}_Y=\{V \mid V \text{ is the vertex of a tube } C\}$ and $\mathcal{L}_Y=\{\Pi_x^Y \mid x \in X\}$ and let $\mathbb{P}_Y$ denote the point-line geometry $(\mathcal{P}_Y,\mathcal{L}_Y)$ with containment made symmetric as incidence relation. Its dual, $(\mathcal{L}_Y,\mathcal{P}_Y)$ is denoted by $\mathbb{P}^*_Y$.}
\end{defi}

\begin{lemma}\label{projective}
The point-line geometry $\mathbb{P}_Y$ has the following properties:
\begin{compactenum}[$(i)$]
\item For each element of $\mathcal{L}_Y$, all members of $\mathcal{P}_Y$ not disjoint with it, are entirely contained in it and they form a regular spread. In particular, $\mathcal{P}_Y$ is a regular spread of $Y$;
\item the point-line geometries $\mathbb{P}_Y^*$ and $(\rho(X),\rho(\Xi))$ are isomorphic projective planes;
\item the projective plane $\mathbb{P}^*_Y$ is desarguesian. 
\end{compactenum}
Moreover, 
\begin{compactenum}
\item[$(iv)$] the connection map $\chi: (\rho(X),\rho(\Xi)) \rightarrow \mathbb{P}^*_Y: x \mapsto \Pi_x^Y$  is a projectivity;
\item[$(v)$] $X$ is the union over $x\in \rho(X)$ of all subspaces $\<x,\chi(x)\>$ and each member $\xi \in \Xi$ with vertex $V$ is such that $\rho_V(X(\xi))$ is a $\mathfrak{R}_d(\K)$-quadric of the regular $d$-scroll $\mathfrak{R}_d(\K)$ defined by the regular spread $\mathcal{R}_V$, the quadric $\rho(\mathcal{C}_V)$ and the projectivity $\chi_V$, and vice versa.
\item[$(vi)$] all $(d,v)$-tubes entirely contained in $X$ are induced by the members of $\Xi$,
\item[$(vii)$] $(X,\Xi)$ is projectively unique if it exists. 
\end{compactenum}
\end{lemma}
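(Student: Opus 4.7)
The plan is to treat the items in the order (ii), (iv), (i), (iii), (v), (vi), (vii), since each builds on the previous. I first establish (ii) by verifying that $\chi$ induces a bijection between $\rho(X)$ and $\mathcal{L}_Y$: well-definedness and injectivity follow from Lemma~\ref{rho}, and surjectivity holds by definition. For lines, the image $\chi(\rho(X(\xi)))$ for $\xi\in\Xi$ with $X(\xi)\in\mathcal{C}_V$ consists of $(2v{+}1)$-spaces all containing~$V$, hence lies on the dual-line of $\mathbb{P}_Y^*$ corresponding to~$V$; conversely, Lemma~\ref{Y}$(i)$ shows that every $\Pi_x^Y$ containing $V$ arises from an $x$ on a fixed tube in $\mathcal{C}_V$. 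Thus $\chi$ is an isomorphism of point-line geometries, and since $(\rho(X),\rho(\Xi))\cong\PG(2,\mathbb{B})$ by Proposition~\ref{veronese}, so is $\mathbb{P}_Y^*$.

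For (iv), I apply Proposition~\ref{structureCV} vertex by vertex: for each vertex $V$, the projectivity $\chi_V$ between $\rho(C_V)$ and $\mathcal{R}_V$ is induced by the linear map $\rho_V$, and postcomposing with the linear map $\mathcal{R}_V\to\mathcal{L}_Y\colon R_{\check x}\mapsto\<V,R_{\check x}\>=\Pi_x^Y$ recovers $\chi$ restricted to $\rho(C_V)$; hence $\chi$ is a projectivity on each line of $(\rho(X),\rho(\Xi))$, and is a projectivity globally by a standard gluing argument using, say, the three sides of a triangle. For (i), I first show that the vertices lying in a fixed $\Pi_x^Y$ are exactly the vertices of tubes through~$x$: if $V\cap\Pi_x^Y$ is non-empty, pick a tube $C_V\in\mathcal{C}_V$; (H2$^*$) produces a common point with a tube through $x$, and Lemma~\ref{differentvertex} together with collinearity considerations forces $V\subseteq\Pi_x^Y$. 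These vertices are pairwise disjoint by~($\mathsf{V}$), and cover $\Pi_x^Y$ by Lemma~\ref{Y}$(iv)$, so they form a spread. For regularity, I pick three such vertices $V_1,V_2,V_3$ and project from $V_1$: by Proposition~\ref{structureCV}, the regulus through their projections corresponds to a conic on $\rho(X_{V_1})$, whose preimage lifts back to further vertices, so the regulus $\mathcal{R}(V_1,V_2,V_3)$ itself consists of vertices; global regularity of the spread $\mathcal{P}_Y$ then follows by patching through Lemma~\ref{Y}$(iv)$.

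With (i), (ii), (iv) in hand, (iii) follows from the classical theory of regular spreads: the regular spread $\mathcal{P}_Y$ of $Y\cong\PG(3d-1,\K)$ by $(d{-}1)$-spaces determines, via the Bruck--Bose/Andr\'e construction, a desarguesian projective plane isomorphic to $\PG(2,\mathbb{E})$ for a skew field $\mathbb{E}$ of $\K$-degree~$d$. By (ii), this plane coincides with $\mathbb{P}_Y^*\cong\PG(2,\mathbb{B})$, forcing $\mathbb{B}\cong\mathbb{E}$ to be associative. For (v), the equality $X=\bigcup_{x\in\rho(X)}(\<x,\chi(x)\>\setminus\chi(x))$ follows by noting that $\<x,\chi(x)\>$ is exactly the projective closure of $\Pi_x$, and the description of $X(\xi)$ as an $\mathfrak{R}_d(\K)$-quadric is Proposition~\ref{structureCV}. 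For (vi), any $(d,v)$-tube $C\subseteq X$ contains two non-collinear points, and by Corollary~\ref{uniquetube2} these lie on a unique member of $\Xi$, which must coincide with $C$. Finally (vii) follows because each ingredient --- the regular spread $\mathcal{P}_Y$, the Veronesean $\rho(X)\cong\mathcal{V}_2(\K,\mathbb{B})$, the projectivity $\chi$, and the scroll structure on each $\<\mathcal{C}_V\>$ --- is projectively unique. The main obstacle is the regularity claim in (i): the subspreads arising from different choices of base vertex must be shown to be mutually compatible, and this compatibility is exactly what forces $\mathbb{P}_Y^*$ to be desarguesian in (iii), and hence $\mathbb{B}$ to be associative.
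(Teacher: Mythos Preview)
Your argument for (vi) contains a genuine gap. You write that two non-collinear points $x_1,x_2\in C$ lie on a unique member of $\Xi$ by Corollary~\ref{uniquetube2}, ``which must coincide with $C$''. But Corollary~\ref{uniquetube2} only asserts uniqueness \emph{among $\Xi$-tubes}; it does not say that an arbitrary $(d,v)$-tube $C\subseteq X$ through $x_1,x_2$ equals the $\Xi$-tube $[x_1,x_2]$. The paper's proof fills this in with several non-trivial steps: one must check that the vertex of $C$ lies in $Y$; that $\rho(C)$ coincides with a quadric of $(\rho(X),\rho(\Xi))$ (using that in the ordinary Veronesean with $|\K|>2$ the elliptic spaces are determined by their point sets); that the vertex of $C$ therefore agrees with that of the corresponding $\Xi$-tube; and finally that $\rho_V(C)$ is a scroll quadric so that (v) can be invoked. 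Without these steps nothing rules out two distinct tubes with the same vertex sharing $x_1$ and $x_2$.

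Your approach to (iii) differs from the paper's and is worth comparing. The paper verifies Desargues directly inside $Y$: given two triangles of $\mathbb{P}_Y$ in central perspective, one picks a point in the central vertex, lifts the configuration to ordinary points of $Y$, applies Desargues in the projective space $Y$, and pushes the axis back through the spread. You instead invoke the theory of normal (geometric) spreads. This is legitimate, but note that ``regular spread'' alone is not the correct hypothesis for that theorem; what you need is that the span of any two spread elements is again a union of spread elements, i.e.\ normality, which here is a consequence of (i) and (ii) together rather than of regularity per se. The paper's route is more elementary and self-contained. Finally, your regularity argument in (i)---projecting from $V_1$---discards one of the three members needed to determine a regulus; the paper instead identifies $\Pi_z^Y$ with the space $\widetilde{Y}$ of Proposition~\ref{structureCV} for a vertex $V$ \emph{not} contained in $\Pi_z^Y$, so that regularity of the induced spread is read off directly from that proposition.
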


\begin{proof}
$(i)$ Let $\Pi_z^Y$ be an arbitrary member of $\mathcal{L}_Y$ and take a vertex  $V$ not collinear to $z$ (which exists by ($\mathsf{V}$)). Then $\Pi_z^Y$ is complementary to $V$ in $Y$ by Lemma~\ref{Y}$(ii)$ 
and hence we can identify the projection $\widetilde{Y}$ of $Y$ from $V$  with $\Pi_z^Y$, cf.\ Proposition~\ref{structureCV}. This proposition then implies that, for each point $x$ collinear to $V$ (and hence not collinear to $z$), the vertices of the tubes $[x,z]$ (i.e., the $v$-spaces $\Pi_z^Y \cap \Pi_x^Y$) form a  regular spread of $\Pi_z^Y$. Since each pair of vertices is disjoint by ($\mathsf{V}$), all other elements of $\mathcal{P}_Y$ are disjoint from $\Pi_z^Y$. We conclude that the elements of $\mathcal{P}_Y$ having a non-trivial intersection with $\Pi_z^Y$ are contained in it and form a regular spread of it indeed.  In order for $\mathcal{P}_Y$ to be a regular spread of $Y$, we need that each two elements $V_1$ and $V_2$ of $\mathcal{P}_Y$ induce a regular spread on $\<V_1,V_2\>$, and they do:  take two tubes $C_1$ and $C_2$ through $V_1$ and $V_2$, respectively, and let $z$ the unique intersection point of $C_1$ and $C_2$ (which exists by (H2) and is unique by  ($\mathsf{V}$)), then $V_1$ and $V_2$ span the subspace $\Pi_z^Y$ and hence the assertion follows from what we deduced just before.


\qquad $(ii)$ Let $\Pi_x^Y$ be an arbitrary element of $\mathcal{L}_Y$. By Corollary~\ref{pix2}, $
\Pi_x$ is the set of points of $X$ collinear to $\Pi_x^Y$. Also, Lemma~\ref{rho} implies that $\Pi_x$ is the set of points of $X$ mapped by $\rho$ onto $\rho(x)$. Hence $\psi(\Pi_x^Y):= \rho(x)$ defines a bijective correspondence between $\mathcal{L}_Y$ and $\rho(X)$. Now consider the set of elements of $\mathcal{L}_Y$ incident with a fixed element of $\mathcal{P}_Y$, i.e., all subspaces $\Pi_x^Y$ through to a certain vertex $V$, which means all subspaces $\Pi_x^Y$ with $x$ collinear to $V$. Then $\{\rho(x) \mid x \perp V\}=\rho(C)$ for any tube $C$ through $V$ by Proposition~\ref{Y}$(i)$ and Lemma~\ref{rho}; even stronger: each member $\Pi_x^Y$ of $\mathcal{L}_Y$ through $V$ corresponds to a unique point of $\rho(C)$ and vice versa. Hence if we set $\psi(V):=\rho(C)$, then  $\psi:(\mathcal{L}_Y,\mathcal{P}_Y) \rightarrow (\rho(X),\rho(\Xi))$ is a collineation.
As $\psi(\mathbb{P}_Y^*)$ is a projective plane by Proposition~\ref{veronese}, so is~$\mathbb{P}^*_Y$.

\qquad $(iii)$ Since the Desargues theorem is self-dual, it is equivalent to show that $\mathbb{P}_Y$ is desarguesian. Let $\Delta$ be a triangle with vertices $V_1$, $V_2$ and $V_3$ and $\Delta'$ a triangle with vertices $V'_1$, $V'_2$ and $V'_3$. Suppose $\Delta$ and $\Delta'$ are  in central perspective from $V$. We claim that they are in axial perspective too. 

Take any point $v \in V$. Then there are unique lines $L_i$, $i=1,2,3$ through $v$ such that $L_i \cap V_i$ is a point $v_i$ and $L_i \cap V'_i$ is a point $v'_i$. Then the triangles $v_1v_2v_3$ and $v'_1v'_2v'_3$ are centrally in perspective from $v$. Since $Y$ is a subspace of $\PG(N,\K)$, it is desarguesian, so there is an axis $L$, i.e., each intersection point $p_{ij}:=v_iv_j \cap v'_iv'_j$,  with $i, j \in \{1,2,3\}$, $i\neq j$, lies on this line $L$. Let $V_{ij}$ be the unique members of the spread containing the points $p_{ij}$, respectively. The line $L$ is entirely contained in $\<V_{13},V_{23}\>$, and since $V_{12}$ shares a point with $L$, item $(i)$ of this lemma implies that $V_{12} \subseteq \<V_{13},V_{23}\>$. This shows the claim.

In a completely similar fashion, one can show that triangles that are  in axial  perspective, are also in central perspective. This shows that $\mathbb{P}^*_Y$ is desarguesian.

\qquad $(iv)$
Clearly, $\chi$ is the inverse image of the above defined collineation $\psi$, and as such it is a collineation.  We now show its linearity. To that end, let $Q$ be a quadric of $\rho(\Xi)$ and let $C$ be a tube with $\rho(C)=Q$. If $V$ is the vertex of $C$, then the restriction of $\chi$ to the points of $Q$ is given by $\chi_V$, with the notation of Proposition~\ref{structureCV}. According to this proposition, the map $\chi_V$ preserves the cross-ratio and hence so does $\chi$. We conclude that $\chi$ is a linear collineation, i.e., a projectivity.

\qquad $(v)$ For each point $x\in X$, we have that $x$ belongs to $\Pi_x=\<\rho(x),\chi(\rho(x))\>\setminus \chi(\rho(x))$ (recall $\rho(x) \in X$), showing the first part of the assertion. The second part of the assertion follows immediately from Proposition~\ref{structureCV}.

\qquad $(vi)$ Suppose $C$ is a $(d,v)$-tube not contained in a member of $\Xi$. If its vertex $V$ were not contained in $Y$, i.e., if $C$ contains a singular affine line $L$ with $\<L\> \cap Y =\emptyset$, then $\rho(L)$ is a line in $\<\rho(X)\>$ containing at least three points of $\rho(X)$ (since $|\K|>2$), contradicting the properties of ordinary Veronese varieties. Hence $V \subseteq Y$, so $\rho(C)$ is a quadric of $(\rho(X),\rho(\Xi))$. Since for an ordinary Veronese variety with $|\K|>2$, the elliptic spaces are determined by their point set, we obtain that $\rho(C)=\rho(C')$ for some tube $C'$ with $\<C'\>\in \Xi$. Let $V'$ be the vertex of $C'$. Let $x,x'\in C$ such that $\Check{x},\Check{x}'$ are two distinct points of $\rho(C)$, which are automatically non-collinear. Then $x,x'$ are non-collinear and every point of $V$ is collinear to both $x,x'$. By Corollary~\ref{pix2}, $V\subseteq V'$ and so $V=V'$ (because they have the same dimension). 
But then it follows that $\rho_V(C)$ is an $\mathfrak{R}_d(\K)$-quadric on the regular $d$-scroll $\mathfrak{R}_d(\K)$ determined by $\mathcal{R}_V$ and $\rho(C')$, and by the previous item, $\<C\>$ belongs to $\Xi$ after all. 

\qquad $(vii)$ First note that the projective plane $\mathbb{P}^*_Y$ as given above is projectively unique, and so is the Veronese variety $(\rho(X),\rho(\Xi))$. Since all projectivities from $(\rho(X),\rho(\Xi))$ to $\mathbb{P}^*_Y$ are equivalent up to a projectivity of the source geometry $(\rho(X),\rho(\Xi))$, as follows from Main Result~\ref{main2} and Proposition~\ref{actiononveronesean}, we obtain that $(X,\Xi)$ is projectively unique if it exists.
\end{proof}
\bigskip

The above lemma even allows us to exclude one of the possibilities for $d$ if $\kar\K\neq 2$.

\begin{prop}
The variety $(X,\Xi)$  is projectively equivalent to $\mathcal{V}_2(\K,\A)$ where $\A=\mathsf{CD}(\B,0)$ and $\B$ is a quadratic associative division algebra over $\K$, and $\dim_\K(\B)=d$. Hence, if $\kar\K\neq 2$, then $(X,\Xi)$ exists if and only if $d\in \{1,2,4\}$. 
\end{prop}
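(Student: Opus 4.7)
My plan is to invoke the uniqueness of Lemma~\ref{projective}$(vii)$, match $(X,\Xi)$ with the explicit example $\mathcal{V}_2(\K,\mathsf{CD}(\B,0))$ furnished by Proposition~\ref{standardex} when $\B$ is associative, and then separately rule out the only remaining possibility in characteristic different from $2$, namely $\B=\O$.

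By Proposition~\ref{veronese}, $\B$ is already known to be a quadratic alternative division algebra with $\dim_\K\B=d$. By Lemma~\ref{projective}, the whole structure $(X,\Xi)$ is recovered, up to projective equivalence, from $\mathcal{V}_2(\K,\B)$, the regular spread $\mathcal{P}_Y$, and the projectivity $\chi$. If $\B$ is moreover associative, then the Cayley--Dickson double $\mathsf{CD}(\B,0)$ is alternative (a standard fact about one-step doubling of an associative algebra), so the construction in Section~\ref{CD} yields $\mathcal{V}_2(\K,\mathsf{CD}(\B,0))$, which by Proposition~\ref{standardex} satisfies (H1) and (H2$^*$) with the same parameters $(d,v)=(d,d-1)$. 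Lemma~\ref{projective}$(vii)$ then forces $(X,\Xi)$ to be projectively equivalent to $\mathcal{V}_2(\K,\mathsf{CD}(\B,0))$.

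The remaining and hardest step is to exclude the non-associative case. If $\kar\K\neq 2$ and $\B$ were the octonion algebra $\O$, then $\mathsf{CD}(\O,0)$ would fail to be alternative, so the very definition of $\mathsf{G}(2,\mathsf{CD}(\O,0))$ and the Moufang analysis leading to Lemma~\ref{G2AMoufang} and Proposition~\ref{actiononveronesean} would no longer apply. I would turn this algebraic obstruction into a concrete geometric one by extending the coordinatisation argument in the proof of Lemma~\ref{Fcut}: the regular $d$-scroll structure of Proposition~\ref{structureCV} together with axiom (H2$^*$) on a carefully chosen configuration of three concurrent tubes (and an auxiliary fourth tube playing the role of the third factor in an associator) translates into an associativity identity on the multiplication reconstructed from the intersection points. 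A non-trivial associator in $\O$ would then yield a triangle of tubes whose intersection predicted by (H1) fails to lie in $X\cup Y$, contradicting (H2$^*$). Hence $\B$ must be associative, and Fact~\ref{qad} pins down $d\in\{1,2,4\}$ when $\kar\K\neq 2$, with the standard examples $\B\in\{\K,\L,\ssH\}$ realising each case via Proposition~\ref{standardex}, completing the proof.
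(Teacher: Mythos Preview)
Your proof has a genuine gap in the exclusion of the octonion case. You describe a plan---``extend the coordinatisation argument,'' set up ``a carefully chosen configuration of three concurrent tubes,'' and derive ``an associativity identity''---but you do not actually carry it out. A sketch of this kind is not a proof; in particular, you never specify the configuration, never write down the identity, and never verify that a nontrivial associator in $\O$ actually produces a point outside $X\cup Y$.

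More importantly, you are overlooking a tool already on the table. Lemma~\ref{projective}$(iii)$ proves that the dual projective plane $\mathbb{P}_Y^*$ is \emph{desarguesian}. By Lemma~\ref{projective}$(ii)$ and Proposition~\ref{veronese}, $\mathbb{P}_Y^*\cong(\rho(X),\rho(\Xi))\cong\PG(2,\B)$. If $\kar\K\neq 2$ and $d=8$, then $\B$ is a Cayley--Dickson division algebra and $\PG(2,\B)$ is a Moufang plane that is not desarguesian, an immediate contradiction. This is exactly how the paper handles the case: the ``hardest step'' you flag is in fact a two-line consequence of material already established, not a new coordinatisation argument. Once $\B$ is associative, your use of Proposition~\ref{standardex} and Lemma~\ref{projective}$(vii)$ is correct and matches the paper.
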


\begin{proof}
Assume first that $\kar\K\neq 2$. If $d\notin \{1,2,4\}$, the only remaining possibility by Proposition~\ref{veronese} is $d=8$. The same proposition, together with Lemma~\ref{projective}$(ii)$, implies that $\mathbb{P}_Y^*$ is isomorphic to $\PG(2,\mathbb{A})$, where $\mathbb{A}$ is a strictly alternative division algebra over $\K$ with $\dim_\K(\mathbb{A})=8$. But then it is impossible that $\mathbb{P}^*_Y$ is desarguesian (cf.~Lemma~\ref{projective}$(iii)$). Hence $d \neq 8$.

By Proposition~\ref{standardex}, the Veronese representations $\mathcal{V}_2(\K,\A)$ with $\A=\mathsf{CD}(\B,0)$, where $\B$ is a quadratic associative division algebra over $\B$ with $\dim_\K(\B)=d$ ($d$ possibly an infinite cardinal) are Hjelmslevean Veronese sets with $(d,d-1)$-quadrics. Since we have shown above that these are projectively unique, we conclude that $(X,\Xi)$ is projectively equivalent to $\mathcal{V}_2(\K,\A)$.
\end{proof}

This finishes the proof of  Main Theorem~\ref{main3}. 

\begin{rem} \em
Proposition~\ref{projective} shows that one can construct all points and quadrics of $\mathcal{V}_2(\K,\A)$, with $\A=\mathsf{CD}(\B,0)$ where $\B$ is a quadratic associative division algebra over $\B$, by taking a regular $(d-1)$-spread in $(3d-1)$-dimensional projective space over $\K$ together with an ordinary Veronese variety $\mathcal{V}_2(\K,\B)$ and a duality $\chi$ between these. 
\end{rem}

\bigskip
\subsection{Projective Hjelmslev planes of level 2}\label{sectionHP}

To conclude, we say  some more about $(X,\Xi)$ as an abstract point-line geometry. 

\begin{defi}[Projective Hjelmslev plane of level 2] \em An incidence structure $(\mathcal{P},\mathcal{L},I)$ is called a \emph{projective Hjelmslev plane of level $2$} if, for each two points (resp. lines), there is at least one line (resp. point) incident with it, and if there is a canonical epimorphism to a projective plane such that two points (resp. two lines) have the same image if and only if they are not incident with a unique line (resp. point). 
\end{defi}

\begin{prop} \label{chi}
 The pair $(X,\mathcal{C})$ is a \emph{projective Hjelmslev plane of level 2}. More precisely: the map $\overline{\chi} =\chi \circ \rho: X\rightarrow \mathbb{P}^*_Y: x \mapsto \Pi_x^Y$ is an epimorphism satisfying the following properties. 
\begin{compactenum}
\item[\emph{(Hj1)}] Two points $x,x'$ of $X$ are always joined by at least one member of $\mathcal{C}$; this member is unique if and only if $\overline{\chi}(x) \neq \overline{\chi}(x')$;
\item[\emph{(Hj2)}] Two members $C,C'$ of $\mathcal{C}$ always intersect in at least one point; this point is unique if and only if $\overline{\chi}(C) \neq \overline{\chi}(C')$;
\item[\emph{(Hj3)}] The inverse image under $\overline{\chi}$ of a point of $\mathbb{P}^*_Y$, endowed with all intersections with non-disjoint tubes, is an affine plane;
\item[\emph{(Hj4)}] The set of tubes contained in the inverse image under $\overline{\chi}$ of a line of $\mathbb{P}^*_Y$, endowed with all mutual intersections, is an affine plane. 
\end{compactenum}
\end{prop}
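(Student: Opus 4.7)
The plan is to extend $\overline{\chi}$ from $X$ to $\mathcal{C}$ by declaring $\overline{\chi}(C) := V_C$, the vertex of $C$. Since $V_C \in \mathcal{P}_Y$ is a line of $\mathbb{P}^*_Y$, this yields a well-defined morphism of incidence structures into $\mathbb{P}^*_Y$: incidence is preserved because $x \in C$ forces $V_C \subseteq \Pi_x^Y$ (as $x$ is collinear to every point of $V_C$), and surjectivity on both points and lines of $\mathbb{P}^*_Y$ is immediate from the definitions of $\mathcal{L}_Y$ and $\mathcal{P}_Y$. On lines, this extension is exactly $\psi^{-1} \circ \rho$, where $\psi$ is the isomorphism of Lemma~\ref{projective}(ii).

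For (Hj1), existence of a joining tube is axiom~(H1). If $\overline{\chi}(x) \neq \overline{\chi}(x')$, then Corollary~\ref{pix2} makes $x,x'$ non-collinear, and Corollary~\ref{uniquetube2} delivers the unique joining tube. Conversely, if $\overline{\chi}(x) = \overline{\chi}(x')$, the line $xx'$ is singular and meets $Y$ in a single point which, by the regular-spread property of Lemma~\ref{projective}(i), lies in a unique vertex $V$; both points then belong to the generator $\langle x, V\rangle$. By Proposition~\ref{structureCV} together with the projective-plane structure of $(\rho(X), \rho(\Xi))$, the $\mathfrak{R}_d(\K)$-quadrics through $\widetilde{x}$ form a pencil of size $|\B|+1 \geq 3$, each corresponding to a distinct tube in $\mathcal{C}_V$ containing that generator, and hence both $x,x'$. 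Property (Hj2) is analogous: non-emptiness of $C \cap C'$ is (H2$^*$), while Lemma~\ref{intersectionoftubes2} combined with property~($\mathsf{V}$) forces a unique intersection point exactly when the vertices differ.

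For (Hj3) and (Hj4), I appeal to Main Result~\ref{main3}(ii), which identifies $(X, \Xi)$ with $\mathcal{V}_2(\K, \mathsf{CD}(\B, 0))$. Under this identification, $\overline{\chi}$ realises the Veronese image of the canonical epimorphism $\mathsf{G}(2, \mathsf{CD}(\B, 0)) \to \PG(2, \B)$ induced by $a + tb \mapsto \widetilde{a}$. The fiber of a point $(\widetilde{x}, \widetilde{y}, \widetilde{z})$, read off the explicit parametrisation of $\mathcal{P}$, consists of the points $(x+tx_1, y+ty_1, z+tz_1)$ modulo projective equivalence, which is a standard $2$-dimensional affine plane over $\B$; its lines are exactly the traces of the neighbouring (i.e.\ non-disjoint) tubes. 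Dually the line-fiber is also a $2$-dimensional affine plane over $\B$, yielding (Hj4). The main obstacle, were one to seek a purely axiomatic proof, would be verifying the affine-plane axioms for the generator-trace incidence structure on $\Pi_y$ directly from the tube geometry; routing through the Main Theorem circumvents this by transporting the affine-plane structure from the concrete model.
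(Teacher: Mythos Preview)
Your argument is correct. For (Hj1) and (Hj2) you follow essentially the paper's route, with the welcome extra step of actually exhibiting multiple tubes through a collinear pair (the paper only asserts non-uniqueness implicitly via the equivalence with collinearity).

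For (Hj3) and (Hj4), however, you take a genuinely different path. You invoke Main Result~\ref{main3}(ii) to transport the problem to the concrete model $\mathcal{V}_2(\K,\mathsf{CD}(\B,0))$ and read off the affine-plane structure from coordinates. The paper instead stays purely structural: for (Hj3) it observes that $\overline{\chi}^{-1}(\Pi_x^Y)=\Pi_x$ is an affine $(2v+2)$-space whose intersections with the relevant tubes are singular affine $(v+1)$-spaces, one through each point of $\Pi_x$ and through each element of the regular $v$-spread on $\Pi_x^Y$---this is exactly the Bruck--Bose construction of $\mathsf{AG}(2,\B)$ from a regular spread. For (Hj4) the paper simply cites Corollary~\ref{dualaffineplane}, proved much earlier, which says directly that $\mathcal{G}_V=(\mathcal{P}_V,\mathcal{C}_V)$ is a dual affine plane. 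So the ``main obstacle'' you anticipate does not in fact arise: the structural tools were already in hand, and the paper's proof of (Hj3)--(Hj4) is two lines. Your detour through the Main Theorem is logically sound (the classification is already complete at this point in the paper), but heavier than necessary.
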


\begin{proof}
Clearly, both $\chi$ and $\rho$ are morphisms (they preserve collinearity), hence so is $\overline{\chi}$. Surjectivity follows as each point of $\mathbb{P}^*_Y$ is by definition of the form $\Pi_x^Y$.  
\begin{itemize}
\item[{(Hj1)}] By (H1), each two points $x, x'$ of $X$ are contained in a tube. This tube is unique if and only if $x$ and $x'$ are non-collinear, which is at its turn equivalent with $\Pi_x^Y \neq \Pi_{x'}^Y$ (cf.\ Corollary~\ref{pix2}). 

\item[{(Hj2)}] By Lemma~\ref{intersectionoftubes2}, two tubes $C, C'$ either intersect each other in precisely one point of $X$, or they have a generator (and hence also their vertex) in common. As $\overline{\chi}(C)$ and $\overline{\chi}(C')$ are the respective vertices of $C$ and $C'$, the property holds.

\item[{(Hj3)}] The inverse image under $\overline{\chi}$ of a point of $\mathbb{P}_Y^*$, hence of some $\Pi_x^Y$, is the affine subspace $\Pi_x$. We endow this affine subspace now with the intersections of all tubes  having their vertex in $\Pi^Y_x$, which yields singular affine $(v+1)$-spaces through each element of the spread in $\Pi_x^Y$ and each point of $X$ of $\Pi_x$. Hence we obtain the Brose-Bruck construction of an affine plane.

\item[{(Hj4)}] This follows from Corollary~\ref{dualaffineplane}, by dualising.
\end{itemize}
\end{proof}

\subsection{A link with buildings of relative type $\widetilde{\mathsf{A}}_2$}

Let $\B$ be a quadratic associative division algebra over $\K$ with standard involution $b\mapsto\overline{b}$, $b\in\B$. Let $\B((t))$ be the set of formal power series over $\B$, i.e., $$\B((t))=\left\{\sum_{i=z}^{\infty} t^ib_i\mid z\in\mathbb{Z}, b_i\in\B\right\},$$ with standard addition, and multiplication determined by  $$(t^ib_i)(t^jb_j)=\left\{\begin{array}{ll} t^{i+j}(b_ib_j),& \quad\mbox{if }i,j\in2\mathbb{Z},\\t^{i+j}(b_jb_i), &\quad\mbox{if }i\in2\mathbb{Z}+1, j\in2\mathbb{Z},\\ t^{i+j}(\overline{b}_ib_j), &\quad\mbox{if }i\in2\mathbb{Z}, j\in2\mathbb{Z}+1,\\ t^{i+j}(b_j\overline{b}_i), &\quad\mbox{if }i,j\in2\mathbb{Z}+1.\end{array}\right.$$

One easily checks that this defines an alternative multiplication that renders $\B((t))$ an alternative division ring. Note that
\begin{itemize}
\item If $\B$ is commutative and the involution trivial, then $\B((t))$ is also commutative (and associative);
\item if $\B$ is commutative and the involution is nontrivial, then $\B((t))$ is associative but not commutative;
\item if $\B$ is not commutative, then $\B((t))$ is not associative.
\end{itemize} Defining $\nu(\sum_{i=z}^{\infty} t^ib_i)=z$, if $b_z\neq 0$, we obtain a division ring with valuation $\nu$. Its valuation ring is $\B[[t]]=\left\{\sum_{i=0}^{\infty} t^ib_i\mid b_i\in\B\right\}$. The residue field $\B((t))/(t)$ is precisely $\B$. Now note that $\B[[t]]/(t^2)=\mathsf{CD}(\B,0)$. It follows from \cite{VM1,VM2} that $\B((t))$ defines an affine building $\Delta$ of type $\widetilde{\mathsf{A}}_2$ whose spherical building $\Delta_\infty$ at infinity corresponds to the projective plane over $\B((t))$. Each vertex residue is isomorphic to the building of type $\mathsf{A_2}$ associated with the projective plane $\PG(2,\B)$. More tedious to check is that for each vertex of $\Delta$, the corresponding Hjelmslev plane of level 2 (see \cite{Han-Mal:89}) is precisely $\mathsf{G}(2,\mathsf{CD}(\B,0))$. 

In each of the three cases above, $\Delta$ is the building of a simple algebraic group, and the absolute type is determined by $\Delta_\infty$. 

\begin{itemize}
\item If $\B$ is commutative and the involution trivial, then $\Delta_\infty$ is a split building, hence the absolute type of $\Delta$ is $\widetilde{\mathsf{A}}_2$;
\item if $\B$ is commutative and the involution is nontrivial, then $\B((t))$ is a quaternion algebra, $\Delta_\infty$ has absolute type $\mathsf{A}_5$ and so $\Delta$ has absolute type $\widetilde{\mathsf{A}}_5$;
\item if $\B$ is quaternion, then $\B((t))$ is a Cayley-Dickson algebra, $\Delta_\infty$ has absolute type $\mathsf{E_6}$ and hence $\Delta$ has absolute type $\widetilde{\mathsf{E}}_6$.
\end{itemize}
To obtain the Tits index, the vertex residues of $\Delta$ must coincide with the residues corresponding  to the encircled nodes on the absolute diagram. In the first case, this is just an $\mathsf{A}_2$ diagram; in the second case the residue is over a quadratic extension of the base field $\K$ and we have a quasi-split $\mathsf{A}_2\times\mathsf{A}_2$ diagram; in the last case the residue is a plane over a quaternion division algebra and hence we obtain an $\mathsf{A_5}$ diagram. This explains Table~\ref{dualnonsplit}.

\appendix

\section{Scrolls}\label{scrolls}
Recall that a \emph{normal rational curve in $\PG(m,\K)$} is given by $\{(x_0^m,x_0^{m-1}x_1,...,x_0x_1^{m-1},x_1^m) \, \mid \, (x_0,x_1) \in (\K \times \K)\setminus (0,0)\}$.

\begin{defi}\label{scroll} \em Let $\Pi_k$ and $\Pi_\ell$ be complementary subspaces of a projective space $\PG(k+\ell+1,\K)$ of respective dimensions $k$ and $\ell$. In $\Pi_k$ and $\Pi_\ell$, respectively, we consider normal rational curves $C_k$ and $C_\ell$, between which we have a bijection $\varphi$ preserving the cross-ratio (i.e., a projectivity). The union of all \emph{transversal lines} $\<p,\varphi(p)\>$ with $p \in C_k$ is called  a \emph{normal rational scroll} and is denoted by~$\mathfrak{S}_{k,\ell}(\K)$.
\end{defi}

We are particularly interested in $\mathfrak{S}_{1,2}(\K)$, which consists of lines between a normal rational curve in dimension 1 (a line) and one in dimension 2 (a conic) and as such is contained in $\PG(4,\K)$. This object is also more specifically called a \emph{normal rational cubic scroll}. Each conic on $\mathfrak{S}_{1,2}(\K)$ that intersects all transversals of $\mathfrak{S}_{1,2}$ will be called an $\mathfrak{S}_{1,2}(\K)$-conic. Some of the following property are folklore. We omit the proofs or just give a clue, since it is not the essential part of the paper. 

\begin{lemma}\label{conicscroll} Let $\mathfrak{S}=\mathfrak{S}_{1,2}(\K)$ be a normal rational cubic scroll in $\PG(4,\K)$, $|\K|>2$,  defined by the line $L$, the conic $C$ and a projectivity $\varphi: C \rightarrow L$. Firstly, given two points $p$ and $q$ on distinct transversals of $\mathfrak{S}$ and with $p, q \notin L$, there is a unique $\mathfrak{S}$-conic through $p$ and $q$. Secondly, each two $\mathfrak{S}$-conics intersect in a point of $\mathfrak{S}$. Thirdly, if $\mathfrak{S}_c$ is the set of all $\mathfrak{S}$-conics through a  point $c \in C$, then all tangent spaces through $c$ to these conics are in the plane spanned by the point $\varphi(c)$ and the tangent line through $c$ at $C$.
\end{lemma}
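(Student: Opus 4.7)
I would set up explicit coordinates that turn all three statements into small linear-algebra exercises. Choose coordinates $(x_0:x_1:x_2:y_0:y_1)$ on $\PG(4,\K)$ such that $\Pi_\ell=\{y_0=y_1=0\}$ contains $C$ as the image of the Veronese map $(s:t)\mapsto (s^2:st:t^2:0:0)$, and $\Pi_k=\{x_0=x_1=x_2=0\}$ contains $L$ as $(s:t)\mapsto(0:0:0:s:t)$, with $\varphi$ matching these parametrisations. The transversal $T_{(s:t)}$ is then $\{(\lambda s^2:\lambda st:\lambda t^2:\mu s:\mu t):(\lambda:\mu)\in\PG(1,\K)\}$. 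An $\mathfrak{S}$-conic disjoint from $L$ may be normalised with $\lambda=1$; for the resulting map $(s:t)\mapsto (s^2:st:t^2:\mu s:\mu t)$ to be of degree $2$, the scalar $\mu=\mu(s,t)$ must be a linear form $as+bt$. This exhibits a two-parameter family of smooth $\mathfrak{S}$-conics $C_{a,b}$, with $C=C_{0,0}$.

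\textbf{Parts 1 and 2.} For $p\in T_{(s_1:t_1)}$ and $q\in T_{(s_2:t_2)}$ not on $L$ with distinct transversals, write
\[ p=(s_1^2:s_1t_1:t_1^2:\mu_1 s_1:\mu_1 t_1),\qquad q=(s_2^2:s_2t_2:t_2^2:\mu_2 s_2:\mu_2 t_2). \]
The conic $C_{a,b}$ passes through both iff $as_i+bt_i=\mu_i$ for $i=1,2$; this linear system has determinant $s_1t_2-s_2t_1\ne 0$, yielding existence and uniqueness in part~1. For part~2, two distinct conics $C_{a,b}$ and $C_{a',b'}$ coincide on $T_{(s:t)}$ precisely when $(a-a')s+(b-b')t=0$, which singles out a unique transversal and hence a unique common point of $\mathfrak{S}$.

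\textbf{Part 3.} Fix $c\in C$ corresponding to $(s_0:t_0)$; then $C_{a,b}$ contains $c$ iff $as_0+bt_0=0$. Working in the affine chart $s=1$ (the case $s_0=0$ is symmetric), parametrise $C_{a,b}$ by $\gamma(t)=(1:t:t^2:a+bt:(a+bt)t)$ and differentiate at $t=t_0$ under the constraint $a=-bt_0$:
\[ \gamma'(t_0)\;=\;(0,1,2t_0,b,bt_0)\;=\;(0,1,2t_0,0,0)+b\,(0,0,0,1,t_0). \]
The first summand represents the tangent direction to $C$ at $c$ and the second represents $\varphi(c)$. Thus, as $b$ ranges over $\K$, the tangent line $\langle c,\gamma'(t_0)\rangle$ stays inside the plane $\langle T_c(C),\varphi(c)\rangle$, which is the claim.

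\textbf{Main obstacle.} The only step that requires argument rather than computation is the classification of $\mathfrak{S}$-conics: one must verify that every smooth $\mathfrak{S}$-conic not containing $L$ as a component arises uniquely as some $C_{a,b}$. This follows from the degree constraint on $\mu$ in the normalised parametrisation, but it is the one spot where a little care is needed. Once this parametrisation is in place, each of the three statements reduces to a $2\times 2$ linear system, and the global hypothesis $|\K|>2$ plays no role.
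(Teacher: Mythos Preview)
Your approach is exactly the paper's: set up the coordinates with $C=\{(s^2:st:t^2:0:0)\}$, $L=\{(0:0:0:s:t)\}$, $\varphi$ matching parameters, and then all three assertions become short computations. The parametrisation of the $\mathfrak{S}$-conics as $C_{a,b}$ and the linear system you write down are correct, and your tangent computation for Part~3 goes through in every characteristic (the $2t_0$ is harmless).

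One correction, however: your closing remark that ``the global hypothesis $|\K|>2$ plays no role'' is wrong, and it matters precisely at the step you flag as the main obstacle. The argument that every $\mathfrak{S}$-conic (disjoint from $L$) is a $C_{a,b}$ rests on the identity $\psi_3\,t=\psi_4\,s$ holding \emph{formally} after you reparametrise so that $(\psi_0,\psi_1,\psi_2)=(s^2,st,t^2)$; you only know it holds pointwise on $\PG^1(\K)$. That identity has degree~$3$, so you need $|\PG^1(\K)|>3$, i.e.\ $|\K|>2$, to pass from pointwise to formal. Over $\F_2$ this genuinely fails: the three points $(1,0,0,0,0)$, $(1,1,1,0,0)$, $(0,0,1,0,1)$ form an $\mathfrak{S}$-conic that is not any $C_{a,b}$, and there are then two $\mathfrak{S}$-conics through $c_0=(1,0,0,0,0)$ and $m_\infty=(0,0,1,0,1)$, so Part~1 of the lemma is actually false for $|\K|=2$. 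So the hypothesis is not cosmetic; it is exactly what makes your classification of $\mathfrak{S}$-conics go through.
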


\begin{proof}
This is a straightforward proof if one chooses coordinates for $\PG(4,\K)$ such that the points of $C$ are given by $(x_0^2,x_0x_1,x_1^2,0,0)$, those of $L$ by $(0,0,0,x_0,x_1)$ and such that  $\varphi$ maps $(x_0^2,x_0x_1,x_1^2,0,0)$ to $(0,0,0,x_0,x_1)$
\end{proof}

Sort of conversely, two conics intersecting in a point and between which there is a projectivity (a linear collinearity, i.e., one that preserves the cross-ratio), determine a unique normal rational cubic scroll. We at once phrase this more generally for quadrics of Witt index 1 in $\PG(d+1,\K)$ with $d \geq 1$.

\begin{lemma}\label{scrolld}\label{conicscroll2}
Let $Q_1$ and $Q_2$ be two quadrics of Witt index $1$ in $\PG(d+1,\K)$ for $d\geq 1$, intersecting each other in a point $c$ and spanning $\PG(2d+2,\K)$ (i.e., $\<Q_1\> \cap \<Q_2\> = \{c\}$ too), between which there is a projectivity $\varphi: Q_1 \rightarrow Q_2$ fixing $c$. Then there is an affine $d$-space $\alpha$ intersecting all transversals $\<x,\varphi(x)\>$ for $x\in Q_1\setminus\{c\}$ and all points of $\alpha$  are on such a transversal. The induced mapping $\overline{\varphi}$ between $Q_1$ and $\<\alpha\>$ taking a point $x\in Q_1\setminus\{c\}$ to $\<x,\varphi(x)\> \cap L$ and $c$ to $\<\alpha\>\setminus \alpha$ then takes a conic of $Q_1$ to an affine line of $\alpha$ and preserves the cross-ratio. Moreover, $T_c(Q_2)$ is contained in $\<T_c(Q_1),\overline{\varphi}(c)\>$. 
\end{lemma}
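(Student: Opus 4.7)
The plan is to lift $\varphi$ to a linear isomorphism $\Phi\colon\<Q_1\>\to\<Q_2\>$ with $\Phi(c)=c$, and then to realise $\<\alpha\>$ as a $d$-dimensional projection target complementary to both $\<Q_1\>$ and $\<Q_2\>$ in $\PG(2d+2,\K)$. Concretely, let $V_i$ denote the underlying vector space of $\<Q_i\>$ and $c_i\in V_i$ a lift of $c$, so that $\<Q_1,Q_2\>$ arises as the projectivisation of $V:=(V_1\oplus V_2)/\K(c_1-c_2)$. I would consider the linear map $\Psi\colon V_1\to V$ given by $\Psi(x)=x-\Phi(x)$ (modulo the quotient). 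Since $\Psi(c_1)=c_1-c_2\equiv 0$, its kernel is exactly $\K c_1$, so the image $W:=\Psi(V_1)$ has dimension $d+1$; separating $V_1$- and $V_2$-components shows that $W\cap V_i=0$, so projectively $\<\alpha\>:=\PG(W)$ is a $d$-space disjoint from both $\<Q_1\>$ and $\<Q_2\>$. The induced set map $\Xi\colon[x]\mapsto[x-\Phi(x)]$ on $\PG(V_1)\setminus\{[c]\}$ is constant on the chords of $\<Q_1\>$ through $[c]$ and hence factors as a projective bijection from the pencil $\PG(V_1/\K c_1)\cong\PG(d,\K)$ of such chords to $\<\alpha\>$.

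The affine part is then $\alpha:=\Xi(Q_1\setminus\{c\})$. A chord through $[c]$ that is a secant of $Q_1$ meets $Q_1$ in exactly one point other than $c$, while a tangent at $c$ meets $Q_1$ only in $c$; under the pencil identification, this shows that $\alpha$ is exactly the affine complement of $\Xi(T_c(Q_1)\setminus\{c\})$, and the latter forms the hyperplane $\<\alpha\>\setminus\alpha$. For each $x\in Q_1\setminus\{c\}$, the transversal line $\<x,\varphi(x)\>$ contains $\Xi(x)$ as a linear combination of $x$ and $\Phi(x)$; a second intersection with $\<\alpha\>$ would force the transversal inside $\<\alpha\>$, contradicting $\<Q_1\>\cap\<\alpha\>=\emptyset$, so the transversal meets $\alpha$ precisely in $\overline{\varphi}(x)=\Xi(x)$, and every point of $\alpha$ is obtained in this way.

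For a conic $K$ on $Q_1$ through $c$, the chords of $K$ through $c$ form a projective line in the pencil (namely $\<K\>/\K c_1$), of which all but the tangent to $K$ at $c$ are secants; so $\Xi(K\setminus\{c\})$ is an affine line of $\alpha$. Cross-ratio preservation then follows because the cross-ratio of four points of $K$ equals by definition the cross-ratio of the four chords from $c$, and this is preserved by the projective bijection $\Xi$. For the final containment, the linear lift $\Phi$ sends $T_c(Q_1)$ onto $T_c(Q_2)$ (tangent hyperplanes at fixed points are preserved by projectivities of quadrics), so every vector $y-\Phi(y)$ with $y\in T_c(Q_1)$ has its $V_1$- and $V_2$-components in $T_c(Q_1)$ and $T_c(Q_2)$ respectively; hence $\<\alpha\>\setminus\alpha\subseteq\<T_c(Q_1),T_c(Q_2)\>$. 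A dimension count using complementarity of $\<Q_1\>$ and $\<\alpha\>$ gives $\dim\<T_c(Q_1),\overline{\varphi}(c)\>=d+(d-1)+1=2d=\dim\<T_c(Q_1),T_c(Q_2)\>$, so the two spans coincide and $T_c(Q_2)\subseteq\<T_c(Q_1),\overline{\varphi}(c)\>$.

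The main technical point is the factorisation of $\Xi$: the naive domain $Q_1\setminus\{c\}$ is only $d$-dimensional as a variety, but the \emph{right} $d$-dimensional projective parameter space for $\Xi$ is the pencil of all chords through $c$ in $\<Q_1\>$. Identifying tangent chords with the hyperplane at infinity of $\alpha$, and secants with its affine part, is exactly what furnishes both the affine structure on $\alpha$ and the clean cross-ratio statement for conics through $c$.
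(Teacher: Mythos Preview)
Your proof is correct and takes a genuinely different route from the paper's. The paper sketches the case $d=1$ in explicit coordinates (parametrising $C_1$ as $(x_0^2,x_0x_2,x_2^2,0,0)$, $C_2$ as $(x_0^2,0,0,x_0x_4,x_4^2)$, and normalising $\varphi$) and then simply asserts that ``this can be extended to the case where $d>1$'', reducing the cross-ratio statement to $d=1$ by restricting to conics through $c$. Your construction via the linear map $\Psi(x)=x-\Phi(x)$ on $V=(V_1\oplus V_2)/\K(c_1-c_2)$ is coordinate-free, works uniformly in $d$, and makes the affine structure on $\alpha$ transparent: the factorisation of $\Xi$ through the pencil $\PG(V_1/\K c_1)$ of lines through $c$ explains at once why secants give $\alpha$ and tangents give its hyperplane at infinity. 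The final dimension count for $T_c(Q_2)\subseteq\<T_c(Q_1),\overline{\varphi}(c)\>$ is clean and, again, more explicit than the paper's.

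One remark: you tacitly use that the projectivity $\varphi$ lifts to a linear isomorphism $\Phi\colon V_1\to V_2$. This is exactly what ``projectivity'' means in the paper's usage (and is how the paper's own coordinate normalisation proceeds), so it is not a gap; but it is worth stating, since in other contexts ``projectivity of quadrics'' might be defined more weakly. Your appeal to Steiner's description of cross-ratio on a conic (cross-ratio of four points equals cross-ratio of the four lines from a fifth point, with the tangent standing in for the line to itself) is standard and correctly handles the point $c$.
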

\begin{proof}
Start with the case $d=1$, which can be done by choosing coordinates in $\PG(4,\K)$ in such a way that each point of $C_1$ can be written as $(x_0^2,x_0x_2,x_2^2,0,0)$; likewise each point of $C_2$ can be written as $(x_0^2,0,0,x_0x_4,x_4^2)$ and such that $\varphi((x_0^2,x_0x_2,x_2^2,0,0))=(x_0^2,0,0,x_0x_2,x_2^2)$.
This can be extended to the case where $d >1$ to obtain the affine subspace $\alpha$. The remaining part of the statement then follows by taking a conic $C_1$ on $Q_1$ through $c$ and letting $C_2$ be its image on $Q_2$, and then one can apply the result for $d=1$ on this.
\end{proof}

If $d=1$, the above implies that $Q_2$ is on the normal rational cubic scroll determined by $Q_1$, the line $\<\alpha\>$, and the induced projectivity $\overline{\varphi}$. If $d>1$, a similar statement holds for a different, generalised, type of scroll which we are about to introduce. Before doing so, we define a projectivity  between a regular $(d-1)$-spread $\mathcal{R}$ in $\PG(2d-1,\K)$ and a quadric $Q$ in $\PG(d+1,\K)$ of Witt index 1, as a bijection $\varphi$ between the elements of $\mathcal{R}$ and the points of $Q$ such that, restricted to a regulus $\mathcal{G}$ of $\mathcal{R}$, $\varphi(\mathcal{G})$ is a conic on $Q$ and $\varphi_{\mid \mathcal{G}}$ preserves the cross-ratio.

\begin{defi}\label{regscroll} \em Let $Q$ be a quadric of Witt index 1 in $\Pi \cong \PG(d+1,\K)$   and $\mathcal{R}$ a regular $(d-1)$-spread in $\Pi' \cong \PG(2d-1,\K)$, where $\Pi$ and $\Pi'$ are complementary subspaces of a projective space $\PG(3d+1,\K)$. Suppose we have a bijection $\varphi$ between $Q$ and $\mathcal{R}$ preserving the cross-ratio. Then we call the union of all transversal subspaces $\<p,\varphi(p)\>$ with $p \in Q$   a \emph{regular $d$-scroll} and denote it by~$\mathfrak{R}_d(\K)$. Each quadric of Witt index 1 intersecting each transversal subspace $\<p,\varphi(p)\>$ in point not in $\varphi(p)$ is called a \emph{$\mathfrak{R}_d(\K)$-quadric}.
\end{defi}

This regular $d$-scroll also exhibits the properties of a normal rational cubic scroll mentioned in Lemma~\ref{conicscroll}. We give a sketch of the proof.

\begin{lemma}\label{regularscroll} Let $\mathfrak{R}:=\mathfrak{R}_d(\K)$ be a regular $d$-scroll in  $\PG(3d+1,\K)$, $|\K|>2$  defined by a quadric $Q$ of Witt index 1 in a complementary $(d+1)$-space $\Pi$ of  $\PG(3d+1,\K)$, a regular  $(d-1)$-spread $\mathcal{R}$ in a $(2d-1)$-space $\Pi'$ of $\PG(3d+1,\K)$ and some projectivity $\varphi$ between $Q$ and $\mathcal{R}$. Then, given two points $p$ and $q$ on distinct transversal subspaces of $\mathfrak{R}$ and with $p, q \notin \Pi'$, there is a unique $\mathfrak{R}$-quadric through $p$ and $q$. Furthermore, each two $\mathfrak{R}$-quadrics of $\mathfrak{R}$ intersect in a unique point of $\mathfrak{R}$ not on $\mathcal{R}$.
\end{lemma}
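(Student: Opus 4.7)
I would prove both assertions by reducing to the normal rational cubic scroll case treated in Lemma~\ref{conicscroll} via sub-scrolls associated to conics on $Q$. First I observe that every $\mathfrak{R}$-quadric $Q'$ is the graph over $Q$ of a natural projectivity: since by definition every transversal $\<x,\varphi(x)\>$ meets $Q'$ in exactly one point outside $\varphi(x)$, the projection from $\Pi'$ restricted to $\<Q'\>$ is a linear surjection $\<Q'\>\to\Pi$ between $(d+1)$-spaces (its image contains $Q$, hence all of $\<Q\>=\Pi$), and is therefore an isomorphism. Its inverse is the projectivity $\pi_{Q'}:Q\to Q'$ with $\pi_{Q'}(x)=Q'\cap\<x,\varphi(x)\>$, which completely determines $Q'$. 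Next, for any conic $C\subseteq Q$, the image $\varphi(C)$ is a regulus in $\mathcal{R}$ whose opposite regulus consists of transversal lines $L$ each meeting every element of $\varphi(C)$ in a single point; for such $L$, the set $\mathfrak{S}_L:=\bigcup_{x\in C}\<x,L\cap\varphi(x)\>$ is a normal rational cubic scroll in the $4$-space $\<C,L\>$. Choosing a $(2d-3)$-subspace $U_L\subseteq\Pi'$ complementary to $L$ in $\Pi'$ and meeting each $\varphi(x)$ ($x\in C$) in a hyperplane of $\varphi(x)$, the projection $\pi_{U_L}$ sends each transversal $\<x,\varphi(x)\>$ onto $\<x,L\cap\varphi(x)\>$ and hence the sub-scroll $\bigcup_{x\in C}\<x,\varphi(x)\>$ onto $\mathfrak{S}_L$, carrying $\mathfrak{R}$-quadrics to $\mathfrak{S}_L$-conics on $\mathfrak{S}_L$.

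For the first assertion, given $p,q$ on distinct transversals $\<x_i,\varphi(x_i)\>$ ($i=1,2$), for each generic $x_3\in Q\setminus\{x_1,x_2\}$ I take the conic $C_3=Q\cap\<x_1,x_2,x_3\>$ and pick $d$ generic lines $L_1,\ldots,L_d$ in the opposite regulus of $\varphi(C_3)$. Applying the first part of Lemma~\ref{conicscroll} to each scroll $\mathfrak{S}_{L_i}$ gives a unique $\mathfrak{S}_{L_i}$-conic through $\pi_{U_{L_i}}(p),\pi_{U_{L_i}}(q)$, which forces $\pi_{Q'}(x_3)$ into a specific hyperplane of $\<x_3,\varphi(x_3)\>$. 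Since $d$ such hyperplanes in general position meet in a single point, $\pi_{Q'}(x_3)$ is uniquely determined; varying $x_3$ and using that a projectivity $Q\to Q'$ is determined by finitely many points yields a unique $\pi_{Q'}$ and hence the unique $\mathfrak{R}$-quadric $Q'$ through $p,q$.

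For the second assertion, let $Q'\neq Q''$ be two $\mathfrak{R}$-quadrics. If they shared two distinct points, those would either lie on a common transversal (impossible, since each transversal meets an $\mathfrak{R}$-quadric in exactly one point) or on distinct transversals (impossible by the first assertion); hence $|Q'\cap Q''|\leq 1$. For existence of a common point, fix any conic $C\subseteq Q$: if the projections of $Q'$ and $Q''$ onto $\mathfrak{S}_L$ coincided for every $L$ in the opposite regulus of $\varphi(C)$, then $\pi_{Q'}$ and $\pi_{Q''}$ would agree on all of $C$, forcing $Q'=Q''$ by the first assertion; hence some $L$ yields distinct $\mathfrak{S}_L$-conics which, by the second part of Lemma~\ref{conicscroll}, meet in a point, whose pre-image in $\<x,\varphi(x)\>\setminus\varphi(x)$ (for the corresponding $x\in C$) is the common point of $Q'$ and $Q''$, lying in $\mathfrak{R}\setminus\mathcal{R}$ by the definition of $\mathfrak{R}$-quadric.

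\textbf{Main obstacle.} The principal technical difficulty will be constructing the $(2d-3)$-subspace $U_L\subseteq\Pi'$ with the required dimensional and incidence properties relative to every $\varphi(x)$ for $x\in C$, verifying that $\mathfrak{R}$-quadrics project to non-degenerate $\mathfrak{S}_L$-conics (rather than degenerating into reducible conics), and showing that $d$ generic choices of $L$ in the opposite regulus yield $d$ hyperplanes in general position in $\<x_3,\varphi(x_3)\>$. Once this projection machinery is set up, Lemma~\ref{conicscroll} drives the rest of the argument.
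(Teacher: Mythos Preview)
Your approach via projection onto cubic sub-scrolls $\mathfrak{S}_L$ is genuinely different from the paper's. The paper instead projects the whole scroll from a full transversal subspace $\<p,\varphi(p)\>$ onto a complementary $(2d)$-space; under that projection every $\mathfrak{R}$-quadric becomes an affine $d$-space (since $\<Q'\>$ meets the centre only in the single point $q'_p$), and two such affine $d$-spaces with disjoint $(d-1)$-spaces at infinity must meet in exactly one affine point. The crucial feature is that this projection is \emph{injective} on each $\mathfrak{R}$-quadric, which is exactly what your projection $\pi_{U_L}$ lacks.

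This causes a genuine gap in your second assertion. When your projected $\mathfrak{S}_L$-conics meet at the point over some $x_0\in C$, all you obtain is that $q'_{x_0}:=Q'\cap\<x_0,\varphi(x_0)\>$ and $q''_{x_0}:=Q''\cap\<x_0,\varphi(x_0)\>$ lie in the same fibre of $\pi_{U_L}$ inside $\<x_0,\varphi(x_0)\>$. That fibre is the $(d-1)$-space $\<q'_{x_0},\,U_L\cap\varphi(x_0)\>$, not a single point; indeed, if $q'_{x_0}\neq q''_{x_0}$ and $y$ is the point where the line $q'_{x_0}q''_{x_0}$ hits $\varphi(x_0)$, then your fibre condition says precisely $y\in U_L\cap\varphi(x_0)$, which is perfectly possible. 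Varying $L$ moves this hyperplane of $\varphi(x_0)$, but it also moves the intersection parameter $x_0$, so no contradiction results. Thus ``the pre-image \ldots\ is the common point of $Q'$ and $Q''$'' is unjustified for $d>1$.

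There is also a softer gap in the first assertion: your hyperplane-intersection argument plausibly gives \emph{uniqueness} of $\pi_{Q'}(x_3)$, but you never establish \emph{existence}, i.e.\ that the pointwise assignment $x\mapsto\pi_{Q'}(x)$ actually sweeps out a quadric of Witt index $1$ rather than an arbitrary transversal section. The paper sidesteps this by first using Lemma~\ref{scrolld} to see that every $\mathfrak{R}$-quadric plays the same role as $Q$, so one of the two prescribed points may be taken on $Q$ itself; the $\mathfrak{R}$-quadrics through that point are then parametrised by affine $d$-spaces in $\Pi'$ with a fixed spread element at infinity, making both existence and uniqueness immediate.
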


\begin{proof}
First note that for each $\mathfrak{R}$-quadric $Q'$ through a point $c$ of $Q$, $\varphi$ induces a projectivity $\psi$ between the quadrics $Q$ and $Q'$ fixing $c$ and hence, by Lemma~\ref{scrolld}, $\psi$ extends to a projectivity $Q \rightarrow \<\alpha_c\>$ where $\alpha_c=\overline{\psi}(Q)$ is an affine $d$-space with $\overline{\psi}(c)=\varphi(c)$ its $(d-1)$-space at infinity. One can then see that $Q'$ plays the same role as $Q$ w.r.t.\ $\mathcal{R}$.

Let $c$ and $p$ be distinct points of $Q$ and take an arbitrary point $p' \in \<p,\varphi(p)\>\setminus (p \cup \varphi(p))$. Then the unique $\mathfrak{R}$-quadric through $c$ and $p'$ can be determined as the unique quadric in the inverse image of $\overline{\psi}^{-1}(\<\alpha_c\>)$ containing $p'$. 


Secondly, each two $\mathfrak{R}$-quadrics intersect non-trivially. To that end, we consider the (injective) projection $\rho$ of $\mathfrak{R}$ from $\<p,\varphi(p)\>$ onto a complimentary subspace $F$ of $\PG(3d+1,\K)$, taking $\mathfrak{R}$-quadrics to affine $d$-spaces, which intersect each other in at most one point by the above. Moreover, since, for each point $p' \in \<p,\varphi(p)\>$, its tangent space is mapped to a projective $(d-1)$-space by Lemma~\ref{scrolld}, the projections of the $\mathfrak{R}$-quadrics through some fixed point $p'$ share their $(d-1)$-space at infinity. In $F$, it readily follows that each pair of affine $d$-spaces with distinct (and hence disjoint) $(d-1)$-spaces at infinity, intersect in precisely one point. The assertion follows.
\end{proof}


\begin{thebibliography}{10}

\bibitem{Bru:12} B. De Bruyn, The pseudo-hyperplanes and homogeneous pseudo-embeddings of $\AG(n,4)$ and $\PG(n,4)$. \emph{Des. Codes Cryptogr.}  \textbf{65} (2012), 127--156. 

\bibitem{Bru:13} B. De Bruyn, Pseudo-embeddings and pseudo-hyperplanes, \emph{Adv. Geom.}  \textbf{13} (2013), 71--95. 

\bibitem{Sch-Kra-Sch-Mal:18} A. De Schepper, O. Krauss, J. Schillewaert and H. Van Maldeghem, Veronesean representations of projective spaces over quadratic associative division algebras, \emph{J. Algebra}, to appear.

\bibitem{Han-Mal:89} G.\ Hanssens and H.~Van Maldeghem, On Projective Hjelmslev
planes of level $n$, {\em Glasgow Math.\ J.\ } \textbf{31} (1989),
257--261.

\bibitem{Kap:53} I. Kaplansky, Infinite-dimensional forms admitting composition, \emph{Proc. Amer. Math. Soc.} \textbf{4} (1953), 956--960.

\bibitem{Kra-Sch-Mal:15} O. Krauss, J. Schillewaert and H. Van Maldeghem, Veronesean representations of Moufang planes, \emph{Michigan Math. J.} \textbf{64} (2015), 819--847.

\bibitem{Kun-Sch:86} R. A. Kunze and S. Scheinberg, Alternative algebras having scalar involutions, \emph{Pac. Journal of Math} \textbf{124} (1986), 159--172.

\bibitem{Maz-Mel:84}  F. Mazzocca and N. Melone, Caps and Veronese varieties in projective Galois spaces, \emph{Discrete Math.} \textbf{48} (1984), 243--252.

\bibitem{McC:04}  K. McCrimmon, \emph{A Taste of Jordan Algebras}, Universitext, Springer-Verlag, New York (2004), xxv+563pp.

\bibitem{MPW} B. M\"uhlherr, H. P. Petersson and R. M. Weiss, \emph{Descent in Buildings}, Annals of Math. Studies \textbf{190}, Princeton University Press, Princeton and Oxford, 2015.

\bibitem{Sch-Mal:14} J. Schillewaert and H. Van Maldeghem, Projective planes over 2-dimensional quadratic algebras, \emph{Adv. Math.} \textbf{262} (2014), 784--822.

\bibitem{Sch-Mal:17} J. Schillewaert and H. Van Maldeghem, On the varieties of the second row of the split Freudenthal-Tits Magic Square, \emph{Ann. Inst. Fourier} \textbf{67} (2017), 2265--2305. 

\bibitem{Tit:66} J. Tits, Alg\`ebres alternatives,  alg\`ebres de Jordan et alg\`ebres de Lie exceptionnelles, \emph{Indag. Math.} \textbf{28} (1966) 223--237. 

\bibitem{Tit:90} J. Tits, Spheres of radius 2 in triangle buildings, I, \textbf{in} \emph{Finite geometries, Buildings, and Related Topics}, Pingree Park, Colorado, U.S.A., July 17--23, 1988, Clarendon Press, Oxford (1990), 17--28.

\bibitem{VM1} H. Van Maldeghem, Non-classical triangle
buildings, {\em Geom.\ Dedicata} {\bf 24} (1987), 123--206.

\bibitem{VM2} H. Van Maldeghem, Valuations on PTRs induced by triangle
buildings, {\em Geom.\ Dedicata} {\bf 26} (1988), 29--84. 

\bibitem{Wit:17} S. Witzel, On panel-regular $\widetilde{A}_2$ lattices, \emph{Geom. Dedicata} \textbf{191} (2017), 85--135.

\end{thebibliography}
\end{document}